\numberwithin{equation}{section}
\numberwithin{figure}{section}
\theoremstyle{plain}
\newtheorem{thm}{\protect\theoremname}
  \theoremstyle{plain}
  \newtheorem{prop}[thm]{\protect\propositionname}
\theoremstyle{plain}
\newtheorem{theorem}{Theorem}[section]
\newtheorem{lemma}[theorem]{Lemma}
\newtheorem{corollary}[theorem]{Corollary}
\newtheorem{hyp}[theorem]{Hypothesis}
\theoremstyle{definition}
\theoremstyle{remark}
\newtheorem{remark}[theorem]{Remark}
  \providecommand{\propositionname}{Proposition}
\providecommand{\theoremname}{Theorem}
\begin{document}

\title{Modified scattering for the cubic Schrödinger equation small data
solution on product space}

\author{Grace Liu}
\begin{abstract}
In this paper we consider the long time behavior of solutions to the
cubic nonlinear Schr\"{o}dinger equation posed on the spatial domain
$\mathbb{R}\times\mathbb{T}^{d}$, $1\leq d\leq4$. We first prove
the local well-posedness in $C(I;L_{x}^{2}H_{y}^{s})\cap C(I;L_{x,y}^{4})$
for solutions with initial data $u_{0}\in H_{x}^{0,1}L_{y}^{2}\cap L_{x}^{2}H_{y}^{s}$.
Then, for sufficiently small, smooth, decaying data, we prove global
existence and derive modified asymptotic dynamics by using the wave
packet method and normal form corrections. The modified scattering
behavior on $\mathbb{R}\times\mathbb{T}^{d}$ combines the modified
scattering of the cubic NLS on real line $\mathbb{R}$ with cubic
NLS dynamics on torus. We also consider the corresponding asymptotic
completeness problem. 
\end{abstract}

\maketitle

\section{Introduction}

In this paper, we work with the cubic defocusing nonlinear Schrödinger
equation (NLS) which has the form
\begin{equation}
\left(i\partial_{t}+\frac{1}{2}\partial_{x}^{2}+\frac{1}{2}\triangle_{y}\right)u=|u|^{2}u,\label{eq main}
\end{equation}
where $u$ is a complex-valued function on the spatial domain $(x,y)\in\mathbb{R}\times\mathbb{T}^{d}$,
$1\leq d\leq4$. Let $\mathbb{T}:=\mathbb{R}/\left(2\pi\mathbb{Z}\right)$.The
equation is known to be well-posed in $H_{x,y}^{1}$ \cite{IP}. A
suitable solution will satisfy the conservation law for mass
\begin{equation}
\int_{\mathbb{R}\times\mathbb{T}^{d}}\left|u(t)\right|^{2}dxdy=\int_{\mathbb{R}\times\mathbb{T}^{d}}\left|u(0)\right|^{2}dxdy,\label{eq:mass}
\end{equation}
and also for energy 
\begin{equation}
\int_{\mathbb{R}\times\mathbb{T}^{d}}\left|\partial_{x}u(t)\right|^{2}+\left|\nabla_{y}u(t)\right|^{2}+\left|u(t)\right|^{4}dxdy=\int_{\mathbb{R}\times\mathbb{T}^{d}}\left|\partial_{x}u(0)\right|^{2}+\left|\nabla_{y}u(0)\right|^{2}+\left|u(0)\right|^{4}dxdy.
\end{equation}
The purpose of this work is to show that we can apply the newly developed
tools of wave packet testing \cite{IT,IT3,IT4} and normal form correction
\cite{shatah} to establish the same asymptotic behavior as in the
work of Hani, Pausader, Tzvetkov and Visciglia \cite{HPTV} yet in
a simpler manner and with lower regularity compared to the result
they produced. This work also gives a more comprehensive relation
between the resonant dynamic and the energy norm. The results discussed
in this paper can be directly extended to the case of the cubic focusing
NLS ($-\left|u\right|^{2}u$ on the left hand side of (\ref{eq main})).
Nevertheless, this approach could be used in similar situations when
investigating long time behavior.

The problem of asymptotic behavior of small solutions of nonlinear
evolution equations has been an important topic in the study of NLS.
The task of identifying nonlinear global dynamics is extremely challenging.
If a solution $u$ behaves like a linear one for a large time, we
say the equation has long range scattering. If the solution $u$ behaves
like a linear solution with a phase correction term, we say the equation
has modified scattering. The interesting feature of the asymptotic
dynamics of (\ref{eq main}) is that it is not simply a phase correction
term when $d\geq2$, but rather a quasi-periodic frequency dynamic
described by (\ref{eq:eqtorus}) and (\ref{eq:mainerr}). 

It is fair to say that the cubic NLS on the product space inherits
properties from both the cubic NLS on $\mathbb{R}$ and cubic NLS
on torus $\mathbb{T}^{d}.$ It is well known that on the space $\mathbb{R}^{n}$,
the NLS equations

\begin{equation}
\left(i\partial_{t}+\frac{1}{2}\triangle\right)u=\lambda\left|u\right|^{p}u,\qquad u\left(0,x\right)=u_{0}\label{eq:lp}
\end{equation}
have small data scattering for $\frac{2}{n}<p\leq\frac{4}{n}$ when
$n\geq1$ and modified scattering for $p=\frac{2}{n}$, when $n=1,2,3.$
For the specific case $n=1$, the cubic NLS on $\mathbb{R}$ is an
integrable system, which means it has infinitely many conservation
laws, and has modified scattering property \cite{IT,KP,HN}. Let $u$
be a solution to (\ref{eq:lp}) on $\mathbb{R}$ with $p=2$, and
suitable small initial data $u_{0}$. Then we have the modified scattering
form

\begin{equation}
u(t,x)\approx\frac{1}{\sqrt{t}}e^{i\frac{\left|x\right|^{2}}{2t}+i\lambda\left|W\left(\frac{x}{t}\right)\right|^{2}\log t}W\left(\frac{x}{t}\right).\label{eq:1dasy}
\end{equation}

In contrast to NLS on the real line, the solutions for NLS on the
torus exhibit no scattering property and even the global existence
becomes difficult \cite{BGT,GK,H1,HTT1,HTT2,IP}. In this case, many
different long time behaviors can be sustained even on arbitrary small
open data sets around zero. The NLS on torus were first studied by
Bourgain \cite{B1}\cite{B2}. The Strichartz estimate for $\mathbb{R}^{n}$
fails in the case of $\mathbb{T}^{d}.$ The cubic NLS with $H_{x}^{1}$
initial data is locally well-posed in $C\left([0,T);H_{x}^{1}\right)\cap X^{1}\left([0,T)\right)$
for $d=1,2,3$. For $d=4$, the equation reaches the energy critical
exponent, hence we do not expect local well-posedness for $d\geq5.$
Since the cubic NLS on torus has no dispersive property, we do not
expect any decay in time. The asymptotic equation can be written as
an infinite dimensional dynamical system. Notice that in the case
$d=1$, $p=2$ , due to the complete integrable property, it has infinitely
many conservation laws. But for $d>1$, for any given parameters $s>1$,
$K\gg1$, and $0<\delta\ll1$, there exists a global solution $u(t,x)$
of (\ref{eq:lp}) and a time $T>0$ such that 
\[
\left\Vert u(0)\right\Vert _{H^{s}}\leq\delta\quad\text{ and }\quad\left\Vert u(T)\right\Vert _{H^{s}}\geq K.
\]
Therefore the orbit of any $H^{s}-$neighborhood of the origin under
the nonlinear flow of the cubic NLS is not uniformly bounded in $H^{s}.$

The sharp contrast in behavior between $\mathbb{R}^{n}$ and $\mathbb{T}^{d}$
has generated considerable interest in questions of long time behavior
on the product spaces. Considering the product spaces $\mathbb{R}^{n}\times\mathbb{T}^{d}$,
there is the expectation that at least if $\frac{np}{2}\geq1$ the
solutions will globally exist and decay like $t^{-\frac{np}{2}}$
for sufficiently small initial data. When $\frac{np}{2}>1$, the global
solutions scatter to linear solutions. When $\frac{np}{2}=1$, the
global solutions exhibit some modified scattering. This paper offers
a specific case of the latter scenario, $n=1,\quad p=2,\quad1\leq d\leq4$. 

\begin{theorem}\label{Theoremmain} Let $1\leq d\leq4$. Consider
the equation (\ref{eq main}) with initial data $u_{0}$, which satisfies
\begin{equation}
\left\Vert xu_{0}\right\Vert _{L_{x,y}^{2}}+\left\Vert D_{y}^{s}u_{0}\right\Vert _{L_{x,y}^{2}}\leq\epsilon,\label{eq:ini}
\end{equation}
where $s=3\alpha$, $\alpha>\frac{d}{2}$ is an arbitrary positive
number, and $\epsilon<\epsilon(d)$. Then: 

(a) There exists a unique global solution $u\in C(\mathbb{R};L_{x}^{2}H_{y}^{s})$
for (\ref{eq main}) with initial data $u_{0}$, and $xe^{-it\partial_{x}^{2}/2}u\in C\left(\mathbb{R};L_{x,y}^{2}\right).$
Moreover we have the time decay estimate
\begin{equation}
\left\Vert u(t)\right\Vert _{L_{x}^{\infty}H_{y}^{1}}\lesssim\epsilon\left|t\right|^{-\frac{1}{2}},\label{eq:maindecay}
\end{equation}
and the energy bound
\begin{equation}
\left(\left\Vert xe^{-it\partial_{x}^{2}/2}u\right\Vert _{L_{x,y}^{2}}^{2}+\left\Vert D_{y}^{s}u\right\Vert _{L_{x,y}^{2}}^{2}\right)^{\frac{1}{2}}\lesssim\epsilon\left(1+t\right)^{C\epsilon^{2}}.\label{eq:mainenergy}
\end{equation}

(b) There exists $W\left(t,v,y\right)\in C\left([1,\infty);L_{v,y}^{2}\cap L_{v}^{\infty}H_{y}^{\alpha}\right)$,
which along rays $v=\text{constant}$ is a solution to the equation
\begin{equation}
i\partial_{t}W+\frac{1}{2}\triangle_{y}W=\frac{1}{t}\left|W\right|^{2}W,\label{eq:eqtorus}
\end{equation}
 such that for $t\geq1$
\begin{equation}
u\left(t,x,y\right)=\frac{1}{\sqrt{t}}e^{i\frac{x^{2}}{2t}}W\left(t,\frac{x}{t},y\right)+err_{x}\label{eq:mainerr}
\end{equation}
where 
\[
err_{x}\in\epsilon\left(O_{L_{v,y}^{2}}\left(\left(1+t\right)^{-\frac{1}{2}+C\epsilon^{2}}\right)\cap O_{L_{x}^{\infty}H_{y}^{\alpha}}\left(\left(1+t\right)^{-\frac{7}{12}+C\epsilon^{2}}\right)\right).
\]
\end{theorem}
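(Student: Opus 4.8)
The plan is to run the wave packet testing machinery of \cite{IT,IT3,IT4} adapted to the product geometry, treating the $y$-variable as a parameter carrying a finite-dimensional torus Schr\"{o}dinger flow. The first step is to set up the bootstrap: on a time interval $[1,T]$ I would assume the conclusions (\ref{eq:maindecay})–(\ref{eq:mainenergy}) with doubled constants, and aim to close them. Local well-posedness in $C(\mathbb{R};L_x^2 H_y^s)$ together with the propagation of the weighted norm $\|x e^{-it\partial_x^2/2}u\|_{L^2_{x,y}}$ is standard (energy estimates plus the algebra property of $H_y^s$ for $s>d/2$, using $\alpha>d/2$), so the content is entirely in the a priori bounds. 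I would introduce the profile $f = e^{-it\partial_x^2/2}u$ and, following Ionescu–Pausader-type normal form ideas \cite{IT2}, perform a normal form/gauge transformation removing the non-resonant part of $|u|^2 u$ in the $x$-frequency variable, leaving the resonant term which, after the change of variables $v = x/t$, becomes exactly the right-hand side $\frac{1}{t}|W|^2 W$ of (\ref{eq:eqtorus}).

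The core step is the wave packet test: I would pair $u(t)$ against a wave packet $\Psi_v$ concentrated at spatial scale $\sqrt{t}$ around $x = vt$ with frequency $v$, and show that $W(t,v,y) := \langle u(t,\cdot,y), \Psi_v \rangle_x$ satisfies, up to acceptable errors, the ODE-in-$t$ (for each fixed ray $v$) given by (\ref{eq:eqtorus}) coupled through the torus Laplacian $\triangle_y$. The key algebraic point, exactly as in the $\mathbb{R}$ case, is that the cubic interaction of three wave packets is stationary-phase dominated by the diagonal $v_1 = v_2 = v_3$, producing the factor $1/t$ and the cubic nonlinearity in $W$; the $y$-dependence just rides along linearly since $\triangle_y$ commutes with everything in $x$. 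From the resulting equation for $W$ one reads off $\partial_t(\|W(t,v,\cdot)\|_{L^2_y}) = 0$ pointwise in $v$ (the nonlinearity is gauge-type in $y$ after testing, because $\mathrm{Im}\int |W|^2 W \bar W\,dy$... wait, one must be careful: $\int (\frac{1}{t}|W|^2W)\bar W\,dy$ is real, so $\|W(t,v,\cdot)\|_{L^2_y}$ is conserved along rays), giving the uniform $L^\infty_v L^2_y$ bound, while the growth of $\|D_y^s u\|$ and the weighted norm is controlled by a Gronwall argument feeding on the $L^\infty_x H^1_y \lesssim \epsilon t^{-1/2}$ decay, yielding the $(1+t)^{C\epsilon^2}$ factors. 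The decay estimate (\ref{eq:maindecay}) itself comes from comparing $u$ to its wave packet approximation: $|u(t,x,y)| \approx t^{-1/2}|W(t,x/t,y)|$ plus errors, and $\|W\|_{L^\infty_v H^1_y}$ is bounded because $H^\alpha_y \hookrightarrow H^1_y$ and the $H^\alpha_y$ norm of $W$ grows only like $t^{C\epsilon^2}$, which is absorbed... actually one needs the decay at rate exactly $t^{-1/2}$, so $\|W\|_{L^\infty_v H^1_y}$ must stay $\lesssim \epsilon$; this follows from the conserved $L^2_y$ mass along rays plus interpolation with the slowly-growing $H^\alpha_y$ norm, using that the interpolation exponent kills the $\epsilon^2\log t$ loss when $\epsilon$ is small — this is the usual mechanism.

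For part (b), once $W$ is constructed via the wave packet test, equation (\ref{eq:eqtorus}) along rays is exactly the testing identity with errors, and the asymptotic formula (\ref{eq:mainerr}) is the statement that $u$ minus its wave-packet reconstruction is small; the two error bounds come from (i) the $L^2$-based energy estimate for the difference profile, giving $(1+t)^{-1/2+C\epsilon^2}$ in $L^2_{v,y}$, and (ii) a Bernstein/Sobolev bound in $x$ combined with the $H^\alpha_y$ control, giving the $t^{-7/12}$ rate (the $7/12 = 1/2 + 1/12$ gain is the standard $\sqrt{t}$-packet localization improvement one gets from the extra $x$-derivative room, cf. \cite{IT3}). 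The membership $W \in C([1,\infty); L^2_{v,y}\cap L^\infty_v H^\alpha_y)$ follows from the uniform bounds plus continuity of the flow.

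\textbf{Main obstacle.} The hard part is the interplay between the torus regularity $H^s_y$, $s = 3\alpha$, and the testing errors: on $\mathbb{T}^d$ there is no dispersion in $y$, so every time I integrate by parts or apply stationary phase in $x$, the cost is paid in $y$-derivatives, and I must check that the chosen hierarchy $s = 3\alpha > 3d/2$ (three copies of $\alpha$, one per factor in the cubic nonlinearity) is exactly enough to close the normal form estimates without losing the decay — in particular, that the normal form correction term, which involves a product of three profiles at separated $x$-frequencies, is estimated in $L^2_x H^\alpha_y$ using the algebra property and the $H^s_y$ bound on one factor together with the $L^\infty_x H^1_y$ decay on the others, and that the resulting bound is genuinely integrable in time. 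Getting the bookkeeping of these $y$-Sobolev exponents to match the $x$-decay rates, uniformly for $1\le d\le 4$, is where the real work lies.
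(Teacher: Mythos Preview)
Your skeleton is right --- bootstrap, wave-packet profile $\gamma(t,v,y)=P_{\le\sqrt t}\big(t^{1/2}e^{-itv^2/2}u(t,tv,y)\big)$, an approximate asymptotic equation for $\gamma$, and energy/Gronwall for the weighted and $H^s_y$ norms --- and this matches the paper's architecture. But two of the load-bearing steps in your sketch do not actually work as stated.

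\textbf{The energy estimate is not a simple Gronwall.} You write that the growth of $\|L_xu\|_{L^2}$ and $\|D_y^su\|_{L^2}$ is ``controlled by a Gronwall argument feeding on the $L^\infty_xH^1_y\lesssim\epsilon t^{-1/2}$ decay''. Pointwise decay alone only gives $\partial_t\|\nu\|^2\lesssim \epsilon^2 t^{-1}\|\nu\|^2\cdot(\text{something})$ after you have already extracted a resonant structure in the $y$-frequencies; the raw nonlinearity $u^2\bar\nu$ gives $t^{-1}\|\gamma\|_{L^\infty_vH^\alpha_y}^2\|\nu\|$, and $\|\gamma\|_{L^\infty_vH^\alpha_y}$ grows. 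The paper splits the contribution into four regions of $(v,y)$-frequency interaction: high $v$-frequency (easy), $y$-resonant (uses the $H^1_y$-based resonance estimate, Lemma~\ref{resonance form}), $y$-nonresonant with fast phase (your normal form), and an \emph{almost resonant} region where the time phase $\Psi(t)$ is nearly stationary but the $y$-frequencies are \emph{not} on the resonant set. In that last region neither pointwise decay nor integration by parts in time gains anything. The paper handles it by freezing the profiles on unit time intervals, reducing to a product of linear-in-$y$ waves, and invoking the \emph{bilinear Strichartz estimate on $\mathbb{T}^d$} (this is where $d\le4$ enters). Without this ingredient the energy bound does not close; your ``main obstacle'' paragraph about Sobolev bookkeeping is not where the difficulty lies.

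\textbf{The uniform $L^\infty_vH^1_y$ bound does not come from interpolation.} You propose to get $\|W\|_{L^\infty_vH^1_y}\lesssim\epsilon$ by interpolating the conserved $L^2_y$ mass against the slowly growing $H^\alpha_y$ norm, saying ``the interpolation exponent kills the $\epsilon^2\log t$ loss''. It does not: interpolation gives $\|W\|_{H^1_y}\lesssim \epsilon\, t^{C\epsilon^2/\alpha}$, which still grows, and then (\ref{eq:maindecay}) fails with the exact $t^{-1/2}$ rate. The correct mechanism (Section~3.3 in the paper) is the \emph{energy identity} for the asymptotic equation: multiply (\ref{eq:gamma}) by $\bar\gamma_t$ to get
\[
\partial_t\Big(\|\nabla_y\gamma\|_{L^2_y}^2+\tfrac{1}{t}\|\gamma\|_{L^4_y}^4\Big)=-4\,\mathrm{Re}\,\langle\gamma_t,I\rangle,
\]
and the right side is time-integrable by the error bounds on $I$. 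This is a genuine conservation law (Hamiltonian structure of cubic NLS on $\mathbb{T}^d$), not an interpolation trick.

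A smaller point on part~(b): the $W$ in the statement is an \emph{exact} solution of (\ref{eq:eqtorus}), whereas your wave-packet profile $\gamma$ only solves it with errors. The paper constructs $W$ as a limit of exact solutions $W_n$ with data $\gamma(2^n)$ at $t=2^n$, showing $\{W_n(1)\}$ is Cauchy in $Y$; this final step is routine once the error bounds on $I$ are in hand, but it is a separate argument from the testing identity.
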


A similar statement holds as $t\rightarrow-\infty.$

Complementing the above scattering result, we also have the asymptotic
completeness property.

\begin{theorem}\label{theoremasycom} Let $1\leq d\leq4$ and $C$
be a large universal constant. There exists $\epsilon=\epsilon(d)>0$
such that if $W_{1}$ satisfies 
\[
\left\Vert D_{v}^{1+C\epsilon^{2}}W_{1}\right\Vert _{L_{v,y}^{2}}+\left\Vert D_{y}^{s}D_{v}^{C\epsilon^{2}}W_{1}\right\Vert _{L_{v,y}^{2}}+\left\Vert D_{y}^{s}W_{1}\right\Vert _{L_{v,y}^{2}}\ll\epsilon\ll1,
\]
then there exists $W(t)$ solving (\ref{eq:eqtorus}) on $t\in[1,\infty)$
with initial data $W(1)=W_{1}$. Moreover, there exists a solution
$u$ of (\ref{eq main}) with initial data $u_{0}$ which satisfies
(\ref{eq:ini}), hence (\ref{eq:mainerr}) holds for $u$.

\end{theorem}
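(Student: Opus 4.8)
The statement is the construction of the modified wave operator for (\ref{eq main}), so the plan has three stages: first solve the limit equation (\ref{eq:eqtorus}) forward from $t=1$; then use the resulting profile to build an exact solution of (\ref{eq main}) asymptotic to it near $t=+\infty$; finally descend to bounded times and invoke Theorem \ref{Theoremmain} to promote this to an honest global solution whose data obeys (\ref{eq:ini}).

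\emph{Solving the limit equation.} I would solve (\ref{eq:eqtorus}) on $[1,\infty)$ with $W(1)=W_{1}$. Along each ray $v=\text{constant}$ this is a cubic NLS on $\mathbb{T}^{d}$ carrying the small, decaying coefficient $t^{-1}$, with $v$ a mere parameter for the $y$-dynamics, so it is globally solvable: the $L^{2}_{v,y}$ mass and the $L^{\infty}_{v}L^{2}_{y}$ norm are essentially conserved, and commuting $D^{s}_{y}$ and $D^{1+C\epsilon^{2}}_{v}$ past the equation and using that $H^{\alpha}_{y}$ is an algebra together with $\int_{1}^{t}\tau^{-1}\|W(\tau)\|^{2}_{L^{\infty}_{v}H^{\alpha}_{y}}\,d\tau\lesssim\epsilon^{2}\log t$ gives, by Gronwall, $\|D^{s}_{y}W(t)\|_{L^{2}_{v,y}}+\|D^{1+C\epsilon^{2}}_{v}W(t)\|_{L^{2}_{v,y}}+\|D^{s}_{y}D^{C\epsilon^{2}}_{v}W(t)\|_{L^{2}_{v,y}}\lesssim\epsilon(1+t)^{C\epsilon^{2}}$, with implicit constants $\ll1$ since $\|W_{1}\|\ll\epsilon$. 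These are exactly the bounds Theorem \ref{Theoremmain}(b) attaches to profiles of genuine solutions.

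\emph{Building the approximate solution.} Write $L=i\partial_{t}+\tfrac12\partial_{x}^{2}+\tfrac12\triangle_{y}$ and $\mathcal{N}(u)=|u|^{2}u$. The naive ansatz $\tilde u_{0}:=t^{-1/2}e^{ix^{2}/2t}W(t,x/t,y)$ satisfies $L\tilde u_{0}=t^{-1/2}e^{ix^{2}/2t}\big(i\partial_{t}W+\tfrac12\triangle_{y}W+\tfrac1{2t^{2}}\partial_{v}^{2}W\big)$ and $\mathcal{N}(\tilde u_{0})=t^{-3/2}e^{ix^{2}/2t}|W|^{2}W$, so (\ref{eq:eqtorus}) is precisely the equation cancelling the leading term, and one is left with $L\tilde u_{0}-\mathcal{N}(\tilde u_{0})=\tfrac12 t^{-5/2}e^{ix^{2}/2t}\partial_{v}^{2}W$ — which involves two $v$-derivatives of $W$ (indeed three, once one weights by $J=x+it\partial_{x}$) and so is not controlled by the $D^{1+C\epsilon^{2}}_{v}$ regularity of the data. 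I would therefore replace $\tilde u_{0}$ by the wave packet superposition of \cite{IT,IT3}, $\tilde u=\int W(t,v,y)\,\psi_{v}(t,x)\,dv$ (which agrees with $\tilde u_{0}$ up to terms in the error class), dressed with a normal form à la \cite{IT2}: the cubic $\mathcal{N}(\tilde u)$ then splits into a resonant part ($v_{1}\simeq v_{2}\simeq v_{3}$) matching $t^{-1}|W|^{2}W$ up to an error carrying only one $v$-derivative of $W$, plus non-resonant oscillatory parts removed by the normal form, while the packets $\psi_{v}$ are approximate free solutions with time-integrable error. The outcome is $\tilde u$ obeying (\ref{eq:maindecay})--(\ref{eq:mainenergy}) with residual $R:=L\tilde u-\mathcal{N}(\tilde u)$ satisfying $\int_{1}^{\infty}\big(\|R(t)\|_{L^{\infty}_{x}H^{\alpha}_{y}}+\|x\,e^{-it\partial_{x}^{2}/2}R(t)\|_{L^{2}_{x,y}}+\|D^{s}_{y}R(t)\|_{L^{2}_{x,y}}\big)\,dt\lesssim\epsilon$. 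This multilinear wave-packet/normal-form analysis — running the Ifrim--Tataru machinery of the proof of Theorem \ref{Theoremmain} "in reverse" with only $\approx1$ derivative of $W$ in $v$ available — is the step I expect to be the main obstacle; the rest is a rerun of estimates already in hand.

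\emph{Solving for the genuine solution.} Put $u=\tilde u+w$, so $Lw=\mathcal{N}(\tilde u+w)-\mathcal{N}(\tilde u)-R$, and impose this through the Duhamel formula anchored at $t=+\infty$,
\[
w(t)=i\int_{t}^{\infty}e^{i(t-\tau)(\partial_{x}^{2}+\triangle_{y})/2}\big[\mathcal{N}(\tilde u+w)-\mathcal{N}(\tilde u)-R\big](\tau)\,d\tau .
\]
Running the bootstrap of the proof of Theorem \ref{Theoremmain}(a) — linear dispersive decay plus vector-field energy estimates — with $R$ as an extra source closes, for $\epsilon$ small, the a priori bounds placing $w$ in the error class $err_{x}$ of Theorem \ref{Theoremmain}(b): the decisive gain is that $\mathcal{N}(\tilde u+w)-\mathcal{N}(\tilde u)=O(\|\tilde u\|_{L^{\infty}_{x}H^{\alpha}_{y}}^{2}\|w\|+\cdots)$ is of size $\lesssim t^{-1}\|w\|$, so $\int_{t}^{\infty}\tau^{-1}\cdot\epsilon\tau^{-1/2-\delta}\,d\tau\lesssim\epsilon t^{-1/2-\delta}$ (and its weighted analogues, together with the integrable $R$-contribution) reproduces the bounds; a contraction argument then yields $u$ on $[1,\infty)$ with $u-\tilde u\in err_{x}$, so (\ref{eq:mainerr}) holds with the prescribed profile. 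Finally, at $t=1$ one has $\|x\,e^{-i\partial_{x}^{2}/2}u(1)\|_{L^{2}_{x,y}}+\|D^{s}_{y}u(1)\|_{L^{2}_{x,y}}\ll\epsilon$ by the smallness of $W_{1}$, so by the autonomy of (\ref{eq main}) and Theorem \ref{Theoremmain} (applied with initial time $1$) the solution $u$ extends to a global solution of (\ref{eq main}) whose data $u_{0}$ satisfies (\ref{eq:ini}), which would complete the construction.
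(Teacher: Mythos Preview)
Your three-stage plan (solve the limit equation, build an approximate solution, solve backward from infinity) matches the paper's architecture, and your identification of the regularity obstacle in the naive ansatz $\tilde u_{0}=t^{-1/2}e^{ix^{2}/2t}W$ is exactly right: the residual contains $\tfrac{1}{2t^{2}}\partial_{v}^{2}W$, which is one derivative too many. But your proposed fix --- a wave packet superposition dressed with a normal form correction, which you yourself flag as the step you cannot fully execute --- is far more elaborate than what the paper actually does. The paper simply replaces $W$ by its frequency regularization $P_{\leq\sqrt{t}}W$ in the ansatz, setting
\[
u_{app}=t^{-1/2}e^{ix^{2}/2t}\bigl(P_{\leq\sqrt{t}}W\bigr)(t,x/t,y).
\]
The offending term becomes $t^{-3/2}e^{ix^{2}/2t}$ times a multiplier of size $\xi^{2}/t$ supported at $|\xi|\sim\sqrt{t}$ acting on $W$, and Bernstein's inequality converts this into control by $\|D_{v}^{1+8\delta}W\|_{L^{2}_{v,y}}$ with an extra factor $t^{-4\delta}$ --- precisely the regularity in the hypothesis. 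The same cutoff produces two further error terms $I_{2}',I_{3}'$ from the cubic nonlinearity, all with clean $L^{2}$ bounds (Lemma~\ref{lemmaIzbd}). No normal form is needed at this stage.

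For the backward solve the paper also proceeds differently from your bootstrap-from-Theorem~\ref{Theoremmain} suggestion: it sets up a direct contraction for the pair $(\tilde w,\tilde\nu)$ with $\tilde\nu\in\{L_{x}\tilde w,\,D_{y}^{s}\tilde w\}$ in dyadically weighted spaces $Z,\tilde Z$ (supremum over $T>1$ of $T^{1/2+\delta}$- and $T^{\delta}$-weighted $L^{\infty}_{t}L^{2}_{x,y}$ norms), estimating the linear, quadratic, and inhomogeneous pieces separately. This is cleaner than rerunning the full energy machinery backward, and it makes the role of the $\ll\epsilon$ smallness transparent: one needs $M^{2}\ll 1$ for the Lipschitz constant and $M$ small for the fixed point. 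Your approach might be made to work, but the paper's frequency cutoff trick is the missing simple idea that dissolves the obstacle you highlighted.
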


The work starts with the proof of Theorem \ref{Theoremmain} (a).
In section \ref{sec:Small-Data-Scattering}, first we prove that when
$\epsilon$ is small enough, there exists $T\approx e^{\frac{C}{\epsilon^{2}}}\gg1$
such that the local well-posedness and (\ref{eq:maindecay}), (\ref{eq:mainenergy})
hold on the interval $[0,T]$. These imply that Theorem \ref{Theoremmain}
(b) holds on $[0,T]$. Then in section\ref{sec:The-Energy-Estimate},
by a more careful analysis (\ref{eq:eqtorus}), (\ref{eq:mainerr})
will give us better bounds for (\ref{eq:maindecay}), (\ref{eq:mainenergy}).
Hence the interval $[0,T]$ can be extended to $[0,\infty)$. For
the second part, in section \ref{sec:Asymptotic-Completeness} we
will show Theorem \ref{theoremasycom} by applying a contraction mapping
argument to the resulting equation for the difference $\tilde{w}:=u-u_{app}.$
Here $u_{app}$ is an approximate solution constructed from $W$.

\begin{remark} In the original paper \cite{HPTV} the authors prove
instead that $u$ approaches a solution for the resonant equation
(\ref{eq dyna}). Here we use (\ref{eq:eqtorus}) to characterize
the asymptotic profile, which is similar to cubic NLS equation on
$\mathbb{T}^{d}.$ We will prove later in Proposition \ref{prop:wproperty}
that these two forms actually are equivalent.

\end{remark}

In this paper, the author uses the wave packet method, originally
developed in the work of Ifrim and Tataru on the $1d$ cubic NLS\cite{IT}
and $2d$ water waves \cite{IT3}\cite{IT4}. Let $\mathcal{X}\in C_{0}^{\infty}(\mathbb{R})$
be a real-valued function localized in both space and frequency near
$0$ at scale $\sim1$. To simplify the computations, we normalized
$\int\mathcal{X}=1$. We define a wave packet adapted to the ray $\varUpsilon_{v}=\{x=tv\}$
by 
\begin{equation}
\Psi_{v}=e^{i\phi}\mathcal{X}\left(\frac{x-tv}{\sqrt{t}}\right),
\end{equation}
where the phase function is defined by 
\[
\phi=\frac{x^{2}}{2t}.
\]
We expect the approximate solution will stay coherent on the time
scale $\Delta t\ll t$. By direct computations, we have 
\begin{equation}
\left(i\partial_{t}+\frac{1}{2}\partial_{x}^{2}\right)\Psi_{v}=\frac{1}{2t}e^{i\phi}\left[t^{\frac{1}{2}}\mathcal{X}^{\prime}\left(\frac{x-tv}{\sqrt{t}}\right)+i\left(x-vt\right)\mathcal{X}\left(\frac{x-tv}{\sqrt{t}}\right)\right].
\end{equation}
To measure the decay of $u$ along the ray $\varUpsilon_{v}$ we use
the function 
\[
\gamma\left(t,v\right)=\int u\overline{\Psi}_{v}dxdy.
\]
The function $\gamma$ satisfies the differential equation
\begin{equation}
i\partial_{t}\gamma+\frac{1}{2}\triangle_{y}\gamma=\frac{1}{t}\left|\gamma\right|^{2}\gamma+R(t,v).
\end{equation}
Let $r_{i}$ be some fixed numbers greater that $0$. The remainder
$R(t,v)$ satisfies the bound
\[
\left\Vert R(t,v)\right\Vert _{Y}\lesssim t^{-1-r_{1}}\left\Vert u(t)\right\Vert _{X^{+}},
\]
where $\left\Vert u(t)\right\Vert _{X^{+}}$ is an appropriate energy.
The definition of $Y$ and $X^{+}$ are given in (\ref{Ynorm}) and
(\ref{X+norm}) respectively.

To close the bootstrap argument, we need to prove that under the asymptotic
relation (\ref{eq:mainerr}), the energy $\left\Vert u(t)\right\Vert _{X^{+}}$
grows slower than $t^{r}$. The main difficulty arises in the energy
cascade phenomenon, in which the energy of the system moves from low
frequencies towards arbitrarily high frequencies. We need the relation
\[
\partial_{t}\left\Vert u(t)\right\Vert _{X^{+}}^{2}\lesssim\left\Vert u(t)\right\Vert _{L^{\infty}}^{2}\left\Vert u(t)\right\Vert _{X^{+}}^{2}\lesssim t^{-1}\left\Vert u(t)\right\Vert _{X^{+}}^{2}
\]
to prove the desired bound. Since for $d\geq2$ ,and any $s>1$, the
$t^{-\frac{1}{2}}$ decay of $\left\Vert u(t)\right\Vert _{L_{v}^{\infty}H_{y}^{s}}$
does not hold, it barely allows to close any polynomial-growth bootstrap
for the energy $\left\Vert u(t)\right\Vert _{X^{+}}$. Therefore we
apply normal form correction to non-resonant frequencies on long time
scale, and apply sharp resonant estimate on short time scale intervals.
The factors with non-resonant frequencies can be bound by $t^{-1-r_{2}}\left\Vert u(t)\right\Vert _{X^{+}}^{2}$
and the factors with resonant frequencies can be bound by $\left\Vert u(t)\right\Vert _{L_{v}^{\infty}H_{y}^{1}}^{2}\left\Vert u(t)\right\Vert _{X^{+}}^{2}$.
Hence we have
\[
\partial_{t}\left\Vert u(t)\right\Vert _{X^{+}}^{2}\lesssim t^{-1-r_{2}}\left\Vert u(t)\right\Vert _{X^{+}}^{2}+\left\Vert u(t)\right\Vert _{L_{v}^{\infty}H_{y}^{1}}^{2}\left\Vert u(t)\right\Vert _{X^{+}}^{2}\lesssim t^{-1-r_{2}}\left\Vert u(t)\right\Vert _{X^{+}}^{2}+t^{-1}\left\Vert u(t)\right\Vert _{X^{+}}^{2},
\]
which allows closing the bootstrap argument.

\subsection{Standard notations.}

In this section we briefly collect some notations, definitions and
estimates used throughout this thesis. Given two quantities $A$\textbf{,$B$}
we will write $A\lesssim B$ if there exists some constant $C>0$
so that $A\leq CB$, and write $A\sim B$ if $A\lesssim B$ and $B\lesssim A$.
If $C=C(k)$ we will write $A\lesssim_{k}B$. We write $A\ll B$ if
$A\lesssim B$ and the constant is sufficiently small. We denote the
sets of integers, real numbers and complex numbers by $\mathbb{Z}$,
$\mathbb{R}$ and $\mathbb{C}$ respectively. If $E\subset\mathbb{R}^{n}$
we denote the indicator function of the set $E$ by $\mathbf{1}_{E}$.
We denote the Euclidean norm by $\left|\cdot\right|$ and define the
bracket $\left\langle \cdot\right\rangle =\left(1+\left|\cdot\right|^{2}\right)^{\frac{1}{2}}$.
If $X$ is a normed space we denote its norm by $\left\Vert \cdot\right\Vert _{X}$.
We denote the torus $\mathbb{R}/(2\pi\mathbb{Z})$ by $\mathbb{T}$.

In this work, we define the Fourier transform $\mathcal{F}_{x}f$
and $\widehat{f}$ with respect to $x$ by 
\[
\mathcal{F}_{x}f:=\frac{1}{\sqrt{2\pi}}\int_{\mathbb{R}}e^{-ix\xi}f\left(x\right)dx.
\]
Similarly we also have the full spatial Fourier transform with respect
to both $x$ and $y$, 
\[
\left(\mathcal{F}f\right)\left(\xi,\mathbf{k}\right):=\frac{1}{\left(2\pi\right)^{\left(d+1\right)/2}}\int_{\mathbb{R}}\int_{\mathbb{T}^{d}}f\left(x,y\right)e^{-ix\xi}e^{-i\left\langle \mathbf{k},y\right\rangle }dydx.
\]
Since we need to switch frequently between $f\left(v,y\right)$ and
$\left(\mathcal{F}_{y}f\right)\left(v,\mathbf{k}\right)$ in this
work, we use the bold character for the Fourier transform in the $y$
variable:
\[
f\left(t,v,\mathbf{k}\right)=\frac{1}{\left(2\pi\right)^{d/2}}\int_{\mathbb{T}^{d}}f\left(t,v,y\right)e^{-i\left\langle \mathbf{k},y\right\rangle }dy.
\]

We use $\partial_{x}f$, $f_{x}$ and $f^{\prime}$ to denote a (partial)
derivative in the variable $x$. We define the fractional derivative
by using the Fourier transform
\[
D^{\alpha}f=\mathcal{F}_{x}^{-1}\left|\xi\right|^{\alpha}\widehat{f}.
\]

If $X$ is a normed space and $I\subset\mathbb{R}$ is an interval,
we denote the space of continuous functions $f:I\rightarrow X$ by
$C\left(I;X\right)$ equipped with the sup norm. We use the notation
$C^{k}$ to denote $k$-continuously differentiable functions; $C^{\infty}:=\cap C^{k}$
to denote smooth functions; $C_{c}^{\infty}$ to denote compact supported
$C^{\infty}$ functions.

For $1<p<\infty$ we use $L_{x}^{p}(F)$ (where $F=\mathbb{R}$ or
$\mathbb{C}$) to denote the space of Lebesgue-measurable functions
$f:X\rightarrow F$ such that 

\[
\left\Vert f\right\Vert _{L^{p}}^{p}=\int\left|f(x)\right|^{p}dx<\infty,
\]
with the usual modification for $p=\infty$. We will typically omit
the domain and codomain when they are evident. We denote the $L^{2}$-
inner product by 
\[
\left\langle u,v\right\rangle =\int_{\mathbb{R}}u(x)\overline{v}(x)dx.
\]
We will use Littlewood-Paley projections in $x$ 
\[
\left(\mathcal{F}P_{\leq N}f\right)\left(\xi,k\right)=\mathcal{X}\left(\frac{\xi}{N}\right)\mathcal{F}f\left(\xi,k\right),
\]
where $\mathcal{X}\in C_{c}^{\infty}\left(\mathbb{R}\right),$ $\mathcal{X}\left(x\right)=1$
when $|x|\leq1$ and $\mathcal{X}\left(x\right)=0$ when $|x|\geq2$.
Next, define 
\[
P_{N}=P_{\leq N}-P_{\leq N/2},\quad P_{\geq N}=1-P_{\leq N/2}.
\]
When we concentrate on the frequency in $y$ only, we will denote
\[
\left(\mathcal{F}P_{\leq N}^{y}f\right)\left(\xi,\mathbf{k}\right)=\varphi\left(\frac{\left|\mathbf{k}\right|}{N}\right)\left(\mathcal{F}f\right)\left(\xi,\mathbf{k}\right).
\]
 We denote the linear Schr\"{o}dinger operator $e^{it\partial_{x}^{2}/2}$
on $\mathbb{R}$ by 
\[
e^{it\partial_{x}^{2}/2}f=\mathcal{F}_{x}^{-1}e^{-it|\xi|^{2}/2}\widehat{f}.
\]
Similarly, the linear Schr\"{o}dinger operator $e^{it\triangle_{y}/2}$
on $\mathbb{T}^{d}$ is given by 
\[
e^{it\triangle_{y}/2}f=\mathcal{F}_{y}^{-1}e^{-it|\mathbf{k}|^{2}/2}\mathcal{F}_{y}f.
\]
 Define the linear Schr\"{o}dinger evolution on $\mathbb{R}\times\mathbb{T}^{d}$
as $U(t)=e^{it\triangle_{y}/2}e^{it\partial_{x}^{2}/2}.$

\subsubsection{Norms and $H^{s}$ spaces}

Define the weighted Sobolev norm for $x\in\mathbb{R}$ by 

\begin{equation}
\left\Vert f\right\Vert _{H_{x}^{0,1}}^{2}=\left\Vert f\right\Vert _{L_{x}^{2}}^{2}+\left\Vert xf\right\Vert _{L_{x}^{2}}^{2}.
\end{equation}
Define the Sobolev norm $H_{x}^{s}$ for variable $x\in\mathbb{R}^{n}$
by 
\begin{equation}
\left\Vert f\right\Vert _{H_{x}^{s}}=\left\Vert \left(1+\left|\xi\right|^{2}\right)^{\frac{s}{2}}\left(\mathcal{F}_{x}f\right)\left(\xi\right)\right\Vert _{L_{\xi}^{2}}.
\end{equation}
Define the Sobolev norm $H_{y}^{s}$ for variable $y\in\mathbb{T}^{n}$
by
\begin{equation}
\left\Vert f\right\Vert _{H_{y}^{s}}=\left[\sum_{\mathbf{k}\in\mathbb{Z}^{n}}\left(\left(1+\left|\mathbf{k}\right|^{2}\right)^{\frac{s}{2}}\left|\left(\mathcal{F}_{y}f\right)\left(\mathbf{k}\right)\right|\right)^{2}\right]^{\frac{1}{2}}.
\end{equation}
 Define the homogeneous Sobolev norm $\dot{H}_{x}^{s}$ for variable
$x\in\mathbb{R}^{n}$ by 
\begin{equation}
\left\Vert f\right\Vert _{\dot{H}_{x}^{s}}=\left\Vert \left|\xi\right|^{\frac{s}{2}}\left(\mathcal{F}_{x}f\right)\left(\xi\right)\right\Vert _{L_{\xi}^{2}}.
\end{equation}
Define the homogeneous Sobolev norm $\dot{H}_{y}^{s}$ for variable
$y\in\mathbb{T}^{n}$ by 
\begin{equation}
\left\Vert f\right\Vert _{\dot{H}_{y}^{s}}=\left[\sum_{\mathbf{k}\in\mathbb{Z}^{n}}\left(\left|\mathbf{k}\right|^{\frac{s}{2}}\left|\left(\mathcal{F}_{y}f\right)\left(\mathbf{k}\right)\right|\right)^{2}\right]^{\frac{1}{2}}.
\end{equation}

We now define several norms which will be used in our problem, namely
the cubic NLS on $\mathbb{R}\times\mathbb{T}^{d}$. For local well-posedness,
we use the norm $X(I)$. For energy estimates we use the norm $X^{+}$.
For uniform bounds and the scattering itself, we use the norm $Y$.
These norms are defined as follows: 

When proving the local well-posedness result, we use the following
norm $X\left(I\right)$:
\begin{equation}
\left\Vert f\right\Vert _{X\left(I\right)}:=\left\Vert f\right\Vert _{L_{t}^{\infty}\left(I;L_{x}^{2}H_{y}^{s}\right)}+\left\Vert f\right\Vert _{L_{t}^{4}\left(I,L_{x,y}^{\infty}\right)},\label{normXI-1}
\end{equation}
where $I$ is some finite interval with length less than or equal
to 1.

The norm we will use to measure the size of the solutions will be
denoted as $X^{+}$. We expect this quantity to grow as $t^{\delta}$
where $\delta$ is a small positive number depending on the size $\epsilon$
of the initial data $u_{0}$: 
\begin{equation}
\left\Vert u\left(t\right)\right\Vert _{X^{+}}^{2}:=\left\Vert xe^{-it\partial_{x}^{2}/2}u\left(t\right)\right\Vert _{L_{x,y}^{2}}^{2}+\left\Vert D_{y}^{s}u\left(t\right)\right\Vert _{L_{x,y}^{2}}^{2}.\label{X+norm}
\end{equation}

While using the energy method, we also define the norm as:

\begin{equation}
\left\Vert f\right\Vert _{Y}=\left\Vert f\right\Vert _{L_{x}^{\infty}H_{y}^{\alpha}}+\left\Vert f\right\Vert _{L_{x,y}^{2}},\label{Ynorm}
\end{equation}
We will prove that the solution tends to the modified scattering profile
in the space $Y$. 

\subsubsection{Forms }

Here we introduce some sets and trilinear forms associated with the
cubic NLS on the torus.

We will use the following sets corresponding to momentum and resonance
level sets. For a fixed $\mathbf{k}$, we define the set 
\begin{equation}
\mathcal{M}\left(\mathbf{k}\right)=\left\{ \left(\mathbf{k}_{1},\mathbf{k}_{2},\mathbf{k}_{3},\mathbf{k}\right)\in\mathbb{Z}^{4d}:\mathbf{k}_{1}-\mathbf{k}_{2}+\mathbf{k}_{3}-\mathbf{k}=0\right\} ,
\end{equation}
which describes the triples of frequencies which yield output $\mathbf{k}$,
and the resonance level 
\begin{equation}
\Gamma_{\omega}\left(\mathbf{k}\right)=\left\{ \left(\mathbf{k}_{1},\mathbf{k}_{2},\mathbf{k}_{3},\mathbf{k}\right)\in\mathcal{M}:|\mathbf{k}_{1}|^{2}-|\mathbf{k}_{2}|^{2}+|\mathbf{k}_{3}|^{2}-|\mathbf{k}|^{2}=\omega\right\} .
\end{equation}
In particular, for the resonance level $\omega=0$,
\begin{equation}
\Gamma_{0}\left(\mathbf{k}\right)=\left\{ \left(\mathbf{k}_{1},\mathbf{k}_{2},\mathbf{k}_{3},\mathbf{k}\right)\in\mathbb{Z}^{4d}:\quad\mathbf{k}_{1}-\mathbf{k}_{2}+\mathbf{k}_{3}-\mathbf{k}=0,\quad|\mathbf{k}_{1}|^{2}-|\mathbf{k}_{2}|^{2}+|\mathbf{k}_{3}|^{2}-|\mathbf{k}|^{2}=0\right\} .\label{def:gamma_0}
\end{equation}
Restricting the trilinear interactions in the expression $u\rightarrow|u|^{2}u$
to resonant ones we obtain the resonant trilinear form 
\begin{align*}
R\left[f_{1},f_{2},f_{3}\right]\left(t,v,\mathbf{k}\right) & =\sum_{\Gamma_{0}\left(\mathbf{k}\right)}f_{1}\left(t,v,\mathbf{k}_{1}\right)\overline{f_{2}}\left(t,v,\mathbf{k}_{2}\right)f_{3}\left(t,v,\mathbf{k}_{3}\right).
\end{align*}
More generally, we will also consider the non-resonant trilinear forms
\[
\mathcal{E}\left[f_{1},f_{2},f_{3}\right]\left(t,v,y\right):=\sum_{\Gamma_{\omega}\left(\mathbf{k}\right),\omega\neq0}e^{i\omega t/2}f_{1}\left(t,v,\mathbf{k}_{1}\right)\overline{f_{2}}\left(t,v,\mathbf{k}_{2}\right)f_{3}\left(t,v,\mathbf{k}_{3}\right)e^{i\mathbf{k}\cdot y}.
\]
The expression

\[
\mathcal{D}\left[f_{1},f_{2},f_{3}\right]\left(t,v,y\right):=\frac{2}{t}\sum_{\Gamma_{\omega}\left(\mathbf{k}\right),\omega\neq0}\frac{e^{i\omega t/2}}{i\omega}f_{1}\left(t,v,\mathbf{k}_{1}\right)\overline{f_{2}}\left(t,v,\mathbf{k}_{2}\right)f_{3}\left(t,v,\mathbf{k}_{3}\right)e^{i\mathbf{k}\cdot y},
\]
will be used to define an energy correction later on.

\section{Small Data Scattering\label{sec:Small-Data-Scattering}}

\subsection{Local well-posedness}

Before we study the long time behavior, it is necessary to consider
the local well-posedness of equation (\ref{eq main}). Here we introduce
the vector field $L_{x}:=x+it\partial_{x}$,which is the conjugate
of $x$ with respect to the linear flow, $U(t)x=L_{x}U(t).$ The vector
field is also the generator for the Galilean group of symmetries.
The function $L_{x}u$ satisfies the following equation, 

\begin{equation}
\left(i\partial_{t}+\frac{1}{2}\partial_{x}^{2}+\frac{1}{2}\triangle_{y}\right)L_{x}u=2|u|^{2}L_{x}u-u^{2}\overline{L_{x}u},\label{eq:lin}
\end{equation}
which is the linearized equation of (\ref{eq main}). The operator
$L_{x}$ allows us to capture the effect of the initial data localization
$xu_{0}\in L_{x,y}^{2}$ as the time increases.
\begin{prop}
\label{prop:localwell}The equation (\ref{eq main}) is locally well-posed
for initial data $u_{0}\in H_{x}^{0,1}L_{y}^{2}\cap L_{x}^{2}H_{y}^{s}$
for any $s>\frac{d}{2}$. For such data we have a unique local solution
$u\in C\left(I;L_{x}^{2}H_{y}^{s}\cap L_{t}^{4}L_{x,y}^{\infty}\right)$.
If in addtion $xu_{0}\in L_{x,y}^{2}$, then the solution has $L_{x}u\in C\left(I;L_{x,y}^{2}\right).$ 
\end{prop}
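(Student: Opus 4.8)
The plan is to prove Proposition \ref{prop:localwell} via a standard contraction mapping argument in the space $X(I)$ defined in (\ref{normXI}), carried out on a short time interval $I$ of length at most $1$. First I would set up the Duhamel formulation
\[
u(t)=U(t)u_{0}-i\int_{0}^{t}U(t-s)\bigl(|u|^{2}u\bigr)(s)\,ds,
\]
and estimate the right-hand side in the two components of the $X(I)$ norm. For the $L_{t}^{\infty}(I;L_{x}^{2}H_{y}^{s})$ piece I would use the unitarity of $U(t)$ on $L_{x}^{2}H_{y}^{s}$ together with the Minkowski inequality in time, reducing matters to bounding $\||u|^{2}u\|_{L_{t}^{1}(I;L_{x}^{2}H_{y}^{s})}$. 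Since $s>d/2$, the Sobolev space $H_{y}^{s}$ is an algebra, so $\||u|^{2}u\|_{L_{x}^{2}H_{y}^{s}}\lesssim \|u\|_{L_{x,y}^{\infty}}^{2}\|u\|_{L_{x}^{2}H_{y}^{s}}$ (distributing derivatives via the algebra/Leibniz property and placing the undifferentiated factors in $L_{x,y}^{\infty}$); integrating in time and using H\"older with the $L_{t}^{4}L_{x,y}^{\infty}$ norm twice gives a bound by $|I|^{1/2}\|u\|_{X(I)}^{3}$.

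For the $L_{t}^{4}(I;L_{x,y}^{\infty})$ piece I would invoke a dispersive/Strichartz-type estimate for $U(t)$ on $\mathbb{R}\times\mathbb{T}^{d}$: one gains $L_{t}^{4}L_{x}^{\infty}$ control of the Schr\"odinger flow on $\mathbb{R}$ in the $x$-variable (the endpoint-type $L_t^4 L_x^\infty$ one-dimensional estimate, available after first using Sobolev embedding $H_y^s\hookrightarrow L_y^\infty$ in the torus directions since $s>d/2$). Concretely, $\|U(t)f\|_{L_{t}^{4}(I;L_{x,y}^{\infty})}\lesssim \|U(t)f\|_{L_t^4(I;L_x^\infty H_y^s)}\lesssim \|f\|_{L_x^2 H_y^s}$ plus, for the Duhamel term, the corresponding inhomogeneous estimate bounding it by $\||u|^2 u\|_{L_t^1(I;L_x^2 H_y^s)}$ again (via Christ--Kiselev or a direct argument, using that $1/1 - 1/4$ pairs acceptably). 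Combining, the solution map is a contraction on a ball in $X(I)$ once $|I|$ is chosen small enough depending on $\|u_0\|_{L_x^2 H_y^s}$, yielding existence and uniqueness of $u\in C(I;L_x^2 H_y^s)\cap L_t^4 L_{x,y}^\infty$; continuity in time follows from the Duhamel formula and dominated convergence.

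For the final assertion, if in addition $xu_{0}\in L_{x,y}^{2}$ then I would run the contraction for $L_{x}u$ using equation (\ref{eq:lin}): since $U(t)L_x = L_x U(t)$ wait --- rather $L_x U(t) = U(t)x$, the vector field commutes with the linear flow up to the multiplier $x$, so $L_x u$ satisfies a Duhamel formula with forcing $2|u|^2 L_x u - u^2\overline{L_x u}$. This forcing is linear in $L_x u$ with coefficients controlled by $\|u\|_{L_{x,y}^\infty}^2$, so the same algebra-in-$y$ estimate (now $L_x u$ need only be measured in $L_x^2 L_y^2$, but we can still use $H_y^s$ for $u$) closes a linear a priori bound for $\|L_x u\|_{L_t^\infty(I;L_{x,y}^2)}$ by Gr\"onwall, and a fixed-point or limiting argument upgrades this to $L_x u\in C(I;L_{x,y}^2)$. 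The main obstacle I anticipate is establishing the $L_t^4 L_{x,y}^\infty$ Strichartz bound with the correct regularity in $y$: on the torus factor there is no dispersion, so one cannot gain integrability there and must absorb the torus directions purely through Sobolev embedding, which is exactly why the hypothesis $s>d/2$ appears; handling the inhomogeneous term at this non-admissible-looking exponent pair (the $L_t^4 L_x^\infty$ one-dimensional estimate is an endpoint in a certain sense) requires some care, e.g. interpolation with $L_t^\infty L_x^2$ or a Christ--Kiselev maximal-function argument.
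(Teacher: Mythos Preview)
Your proposal is correct and follows essentially the same route as the paper: a contraction in $X(I)$ via Duhamel, with the $L_t^4 L_{x,y}^\infty$ control coming from the one-dimensional Strichartz estimate in $x$ combined with the Sobolev embedding $H_y^s\hookrightarrow L_y^\infty$ on the torus (this is exactly the content of Lemma~\ref{strichartz} and its Corollary, which also handle the inhomogeneous term via Christ--Kiselev), and the $L_x u$ part by running the linear estimate on (\ref{eq:lin}). The only cosmetic difference is that the paper's Corollary yields the factor $|I|^{1/2}+|I|^{3/4}$ in (\ref{eq:wellposed}) rather than just $|I|^{1/2}$, and the paper phrases the $L_x u$ step as a contraction in $L_t^\infty(I;L_{x,y}^2)$ rather than Gr\"onwall, but these are equivalent for the purpose of local well-posedness.
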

In the case $d\leq3$, global existence can be established in a much
more general setting ($u_{0}\in H_{x,y}^{1}$), see \cite{IP}. 

\begin{proof}

Using the normed space $X$ defined in (\ref{normXI}) and the Strichartz
estimate from Lemma \ref{strichartz}, for arbitrary functions $f,g\in L_{x}^{2}H_{y}^{s}$,
there is the inequality 
\begin{equation}
\left\Vert \int_{0}^{t}U(t-s)\left(\left|f\right|^{2}f-\left|g\right|^{2}g\right)\left(s\right)ds\right\Vert _{X\left(I\right)}\lesssim\left(\left|I\right|^{\frac{1}{2}}+\left|I\right|^{\frac{3}{4}}\right)\left(\left\Vert f\right\Vert _{X\left(I\right)}^{2}+\left\Vert g\right\Vert _{X\left(I\right)}^{2}\right)\left\Vert f-g\right\Vert _{X\left(I\right)}.\label{eq:wellposed}
\end{equation}
The estimate (\ref{eq:wellposed}) allows us to obtain the unique
local solution through the contraction principle, if we make the interval
$I$ small enough. Therefore equation (\ref{eq:lin}) is locally well-posed
in the space $X\left(I\right).$ Due to the lack of uniform $L_{x}^{2}H_{y}^{s}$
estimates, for $d>1$ we can not extend the iteration directly to
global well-posedness. Later on we will prove that for solutions with
suitable small initial data, we will have the bound $\left\Vert u(t)\right\Vert _{L_{x}^{2}H_{y}^{s}}\lesssim t^{\delta}$,
which implies the global well-posedness. To prove the local well-posedness
for the linearized equation (\ref{eq:lin}), for any $f,g\in L_{x,y}^{2}$,
we have the following Strichartz inequality: 
\begin{equation}
\left\Vert \int_{0}^{t}U(t-s)\left(\left|u\right|^{2}f-\left|u\right|^{2}g\right)\left(s\right)ds\right\Vert _{L_{t}^{\infty}\left(I;L_{x,y}^{2}\right)}\lesssim\left|I\right|^{\frac{1}{2}}\left\Vert u\right\Vert _{L_{t}^{4}\left(I,L_{x,y}^{\infty}\right)}^{2}\left\Vert f-g\right\Vert _{L_{t}^{\infty}\left(I;L_{x,y}^{2}\right)}.
\end{equation}
When the interval $|I|$ is small enough, we obtain the desired properties.
A similar bound can be applied to the nonlinear term $u^{2}\overline{L_{x}u}$.
Hence the local well-posedness for the equation (\ref{eq:lin}) in
$L_{t}^{\infty}\left(I;L_{x,y}^{2}\right)$ can be readily obtained
once we have the well-posedness for the equation of $u$ in $X\left(I\right).$

\end{proof}

\subsection{The asymptotic equation}

Here we use the wave packet testing method following the work of Ifrim
and Tataru on the $1d$ cubic NLS \cite{IT}. A wave packet in the
context here is an approximate solution to the linear system with
$\mathcal{O}\left(1/t\right)$ errors. For each trajectory $\varUpsilon_{v}:=\left\{ x=vt\right\} $
traveling with velocity $v$ we establish decay along this ray by
testing with a wave packet moving along the ray. Here we use a slightly
different notation; one can verify that the function $\gamma$ here
is the same as in the original paper \cite{IT}. Define

\begin{equation}
w(t,v,y):=t^{\frac{1}{2}}e^{-i\frac{tv^{2}}{2}}u(t,tv,y),
\end{equation}
\begin{equation}
\gamma(t,v,y):=P_{\leq\sqrt{t}}w(t,v,y).
\end{equation}

Our first result asserts that $u$ is well approximated by a function
ass

ociated with $\gamma$ for as long as we have good control of the
energy bounds $\left\Vert L_{x}u\right\Vert _{L_{x,y}^{2}}$ and $\left\Vert D_{y}^{s}u\right\Vert _{L_{x,y}^{2}}$. 

\begin{lemma} The functions $\gamma$ and $w$ satisfy the following
bounds for any $\alpha\geq0$: 
\begin{equation}
\left\Vert \gamma\right\Vert _{L_{v}^{\infty}L_{y}^{2}\left(L_{v}^{\infty}H_{y}^{s}\right)}\lesssim\left\Vert w\right\Vert _{L_{v}^{\infty}L_{y}^{2}\left(L_{v}^{\infty}H_{y}^{s}\right)}=t^{\frac{1}{2}}\left\Vert u\right\Vert _{L_{x}^{\infty}L_{y}^{2}\left(L_{x}^{\infty}H_{y}^{s}\right)},
\end{equation}
\begin{equation}
\left\Vert D_{y}^{s}\gamma\right\Vert _{L_{v,y}^{2}}\leq\left\Vert D_{y}^{s}w\right\Vert _{L_{v,y}^{2}}=\left\Vert D_{y}^{s}u\right\Vert _{L_{x,y}^{2}},\quad\left\Vert \partial_{v}\gamma\right\Vert _{L_{v,y}^{2}}\leq\left\Vert \partial_{v}w\right\Vert _{L_{v,y}^{2}}=\left\Vert L_{x}u\right\Vert _{L_{x,y}^{2}},\label{eq:gammabound}
\end{equation}
\begin{equation}
\left\Vert \gamma\right\Vert _{L_{v}^{\infty}H_{y}^{\alpha}}\lesssim\left\Vert w\right\Vert _{L_{v}^{\infty}H_{y}^{\alpha}}\lesssim\left\Vert u\right\Vert _{L_{x,y}^{2}}^{\frac{1}{6}}\left\Vert L_{x}u\right\Vert _{L_{x,y}^{2}}^{\frac{1}{2}}\left\Vert D_{y}^{3\alpha}u\right\Vert _{L_{x,y}^{2}}^{\frac{1}{3}}.\label{eq:alpha bound}
\end{equation}
We also have the physical space bounds

\begin{align}
\left\Vert u\left(t,x,y\right)-\frac{1}{\sqrt{t}}e^{-i\frac{x^{2}}{2t}}\gamma\left(t,\frac{x}{t},y\right)\right\Vert _{L_{x}^{\infty}H_{y}^{\alpha}} & \lesssim t^{-\frac{7}{12}}\left\Vert L_{x}u\right\Vert _{L_{x,y}^{2}}^{\frac{2}{3}}\left\Vert D_{y}^{3\alpha}u\right\Vert _{L_{x,y}^{2}}^{\frac{1}{3}},\label{eq:diffalpha}
\end{align}
\begin{align}
\left\Vert u\left(t,x,y\right)-\frac{1}{\sqrt{t}}e^{-i\frac{x^{2}}{2t}}\gamma\left(t,\frac{x}{t},y\right)\right\Vert _{L_{x,y}^{2}} & \lesssim t^{-\frac{1}{2}}\left\Vert L_{x}u\right\Vert _{L_{x,y}^{2}},\label{eq:diffL2}
\end{align}
and the Fourier space bounds 
\begin{equation}
\left\Vert \hat{u}\left(t,\xi,k\right)-e^{-it\xi^{2}/2}\gamma\left(t,\xi,\mathbf{k}\right)\right\Vert _{L_{\xi}^{2}l_{\mathbf{k}}^{2}}\lesssim t^{-\frac{1}{2}}\left\Vert L_{x}u\right\Vert _{L_{x,y}^{2}}.
\end{equation}
\end{lemma}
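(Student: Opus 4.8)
The plan is to reduce every assertion to the \emph{exact} rescaling dictionary between $u$ and $w$ together with elementary Littlewood--Paley and interpolation estimates on $w$ and on its high-frequency tail $(1-P_{\leq\sqrt t})w$; write $P_{>\sqrt t}:=1-P_{\leq\sqrt t}$, a Littlewood--Paley projection onto $v$-frequencies $\gtrsim\sqrt t$. From $w(t,v,y)=t^{1/2}e^{-itv^2/2}u(t,tv,y)$ one reads off $u(t,x,y)=t^{-1/2}e^{ix^2/2t}w(t,x/t,y)$, and the substitution $x=tv$, $dx=t\,dv$, shows that $u\mapsto w$ is an isometry $L^2_{x,y}\to L^2_{v,y}$, that $\|w\|_{L^\infty_vL^2_y}=t^{1/2}\|u\|_{L^\infty_xL^2_y}$, and the same with $H^s_y$ in place of $L^2_y$ since $D_y$ commutes with the rescaling and the modulation. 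Differentiating, $\partial_v w(t,v,y)=-it^{1/2}e^{-itv^2/2}(L_xu)(t,tv,y)$, so $\|\partial_v w\|_{L^2_{v,y}}=\|L_xu\|_{L^2_{x,y}}$ and $\|D_y^s w\|_{L^2_{v,y}}=\|D_y^s u\|_{L^2_{x,y}}$. Finally $\gamma=P_{\leq\sqrt t}w$ contracts every $L^2_v$-based norm, is bounded on $L^\infty_v$ by a universal constant, and commutes with $\partial_v$ and $D_y$. This yields the first two displayed lines of the lemma, including (\ref{eq:gammabound}), and reduces everything else to estimates on $w$ and $P_{>\sqrt t}w$.

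For the $L^\infty_vH^\alpha_y$ bounds, the key observation is that for any $F(v,y)$, writing $\|F(v)\|_{H^\alpha_y}^2=\int_{-\infty}^v\partial_{v'}\|F(v')\|_{H^\alpha_y}^2\,dv'$ with $\partial_{v'}\|F(v')\|_{H^\alpha_y}^2=2\,\mathrm{Re}\langle\partial_{v'}F,\langle D_y\rangle^{2\alpha}F\rangle_{L^2_y}$, Cauchy--Schwarz in $v'$ gives $\|F\|_{L^\infty_vH^\alpha_y}^2\lesssim\|\partial_v F\|_{L^2_{v,y}}\,\|\langle D_y\rangle^{2\alpha}F\|_{L^2_{v,y}}$. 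Combining this with the Sobolev interpolation $\|\langle D_y\rangle^{2\alpha}F\|_{L^2}\le\|F\|_{L^2}^{1/3}\|\langle D_y\rangle^{3\alpha}F\|_{L^2}^{2/3}$, taking $F=w$ (resp.\ $F=\gamma$) and inserting the dictionary gives exactly the $H^\alpha_y$ bound $\|w\|_{L^\infty_vH^\alpha_y}\lesssim\|u\|_{L^2}^{1/6}\|L_xu\|_{L^2}^{1/2}\|D_y^{3\alpha}u\|_{L^2}^{1/3}$. For the error bound (\ref{eq:diffalpha}) I run the same inequality with $F=P_{>\sqrt t}w$, inserting into the low-$y$-regularity factor the high-frequency Bernstein gain $\|P_{>\sqrt t}w\|_{L^2}\lesssim t^{-1/2}\|\partial_v w\|_{L^2}$; this produces a factor $t^{-1/6}$ inside the square, hence $t^{-1/12}$ after taking the square root, and with the $t^{-1/2}$ prefactor coming from the next step one lands on $t^{-7/12}$. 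The point requiring care here is pure bookkeeping: one must route the $t^{-1/2}$ Bernstein gain through the interpolation so as to hit precisely the exponents $\tfrac16,\tfrac12,\tfrac13$ and $-\tfrac7{12}$.

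The physical-space differences are then essentially free. Because $u(t,x,y)=t^{-1/2}e^{ix^2/2t}w(t,x/t,y)$ identically (I take the sign of the quadratic phase consistently with the definition of $w$), we have the exact identity $u(t,x,y)-t^{-1/2}e^{ix^2/2t}\gamma(t,x/t,y)=t^{-1/2}e^{ix^2/2t}(P_{>\sqrt t}w)(t,x/t,y)$. Changing variables $x=tv$ turns its $L^2_{x,y}$ norm into $\|P_{>\sqrt t}w\|_{L^2_{v,y}}\lesssim t^{-1/2}\|\partial_v w\|_{L^2_{v,y}}=t^{-1/2}\|L_xu\|_{L^2_{x,y}}$, which is (\ref{eq:diffL2}); and it turns its $L^\infty_xH^\alpha_y$ norm into $t^{-1/2}\|P_{>\sqrt t}w\|_{L^\infty_vH^\alpha_y}$, controlled by the previous paragraph. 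No oscillatory-integral input is needed: in physical space $u$ literally is the rescaled, modulated profile and the error is just the high-frequency tail of $w$.

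Only the Fourier-space bound carries genuinely analytic content. Completing the square in the $x$-Fourier integral of $u$ gives $e^{it\xi^2/2}\hat u(t,\xi,\mathbf k)=\frac{t^{1/2}}{\sqrt{2\pi}}\int e^{it(v-\xi)^2/2}w(t,v,\mathbf k)\,dv$, and by the Fresnel identity this convolution operator is, on the $v$-Fourier side (dual variable $\eta$), multiplication by $e^{i\pi/4}e^{-i\eta^2/2t}$. Since $|e^{-i\eta^2/2t}-1|\lesssim|\eta|t^{-1/2}$, Plancherel gives $\|e^{it\xi^2/2}\hat u(t,\xi,\mathbf k)-e^{i\pi/4}w(t,\xi,\mathbf k)\|_{L^2_\xi l^2_{\mathbf k}}\lesssim t^{-1/2}\|\partial_v w\|_{L^2_{v,y}}=t^{-1/2}\|L_xu\|_{L^2_{x,y}}$, and replacing $w$ by $\gamma=P_{\leq\sqrt t}w$ costs a further $\|P_{>\sqrt t}w\|_{L^2_{v,y}}\lesssim t^{-1/2}\|L_xu\|_{L^2_{x,y}}$ by Bernstein; the universal phase $e^{i\pi/4}$ is the one absorbed into the convention identifying $\gamma$ with the wave-packet amplitude of \cite{IT}. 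This Fresnel step --- and the fact that its error is only $O(|\eta|t^{-1/2})$ precisely because we control $\partial_v w=L_xu$ in $L^2$ but nothing more regular --- is the single genuinely analytic ingredient; everything else is rescaling plus Littlewood--Paley. I expect this step, and the exponent bookkeeping in the second paragraph, to be the only points needing real attention.
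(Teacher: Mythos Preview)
Your proof is correct and follows essentially the same strategy as the paper --- rescaling dictionary, Littlewood--Paley boundedness, and Bernstein/interpolation on the tail $P_{>\sqrt t}w$ --- with only a cosmetic difference in how the $L^\infty_vH^\alpha_y$ bound is reached (you use the one-dimensional Agmon inequality $\|F\|_{L^\infty_vH^\alpha_y}^2\lesssim\|\partial_vF\|_{L^2}\|\langle D_y\rangle^{2\alpha}F\|_{L^2}$ and then interpolate in $y$, whereas the paper interpolates through $\|D_v^{3/4}w\|_{L^2}^{2/3}\|D_y^{3\alpha}w\|_{L^2}^{1/3}$; both routes land on the same exponents). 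Your explicit Fresnel argument for the Fourier-space bound, including the $e^{i\pi/4}$ phase that the paper's statement suppresses, is in fact more detailed than what the paper's own proof provides.
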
 

\begin{proof} By Bernstein's inequality and interpolation, we have
the straightforward bounds: 
\begin{align*}
\left\Vert \gamma\right\Vert _{L_{v}^{\infty}H_{y}^{\alpha}}\lesssim\left\Vert w\right\Vert _{L_{v}^{\infty}H_{y}^{\alpha}} & \lesssim\left\Vert D_{v}^{\frac{3}{4}}w\right\Vert _{L_{v,y}^{2}}^{\frac{2}{3}}\left\Vert D_{y}^{3\alpha}w\right\Vert _{L_{v,y}^{2}}^{\frac{1}{3}}\lesssim\left\Vert u\right\Vert _{L_{x,y}^{2}}^{\frac{1}{6}}\left\Vert L_{x}u\right\Vert _{L_{x,y}^{2}}^{\frac{1}{2}}\left\Vert D_{y}^{3\alpha}u\right\Vert _{L_{x,y}^{2}}^{\frac{1}{3}},
\end{align*}

\begin{align}
\left\Vert w-\gamma\right\Vert _{L_{v}^{\infty}H_{y}^{\alpha}} & \lesssim\left\Vert D_{v}^{\frac{3}{4}}P_{\geq\sqrt{t}}w\right\Vert _{L_{v,y}^{2}}^{\frac{2}{3}}\left\Vert D_{y}^{3\alpha}w\right\Vert _{L_{v,y}^{2}}^{\frac{1}{3}}\lesssim t^{-\frac{1}{12}}\left\Vert L_{x}u\right\Vert _{L_{x,y}^{2}}^{\frac{2}{3}}\left\Vert D_{y}^{3\alpha}u\right\Vert _{L_{x,y}^{2}}^{\frac{1}{3}},
\end{align}
and

\begin{align}
\left\Vert w-\gamma\right\Vert _{L_{v,y}^{2}} & \lesssim t^{-\frac{1}{2}}\left\Vert \partial_{v}w\right\Vert _{L_{v,y}^{2}}\lesssim t^{-\frac{1}{2}}\left\Vert L_{x}u\right\Vert _{L_{x,y}^{2}}.
\end{align}
\end{proof}
From now on, we will assume that $\alpha=\frac{d}{2}^{+}$and $s=3\alpha$.

The next objective is to show that $\gamma$ is an approximate solution
to the asymptotic equation (\ref{eq:eqtorus}). 

\begin{lemma}\label{uasymptote} If $u$ solves (\ref{eq main})
then we have 
\begin{equation}
i\partial_{t}\gamma+\frac{1}{2}\triangle_{y}\gamma=\frac{1}{t}\left|\gamma\right|^{2}\gamma+I,\label{eq:gamma}
\end{equation}
where the remainder $I$ satisfies 
\begin{equation}
\left\Vert I\right\Vert _{L_{v,y}^{2}}\lesssim t^{-\frac{3}{2}}\left(\left\Vert u\right\Vert _{L_{x,y}^{2}}^{\frac{1}{3}}\left\Vert u\right\Vert _{X^{+}}^{\frac{5}{3}}+1\right)\left\Vert u\right\Vert _{X^{+}}\label{eq:L2err}
\end{equation}
\begin{equation}
\left\Vert I\right\Vert _{L_{v}^{\infty}H_{y}^{\alpha}}\lesssim t^{-\frac{13}{12}}\left(\left\Vert u\right\Vert _{L_{x,y}^{2}}^{\frac{1}{3}}\left\Vert u\right\Vert _{X^{+}}^{\frac{5}{3}}+1\right)\left\Vert u\right\Vert _{X^{+}},\label{eq:LinftyYalphaerr}
\end{equation}
where $s=3\alpha$ and $\alpha=\frac{d}{2}^{+}$. \end{lemma}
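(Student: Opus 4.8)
\emph{Step 1: an exact equation for $w$.} I would first compute $i\partial_{t}w+\frac{1}{2}\triangle_{y}w$ straight from $w(t,v,y)=t^{1/2}e^{-itv^{2}/2}u(t,tv,y)$. The chain rule in $t$ produces $i\partial_{t}u+\frac{1}{2}\triangle_{y}u$ (evaluated at $x=tv$) plus the three extra terms $\frac{i}{2t}u,\ \frac{v^{2}}{2}u,\ iv\partial_{x}u$; inserting (\ref{eq main}) rewrites $i\partial_{t}u+\frac{1}{2}\triangle_{y}u$ as $-\frac{1}{2}\partial_{x}^{2}u+|u|^{2}u$. One then uses the algebraic identities $\frac{v^{2}}{2}u+iv\partial_{x}u-\frac{1}{2}\partial_{x}^{2}u=-\frac{1}{2}(\partial_{x}-iv)^{2}u$ and $t^{1/2}e^{-itv^{2}/2}|u|^{2}u=\frac{1}{t}|w|^{2}w$, together with the direct computation $\partial_{v}^{2}w=t^{5/2}e^{-itv^{2}/2}(\partial_{x}-iv)^{2}u|_{x=tv}-itw$ --- so the $(\partial_{x}-iv)^{2}$ contribution equals $-\frac{1}{2t^{2}}\partial_{v}^{2}w-\frac{i}{2t}w$, the last summand cancelling the contribution of $\frac{i}{2t}u$ --- to arrive at
\[
i\partial_{t}w+\tfrac{1}{2}\triangle_{y}w=\tfrac{1}{t}|w|^{2}w-\tfrac{1}{2t^{2}}\partial_{v}^{2}w .
\]
This is the Ifrim--Tataru $1d$ computation, with the torus variables carried inertly along. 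From the previous lemma I will use $\Vert\partial_{v}w\Vert_{L_{v,y}^{2}}=\Vert L_{x}u\Vert_{L_{x,y}^{2}}$ and $\Vert D_{y}^{s}w\Vert_{L_{v,y}^{2}}=\Vert D_{y}^{s}u\Vert_{L_{x,y}^{2}}$ (these two make up $\Vert u\Vert_{X^{+}}$), together with the already-proven bounds on $\Vert\gamma\Vert_{L_{v}^{\infty}H_{y}^{\alpha}}$, $\Vert w-\gamma\Vert_{L_{v,y}^{2}}$, $\Vert w-\gamma\Vert_{L_{v}^{\infty}H_{y}^{\alpha}}$.

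\emph{Step 2: the equation for $\gamma$ and the four error pieces.} Since $P_{\leq\sqrt{t}}$ is time dependent, $\partial_{t}\gamma=P_{\leq\sqrt{t}}\partial_{t}w+(\partial_{t}P_{\leq\sqrt{t}})w$, so $i\partial_{t}\gamma+\frac{1}{2}\triangle_{y}\gamma=\frac{1}{t}P_{\leq\sqrt{t}}(|w|^{2}w)-\frac{1}{2t^{2}}\partial_{v}^{2}\gamma+i(\partial_{t}P_{\leq\sqrt{t}})w$. Writing $w=\gamma+(w-\gamma)$ and $P_{\leq\sqrt{t}}(|w|^{2}w)-|\gamma|^{2}\gamma=P_{\leq\sqrt{t}}(|w|^{2}w-|\gamma|^{2}\gamma)-(1-P_{\leq\sqrt{t}})(|\gamma|^{2}\gamma)$, the remainder $I$ in (\ref{eq:gamma}) is the sum of: (i) $-\frac{1}{2t^{2}}\partial_{v}^{2}\gamma$; (ii) $i(\partial_{t}P_{\leq\sqrt{t}})w=-\frac{i}{2t}\widetilde{P}w$, with $\widetilde{P}$ a Fourier multiplier in $v$ supported at $|\xi_{v}|\sim\sqrt{t}$; (iii) $\frac{1}{t}P_{\leq\sqrt{t}}(|w|^{2}w-|\gamma|^{2}\gamma)$, every summand of which carries a factor $w-\gamma$ or its conjugate; and (iv) $-\frac{1}{t}(1-P_{\leq\sqrt{t}})(|\gamma|^{2}\gamma)$, on which I move one $\partial_{v}$ onto the product to expose a factor $\partial_{v}\gamma$.

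\emph{Step 3: the $L_{v,y}^{2}$ estimate.} For (i) and (ii), the two $v$-derivatives (resp. the frequency weight) are absorbed, one by the profile via $\partial_{v}w\leftrightarrow L_{x}u$ and one by the projection at frequency $\leq\sqrt{t}$, costing a single $\sqrt{t}$; this gives $\lesssim t^{-3/2}\Vert L_{x}u\Vert_{L_{x,y}^{2}}$ apiece. For (iii) and (iv), the factor $w-\gamma$ (resp. $\partial_{v}\gamma$) yields the gain $t^{-1/2}$ from $\Vert w-\gamma\Vert_{L_{v,y}^{2}}\lesssim t^{-1/2}\Vert\partial_{v}w\Vert_{L_{v,y}^{2}}$ (resp. $\Vert\partial_{v}\gamma\Vert_{L_{v,y}^{2}}\leq\Vert L_{x}u\Vert_{L_{x,y}^{2}}$), and the remaining two factors are put into $L_{v}^{\infty}L_{y}^{\infty}$ and controlled by $\Vert w\Vert_{L_{v}^{\infty}H_{y}^{\alpha}},\Vert\gamma\Vert_{L_{v}^{\infty}H_{y}^{\alpha}}\lesssim\Vert u\Vert_{L_{x,y}^{2}}^{1/6}\Vert L_{x}u\Vert^{1/2}\Vert D_{y}^{s}u\Vert^{1/3}$, using $\alpha>d/2$ (so $H_{y}^{\alpha}\hookrightarrow L_{y}^{\infty}$ and $H_{y}^{\alpha}$ is an algebra). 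Adding the four contributions yields (\ref{eq:L2err}).

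\emph{Step 4: the $L_{v}^{\infty}H_{y}^{\alpha}$ estimate --- the crux.} Each of (i)--(iv) is localized in $v$-frequency to $|\xi_{v}|\lesssim\sqrt{t}$ (the product $|\gamma|^{2}\gamma$ to $|\xi_{v}|\leq 6\sqrt{t}$), so a one-dimensional Bernstein inequality in $v$ converts $L_{v}^{2}$ into $L_{v}^{\infty}$ at the cost of $t^{1/4}$, and the $y$-factors are handled via the $H_{y}^{\alpha}$ algebra as in Step 3. The genuine obstacle is that $X^{+}$ controls $\partial_{v}w$ and $D_{y}^{s}w$ separately in $L_{v,y}^{2}$ but never $D_{y}^{s}\partial_{v}w$; I resolve this by interpolating $\Vert D_{y}^{\alpha}f\Vert_{L_{v,y}^{2}}\lesssim\Vert f\Vert_{L_{v,y}^{2}}^{2/3}\Vert D_{y}^{s}f\Vert_{L_{v,y}^{2}}^{1/3}$ (here $s=3\alpha$), absorbing one $\partial_{v}$ by the profile and each further $\partial_{v}$ or high-frequency projection by the Littlewood--Paley multiplier at cost $\sqrt{t}$ --- so that, for instance, $\Vert\partial_{v}\gamma\Vert_{L_{v}^{2}H_{y}^{\alpha}}\lesssim\Vert L_{x}u\Vert^{2/3}(\sqrt{t}\,\Vert D_{y}^{s}u\Vert)^{1/3}$ and $\Vert\partial_{v}^{2}\gamma\Vert_{L_{v}^{2}H_{y}^{\alpha}}\lesssim(\sqrt{t}\,\Vert L_{x}u\Vert)^{2/3}(t\,\Vert D_{y}^{s}u\Vert)^{1/3}$. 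Combining the prefactors $t^{-2}$ (for (i)) or $t^{-1}$ (for (ii)--(iv)) with the $t^{1/4}$ Bernstein loss and these interpolation exponents, the powers of $t$ collapse to $-\frac{13}{12}$ in all four cases, while the $y$-norm factors reassemble into $(\Vert u\Vert_{L_{x,y}^{2}}^{1/3}\Vert u\Vert_{X^{+}}^{5/3}+1)\Vert u\Vert_{X^{+}}$, which is (\ref{eq:LinftyYalphaerr}). I expect this final balancing of $t$-powers --- while only ever invoking the two norms $\Vert L_{x}u\Vert_{L_{x,y}^{2}}$ and $\Vert D_{y}^{s}u\Vert_{L_{x,y}^{2}}$ --- to be the one delicate point; Steps 1--3 are routine.
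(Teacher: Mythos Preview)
Your proposal is correct and follows essentially the same approach as the paper. The paper combines your pieces (i) and (ii) into a single Fourier multiplier $I_{1}$ and, for the $L_{v}^{\infty}H_{y}^{\alpha}$ bound on the cubic pieces, uses the previously recorded estimate $\Vert w-\gamma\Vert_{L_{v}^{\infty}H_{y}^{\alpha}}\lesssim t^{-1/12}\Vert L_{x}u\Vert^{2/3}\Vert D_{y}^{s}u\Vert^{1/3}$ directly (and a fractional Leibniz $D_{v}^{2/3}$ on $|\gamma|^{2}\gamma$) rather than your Bernstein-then-interpolate route; these are cosmetic rearrangements of the same $t$-bookkeeping and yield the same exponents.
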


\begin{proof}Let $\xi$ be the Fourier variable in $v$. A direct
computation yields 
\begin{align*}
\partial_{t}\gamma+\frac{1}{2}\triangle_{y}\gamma & =\mathcal{F}\left[\mathcal{X}^{\prime}\left(\frac{\xi}{\sqrt{t}}\right)\cdot\frac{\xi}{2t^{\frac{3}{2}}}+\frac{\left|\xi\right|^{2}}{2t^{2}}\mathcal{X}\left(\frac{\xi}{\sqrt{t}}\right)\right]\hat{w}+t^{-1}P_{\leq\sqrt{t}}\left|w\right|^{2}w.
\end{align*}
Hence we can write an evolution equation for $\gamma$ of the form
\[
i\partial_{t}\gamma+\frac{1}{2}\triangle_{y}\gamma=\frac{1}{t}\left|\gamma\right|^{2}\gamma+I,
\]
where the error term $I\left(t,v\right)$ can be written as a sum
of three quantities which can be easily bounded: 
\begin{align*}
I & :=\mathcal{F}\left[D\mathcal{X}\left(\frac{\xi}{\sqrt{t}}\right)\cdot\frac{\xi}{2t^{\frac{3}{2}}}+\frac{\left|\xi\right|^{2}}{2t^{2}}\mathcal{X}\left(\frac{\xi}{\sqrt{t}}\right)\right]\hat{w}+t^{-1}P_{\leq\sqrt{t}}\left|w\right|^{2}w-t^{-1}\left|\gamma\right|^{2}\gamma\\
 & :=\mathcal{F}\left[D\mathcal{X}\left(\frac{\xi}{\sqrt{t}}\right)\cdot\frac{\xi}{2t^{\frac{3}{2}}}+\frac{\left|\xi\right|^{2}}{2t^{2}}\mathcal{X}\left(\frac{\xi}{\sqrt{t}}\right)\right]\hat{w}+t^{-1}P_{\leq\sqrt{t}}\left(\left|w\right|^{2}w-\left|\gamma\right|^{2}\gamma\right)+t^{-1}P_{\geq\sqrt{t}}\left|\gamma\right|^{2}\gamma\\
 & :=I_{1}+I_{2}+I_{3}.
\end{align*}

The first term $I_{1}$ can be expressed as a convolution and by Young's
inequality we obtain the bound
\[
\left\Vert I_{1}(t,v,y)\right\Vert _{L_{v,y}^{2}}\lesssim t^{-\frac{3}{2}}\left\Vert P_{\leq\sqrt{t}}\partial_{v}w\right\Vert _{L_{v,y}^{2}}\lesssim t^{-\frac{3}{2}}\left\Vert L_{x}u\right\Vert _{L_{x,y}^{2}}\lesssim t^{-\frac{3}{2}}\left\Vert u\right\Vert _{X^{+}},
\]

\begin{align*}
\left\Vert I_{1}(t,v,y)\right\Vert _{L_{v}^{\infty}H_{y}^{\alpha}} & \lesssim t^{-\frac{5}{4}}\left\Vert P_{\leq\sqrt{t}}\partial_{v}w\right\Vert _{L_{v}^{2}H_{y}^{\alpha}}\lesssim t^{-\frac{13}{12}}\left\Vert P_{\leq\sqrt{t}}D_{v}^{\frac{2}{3}}w\right\Vert _{L_{v}^{2}H_{y}^{\alpha}}\\
 & \lesssim t^{-\frac{13}{12}}\left\Vert P_{\leq\sqrt{t}}\partial_{v}w\right\Vert _{L_{v,y}^{2}}^{\frac{2}{3}}\left\Vert P_{\leq\sqrt{t}}w\right\Vert _{L_{v}^{2}H_{y}^{3\alpha}}^{\frac{1}{3}}\\
 & \lesssim t^{-\frac{13}{12}}\left\Vert u\right\Vert _{X^{+}}.
\end{align*}
For the second and third terms $I_{2},I_{3}$, we apply Bernstein's
inequality in order to get: 
\begin{align*}
\left\Vert I_{2}\right\Vert _{L_{v,y}^{2}} & \lesssim t^{-1}\left(\left\Vert w\right\Vert _{L_{v}^{\infty}H_{y}^{\alpha}}^{2}+\left\Vert \gamma\right\Vert _{L_{v}^{\infty}H_{y}^{\alpha}}^{2}\right)\left\Vert w-\gamma\right\Vert _{L_{v,y}^{2}}\lesssim t^{-\frac{3}{2}}\left(\left\Vert w\right\Vert _{L_{v}^{\infty}H_{y}^{\alpha}}^{2}+\left\Vert \gamma\right\Vert _{L_{v}^{\infty}H_{y}^{\alpha}}^{2}\right)\left\Vert L_{x}u\right\Vert _{L_{x,y}^{2}}\\
 & \lesssim t^{-\frac{3}{2}}\left\Vert u\right\Vert _{L_{x,y}^{2}}^{\frac{1}{3}}\left\Vert L_{x}u\right\Vert _{L_{x,y}^{2}}^{2}\left\Vert D_{y}^{s}u\right\Vert _{L_{x,y}^{2}}^{\frac{2}{3}}\lesssim t^{-\frac{3}{2}}\left\Vert u\right\Vert _{L_{x,y}^{2}}^{\frac{1}{3}}\left\Vert u\right\Vert _{X^{+}}^{\frac{8}{3}}.
\end{align*}
\begin{align*}
\left\Vert I_{2}\right\Vert _{L_{v}^{\infty}H_{y}^{\alpha}} & \lesssim t^{-1}\left\Vert \left|w\right|^{2}w-\left|\gamma\right|^{2}\gamma\right\Vert _{L_{v}^{\infty}H_{y}^{\alpha}}\lesssim t^{-1}\left(\left\Vert w\right\Vert _{L_{v}^{\infty}H_{y}^{\alpha}}^{2}+\left\Vert \gamma\right\Vert _{L_{v}^{\infty}H_{y}^{\alpha}}^{2}\right)\left\Vert w-\gamma\right\Vert _{L_{v}^{\infty}H_{y}^{\alpha}}\\
 & \lesssim t^{-\frac{13}{12}}\left(\left\Vert w\right\Vert _{L_{v}^{\infty}H_{y}^{\alpha}}^{2}+\left\Vert \gamma\right\Vert _{L_{v}^{\infty}H_{y}^{\alpha}}^{2}\right)\left\Vert L_{x}u\right\Vert _{L_{x,y}^{2}}^{\frac{2}{3}}\left\Vert D_{y}^{s}u\right\Vert _{L_{x,y}^{2}}^{\frac{1}{3}}\\
 & \lesssim t^{-\frac{13}{12}}\left\Vert u\right\Vert _{L_{x,y}^{2}}^{\frac{1}{3}}\left\Vert u\right\Vert _{X^{+}}^{\frac{8}{3}}.
\end{align*}
Similarly, we have: 
\begin{alignat*}{1}
\left\Vert I_{3}\right\Vert _{L_{v,y}^{2}} & \lesssim t^{-\frac{3}{2}}\left\Vert \partial_{v}\left|\gamma\right|^{2}\gamma\right\Vert _{L_{v,y}^{2}}\lesssim t^{-\frac{3}{2}}\left\Vert \gamma\right\Vert _{L_{v}^{\infty}H_{y}^{\alpha}}^{2}\left\Vert \partial_{v}\gamma\right\Vert _{L_{v,y}^{2}}\lesssim t^{-\frac{3}{2}}\left\Vert \gamma\right\Vert _{L_{v}^{\infty}H_{y}^{\alpha}}^{2}\left\Vert L_{x}u\right\Vert _{L_{x,y}^{2}}\\
 & \lesssim t^{-\frac{3}{2}}\left\Vert u\right\Vert _{L_{x,y}^{2}}^{\frac{1}{3}}\left\Vert u\right\Vert _{X^{+}}^{\frac{8}{3}},
\end{alignat*}
\begin{align*}
\left\Vert I_{3}\right\Vert _{L_{v}^{\infty}H_{y}^{\alpha}} & \lesssim t^{-\frac{5}{4}}\left\Vert \partial_{v}\left|\gamma\right|^{2}\gamma\right\Vert _{L_{v}^{2}H_{y}^{\alpha}}\lesssim t^{-\frac{13}{12}}\left\Vert D_{v}^{\frac{2}{3}}\left|\gamma\right|^{2}\gamma\right\Vert _{L_{v}^{2}H_{y}^{\alpha}}\\
 & \lesssim t^{-\frac{13}{12}}\left\Vert \gamma\right\Vert _{L_{v}^{\infty}H_{y}^{\alpha}}^{2}\left\Vert D_{v}^{\frac{2}{3}}\gamma\right\Vert _{L_{v}^{2}H_{y}^{\alpha}}\lesssim t^{-\frac{13}{12}}\left\Vert \gamma\right\Vert _{L_{v}^{\infty}H_{y}^{\alpha}}^{2}\left\Vert L_{x}u\right\Vert _{L_{x,y}^{2}}^{\frac{2}{3}}\left\Vert D_{y}^{s}u\right\Vert _{L_{x,y}^{2}}^{\frac{1}{3}}\\
 & \lesssim t^{-\frac{13}{12}}\left\Vert u\right\Vert _{L_{x,y}^{2}}^{\frac{1}{3}}\left\Vert u\right\Vert _{X^{+}}^{\frac{8}{3}}.
\end{align*}

Since 
\[
\left\Vert I\right\Vert _{Y}\leq\left\Vert I_{1}\right\Vert _{Y}+\left\Vert I_{2}\right\Vert _{Y}+\left\Vert I_{3}\right\Vert _{Y},
\]
we obtain (\ref{eq:L2err}) and (\ref{eq:LinftyYalphaerr}).

\end{proof}

\subsection{The energy bound for $\gamma$.}

From the equation (\ref{eq:gamma}) there is the natural guess that
\[
\left\Vert \gamma(t)\right\Vert _{L_{v}^{\infty}H_{y}^{1}}\lesssim\epsilon
\]
for any $t\geq1.$ Indeed, multiplying (\ref{eq:gamma}) with $\overline{\gamma}_{t}$,
integrating over $y$, and taking the real part we have
\[
\frac{1}{2}\text{Re}\int\left(\triangle_{y}\gamma\right)\overline{\gamma_{t}}\,dy=\frac{1}{t}\text{Re}\int\left|\gamma\right|^{2}\gamma\overline{\gamma_{t}}\,dy+\text{Re}\int I\overline{\gamma_{t}}\,dy,
\]
which directly implies that 
\[
\partial_{t}\left\Vert \nabla_{y}\gamma\right\Vert _{L_{y}^{2}}^{2}+\frac{1}{t}\partial_{t}\left\Vert \gamma\right\Vert _{L_{y}^{4}}^{4}=-4\text{Re}\left\langle \gamma_{t},I\right\rangle _{H_{y}^{-1},H_{y}^{1}}.
\]
Use (\ref{eq:gamma}) again and the fact that in $\mathbb{R}^{d}$
for $d\leq4$ we have $L^{\frac{4}{3}}\subset H^{-1}$, $\left\Vert f\right\Vert _{H^{-1}}\lesssim\left\Vert f\right\Vert _{L^{\frac{4}{3}}}$
and $H^{s_{1}}\subset H^{s_{2}}$ if $s_{1}>s_{2}$,
\begin{align*}
\left\Vert \gamma_{t}\right\Vert _{H_{y}^{-1}} & \lesssim\left\Vert \triangle_{y}\gamma\right\Vert _{H_{y}^{-1}}+t^{-1}\left\Vert \left|\gamma\right|^{2}\gamma\right\Vert _{H_{y}^{-1}}+\left\Vert I\right\Vert _{H_{y}^{-1}}\\
 & \lesssim\left\Vert \gamma\right\Vert _{H_{y}^{1}}+t^{-1}\left\Vert \gamma\right\Vert _{L_{y}^{4}}^{3}+\left\Vert I\right\Vert _{H_{y}^{\alpha}}\lesssim\left\Vert \gamma\right\Vert _{H_{y}^{\alpha}}+t^{-1}\left\Vert \gamma\right\Vert _{H_{y}^{\alpha}}^{3}+\left\Vert I\right\Vert _{H_{y}^{\alpha}}.
\end{align*}
After integrating with respect to $t$, then taking the supremum over
$v$, we get 
\begin{equation}
\begin{aligned} & \left\Vert \nabla_{y}\gamma(T)\right\Vert _{L_{v}^{\infty}L_{y}^{2}}^{2}+\frac{1}{t}\left\Vert \gamma(T)\right\Vert _{L_{v}^{\infty}L_{y}^{4}}^{4}\\
\lesssim & \left\Vert \nabla_{y}\gamma(1)\right\Vert _{L_{v}^{\infty}L_{y}^{2}}^{2}+\left\Vert \gamma(1)\right\Vert _{L_{v}^{\infty}L_{y}^{4}}^{4}+\int_{1}^{T}t^{-2}\left\Vert u\right\Vert _{L_{x,y}^{2}}^{\frac{2}{3}}\left\Vert u(t)\right\Vert _{X^{+}}^{\frac{10}{3}}dt\\
 & +\int_{1}^{T}t^{-\frac{13}{12}}\left\Vert u\right\Vert _{L_{x,y}^{2}}^{\frac{1}{2}}\left\Vert u(t)\right\Vert _{X^{+}}^{\frac{7}{2}}+t^{-\frac{25}{12}}\left\Vert u\right\Vert _{L_{x,y}^{2}}^{\frac{5}{6}}\left\Vert u(t)\right\Vert _{X^{+}}^{\frac{31}{6}}+t^{-\frac{13}{6}}\left\Vert u\right\Vert _{L_{x,y}^{2}}^{\frac{2}{3}}\left\Vert u(t)\right\Vert _{X^{+}}^{\frac{16}{3}}dt.
\end{aligned}
\label{eq:gammaestimate}
\end{equation}

\section{The Energy Estimate\label{sec:The-Energy-Estimate}}

In this section, we aim to prove the energy bounds for $\left\Vert L_{x}u\right\Vert _{L_{x,y}^{2}}$
and $\left\Vert D_{y}^{s}u\right\Vert _{L_{x,y}^{2}}$. Here we will
work with the general linearized equation of (\ref{eq main}) which
is given by 
\begin{equation}
\left(i\partial_{t}+\frac{1}{2}\partial_{x}^{2}+\frac{1}{2}\triangle_{y}\right)\nu=2\left|u\right|^{2}\nu-u^{2}\overline{\nu}.\label{eq:linearized}
\end{equation}
Notice that the equation for $L_{x}u$, (\ref{eq:lin}) is the same
as (\ref{eq:linearized}). The function $D_{y}^{s}u$ does not directly
satisfy the linearized equation, but its equation can be written in
the form 
\begin{equation}
\left(i\partial_{t}+\frac{1}{2}\partial_{x}^{2}+\frac{1}{2}\triangle_{y}\right)D_{y}^{s}u=2|u|^{2}D_{y}^{s}u+u^{2}\overline{D_{y}^{s}u}+\text{cor}(t),\label{eq: lin dyu}
\end{equation}
where 
\[
\text{cor}(t):=D_{y}^{s}\left(\left|u\right|^{2}u\right)-\left(2|u|^{2}D_{y}^{s}u+u^{2}\overline{D_{y}^{s}u}\right).
\]
The correction term $\text{cor}(t)$ is nontrivial for $s\neq1$ but
has a commutator structure and satisfies favorable bounds. We will
leave the proof of bound for $\text{cor}(t)$ for the last part of
the section. 

To obtain $L_{x,y}^{2}$ estimates for the linearized equation (\ref{eq:linearized}),
we make the following assumptions on $u$:

\begin{hyp}\label{hyp} The solution $u$ for (\ref{eq main}) exists
in a time interval $[0,T]$, and satisfies the bounds
\begin{equation}
\left\Vert u\left(t\right)\right\Vert _{L_{v}^{\infty}H_{y}^{1}}\leq D\epsilon\left|t\right|^{-\frac{1}{2}},\label{eq:bootsass1}
\end{equation}
\begin{equation}
\left\Vert u\left(t\right)\right\Vert _{X^{+}}\leq D\epsilon\left(1+\left|t\right|\right)^{\delta},\label{eq:bootsass2}
\end{equation}
for $t\in[0,T]$. Here $D$ is a sufficiently large positive number
which does not depend on $u$.

\end{hyp}

Then we have the following bound for the linearized equation (\ref{eq:linearized}): 
\begin{prop}
\label{prop. nugrowth}Suppose $u$ is a solution to (\ref{eq main})
satisfying Hypothesis \ref{hyp} on $[0,T]$, then we will have that

(a) The equation (\ref{eq:linearized}) in $\nu$ is $L_{x,y}^{2}$
well-posed.

(b) There is the bound
\begin{equation}
\left\Vert \nu(t)\right\Vert _{L_{x,y}^{2}}\lesssim\left\Vert \nu(0)\right\Vert _{L_{x,y}^{2}}\left(1+t\right)^{2D^{3}\epsilon^{2}}\label{eq:nugrowth}
\end{equation}
for $t\in[0,T]$.
\end{prop}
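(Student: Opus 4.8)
The plan is a standard energy estimate for the linearized flow, exploiting that the nonlinear terms in (\ref{eq:linearized}) are at most quadratic in $u$ and linear in $\nu$, and that under Hypothesis \ref{hyp} the coefficient $|u(t)|^2$ decays like $t^{-1}$ in $L^\infty_{x,y}$ (with an $H^1_y$ loss absorbed by the embedding $H^1_y \hookrightarrow L^\infty_y$ when $d\le 1$, or more generally by pairing the $L^\infty_x H^1_y$ bound against the $L^2$ energy and using the algebra/product estimates already implicit in the paper). For part (a), the $L^2_{x,y}$ local well-posedness of (\ref{eq:linearized}) is essentially the content of the linearized part of the proof of Proposition \ref{prop:localwell}: one runs the same Strichartz contraction, now with the external coefficient $u \in L^4_t L^\infty_{x,y}$ (which holds on $[0,T]$, after subdividing into unit-length subintervals, by Hypothesis \ref{hyp} and the local theory), to get existence and uniqueness of $\nu \in C([0,T];L^2_{x,y})$. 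So the substance is part (b).

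For part (b), I would work directly with the $L^2_{x,y}$ norm of $\nu$. Multiplying (\ref{eq:linearized}) by $\overline{\nu}$, integrating over $\mathbb{R}\times\mathbb{T}^d$, and taking the imaginary part kills the $\frac12\partial_x^2 + \frac12\triangle_y$ term and the $2|u|^2\nu$ term (the latter is real when paired with $\overline\nu$), leaving
\begin{equation}
\frac{1}{2}\frac{d}{dt}\left\Vert \nu(t)\right\Vert_{L^2_{x,y}}^2 = \operatorname{Im}\int u^2 \overline{\nu}\,\overline{\nu}\,dx\,dy \lesssim \left\Vert u(t)\right\Vert_{L^\infty_{x,y}}^2 \left\Vert \nu(t)\right\Vert_{L^2_{x,y}}^2.
\end{equation}
(The $2|u|^2\nu$ term, being Hermitian, contributes nothing; only the $u^2\overline\nu$ term survives, and it is controlled by the same quantity.) By the decay hypothesis (\ref{eq:bootsass1}) together with $H^1_y \hookrightarrow L^\infty_y$, one has $\left\Vert u(t)\right\Vert_{L^\infty_{x,y}}^2 \lesssim \left\Vert u(t)\right\Vert_{L^\infty_x H^1_y}^2 \le D^2\epsilon^2 t^{-1}$; being slightly more careful with the $d\le 4$ case one pays a harmless constant, and in any event the bound $\left\Vert u(t)\right\Vert_{L^\infty_{x,y}}^2 \lesssim D^2\epsilon^2(1+t)^{-1}$ is what is used. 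Grönwall's inequality then gives
\begin{equation}
\left\Vert \nu(t)\right\Vert_{L^2_{x,y}}^2 \lesssim \left\Vert \nu(0)\right\Vert_{L^2_{x,y}}^2 \exp\!\left(C D^2\epsilon^2\int_0^t \frac{ds}{1+s}\right) = \left\Vert \nu(0)\right\Vert_{L^2_{x,y}}^2 (1+t)^{CD^2\epsilon^2},
\end{equation}
and choosing the implicit constant so that $CD^2 \le 4D^3$ (valid once $D$ is large), i.e. tracking that the exponent is $2D^3\epsilon^2$ after taking square roots, yields (\ref{eq:nugrowth}).

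The only genuine subtlety — and the step I expect to require the most care — is handling the $L^\infty_{x,y}$ norm of $u$ when $d \ge 2$, where $H^1_y$ does not embed in $L^\infty_y$. There one cannot literally bound $\left\Vert u\right\Vert_{L^\infty_{x,y}}$ by $\left\Vert u\right\Vert_{L^\infty_x H^1_y}$; instead one should not take the crude $L^\infty$ bound on the coefficient but rather estimate $\left|\int u^2\overline\nu^2\right|$ by distributing regularity, e.g. $\lesssim \left\Vert u\right\Vert_{L^\infty_x H^1_y}^2 \left\Vert \nu\right\Vert_{L^2_{x,y}}^2$ using a product estimate of the type $\left\Vert fg\right\Vert_{L^2_y} \lesssim \left\Vert f\right\Vert_{H^1_y}\left\Vert g\right\Vert_{L^2_y}$ valid for $d\le 4$ (the same embedding $H^1 \subset L^4$, $d\le 4$, that is invoked just before this section in (\ref{eq:gammaestimate})). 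With that replacement the identity above reads $\frac{d}{dt}\left\Vert\nu\right\Vert_{L^2_{x,y}}^2 \lesssim \left\Vert u(t)\right\Vert_{L^\infty_x H^1_y}^2\left\Vert\nu(t)\right\Vert_{L^2_{x,y}}^2 \le D^2\epsilon^2(1+t)^{-1}\left\Vert\nu\right\Vert_{L^2_{x,y}}^2$ exactly as before, and the Grönwall argument closes unchanged. The rest is bookkeeping of constants.
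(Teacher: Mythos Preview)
Your argument is correct for $d=1$ but has a genuine gap for $d\in\{2,3,4\}$, and the paper's proof is accordingly far more elaborate than what you propose.

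The crux is your claimed product estimate $\|fg\|_{L^2_y}\lesssim\|f\|_{H^1_y}\|g\|_{L^2_y}$ ``valid for $d\le4$''. This is false for $d\ge2$: taking the supremum over $\|g\|_{L^2_y}=1$ it would imply $\|f\|_{L^\infty_y}\lesssim\|f\|_{H^1_y}$, i.e.\ the embedding $H^1(\mathbb{T}^d)\hookrightarrow L^\infty(\mathbb{T}^d)$, which fails precisely when $d\ge2$. The embedding $H^1\subset L^4$ you cite does not rescue the step, since applying it to $\int u^2\bar\nu^2$ would place $\nu$ in $L^4_y$, and you only control $\nu$ in $L^2_y$. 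So the direct Gr\"onwall argument simply does not close in dimensions $2$--$4$. (The paper even remarks after the proof that for $d=1$ a much simpler estimate is available, confirming that the extra machinery is forced by $d\ge2$.)

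What the paper does instead is to decompose the interaction $u^2\bar\nu$ according to $y$-frequency resonance. On the resonant set $\Gamma_0(\mathbf{k})$, the key trilinear estimate (\ref{eq:h1esti}) --- which is \emph{not} a pointwise product bound but a bound specific to the resonant sum --- gives exactly the $H^1_y\times H^1_y\times L^2_y\to L^2_y$ mapping you want, producing the $t^{-1}\|\gamma\|_{L^\infty_v H^1_y}^2\|\nu\|_{L^2}^2$ term that feeds Gr\"onwall. Off resonance ($\omega\neq0$) the phase $e^{i\omega t/2}$ oscillates, and the paper exploits this via a normal-form correction (integration by parts in $t$) to gain integrable decay; the remaining ``almost resonant'' region, where the full phase $\Psi'(t)$ is small, is handled by a delicate frequency localization together with the bilinear Strichartz estimate on $\mathbb{T}^d$. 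All of this is needed to reduce the nonresonant contributions to something strictly better than $t^{-1}$, so that only the resonant term drives the polynomial growth.
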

The local well-posedness property of equation (\ref{eq:linearized})
is given by Proposition \ref{prop:localwell}; therefore it suffices
only to prove (\ref{eq:nugrowth}). Denote 
\begin{equation}
V\left(t,v,y\right):=e^{-\frac{itx^{2}}{2}}\sqrt{t}\nu\left(t,tx,y\right)
\end{equation}
 by making a substitution $v=tx$. Instead of computing the $L_{x,y}^{2}$
norm of $\nu$, it is easier to work with the $L_{v,y}^{2}$ norm
of $V(t,v,y)$( which is the same). By direct computation, $V$ satisfies
the equation
\begin{equation}
\left(i\partial_{t}+\frac{1}{2t^{2}}\partial_{v}^{2}+\frac{1}{2}\triangle_{y}\right)V\left(t,v,y\right)=t^{-1}\left[2\left|w\right|^{2}V(t,v,y)+w^{2}\overline{V}\left(t,v,y\right)\right].\label{eq:V}
\end{equation}
 Denote the associated linear evolution operator by 
\[
S(t)=e^{-it\triangle_{y}/2}e^{i\partial_{v}^{2}/\left(2t\right)},
\]
and transform the equation (\ref{eq:V}) into the form
\[
i\partial_{t}\left(S(-t)V(t,v,y)\right)=t^{-1}S(-t)\left[2\left|w\right|^{2}V(t,v,y)+w^{2}\overline{V}\left(t,v,y\right)\right].
\]

From the above equation we have the fact that

\begin{align*}
\partial_{t}\frac{1}{2}\left\Vert S(-t)V(t,v,y)\right\Vert _{L_{v,y}^{2}}^{2} & =t^{-1}\text{Im}\int_{\mathbb{R}\times\mathbb{T}^{d}}\left[\overline{S(-t)V}\right]\left[S(-t)w^{2}\overline{V}\right]dvdy,
\end{align*}
and 
\begin{equation}
\left\Vert \nu(t)\right\Vert _{L_{x,y}^{2}}=\left\Vert V(t)\right\Vert _{L_{v,y}^{2}}=\left\Vert S(-t)V(t)\right\Vert _{L_{v,y}^{2}}\label{eq:LVequal}
\end{equation}
for any $t\neq0.$ 

Here we define
\begin{equation}
\mathcal{Z}:=S(-t)V,
\end{equation}
and
\begin{equation}
W^{*}:=e^{-it\triangle_{y}/2}w.
\end{equation}
We define
\begin{equation}
G:=e^{-it\triangle_{y}/2}\gamma,
\end{equation}
which is the linear pullback of $\gamma$. 

By a direct computation we write the Fourier transform of the second
factor in the integrand in the form: 

\begin{equation}
\begin{aligned} & \mathcal{F}\left[S(-t)w^{2}\overline{V}\right]\left(t,\xi,\mathbf{k}\right)\\
 & =\sum_{\mathcal{M}\left(\mathbf{k}\right)}\int\int\exp\left(i\frac{t}{2}\left|\mathbf{k}\right|^{2}-i\frac{\xi^{2}}{2t}\right)\widehat{w}\left(t,\kappa,\mathbf{k}_{1}\right)\widehat{\overline{V}}\left(t,\xi-\eta-\kappa,\mathbf{k}_{2}\right)\widehat{w}\left(t,\eta,\mathbf{k}_{3}\right)d\kappa d\eta\\
 & =\sum_{\mathcal{M}\left(\mathbf{k}\right)}\int\int e^{i\Psi(t)}\widehat{W^{*}}\left(t,\kappa,\mathbf{k}_{1}\right)\widehat{\overline{\mathcal{Z}}}\left(t,\xi-\eta-\kappa,\mathbf{k}_{2}\right)\widehat{W^{*}}\left(t,\eta,\mathbf{k}_{3}\right)d\kappa d\eta.
\end{aligned}
\label{eq:nonlinV}
\end{equation}
Here the phase function $\Psi$ is defined as
\begin{equation}
\Psi\left(t\right):=\frac{1}{2t}\left(\left(\xi-\eta-\kappa\right)^{2}+\xi^{2}\right)+\frac{t}{2}\omega,\label{eq:refunc}
\end{equation}
where $\omega:=\left|\mathbf{k}_{1}\right|^{2}-\left|\mathbf{k}_{2}\right|^{2}+\left|\mathbf{k}_{3}\right|^{2}-\left|\mathbf{k}\right|^{2}$. 

It is natural to separate the right hand side of the equation (\ref{eq:nonlinV})
into four different parts according to the $v$ and $y$ frequencies
of the factors:

\begin{itemize}

\item Where the $v$-frequency of one of $w$ is large, $\left\{ \left(\kappa,\eta\right):\left|\kappa\right|\geq\sqrt{t}\right\} \bigcup\left\{ \left(\kappa,\eta\right):\left|\eta\right|\geq\sqrt{t}\right\} $.
After excluding this case, we are able to replace $w$ in (\ref{eq:V})
by $\gamma$. The corresponding term has the expression
\begin{equation}
e_{1}\left(t,v,y\right):=t^{-1}S(-t)\left[\left(w^{2}-\gamma^{2}\right)\overline{V}\right].
\end{equation}

\item Where the $y$ frequencies are resonant,

$\left\{ \left(\kappa,\eta\right):\left|\kappa\right|,\left|\eta\right|<\sqrt{t}\right\} \cap\left\{ \left(\mathbf{k}_{1},\mathbf{k}_{2},\mathbf{k}_{3}\right):\left(\mathbf{k}_{1},\mathbf{k}_{2},\mathbf{k}_{3}\right)\in\mathcal{M}\left(\mathbf{k}\right),\left(\mathbf{k}_{1},\mathbf{k}_{2},\mathbf{k}_{3}\right)\in\Gamma_{0}\left(\mathbf{k}\right)\right\} .$
This corresponds to
\begin{equation}
e_{2}\left(t,v,y\right):=t^{-1}S(-t)\sum_{\mathbf{k}}\left[\sum_{\Gamma_{0}\left(\mathbf{k}\right)}\gamma\left(t,v,\mathbf{k}_{1}\right)\overline{V}\left(t,v,\mathbf{k}_{2}\right)\gamma\left(t,v,\mathbf{k}_{3}\right)e^{i\mathbf{k}\cdot y}\right].
\end{equation}

\item Where the $v$ and $y$ frequencies are non-resonant,

$\left\{ \left(\kappa,\eta\right):\left|\kappa\right|,\left|\eta\right|<\sqrt{t}\right\} \cap\left\{ \left(\mathbf{k}_{1},\mathbf{k}_{2},\mathbf{k}_{3}\right):\left(\mathbf{k}_{1},\mathbf{k}_{2},\mathbf{k}_{3}\right)\in\mathcal{M}\left(\mathbf{k}\right)\cap\Gamma_{\omega}\left(\mathbf{k}\right),\omega\neq0\right\} \cap\left\{ \left|\Psi^{\prime}(t)\right|\gtrsim t^{-\frac{3}{8}}\right\} $.
Therefore we choose the region $\Omega_{t}^{1}$ as follows: 
\begin{equation}
\begin{aligned}\Omega_{t}^{1}\left(\xi,\omega\right)= & \left\{ \begin{array}{cc}
\begin{array}{c}
\omega\neq0,\end{array} & \left|\frac{\left(\xi-\kappa-\eta\right)^{2}}{t^{2}\omega}-\frac{1}{2}\right|\geq\frac{1}{2}t^{-\frac{3}{8}}\end{array}\right\} \bigcup\left\{ \omega\neq0,\left|\frac{\xi^{2}}{t^{2}\omega}-\frac{1}{2}\right|\geq2t^{-\frac{3}{8}}\right\} .\end{aligned}
\end{equation}
The corresponding term in the energy is given by
\begin{equation}
\begin{aligned} & e_{3}\left(t,v,y\right)\\
:= & t^{-1}\mathcal{F}_{\xi}^{-1}\sum_{\mathbf{k}}\left[\sum_{\omega\neq0}\sum_{\Gamma_{\omega}\left(\mathbf{k}\right)}\iint\mathcal{X}_{1}e^{i\Psi(t)}\widehat{G}\left(t,\kappa,\mathbf{k}_{1}\right)\widehat{\overline{\mathcal{Z}}}\left(t,\xi-\eta-\kappa,\mathbf{k}_{2}\right)\widehat{G}\left(t,\eta,\mathbf{k}_{3}\right)d\kappa d\eta\right]e^{i\mathbf{k}\cdot y}.
\end{aligned}
\end{equation}
Here $\mathcal{X}_{1}$ is a cutoff function selecting this region.
Precisely, we will define the frequency cutoff function $\mathcal{X}_{1}$
depending on $t,\xi,\kappa,\eta$ and $\omega$ by 
\begin{equation}
\mathcal{X}_{1}:=1-\mathcal{X}_{2},
\end{equation}
where the function $\mathcal{X}_{2}$ is given in (\ref{defX_2}).

\item Where the $v$ and $y$ frequencies are almost resonant,

$\left\{ \left(\kappa,\eta\right):\left|\kappa\right|,\left|\eta\right|<\sqrt{t}\right\} \cap\left\{ \left(\mathbf{k}_{1},\mathbf{k}_{2},\mathbf{k}_{3}\right):\left(\mathbf{k}_{1},\mathbf{k}_{2},\mathbf{k}_{3}\right)\in\mathcal{M}\left(\mathbf{k}\right)\cap\Gamma_{\omega}\left(\mathbf{k}\right),\omega\neq0\right\} \cap\left\{ \left|\Psi^{\prime}(t)\right|\lesssim t^{-\frac{3}{8}}\right\} $.
Therefore we choose the region $\Omega_{t}^{2}$ as follows: 
\begin{equation}
\begin{aligned}\Omega_{t}^{2}\left(\xi,\omega\right)= & \left\{ \begin{array}{cc}
\begin{array}{c}
\omega\neq0,\end{array} & \left|\frac{\left(\xi-\kappa-\eta\right)^{2}}{t^{2}\omega}-\frac{1}{2}\right|<2t^{-\frac{3}{8}}\end{array}\right\} \bigcap\left\{ \begin{array}{cc}
\begin{array}{c}
\omega\neq0,\end{array} & \left|\frac{\xi^{2}}{t^{2}\omega}-\frac{1}{2}\right|<2t^{-\frac{3}{8}}\end{array}\right\} .\end{aligned}
\end{equation}
The corresponding term in the energy is given by
\begin{equation}
\begin{aligned} & e_{4}\left(t,v,y\right)\\
:= & t^{-1}\mathcal{F}_{\xi}^{-1}\sum_{\mathbf{k}}\left[\sum_{\omega\neq0}\sum_{\Gamma_{\omega}\left(\mathbf{k}\right)}\iint\mathcal{X}_{2}e^{i\Psi(t)}\widehat{G}\left(t,\kappa,\mathbf{k}_{1}\right)\widehat{\overline{\mathcal{Z}}}\left(t,\xi-\eta-\kappa,\mathbf{k}_{2}\right)\widehat{G}\left(t,\eta,\mathbf{k}_{3}\right)d\kappa d\eta\right]e^{i\mathbf{k}\cdot y},
\end{aligned}
\end{equation}
where $\mathcal{X}_{2}$ is a cutoff function selecting this region.
Here we define the frequency cutoff function $\mathcal{X}_{2}$ by
\begin{equation}
\mathcal{X}_{2}:=\mathcal{X}_{1,\omega}\left(\xi-\kappa-\eta\right)\mathcal{X}_{2,\omega}\left(\xi\right),\label{defX_2}
\end{equation}
where
\begin{equation}
\mathcal{X}_{1,\omega}\left(\xi-\kappa-\eta\right):=\mathcal{X}\left(\frac{1}{2}t^{\frac{3}{8}}\left(\frac{\left(\xi-\kappa-\eta\right)^{2}}{t^{2}\omega}-\frac{1}{2}\right)\right),\quad\mathcal{X}_{2,\omega}\left(\xi\right):=\mathcal{X}\left(\frac{1}{2}t^{\frac{3}{8}}\left(\frac{\xi^{2}}{t^{2}\omega}-\frac{1}{2}\right)\right).
\end{equation}

\end{itemize}

Hence we have 
\[
t^{-1}S\left(-t\right)\left(w^{2}\overline{V}\right):=e_{1}+e_{2}+e_{3}+e_{4}.
\]

Assuming that the initial data satisfies (\ref{eq:ini}), by the local
well-posedness we know that $\nu(t)$ and $u(t)$ exist inside the
interval $[0,T]$. To advance from time 0 to time 1 we use the local
well-posedness results to obtain

\[
\left\Vert \nu\left(1\right)\right\Vert _{L_{x,y}^{2}}\lesssim\left\Vert \nu\left(0\right)\right\Vert _{L_{x,y}^{2}},
\]
and note that by the mass conservation law (\ref{eq:mass}) there
is the inequality 
\[
\left\Vert u(t)\right\Vert _{L_{x,y}^{2}}=\left\Vert u(0)\right\Vert _{L_{x,y}^{2}}\leq\epsilon
\]
for any $t\in[0,T]$. By (\ref{eq:LVequal}) there is the inequality
\begin{equation}
\frac{1}{2}\left\Vert \nu\left(T\right)\right\Vert _{L_{x,y}^{2}}^{2}\leq\frac{1}{2}\left\Vert \nu\left(1\right)\right\Vert _{L_{x,y}^{2}}^{2}+\left|\int_{1}^{T}\left\langle S(-t)V,e_{1}+e_{2}+e_{3}+e_{4}\right\rangle _{L_{x,y}^{2}}dt\right|.\label{eq:nuintegral}
\end{equation}

By (\ref{eq:bootsass1}), and the definition of $\gamma$, there is
the property 
\[
\left\Vert \gamma(t)\right\Vert _{L_{v}^{\infty}H_{y}^{1}}\lesssim\sqrt{t}\left\Vert u(t)\right\Vert _{L_{x}^{\infty}H_{y}^{1}}\lesssim D\epsilon.
\]

\subsection{The high frequency estimates. }

First we start with bounds for the high $v$-frequencies in $w$. 

\begin{lemma} Assume that $T\geq1.$ Then the following estimates
hold uniformly in $t$: 
\begin{equation}
\int_{1}^{T}\left|\left\langle S(-t)V,e_{1}\left(t\right)\right\rangle _{L_{v,y}^{2}}\right|dt\lesssim\int_{1}^{T}D^{\frac{11}{6}}\epsilon^{2}t^{-\frac{13}{12}}\left(1+t\right)^{\frac{11}{6}\delta}\left\Vert \nu\left(t\right)\right\Vert _{L_{x,y}^{2}}^{2}dt\label{eq:e1bound}
\end{equation}
\end{lemma}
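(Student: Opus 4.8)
The plan is to estimate the contribution of $e_1(t,v,y) = t^{-1} S(-t)\big[(w^2-\gamma^2)\overline{V}\big]$ to the energy identity. Since $S(-t)$ is unitary on $L_{v,y}^2$, the pairing $\langle S(-t)V, e_1\rangle_{L_{v,y}^2}$ equals $t^{-1}\langle V, (w^2-\gamma^2)\overline{V}\rangle_{L_{v,y}^2}$ up to the conjugation built into the definition; in any case it is bounded by $t^{-1}\|w^2-\gamma^2\|_{L_v^\infty L_y^\infty}\|V\|_{L_{v,y}^2}^2$ after using that the $y$-frequencies of the cubic term are controlled and applying H\"older in $y$ via the algebra property $H_y^\alpha \hookrightarrow L_y^\infty$ for $\alpha > d/2$. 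The key point is then to exploit the factorization $w^2 - \gamma^2 = (w-\gamma)(w+\gamma)$, which gains a power of $t$ from the high-frequency truncation $w - \gamma = P_{\geq \sqrt{t}} w$.

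First I would write $\|w^2-\gamma^2\|_{L_v^\infty H_y^\alpha} \lesssim \|w-\gamma\|_{L_v^\infty H_y^\alpha}\big(\|w\|_{L_v^\infty H_y^\alpha} + \|\gamma\|_{L_v^\infty H_y^\alpha}\big)$, using that $H_y^\alpha$ is an algebra. For the second factor, the bound \eqref{eq:alpha bound} together with Hypothesis \ref{hyp} gives $\|w\|_{L_v^\infty H_y^\alpha}, \|\gamma\|_{L_v^\infty H_y^\alpha} \lesssim \|u\|_{L_{x,y}^2}^{1/6}\|L_x u\|_{L_{x,y}^2}^{1/2}\|D_y^s u\|_{L_{x,y}^2}^{1/3} \lesssim \epsilon^{1/6}(D\epsilon)^{5/6}(1+t)^{5\delta/6}$, so each factor contributes $D^{5/6}\epsilon(1+t)^{5\delta/6}$. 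For the first factor, I would use the interpolation bound from the first lemma, $\|w-\gamma\|_{L_v^\infty H_y^\alpha} \lesssim \|D_v^{3/4} P_{\geq\sqrt t} w\|_{L_{v,y}^2}^{2/3}\|D_y^{3\alpha}w\|_{L_{v,y}^2}^{1/3} \lesssim t^{-1/12}\|L_x u\|_{L_{x,y}^2}^{2/3}\|D_y^s u\|_{L_{x,y}^2}^{1/3} \lesssim t^{-1/12} D\epsilon (1+t)^{\delta}$, where the gain $t^{-1/12}$ comes from $\|D_v^{3/4}P_{\geq\sqrt t} w\|_{L_{v,y}^2} \lesssim t^{-1/8}\|\partial_v w\|_{L_{v,y}^2}$.

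Combining, $\|w^2-\gamma^2\|_{L_v^\infty H_y^\alpha} \lesssim D^{11/6}\epsilon^2 \, t^{-1/12}(1+t)^{11\delta/6}$, and then
\[
\big|\langle S(-t)V, e_1(t)\rangle_{L_{v,y}^2}\big| \lesssim t^{-1}\|w^2-\gamma^2\|_{L_v^\infty H_y^\alpha}\|V\|_{L_{v,y}^2}^2 \lesssim D^{11/6}\epsilon^2\, t^{-13/12}(1+t)^{11\delta/6}\|\nu(t)\|_{L_{x,y}^2}^2,
\]
using $\|V(t)\|_{L_{v,y}^2} = \|\nu(t)\|_{L_{x,y}^2}$ from \eqref{eq:LVequal}. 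Integrating in $t$ over $[1,T]$ yields \eqref{eq:e1bound}.

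The main obstacle — really the only subtle point — is justifying that the $y$-frequency sum (the resonant/nonresonant splitting over $\mathbf{k}$) does not cost anything when we pass to the $L_v^\infty H_y^\alpha$ bound on $w^2-\gamma^2$: one must check that summing the trilinear form over $\mathcal{M}(\mathbf{k})$ is exactly reproducing the product $(w^2-\gamma^2)\overline{V}$ in physical $y$-space, so that the algebra estimate $H_y^\alpha \hookrightarrow L_y^\infty$ (valid precisely because $\alpha > d/2$, using $1 \leq d \leq 4$ to keep $s = 3\alpha$ within the range where $L_y^{4/3}\subset H_y^{-1}$ and the Sobolev embeddings hold) applies cleanly. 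Once that bookkeeping is in place the estimate is a direct consequence of the interpolation inequalities already proved and the bootstrap Hypothesis \ref{hyp}.
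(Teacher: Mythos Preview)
Your proposal is correct and follows essentially the same route as the paper: both use unitarity of $S(-t)$, the factorization $w^2-\gamma^2=(w-\gamma)(w+\gamma)$, the algebra property of $H_y^\alpha$ for $\alpha>d/2$, and the interpolation bound $\|w-\gamma\|_{L_v^\infty H_y^\alpha}\lesssim t^{-1/12}\|L_xu\|_{L_{x,y}^2}^{2/3}\|D_y^s u\|_{L_{x,y}^2}^{1/3}$ together with \eqref{eq:alpha bound} and Hypothesis~\ref{hyp} to reach the same $D^{11/6}\epsilon^2 t^{-13/12}(1+t)^{11\delta/6}$ integrand. The ``main obstacle'' you flag is not actually an issue: $e_1$ is defined directly as $t^{-1}S(-t)[(w^2-\gamma^2)\overline V]$ in physical $y$-space, so no resonant/nonresonant bookkeeping enters and the $H_y^\alpha$ algebra estimate applies without further justification.
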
 

\begin{proof}

Since $S(-t)$ is a unitary operator, using (\ref{eq:alpha bound})
we have 
\begin{align*}
\left\Vert e_{1}\left(t\right)\right\Vert _{L_{v,y}^{2}} & \lesssim t^{-1}\left(\left\Vert w\right\Vert _{L_{v}^{\infty}H_{y}^{\alpha}}+\left\Vert \gamma\right\Vert _{L_{v}^{\infty}H_{y}^{\alpha}}\right)\left\Vert w-\gamma\right\Vert _{L_{v}^{\infty}H_{y}^{\alpha}}\left\Vert V\right\Vert _{L_{v,y}^{2}}\\
 & \lesssim t^{-\frac{13}{12}}\left(\left\Vert w\right\Vert _{L_{v}^{\infty}H_{y}^{\alpha}}+\left\Vert \gamma\right\Vert _{L_{v}^{\infty}H_{y}^{\alpha}}\right)\left\Vert L_{x}u\right\Vert _{L_{x,y}^{2}}^{\frac{2}{3}}\left\Vert D_{y}^{s}u\right\Vert _{L_{x,y}^{2}}^{\frac{1}{3}}\left\Vert V\right\Vert _{L_{v,y}^{2}}\\
 & \lesssim t^{-\frac{13}{12}}\left\Vert u\right\Vert _{L_{x,y}^{2}}^{\frac{1}{6}}\left\Vert u\right\Vert _{X^{+}}^{\frac{11}{6}}\left\Vert \nu\right\Vert _{L_{x,y}^{2}}.
\end{align*}
\end{proof}

\subsection{The $y$ frequencies resonant term.}

The growth of the energy mainly comes from the resonant term and will
be smaller than $t^{-1}$; hence we can apply Grownwall's inequality.

\begin{lemma} Assume that $T\geq1.$ we will have 
\begin{equation}
\int_{1}^{T}\left|\left\langle S(-t)V,e_{2}\left(t\right)\right\rangle _{L_{v,y}^{2}}\right|dt\lesssim\int_{1}^{T}D^{2}\epsilon^{2}t^{-1}\left\Vert \nu\left(t\right)\right\Vert _{L_{x,y}^{2}}^{2}dt.\label{eq:e2bound}
\end{equation}
\end{lemma}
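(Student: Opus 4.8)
\textbf{Proof plan for the estimate \eqref{eq:e2bound}.}

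The plan is to treat $e_2$ as the genuinely resonant contribution, which carries the only $t^{-1}$ decay (rather than something integrable), so the goal is merely to bound the pairing $\langle S(-t)V, e_2(t)\rangle$ by $D^2\epsilon^2 t^{-1}\|\nu(t)\|_{L^2_{x,y}}^2$ \emph{pointwise in $t$} and then integrate. Since $S(-t)$ is unitary on $L^2_{v,y}$, one has $\langle S(-t)V, e_2\rangle_{L^2_{v,y}} = \langle \mathcal Z, S(t)e_2\rangle$, but it is cleaner to peel off $S(-t)$ from both slots at once: writing $\mathcal Z = S(-t)V$ and recalling $e_2 = t^{-1}S(-t)[\,\cdot\,]$, Plancherel in $v$ and $y$ reduces the pairing to
\[
\bigl\langle S(-t)V, e_2(t)\bigr\rangle_{L^2_{v,y}} = t^{-1}\int_{\mathbb R}\sum_{\mathbf k}\sum_{\Gamma_0(\mathbf k)} \overline{V(t,v,\mathbf k)}\, \gamma(t,v,\mathbf k_1)\,\overline{V(t,v,\mathbf k_2)}\,\gamma(t,v,\mathbf k_3)\,dv,
\]
because on the resonant set $\Gamma_0$ the phase $\omega=0$ and the $v$-phases cancel in the $L^2$ pairing. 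So the whole matter is an estimate, uniform in $v$, for the quadrilinear sum over $\Gamma_0(\mathbf k)$.

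The key step is the resonant multilinear bound: for each fixed $v$,
\[
\Bigl|\sum_{\mathbf k}\sum_{\Gamma_0(\mathbf k)} \overline{V(\mathbf k)}\,\gamma(\mathbf k_1)\,\overline{V(\mathbf k_2)}\,\gamma(\mathbf k_3)\Bigr| \lesssim \|\gamma(t,v,\cdot)\|_{H^\alpha_y}^2\,\|V(t,v,\cdot)\|_{\ell^2_{\mathbf k}}^2,
\]
with $\alpha>d/2$. This is the standard $\ell^2$ bilinear-type estimate for the $0$-resonance set on $\mathbb T^d$: one controls the two $\gamma$ factors in a Wiener-type algebra ($\sum_{\mathbf k}\langle\mathbf k\rangle^\alpha|\widehat{\cdot}|$ type norm bounded by $H^\alpha$ since $\alpha>d/2$), uses the momentum relation $\mathbf k_1-\mathbf k_2+\mathbf k_3=\mathbf k$ to turn the sum into a convolution, and applies Cauchy--Schwarz in the two $V$-indices so that the $\Gamma_0$ constraint is simply discarded (replaced by all of $\mathcal M$). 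Then integrating the resulting pointwise inequality in $v$: one factor $\|\gamma\|_{H^\alpha_y}^2$ comes out as $\|\gamma\|_{L^\infty_v H^\alpha_y}^2$, while $\int \|V(t,v,\cdot)\|_{\ell^2_{\mathbf k}}^2 dv = \|V(t)\|_{L^2_{v,y}}^2 = \|\nu(t)\|_{L^2_{x,y}}^2$ by \eqref{eq:LVequal}. Finally, under Hypothesis \ref{hyp} and the definition of $\gamma$ one has $\|\gamma(t)\|_{L^\infty_v H^\alpha_y}\lesssim \|\gamma(t)\|_{L^\infty_v H^1_y}\lesssim \sqrt t\,\|u(t)\|_{L^\infty_x H^1_y}\lesssim D\epsilon$ (using $\alpha<1$, or interpolating with the $X^+$ bound if $\alpha>1$ for the larger $d$). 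Assembling these gives the integrand $\lesssim D^2\epsilon^2 t^{-1}\|\nu(t)\|_{L^2_{x,y}}^2$, and integrating from $1$ to $T$ yields \eqref{eq:e2bound}.

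The main obstacle, and the only place care is needed, is the resonant multilinear estimate itself — specifically checking that on $\Gamma_0(\mathbf k)$ one can close with only $\alpha>d/2$ regularity on $\gamma$ and merely $\ell^2$ on $V$, with no loss from the resonance constraint. The trick is exactly that $\Gamma_0\subset\mathcal M$, so one never needs to count lattice points on the sphere-type set $\{|\mathbf k_1|^2-|\mathbf k_2|^2+|\mathbf k_3|^2=|\mathbf k|^2\}$; dropping down to the momentum set $\mathcal M$ and summing the two $\gamma$ factors against the convolution of the two $V$ factors via Cauchy--Schwarz and Young's inequality suffices precisely because $H^\alpha_y(\mathbb T^d)\hookrightarrow \ell^1_{\mathbf k}(\langle\mathbf k\rangle^{\alpha})$ embeds into the Wiener algebra when $\alpha>d/2$. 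A secondary, purely bookkeeping point is the $\gamma$-versus-$w$ replacement: $e_2$ is already written in terms of $\gamma$ (the high-$v$-frequency discrepancy having been shunted into $e_1$), so no further truncation is needed here.
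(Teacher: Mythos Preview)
Your overall plan is right---Cauchy--Schwarz the pairing against $\|e_2\|_{L^2_{v,y}}$, bound the resonant trilinear form in $\ell^2_{\mathbf k}$ at each fixed $v$, pull out $\|\gamma\|_{L^\infty_v H^{\bullet}_y}^2$, and use $\|V\|_{L^2_{v,y}}=\|\nu\|_{L^2_{x,y}}$. This is exactly what the paper does. The gap is in the multilinear step.

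You propose to discard the resonance constraint, pass from $\Gamma_0(\mathbf k)$ to $\mathcal M(\mathbf k)$, and use the convolution/Young bound together with the Wiener algebra embedding $H^{\alpha}_y\hookrightarrow\ell^1_{\mathbf k}$ for $\alpha>d/2$. That argument is correct as far as it goes, but it puts the two $\gamma$ factors in $H^{\alpha}_y$, not $H^1_y$. For $d\ge 2$ one has $\alpha>d/2\ge 1$, and the only \emph{uniform-in-time} control available under Hypothesis~\ref{hyp} is on $\|\gamma\|_{L^\infty_v H^1_y}\lesssim D\epsilon$; the norm $\|\gamma\|_{L^\infty_v H^\alpha_y}$ carries a factor $(1+t)^{c\delta}$ coming from the $X^+$ bound (your parenthetical interpolation does not remove this growth---it only makes $c$ small). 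Plugging that in gives an integrand of size $D^2\epsilon^2\,t^{-1}(1+t)^{2c\delta}\|\nu\|^2$, which is \emph{not} $\lesssim D^2\epsilon^2 t^{-1}\|\nu\|^2$ and, more importantly, feeds a non-logarithmic term into the subsequent Gronwall step, destroying the polynomial energy bound. So for $d=2,3,4$ your estimate does not prove the lemma as stated.

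What the paper actually uses here is the sharper resonant bound \eqref{eq:h1esti} (Lemma~\ref{resonance form}, proved in \cite{HPTV}):
\[
\bigl\|R[\gamma,V,\gamma]\bigr\|_{\ell^2_{\mathbf k}}\ \lesssim\ \|V\|_{\ell^2_{\mathbf k}}\,\|\gamma\|_{h^1_{\mathbf k}}^2,\qquad d\le 4.
\]
This estimate \emph{does} exploit the structure of $\Gamma_0$ (ultimately via bilinear Strichartz/divisor-type counting on $\mathbb T^d$) and is strictly stronger than the crude $\mathcal M$-convolution bound when $d\ge 3$. With it one gets $\|e_2\|_{L^2_{v,y}}\lesssim t^{-1}\|\gamma\|_{L^\infty_v H^1_y}^2\|\nu\|_{L^2_{x,y}}\lesssim D^2\epsilon^2 t^{-1}\|\nu\|_{L^2_{x,y}}$, exactly the integrand in \eqref{eq:e2bound}. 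In short: do not drop down to $\mathcal M$; invoke \eqref{eq:h1esti} so that only the uniformly bounded $H^1_y$ norm of $\gamma$ appears.
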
 

\begin{proof}

Here we use the inequality (\ref{eq:h1esti}), which provides a good
fixed-time estimate for the $y$-resonant interactions. The original
proof of (\ref{eq:h1esti}) is given in Lemma 7.1 in \cite{HPTV}.
By the fact that $S(-t)$ is unitary, and the inequality (\ref{eq:h1esti}),
the factor $e_{2}$ satisfies the following inequality: 
\begin{equation}
\begin{aligned}\left\Vert e_{2}\left(t\right)\right\Vert _{L_{v,y}^{2}} & \lesssim\frac{1}{t}\left\Vert \left\Vert \gamma\right\Vert _{H_{y}^{1}}^{2}\left\Vert V\right\Vert _{L_{y}^{2}}\right\Vert _{L_{v}^{2}}\lesssim\frac{1}{t}\left\Vert \gamma\right\Vert _{L_{v}^{\infty}H_{y}^{1}}^{2}\left\Vert V\right\Vert _{L_{v,y}^{2}}\lesssim\frac{1}{t}\left\Vert \gamma\right\Vert _{L_{v}^{\infty}H_{y}^{1}}^{2}\left\Vert \nu\right\Vert _{L_{x,y}^{2}}.\end{aligned}
\end{equation}

\end{proof}

\subsection{The fast-time oscillations.}

Here we use a normal form energy correction to cancel out the non-resonant
frequencies in $\frac{1}{t}\gamma^{2}\overline{L}$ , using a technique
developed in the papers \cite{IT2,shatah,GMS}. The idea is that we
may apply integration by parts in time to get a better decay where
the nonlinear term is non-resonant. 

From the equation (\ref{eq:nonlinV}) we may rewrite the remaining
terms with low frequency of $w$ and non-resonant $y$ frequencies
as 
\begin{equation}
\sum_{\omega\neq0}\sum_{\Gamma_{\omega}\left(\mathbf{k}\right)}\int\int e^{i\Psi(t)}\mathcal{X}_{1}\widehat{G}\left(t,\kappa,\mathbf{k}_{1}\right)\widehat{\overline{\mathcal{Z}}}\left(t,\xi-\eta-\kappa,\mathbf{k}_{2}\right)\widehat{G}\left(t,\eta,\mathbf{k}_{3}\right)d\kappa d\eta.
\end{equation}
 The resonance function $\Psi(t)$ is given in (\ref{eq:refunc}).
One can only apply the normal form correction in the region where
$\Psi^{\prime}\neq0$. By the following equation
\begin{equation}
\Psi^{\prime}\left(t\right)=-\frac{1}{2t^{2}}\left(\left(\xi-\eta-\kappa\right)^{2}+\xi^{2}\right)+\frac{1}{2}\omega,
\end{equation}
it is obvious that inside the area $\Omega_{t}^{1}$ we have $\left|\Psi^{\prime}\left(t\right)\right|\gtrsim t^{-\frac{3}{8}}\omega.$ 

\begin{lemma}Assume that $T\geq1.$Then we have the following estimate
\begin{equation}
\begin{aligned}\left|\int_{1}^{T}\left\langle S(-t)V,e_{3}\left(t\right)\right\rangle _{L_{v,y}^{2}}dt\right| & \lesssim D^{\frac{5}{3}}\epsilon^{2}T^{-\frac{5}{8}}\left(1+T\right)^{\frac{5}{3}\delta}\left\Vert \nu\left(T\right)\right\Vert _{L_{v,y}^{2}}^{2}+D^{\frac{5}{3}}\epsilon^{2}\left\Vert \nu(1)\right\Vert _{L_{v,y}^{2}}^{2}\\
 & \quad+\int_{1}^{T}\left[D^{\frac{5}{3}}\epsilon^{2}t^{-\frac{5}{4}}\left(1+t\right)^{\frac{5}{3}\delta}+D^{\frac{5}{3}}\epsilon^{2}t^{-\frac{13}{8}}\left(1+t\right)^{\frac{5}{3}\delta}\right]\left\Vert \nu\left(t\right)\right\Vert _{L_{x,y}^{2}}^{2}dt\\
 & \quad+\int_{1}^{T}D^{\frac{10}{3}}\epsilon^{4}t^{-\frac{13}{8}}\left(1+t\right)^{\frac{10}{3}\delta}\left\Vert \nu\left(t\right)\right\Vert _{L_{x,y}^{2}}^{2}dt.
\end{aligned}
\label{eq:e3bound}
\end{equation}

\end{lemma}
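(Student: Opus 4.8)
The plan is to integrate by parts in time in the quantity $\int_1^T \langle S(-t)V, e_3(t)\rangle\, dt$, exploiting the lower bound $|\Psi'(t)| \gtrsim t^{-3/8}\omega$ valid on the region $\Omega_t^1$ where the cutoff $\mathcal{X}_1$ is supported. Writing $e_3$ as a Fourier integral with oscillatory factor $e^{i\Psi(t)}$, I would substitute $e^{i\Psi(t)} = \frac{1}{i\Psi'(t)} \partial_t\big(e^{i\Psi(t)}\big)$ and integrate by parts in $t$. This produces a boundary term at $t=T$ and $t=1$, plus three interior terms: one where $\partial_t$ falls on $\Psi'(t)^{-1}$ (which picks up an extra $t^{-1}$-type gain since $\Psi''/\Psi'$ behaves like $t^{-1}$), one where $\partial_t$ falls on the cutoff $\mathcal{X}_1$ (differentiating $\mathcal{X}_2 = \mathcal{X}_{1,\omega}\mathcal{X}_{2,\omega}$ in $t$; each derivative lands on an argument of size $t^{3/8}\times(\ldots)$, giving a factor of size $t^{3/8}\cdot t^{-1}$ on the support), and one where $\partial_t$ falls on the product $\widehat{G}\,\widehat{\overline{\mathcal Z}}\,\widehat{G}$. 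For this last, genuinely nonlinear term, I would use the equations: $\mathcal{Z}=S(-t)V$ satisfies $i\partial_t \mathcal{Z} = t^{-1} S(-t)[2|w|^2 V + w^2\overline V]$ from the displayed evolution equation, and $G = e^{-it\triangle_y/2}\gamma$ satisfies the corresponding equation coming from Lemma \ref{uasymptote}, so $\partial_t G$ is controlled by $t^{-1}|\gamma|^2\gamma$ together with the error $I$ bounded in Lemma \ref{uasymptote}.

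The bookkeeping then proceeds term by term. The boundary term at $T$ is estimated using $|\Psi'|^{-1} \lesssim t^{3/8}$, unitarity of $S(-t)$, the pointwise bounds on $\gamma$ (hence on $G$ after the pullback) coming from (\ref{eq:alpha bound}) and the bootstrap $\|\gamma\|_{L_v^\infty H_y^1}\lesssim D\epsilon$, and the identification $\|\mathcal Z\|_{L^2_{v,y}} = \|V\|_{L^2_{v,y}} = \|\nu\|_{L^2_{x,y}}$ from (\ref{eq:LVequal}); this yields the $D^2\epsilon^2 T^{-5/8}(1+T)^{2\delta}\|\nu(T)\|^2$ term. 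The boundary term at $t=1$ gives the $D^2\epsilon^2\|\nu(1)\|^2$ term. The interior terms from $\partial_t(\Psi'^{-1})$ and $\partial_t\mathcal{X}_1$ produce the $t^{-5/4}$ and $t^{-13/8}$ contributions (the $t^{-1}$ from $e_3$'s prefactor, times $t^{3/8}$ from $|\Psi'|^{-1}$, times another $t^{-1}$ or $t^{3/8}\cdot t^{-1}$ from the differentiation, matched against two copies of $\gamma$ at size $(D\epsilon)^2$ and the remaining $V$ paired with $\mathcal Z$). The quartic interior terms, where $\partial_t$ hits one of the three factors, bring in an extra $\gamma$ or an error term from Lemma \ref{uasymptote}; pairing four $\gamma$-type factors (or three plus an $I$, using (\ref{eq:L2err})) with $\|\nu\|$ and invoking (\ref{eq:alpha bound}) with $\|u\|_{X^+}\leq D\epsilon(1+t)^\delta$ yields the $D^{11/3}\epsilon^4$ and $D^{8/3}\epsilon^4$ terms with the stated powers $t^{-13/8}(1+t)^{c\delta}$.

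The main obstacle is the normal form correction itself: the integration by parts in time is not a clean global operation because $\Psi'(t)$ can vanish — this is exactly why the phase-space region has been split so that $e_3$ only sees $\Omega_t^1$, where $|\Psi'(t)|\gtrsim t^{-3/8}\omega$, with the almost-resonant complement $\Omega_t^2$ handed off to $e_4$. So the delicate part is (i) verifying that the $t$-derivative of the cutoff $\mathcal{X}_2$ in (\ref{defX_2}) genuinely gains a power of $t$ rather than losing one — one checks $\partial_t\big[\frac12 t^{3/8}(\xi^2/(t^2\omega) - \frac12)\big]$ has size $\lesssim t^{3/8-1}$ on the support of $\mathcal{X}'$, because on that set $\xi^2/(t^2\omega)$ is within $O(t^{-3/8})$ of $\frac12$ so the two competing $t$-dependences nearly cancel; and (ii) keeping track of the $\omega$-summation: the factor $\omega^{-1}$ from $|\Psi'|^{-1}\lesssim t^{3/8}\omega^{-1}$ must be absorbed, which requires that the $y$-frequency weight $D_y^s$ with $s=3\alpha$, $\alpha>d/2$, provides enough room to sum $\sum_{\omega\neq0}\sum_{\Gamma_\omega(\mathbf k)}\omega^{-1}(\ldots)$ — this is where the regularity hypothesis on the data is consumed, via the same $H^1_y$ fixed-time resonant estimate philosophy used for $e_2$, now applied to the non-resonant shells with the extra decay from $|\Psi'|^{-1}$. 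Once these two points are secured, the remaining estimates are routine applications of Hölder, Bernstein, unitarity of $S(-t)$, and the bounds already established in the preceding lemmas.
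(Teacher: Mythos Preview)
Your overall strategy is exactly the paper's: integrate by parts in $t$ using $e^{i\Psi}=(i\Psi')^{-1}\partial_t e^{i\Psi}$, collect boundary terms at $t=1,T$, and interior terms where $\partial_t$ falls on $(\Psi')^{-1}$, on $\mathcal{X}_1$, on the explicit prefactor $t^{-1}$, or on one of $G,\mathcal{Z}$. The paper organizes these into quadrilinear forms $\mathcal{O}_1^t$, $\mathcal{O}_2^t$ and obtains precisely the stated exponents; your bookkeeping of the boundary and quartic pieces, and your check that $\partial_t\mathcal{X}_1=O(t^{-5/8})$, match.

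One point needs correction. You describe the $\omega$-summation as the delicate issue and say the factor $\omega^{-1}$ from $|\Psi'|^{-1}\lesssim t^{3/8}\omega^{-1}$ ``must be absorbed'' via the $H^1_y$ resonant estimate philosophy of $e_2$. This is not how the paper proceeds, and in fact there is nothing to absorb: since $\omega\in\mathbb{Z}\setminus\{0\}$, one simply uses $|\omega|^{-1}\le 1$ and discards it. The $\mathbf{k}$-sum is then handled by the crude Young-type bound of Lemma~\ref{elementarylp} (two factors in $l^1_{\mathbf{k}}$, one in $l^2_{\mathbf{k}}$), not by the resonant estimate~(\ref{eq:h1esti}), which is specific to $\Gamma_0$. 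The regularity $s=3\alpha$, $\alpha>d/2$, is consumed differently: $\alpha>d/2$ gives $h^{\alpha}_{\mathbf{k}}\hookrightarrow l^1_{\mathbf{k}}$, and the factor $3$ comes from the interpolation
\[
\|\widehat{G}\|_{L^1_\xi h^{\alpha}_{\mathbf{k}}}\lesssim\|\langle\xi\rangle^{2/3}\widehat{G}\|_{L^2_\xi h^{\alpha}_{\mathbf{k}}}\lesssim\|D_y^{3\alpha}G\|_{L^2_{v,y}}^{1/3}\|\partial_v G\|_{L^2_{v,y}}^{2/3}\lesssim\|u\|_{X^+},
\]
which is what produces the $t^{-5/8}(1+t)^{2\delta}$ in the fixed-time bound for $\mathcal{O}_1^t$. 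Once you replace your $\omega$-summation argument by this, the rest of your outline goes through as written.
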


\begin{proof}

First observe that
\[
e^{i\Psi(t)}=\frac{1}{i\Psi^{\prime}(t)}\left(\partial_{t}e^{i\Psi(t)}\right)\text{ and }\partial_{t}\left(\frac{1}{i\Psi^{\prime}(t)}\right)=-\frac{\Psi^{\prime\prime}\left(t\right)}{i\left(\Psi^{\prime}(t)\right)^{2}},
\]
where $\Psi^{\prime\prime}\left(t\right)=\frac{1}{t^{3}}\left(\left(\xi-\kappa-\eta\right)^{2}+\xi^{2}\right)$
. Thus it is natural to define the trilinear form as follows:
\begin{align*}
 & \mathcal{O}_{1}^{t}\left[f_{1},f_{2},f_{3},f_{4}\right]\\
:= & t^{-1}\sum_{\mathbf{k}}\sum_{\omega\neq0}\sum_{\Gamma_{\omega}\left(\mathbf{k}\right)}\int\iint\mathcal{X}_{1}\frac{e^{i\Psi(t)}}{i\Psi^{\prime}}\widehat{f_{1}}\left(t,\kappa,\mathbf{k}_{1}\right)\widehat{\overline{f_{2}}}\left(t,\xi-\eta-\kappa,\mathbf{k}_{2}\right)\widehat{f_{3}}\left(t,\eta,\mathbf{k}_{3}\right)\widehat{\overline{f_{4}}}\left(t,-\xi,\mathbf{k}\right)d\kappa d\eta d\xi,
\end{align*}
\begin{align*}
\mathcal{O}_{2}^{t}\left[f_{1},f_{2},f_{3},f_{4}\right] & :=t^{-1}\sum_{\mathbf{k}}\sum_{\omega\neq0}\sum_{\Gamma_{\omega}\left(\mathbf{k}\right)}\int\iint\left[\mathcal{X}_{1}\frac{\Psi^{\prime\prime}}{i\left(\Psi^{\prime}\right)^{2}}+\frac{\partial_{t}\mathcal{X}_{1}}{i\Psi^{\prime}}\right]e^{i\Psi(t)}\widehat{f_{1}}\\
 & \quad\quad\left(t,\kappa,\mathbf{k}_{1}\right)\widehat{\overline{f_{2}}}\left(t,\xi-\eta-\kappa,\mathbf{k}_{2}\right)\widehat{f_{3}}\left(t,\eta,\mathbf{k}_{3}\right)\widehat{\overline{f_{4}}}\left(t,-\xi,\mathbf{k}\right)d\kappa d\eta d\xi.
\end{align*}

Then observe that
\begin{align*}
e_{3}\left(t\right)= & \partial_{t}\left(\mathcal{O}_{1}^{t}\left[G,\mathcal{Z},G,\mathcal{Z}\right]\right)+\mathcal{O}_{2}^{t}\left[G,\mathcal{Z},G,\mathcal{Z}\right]+t^{-1}\mathcal{O}_{1}^{t}\left[G,\mathcal{Z},G,\mathcal{Z}\right]\\
 & -\mathcal{O}_{1}^{t}\left[\partial_{t}G,\mathcal{Z},G,\mathcal{Z}\right]-\mathcal{O}_{1}^{t}\left[G,\partial_{t}\mathcal{Z},G,\mathcal{Z}\right]-\mathcal{O}_{1}^{t}\left[G,\mathcal{Z},\partial_{t}G,\mathcal{Z}\right]-\mathcal{O}_{1}^{t}\left[G,\mathcal{Z},G,\partial_{t}\mathcal{Z}\right].
\end{align*}

We start with the estimates associated with $\mathcal{O}_{1}^{t}$:

\begin{lemma}\label{o1estiamte}

Assume that $\mathcal{O}_{1}^{t}$ defined as above. Then for any
$t\geq1$
\begin{equation}
\left|\mathcal{O}_{1}^{t}\left[G,\mathcal{Z},G,\mathcal{Z}\right]\right|\lesssim\epsilon^{2}D^{\frac{5}{3}}t^{-\frac{5}{8}}\left(1+t\right)^{\frac{5}{3}\delta}\left\Vert \nu\left(t\right)\right\Vert _{L_{x,y}^{2}}^{2},
\end{equation}
\begin{equation}
\left|\mathcal{O}_{1}^{t}\left[\partial_{t}G,\mathcal{Z},G,\mathcal{Z}\right]\right|,\left|\mathcal{O}_{1}^{t}\left[G,\mathcal{Z},\partial_{t}G,\mathcal{Z}\right]\right|\lesssim\epsilon^{4}D^{\frac{10}{3}}t^{-\frac{13}{8}}\left(1+t\right)^{\frac{10}{3}\delta}\left\Vert \nu\right\Vert _{L_{x,y}^{2}}^{2},
\end{equation}
and 
\begin{equation}
\left|\mathcal{O}_{1}^{t}\left[G,\partial_{t}\mathcal{Z},G,\mathcal{Z}\right]\right|,\left|\mathcal{O}_{1}^{t}\left[G,\mathcal{Z},G,\partial_{t}\mathcal{Z}\right]\right|\lesssim\epsilon^{4}D^{\frac{10}{3}}t^{-\frac{13}{8}}\left(1+t\right)^{\frac{10}{3}\delta}\left\Vert \nu\right\Vert _{L_{x,y}^{2}}^{2}.
\end{equation}

\end{lemma}

\begin{proof}

Here we introduce the elementary inequality for $a>\frac{1}{2}$
\begin{equation}
\left\Vert f\right\Vert _{L_{x}^{1}\left(\mathbb{R}\right)}\lesssim\left\Vert f\right\Vert _{L_{x}^{2}\left(\mathbb{R}\right)}^{1-\frac{1}{2a}}\left\Vert \left|x\right|^{a}f\right\Vert _{L_{x}^{2}\left(\mathbb{R}\right)}^{\frac{1}{2a}}.\label{eq:ele}
\end{equation}
By (\ref{eq:ele}) and let $a=\frac{2}{3}$, we have the following
inequality by interpolation

\begin{equation}
\left\Vert \widehat{G}\left(t,\xi,\mathbf{k}\right)\right\Vert _{L_{\xi}^{1}h_{\mathbf{k}}^{\alpha}}\lesssim\left\Vert \widehat{G}(t,\xi,\mathbf{k})\right\Vert _{L_{\xi}^{2}h_{\mathbf{k}}^{\alpha}}^{\frac{1}{4}}\left\Vert \left|\xi\right|^{\frac{2}{3}}\widehat{G}\left(t,\xi,\mathbf{k}\right)\right\Vert _{L_{\xi}^{2}h_{\mathbf{k}}^{\alpha}}^{\frac{3}{4}}\lesssim\left\Vert \widehat{G}\right\Vert _{L_{\xi}^{2}l_{\mathbf{k}}^{2}}^{\frac{1}{6}}\left\Vert \widehat{G}\right\Vert _{L_{\xi}^{2}h_{\mathbf{k}}^{s}}^{\frac{1}{3}}\left\Vert \left|\xi\right|\widehat{G}\right\Vert _{L_{\xi}^{2}l_{\mathbf{k}}^{2}}^{\frac{1}{2}}.\label{eq:l1est}
\end{equation}
By (\ref{eq:l1est}), Lemma \ref{elementarylp}, (\ref{eq:gammabound}),(\ref{eq:mass})
and Minkowski's integral inequality
\begin{align*}
\left|\mathcal{O}_{1}^{t}\left[G,\mathcal{Z},G,\mathcal{Z}\right]\right| & \lesssim t^{-\frac{5}{8}}\left\Vert \widehat{G}\left(t,\xi,\mathbf{k}\right)\right\Vert _{L_{\xi}^{1}h_{\mathbf{k}}^{\alpha}}^{2}\left\Vert \widehat{\overline{\mathcal{Z}}}\left(t,\xi,\mathbf{k}\right)\right\Vert _{L_{\xi}^{2}l_{\mathbf{k}}^{2}}^{2}\lesssim t^{-\frac{5}{8}}\left\Vert \widehat{G}\right\Vert _{L_{\xi}^{2}l_{\mathbf{k}}^{2}}^{\frac{1}{3}}\left\Vert \widehat{G}\right\Vert _{L_{\xi}^{2}h_{\mathbf{k}}^{s}}^{\frac{2}{3}}\left\Vert \left|\xi\right|\widehat{G}\right\Vert _{L_{\xi}^{2}l_{\mathbf{k}}^{2}}\left\Vert \mathcal{Z}\right\Vert _{L_{v,y}^{2}}^{2}\\
 & \lesssim t^{-\frac{5}{8}}\left\Vert G\right\Vert _{L_{v,y}^{2}}^{\frac{1}{3}}\left\Vert G\right\Vert _{L_{\xi}^{2}H_{y}^{s}}^{\frac{2}{3}}\left\Vert \partial_{v}G\right\Vert _{L_{v,y}^{2}}\left\Vert \mathcal{Z}\right\Vert _{L_{v,y}^{2}}^{2}\lesssim t^{-\frac{5}{8}}\left\Vert D_{y}^{s}u\right\Vert _{L_{x,y}^{2}}^{\frac{2}{3}}\left\Vert L_{x}u\right\Vert _{L_{x,y}^{2}}\left\Vert \nu\right\Vert _{L_{x,y}^{2}}^{2}.
\end{align*}

From (\ref{eq:l1est}) and Bernstein's inequality, for $t>1$ there
is the bound 
\begin{align*}
\left\Vert \partial_{t}\widehat{G}\left(t,\xi,\mathbf{k}\right)\right\Vert _{L_{\xi}^{1}h_{\mathbf{k}}^{\alpha}} & \lesssim t^{-1}\left\Vert \widehat{w}(t,\xi,\mathbf{k})\right\Vert _{L_{\xi}^{1}h_{\mathbf{k}}^{\alpha}}^{3}+t^{-1}\left\Vert \widehat{w}\left(t,\xi\mathbf{k}\right)\right\Vert _{L_{\xi}^{1}h_{\mathbf{k}}^{\alpha}}\\
 & \lesssim\epsilon^{3}D^{\frac{5}{2}}t^{-1}\left(1+t\right)^{\frac{5}{2}\delta}+\epsilon D^{\frac{6}{5}}t^{-1}\left(1+t\right)^{\frac{5}{6}\delta}\lesssim\epsilon^{3}D^{\frac{5}{2}}t^{-1}\left(1+t\right)^{\frac{5}{2}\delta}.
\end{align*}
In the last inequality we consider the case when $t\gg1$; therefore
the first term will increase faster than the second term. 

Hence using the same procedure, there is the estimate
\begin{align*}
 & \left|\mathcal{O}_{1}^{t}\left[\partial_{t}G,\mathcal{Z},G,\mathcal{Z}\right]\right|\\
\lesssim & t^{-\frac{5}{8}}\left\Vert \partial_{t}\widehat{G}\left(t,\xi,\mathbf{k}\right)\right\Vert _{L_{\xi,}^{1}h_{\mathbf{k}}^{\alpha}}\left\Vert \widehat{G}\left(t,\xi,\mathbf{k}\right)\right\Vert _{L_{\xi}^{1}h_{\mathbf{k}}^{\alpha}}\left\Vert \mathcal{Z}\right\Vert _{L_{v,y}^{2}}^{2}\lesssim t^{-\frac{13}{8}}\left\Vert w\right\Vert _{L_{v,y}^{2}}^{\frac{2}{3}}\left\Vert w\right\Vert _{L_{v}^{2}H_{y}^{s}}^{\frac{4}{3}}\left\Vert \partial_{v}w\right\Vert _{L_{v,y}^{2}}^{2}\left\Vert \mathcal{Z}\right\Vert _{L_{v,y}^{2}}^{2}\\
\lesssim & D^{\frac{10}{3}}\epsilon^{4}t^{-\frac{13}{8}}\left(1+t\right)^{\frac{10}{3}\delta}\left\Vert \nu\right\Vert _{L_{x,y}^{2}}^{2}.
\end{align*}
Also there is the bound
\[
\left\Vert \partial_{t}\mathcal{Z}\right\Vert _{L_{v,y}^{2}}\lesssim t^{-1}\left\Vert w^{2}V\right\Vert _{L_{v,y}^{2}}\lesssim t^{-1}\left\Vert w\right\Vert _{L_{v}^{\infty}H_{y}^{\alpha}}^{2}\left\Vert V\right\Vert _{L_{v,y}^{2}}\lesssim D^{\frac{5}{3}}\epsilon^{2}t^{-1}\left(1+t\right)^{\frac{5}{3}\delta}\left\Vert \nu\right\Vert _{L_{x,y}^{2}},
\]
and therefore
\begin{align*}
\left|\mathcal{O}_{1}^{t}\left[G,\partial_{t}\mathcal{Z},G,\mathcal{Z}\right]\right| & \lesssim t^{-\frac{5}{8}}\left\Vert \widehat{G}\right\Vert _{L_{\xi}^{1}h_{\mathbf{k}}^{\alpha}}^{2}\left\Vert \partial_{t}\mathcal{Z}\right\Vert _{L_{v,y}^{2}}\left\Vert \mathcal{Z}\right\Vert _{L_{v,y}^{2}}\lesssim D^{\frac{10}{3}}\epsilon^{4}t^{-\frac{13}{8}}\left(1+t\right)^{\frac{10}{3}\delta}\left\Vert \nu\right\Vert _{L_{x,y}^{2}}^{2}.
\end{align*}

\end{proof}

It remains to establish the bound of $\mathcal{O}_{2}^{t}$. Note
that inside $\Omega_{t}^{1}$ we have the bounds 
\[
\left|\frac{\Psi^{\prime\prime}}{\left(\Psi^{\prime}\right)^{2}}\right|\lesssim t^{-\frac{1}{4}}\left|\omega\right|^{-1}\text{ for }\omega\neq0.
\]
By a direct computation we have that
\begin{align*}
\partial_{t}\mathcal{X}_{1} & =-\mathcal{X}_{1,\omega}^{\prime}\mathcal{X}_{2,\omega}\left[\frac{3}{16}t^{-\frac{5}{8}}\left(\frac{\left(\xi-\eta-\kappa\right)^{2}}{t^{2}\omega}-\frac{1}{2}\right)-t^{-\frac{5}{8}}\left(\frac{\left(\xi-\eta-\kappa\right)^{2}}{t^{2}\omega}\right)\right]\\
 & \quad\quad-\mathcal{X}_{1,\omega}\mathcal{X}_{2,\omega}^{\prime}\left[\frac{3}{16}t^{-\frac{5}{8}}\left(\frac{\xi^{2}}{t^{2}\omega}-\frac{1}{2}\right)-t^{-\frac{5}{8}}\left(\frac{\xi^{2}}{t^{2}\omega}\right)\right].
\end{align*}
Hence we have 
\begin{equation}
\left|\partial_{t}\mathcal{X}_{1}\right|\lesssim t^{-\frac{5}{8}},\mbox{ and }\left|\frac{\partial_{t}\mathcal{X}_{1}}{\Psi^{\prime}}\right|\lesssim t^{-\frac{1}{4}}\left|\omega\right|^{-1}.\label{eq:Xi_esti}
\end{equation}

\begin{lemma}\label{LemmaO_2}Assuming $t\geq1$, we have

\begin{equation}
\left|\mathcal{O}_{2}^{t}\left[G,\mathcal{Z},G,\mathcal{Z}\right]\right|\lesssim D^{\frac{5}{3}}\epsilon^{2}t^{-\frac{5}{4}}\left(1+t\right)^{\frac{5}{3}\delta}\left\Vert \nu\left(t\right)\right\Vert _{L_{x,y}^{2}}^{2}.
\end{equation}

\end{lemma}

Applying the same estimate as Lemma (\ref{o1estiamte}) one can obtain
the bound. Hence we have

\begin{align*}
\left|\int_{1}^{T}\left\langle S(-t)V,e_{3}\left(t\right)\right\rangle _{L_{v,y}^{2}}dt\right|\lesssim & \left.\left|\mathcal{O}_{1}^{t}\left[G,\mathcal{Z},G,\mathcal{Z}\right]\right|\right|_{t=1}^{T}+\int_{1}^{T}\left|\mathcal{O}_{2}^{t}\left[G,\mathcal{Z},G,\mathcal{Z}\right]\right|dt+\left|t^{-1}\mathcal{O}_{1}^{t}\left[G,\mathcal{Z},G,\mathcal{Z}\right]\right|dt\\
+\int_{1}^{T} & \left|\mathcal{O}_{1}^{t}\left[G_{t},\mathcal{Z},G,\mathcal{Z}\right]\right|+\left|\mathcal{O}_{1}^{t}\left[G,\mathcal{Z}_{t},G,\mathcal{Z}\right]\right|+\left|\mathcal{O}_{1}^{t}\left[G,\mathcal{Z},G_{t},\mathcal{Z}\right]\right|+\left|\mathcal{O}_{1}^{t}\left[G,\mathcal{Z},G,\mathcal{Z}_{t}\right]\right|dt.
\end{align*}
Using the estimates from $\mathcal{O}_{1}^{t}$ and $\mathcal{O}_{2}^{t}$,
the proof of Lemma \ref{LemmaO_2} is complete.

\end{proof}

\subsection{Almost resonant interactions.}

The remaining case corresponds to frequency interactions localized
in the region $\Omega_{t}^{2}$, where the inequality $\left|\Psi^{\prime}\left(t\right)\right|\lesssim t^{-\frac{3}{8}}$
holds. This case corresponds to almost in resonance both in $y$ and
$v$ frequency. Due to the smallness of $\left|\Psi^{\prime}\left(t\right)\right|$,
we are not able to perform normal form correction and obtain better
decay properties. To prove (\ref{eq:nugrowth}), it is crucial to
have the $t^{-1}$ decay of $\left|\left\langle S(-t)V,e_{4}\right\rangle _{L_{x,y}^{2}}\right|$.
Since on $\Omega_{t}^{2}$, the phase function $\Psi(t)$ is almost
constant in unit time scale, it is natural to divide the time interval
$[1,T]$ into unit time intervals. Inside each interval, we can replace
the phase function by a constant with small errors. After that we
apply a frequency localization in $v$. Then the quantity $e_{4}$
can be described as the output of interaction of linear waves, with
very small errors. Hence we can bound $e_{4}$ by Strichartz estimates
on torus, which gives us the desired bound $t^{-1}\left\Vert G\right\Vert _{L_{v}^{\infty}H_{y}^{1}}^{2}\left\Vert \nu\right\Vert _{L_{v,y}^{2}}^{2}$
with integrable errors.

Here we let $T_{1}=1$, $T_{n+1}=T_{n}+2\pi$. For $t\in\left[T_{n},T_{n+1}\right)$,
we need to separate $\left\langle S(-t)V,e_{4}\right\rangle _{L_{x,y}^{2}}$
into two parts:

\begin{align*}
\left\langle S(-t)V,e_{4}\right\rangle _{L_{x,y}^{2}} & :=\left\langle S(-t)V,M(t)\right\rangle _{L_{x,y}^{2}}+\left\langle S(-t)V,\text{err}\left(t\right)\right\rangle _{L_{x,y}^{2}},
\end{align*}
where $M(t)$ is a product of linear flow. For $M(t)$ we have the
bound 
\begin{equation}
\left\Vert M\left(t\right)\right\Vert _{L_{t}^{2}\left(T_{n},T_{n+1};L_{v,y}^{2}\right)}\lesssim t^{-1}\left\Vert G(T_{n},v,y)\right\Vert _{L_{v}^{\infty}H_{y}^{1}}^{2}\left\Vert V\left(T_{n},v,y\right)\right\Vert _{L_{v,y}^{2}}.
\end{equation}
For the error term $\text{err}(t)$, the decay rate is faster than
$t^{-1}$, by using the bootstrap assumptions (\ref{eq:bootsass1}),
(\ref{eq:bootsass2}), we can obtain the bound:
\[
\int_{1}^{T}\left\Vert \text{err}\left(t\right)\right\Vert _{L_{v,y}^{2}}\left\Vert \nu\right\Vert _{L_{x,y}^{2}}dt\lesssim\epsilon.
\]

\begin{lemma}

For $T\geq1$, we will have
\begin{equation}
\begin{aligned}\int_{1}^{T}\left|\left\langle S(-t)V,e_{4}\left(t\right)\right\rangle _{L_{v,y}^{2}}\right|dt & \lesssim\int_{1}^{T}D^{2}\epsilon^{2}t^{-1}\left\Vert \nu\left(t\right)\right\Vert _{L_{x,y}^{2}}^{2}dt\\
+ & \int_{1}^{T}\left[D^{\frac{11}{6}}\epsilon^{2}t^{-\frac{65}{64}}\left(1+t\right)^{\frac{11}{6}\delta}+D^{\frac{5}{3}}\epsilon^{2}t^{-\frac{5}{4}}\left(1+t\right)^{\frac{5}{3}\delta}+D^{\frac{73}{42}}\epsilon^{2}t^{-\frac{25}{24}}\left(1+t\right)^{\frac{73}{42}\delta}\right]\left\Vert \nu(t)\right\Vert _{L_{x,y}^{2}}^{2}dt\\
+ & \int_{1}^{T}\left[D^{\frac{11}{3}}\epsilon^{4}t^{-2}\left(1+t\right)^{\frac{5}{3}\delta}+D^{\frac{10}{3}}\epsilon^{4}t^{-2}\left(1+t\right)^{\frac{10}{3}\delta}\right]\left\Vert \nu\left(t\right)\right\Vert _{L_{x,y}^{2}}^{2}dt.
\end{aligned}
\label{eq:e4bound}
\end{equation}

\end{lemma}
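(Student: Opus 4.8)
The plan is to run the decomposition already indicated above: write
\[
\langle S(-t)V,e_4(t)\rangle_{L^2_{v,y}}=\langle S(-t)V,M(t)\rangle_{L^2_{v,y}}+\langle S(-t)V,\text{err}(t)\rangle_{L^2_{v,y}},
\]
where on each unit-length interval $[T_n,T_{n+1})$, $T_{n+1}=T_n+2\pi$, the main piece $M(t)$ is produced from $e_4(t)$ by replacing the oscillatory phase $e^{i\Psi(t)}$ by the constant $e^{i\Psi(T_n)}$, freezing the cutoff $\mathcal{X}_2$ and the profiles $\gamma,V$ at time $T_n$, and localizing the $v$-frequencies dyadically; after these reductions $M(t)$ becomes a quadrilinear pairing of linear $\mathbb{T}^d$-Schr\"odinger flows on the unit interval, to which the bilinear Strichartz estimate on $\mathbb{T}^d$ ($1\le d\le 4$) applies, so that by construction $\|M\|_{L^2_t([T_n,T_{n+1});L^2_{v,y})}\lesssim t^{-1}\|G(T_n)\|_{L^\infty_v H^1_y}^2\|V(T_n)\|_{L^2_{v,y}}$. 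Throughout I will use $\|\nu(t)\|_{L^2_{x,y}}=\|V(t)\|_{L^2_{v,y}}=\|S(-t)V(t)\|_{L^2_{v,y}}$.

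I would first dispatch the main term. On $[T_n,T_{n+1})$, Cauchy--Schwarz in $(t,v,y)$ gives
\[
\int_{T_n}^{T_{n+1}}\bigl|\langle S(-t)V,M(t)\rangle_{L^2_{v,y}}\bigr|\,dt\;\le\;\Bigl(\int_{T_n}^{T_{n+1}}\|\nu(t)\|_{L^2_{x,y}}^2\,dt\Bigr)^{1/2}\|M\|_{L^2_t([T_n,T_{n+1});L^2_{v,y})}.
\]
The decisive feature of the bilinear Strichartz bound on $M$ is that it costs only the \emph{non-growing} norm $\|G(T_n)\|_{L^\infty_v H^1_y}=\|\gamma(T_n)\|_{L^\infty_v H^1_y}\lesssim\sqrt{T_n}\,\|u(T_n)\|_{L^\infty_x H^1_y}\lesssim D\epsilon$, furnished by \eqref{eq:bootsass1}, rather than the slowly growing energies $\|L_x u\|_{L^2_{x,y}}$ or $\|D_y^s u\|_{L^2_{x,y}}$; this is exactly what the unit-time torus machinery buys. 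Combining this with $\|V(T_n)\|_{L^2_{v,y}}=\|\nu(T_n)\|_{L^2_{x,y}}\lesssim(\int_{T_n}^{T_{n+1}}\|\nu(t)\|_{L^2_{x,y}}^2\,dt)^{1/2}$ --- slow variation of $\|\nu\|^2$ on a unit interval, obtained by integrating $\partial_t\|\nu\|_{L^2}^2$ via \eqref{eq:linearized} --- together with $T_n\sim t$ on $[T_n,T_{n+1})$, and summing over $n$, produces the leading term $\int_1^T D^2\epsilon^2 t^{-1}\|\nu(t)\|_{L^2_{x,y}}^2\,dt$ of \eqref{eq:e4bound}.

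Next I would absorb $\text{err}(t)=e_4(t)-M(t)$, which on $[T_n,T_{n+1})$ splits into the mismatches from freezing, in turn, the phase, the cutoff, and the profiles. The phase mismatch has size $\sup_{[T_n,T_{n+1})}|\Psi(t)-\Psi(T_n)|\lesssim t^{-3/8}$, by integrating the bound $|\Psi'(t)|\lesssim t^{-3/8}$ valid on $\Omega_t^2$ over a unit interval; the cutoff mismatch is controlled by $|\partial_t\mathcal{X}_2|\lesssim t^{-5/8}$, exactly as already computed in the analysis of $e_3$; and the profile mismatches are controlled by $\|G(t)-G(T_n)\|\lesssim\|\partial_t G\|$ and $\|\mathcal{Z}(t)-\mathcal{Z}(T_n)\|\lesssim\|\partial_t\mathcal{Z}\|$, where \eqref{eq:gamma}, \eqref{eq:V} and the computations in Lemma \ref{o1estiamte} give $\|\partial_t G\|_{\dot H^1_v L^2_y(L^2_v H^s_y)}\lesssim\epsilon^3 D^{8/3}t^{-1}(1+t)^{8\delta/3}$ and $\|\partial_t\mathcal{Z}\|_{L^2_{v,y}}\lesssim D^{5/3}\epsilon^2 t^{-1}(1+t)^{5\delta/3}\|\nu\|_{L^2_{x,y}}$. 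Each resulting piece is again a mildly perturbed product of linear $\mathbb{T}^d$-waves, so I would estimate it by the same Cauchy--Schwarz-plus-bilinear-Strichartz scheme on the unit interval; the extra gains $t^{-3/8}$, $t^{-5/8}$, and $t^{-1}$ (respectively), together with the dyadic losses from the $v$-Littlewood--Paley decomposition and the summation over the admissible $\omega$ inside $\Omega_t^2$, reproduce the $D^{11/6}\epsilon^2 t^{-65/64}(1+t)^{11\delta/6}$, $D^2\epsilon^2 t^{-25/24}(1+t)^{2\delta}$, and $D^{11/3}\epsilon^4 t^{-2}(1+t)^{5\delta/3}$, $D^4\epsilon^4 t^{-2}(1+t)^{4\delta}$ terms of \eqref{eq:e4bound}. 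Summing over $n$ and combining with the main term yields \eqref{eq:e4bound}.

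The crux --- and the main obstacle --- is to arrange $M(t)$ to be genuinely a pairing of \emph{linear} torus flows so that the bilinear Strichartz estimate applies and costs only $\|\gamma\|_{L^\infty_v H^1_y}^2$, which the bootstrap controls with no power of $t$, rather than the growing higher-regularity norms; this is what forces the unit-time localization in the first place, since the torus Strichartz estimates hold only on bounded time intervals. Making the freezing errors integrable hinges on the quantitative dichotomy $|\Psi'|\gtrsim t^{-3/8}\omega$ inside $\Omega_t^1$ versus $|\Psi'|\lesssim t^{-3/8}$ inside $\Omega_t^2$, and the heaviest technical part is the careful $v$-frequency localization together with the control of the $\omega$-sum within $\Omega_t^2$, which must be performed uniformly over the dyadic time intervals.
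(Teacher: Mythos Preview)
Your outline has the right architecture --- unit-interval freezing, a main piece estimated via bilinear Strichartz on $\mathbb{T}^d$, and integrable errors --- but it misses one structural step that the paper's argument hinges on, and as a result several of your claims are not actually true as written.

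The missing step is the \emph{truncation of $G$ to low $y$-frequencies}, $\tilde G:=P^y_{\le t^{1/16}}G$, performed \emph{before} any freezing. This produces the paper's $\mathrm{err}_1$ and is exactly where the $D^{11/6}\epsilon^2 t^{-65/64}(1+t)^{11\delta/6}$ term comes from (via $\|P^y_{\ge t^{1/16}}\gamma\|_{L^\infty_v H^\alpha_y}$), not from ``dyadic $v$-Littlewood--Paley losses'' or ``$\omega$-summation'' as you suggest. More importantly, this truncation is what forces $|\omega|\le t^{1/8}$, and without that bound several of your assertions fail:
\begin{itemize}
\item On $\Omega_t^2$ one only has $|\Psi'(t)|\lesssim \omega\, t^{-3/8}$, not $|\Psi'(t)|\lesssim t^{-3/8}$ as you claim; so the phase-freezing error on a unit interval is $O(\omega t^{-3/8})$, which is useless unless $\omega$ has been restricted. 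After the truncation, $|\Psi'\mathcal{X}_2|\lesssim t^{-1/4}$, which is what the paper actually uses for $\mathrm{err}_2$.
\item The sets $A(\omega)=\{\xi:\;|\xi^2/(T_n^2\omega)-\tfrac12|<2T_n^{-3/8}\}$, which carry the $v$-support of $\mathcal{Z}$ on $\Omega_t^2$, are pairwise disjoint for different $\omega$ \emph{only} once $\omega\le t^{1/8}$; this disjointness is what converts the $\omega$-sum into an orthogonal decomposition and lets the bilinear Strichartz bound cost just $\|\gamma\|_{L^\infty_v H^1_y}^2\|V\|_{L^2_{v,y}}$.
\end{itemize}

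A second, related gap is in how $M(t)$ is actually turned into a product of linear torus flows. This is not achieved by ``dyadic $v$-localization''; rather, after freezing at $T_n$ one localizes $\mathcal{Z}$ to $A(\omega,k)$ (intervals of length $\sqrt{T_n}$ near $\pm T_n\sqrt{\omega/2}$), then Taylor-expands $\Psi(T_n)$ around the center $\xi_0$ so that the $(\kappa,\eta)$-dependence becomes a pure translation $v\mapsto v-\xi_0/T_n$ up to an $O(1)$ remainder. The remainder produces the paper's $\mathrm{err}_3$, which is where the $D^2\epsilon^2 t^{-25/24}(1+t)^{2\delta}$ term actually originates --- again not from freezing. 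Only after this reduction does one introduce the auxiliary torus-time variable $\sigma$ and recognize $M$ as a genuine product $e^{i\sigma\Delta_y/2}\tilde G\cdot\overline{e^{i\sigma\Delta_y/2}\mathcal{Z}_\pm}$, to which the bilinear Strichartz estimate on $\mathbb{T}^d$ applies. The error pieces $\mathrm{err}_1,\mathrm{err}_2,\mathrm{err}_3$ themselves are handled by elementary Young/Parseval bounds in frequency, not by bilinear Strichartz as you propose.
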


Recall that $P^{y}$ denotes the frequency projection of $y$. We
may restrict the case to where $\max\left\{ \left|k_{1}\right|,\left|k_{3}\right|\right\} \leq t^{\frac{1}{16}}$,
and since $s>\frac{3d}{2}\geq\frac{3}{2}$, we peel off the high $y$-frequency
in $\gamma$:

\begin{lemma}Define 
\begin{align*}
\text{err}_{1}\left(t\right) & :=e_{4}\left(t,v,y\right)\\
 & \quad-t^{-1}\mathcal{F}_{\xi}^{-1}\sum_{\mathbf{k}}\left[\sum_{\omega\neq0}\sum_{\Gamma_{\omega}\left(\mathbf{k}\right)}\iint\mathcal{X}_{2}e^{i\Psi(t)}\widehat{\tilde{G}}\left(t,\kappa,\mathbf{k}_{1}\right)\widehat{\overline{\mathcal{Z}}}\left(t,\xi-\eta-\kappa,\mathbf{k}_{2}\right)\widehat{\tilde{G}}\left(t,\eta,\mathbf{k}_{3}\right)d\kappa d\eta\right]e^{i\mathbf{k}\cdot y},
\end{align*}
where 
\[
\tilde{G}=P_{\leq t^{\frac{1}{16}}}^{y}G.
\]
Then we have 
\begin{equation}
\left\Vert \text{err}_{1}\left(t\right)\right\Vert _{L_{v,y}^{2}}\lesssim D^{\frac{11}{6}}\epsilon^{2}t^{-\frac{65}{64}}\left(1+t\right)^{\frac{11}{6}\delta}\left\Vert \nu\right\Vert _{L_{x,y}^{2}}.\label{eq:err_1}
\end{equation}

\end{lemma}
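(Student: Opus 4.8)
\emph{Proof proposal.} The point is to read $\mathrm{err}_1$ as precisely the portion of $e_4$ built out of high $y$-frequencies of $\gamma$. Writing $G=\tilde G+P_{>t^{1/16}}^{y}G$ and inserting this in the two outer ($\gamma$-)slots of $e_4$, the difference $\mathrm{err}_1$ telescopes to a sum of (at most three) trilinear expressions, each of the schematic form
\[
t^{-1}\mathcal{F}_{\xi}^{-1}\sum_{\mathbf{k}}\Big[\sum_{\omega\neq0}\sum_{\Gamma_{\omega}\left(\mathbf{k}\right)}\iint\mathcal{X}_{2}\,e^{i\Psi}\,\widehat{P_{>t^{1/16}}^{y}G}\,\widehat{\overline{\mathcal{Z}}}\,\widehat{G}\,d\kappa\,d\eta\Big]e^{i\mathbf{k}\cdot y},
\]
with the projection $P_{>t^{1/16}}^{y}$ attached to one outer factor and $G$ (or $\tilde G$, which only helps) in the other. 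Thus I would reduce matters to bounding each such term in $L_{v,y}^{2}$.

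Since $|\mathcal{X}_{2}e^{i\Psi}|\le1$, $S(-t)$ is unitary, and enlarging $\sum_{\omega\neq0}\sum_{\Gamma_{\omega}(\mathbf{k})}$ to the full momentum-conserving sum $\sum_{\mathcal{M}(\mathbf{k})}$ only increases the norm once all Fourier data are replaced by their absolute values, the usual product estimate controls each term by $t^{-1}$ times
\[
\big\Vert (P_{>t^{1/16}}^{y}\gamma)\,\gamma\,V\big\Vert_{L_{v,y}^{2}}\lesssim\Vert P_{>t^{1/16}}^{y}\gamma\Vert_{L_{v}^{\infty}H_{y}^{\alpha}}\,\Vert\gamma\Vert_{L_{v}^{\infty}H_{y}^{\alpha}}\,\Vert V\Vert_{L_{v,y}^{2}},
\]
using the algebra property of $H_{y}^{\alpha}$ and $H_{y}^{\alpha}\hookrightarrow L_{y}^{\infty}$ (legitimate because $\alpha>\frac d2$). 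Here $\Vert V\Vert_{L_{v,y}^{2}}=\Vert\nu\Vert_{L_{x,y}^{2}}$; because $\nu$ carries no control beyond $L^{2}$ this placement is forced, which in turn pins both copies of $\gamma$ to $L_{v}^{\infty}H_{y}^{\alpha}$. For the intact factor I use \eqref{eq:alpha bound} together with \eqref{eq:mass} and Hypothesis \ref{hyp}, obtaining $\Vert\gamma\Vert_{L_{v}^{\infty}H_{y}^{\alpha}}\lesssim D^{5/6}\epsilon(1+t)^{5\delta/6}$. For the truncated factor I cash in the spare $y$-regularity: because $s=3\alpha>\frac{3d}{2}\ge\frac32$, the Littlewood--Paley bound $\Vert P_{>t^{1/16}}^{y}\gamma\Vert_{L_{v}^{\infty}H_{y}^{\alpha}}\lesssim t^{-c}\Vert\gamma\Vert_{L_{v}^{\infty}H_{y}^{\beta}}$ holds for a suitable $\alpha<\beta\le s$ and $c>0$, and the right side is estimated by \eqref{eq:gammabound}, \eqref{eq:alpha bound}, a Bernstein inequality in $v$ (licit since $\gamma=P_{\le\sqrt t}w$ has $v$-frequency $\le\sqrt t$) and Hypothesis \ref{hyp}, at the cost of a controlled power of $t$. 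Assembling the powers of $D$, $\epsilon$, $(1+t)^{\delta}$ and $\Vert\nu\Vert_{L_{x,y}^{2}}$ with the $t^{-1}$ prefactor and the gain $t^{-c}$ yields \eqref{eq:err_1}; the truncation level $t^{1/16}$ is chosen exactly so that the net time exponent lands at $-\frac{65}{64}$, i.e.\ strictly below $-1$.

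The main obstacle is precisely this last balance. The gain produced by the $y$-frequency cutoff competes against the Bernstein-in-$v$ loss incurred when $\gamma$ is promoted from $H_{y}^{\alpha}$ to the stronger norm $H_{y}^{\beta}$, all inside the rigid scheme in which $V$ (hence $\nu$) must sit in $L_{v,y}^{2}$ and both copies of $\gamma$ in $L_{v}^{\infty}H_{y}^{\alpha}$; there is essentially no room to rearrange Hölder exponents. It is this competition that dictates the cutoff $t^{1/16}$ — a coarser cutoff overspends the $v$-Bernstein budget, a finer one gains too little against $H_{y}^{s}$ — and that accounts for the unusual exponent $\frac{65}{64}$; only its being $>1$ is used afterwards, in the Grönwall step leading to \eqref{eq:e4bound}.
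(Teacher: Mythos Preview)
Your proposal is correct and follows the same route as the paper: telescope $G=\tilde G+P^{y}_{>t^{1/16}}G$ in the two outer slots of $e_{4}$, bound by $t^{-1}\Vert P^{y}_{>t^{1/16}}\gamma\Vert_{L_{v}^{\infty}H_{y}^{\alpha}}\Vert\gamma\Vert_{L_{v}^{\infty}H_{y}^{\alpha}}\Vert V\Vert_{L_{v,y}^{2}}$, and extract a small gain in $t$ from the high-$y$-frequency factor.

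One point of presentation: your narrative of a ``competition'' between a $y$-Bernstein gain and a $v$-Bernstein loss is not how the paper (or the cleanest version of your own argument) runs. Rather than first passing to $L_{v}^{\infty}H_{y}^{\beta}$ and then paying a $v$-Bernstein price, the paper applies the interpolation behind \eqref{eq:alpha bound} directly to $P^{y}_{>t^{1/16}}\gamma$,
\[
\Vert P^{y}_{>t^{1/16}}\gamma\Vert_{L_{v}^{\infty}H_{y}^{\alpha}}\lesssim\Vert P^{y}_{>t^{1/16}}\gamma\Vert_{L_{v,y}^{2}}^{1/6}\,\Vert\partial_{v}\gamma\Vert_{L_{v,y}^{2}}^{1/2}\,\Vert D_{y}^{s}\gamma\Vert_{L_{v,y}^{2}}^{1/3},
\]
so that the gain comes solely from $\Vert P^{y}_{>t^{1/16}}\gamma\Vert_{L_{v,y}^{2}}^{1/6}\lesssim t^{-s/96}\Vert D_{y}^{s}\gamma\Vert_{L_{v,y}^{2}}^{1/6}$, and since $s>\tfrac{3}{2}$ this is at least $t^{-1/64}$. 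There is no competing loss at all, and the exponents $D^{11/6}$, $(1+t)^{11\delta/6}$, $t^{-65/64}$ drop out immediately from mass conservation and Hypothesis~\ref{hyp}. Your scheme with an intermediate $H_{y}^{\beta}$ is equivalent provided you use the same anisotropic interpolation (not a crude $t^{1/4}$ Sobolev-in-$v$, which would fail for $d\le3$); the paper's formulation simply makes this transparent.
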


\begin{proof} Here we follow the computations in (\ref{eq:alpha bound}),
and use the $y$ frequency is larger than $t^{\frac{1}{16}}$ to obtain
extra decay.

\begin{align*}
\left\Vert \text{err}_{1}\left(t\right)\right\Vert _{L_{v,y}^{2}}= & \frac{1}{t}\left\Vert P_{\geq t^{\frac{1}{16}}}^{y}\gamma\right\Vert _{L_{v}^{\infty}H_{y}^{\alpha}}\left\Vert \gamma\right\Vert _{L_{v}^{\infty}H_{y}^{\alpha}}\left\Vert V\right\Vert _{L_{v,y}^{2}}\\
\lesssim & \frac{1}{t}\left\Vert P_{\geq t^{\frac{1}{16}}}^{y}\gamma\right\Vert _{L_{v,y}^{2}}^{\frac{1}{6}}\left\Vert \gamma\right\Vert _{L_{v,y}^{2}}^{\frac{1}{6}}\left\Vert \partial_{v}\gamma\right\Vert _{L_{v,y}^{2}}\left\Vert D_{y}^{3\alpha}\gamma\right\Vert _{L_{v,y}^{2}}^{\frac{2}{3}}\left\Vert V\right\Vert _{L_{v,y}^{2}}\\
\lesssim & t^{-\frac{65}{64}}\left\Vert \gamma\right\Vert _{L_{v,y}^{2}}^{\frac{1}{6}}\left\Vert D_{y}^{s}\gamma\right\Vert _{L_{v,y}^{2}}^{\frac{5}{6}}\left\Vert \partial_{v}w\right\Vert _{L_{v,y}^{2}}\left\Vert V\right\Vert _{L_{v,y}^{2}}.
\end{align*}

\end{proof}

Thus we can reduce the problem to the case $0\leq\frac{1}{2}\omega\leq t^{\frac{1}{8}}$
since display $\frac{1}{2}\omega=\frac{1}{2}\left(\left|\mathbf{k}_{1}\right|^{2}+\left|\mathbf{k}_{3}\right|^{2}-\left|\mathbf{k}_{2}\right|^{2}-\left|\mathbf{k}_{4}\right|^{2}\right)\leq t^{\frac{1}{8}}$.

Next we consider unit time intervals, and show that we can freeze
$\widehat{\tilde{G}}$ and $\widehat{\mathcal{Z}}$ at end points.
First we show that on unit time intervals there are uniform bounds
for the linearized equation.

\begin{lemma}\label{lemmaZint}For any $s,t\in\left[T_{n},T_{n+1}\right)$,
and $s\leq t$, there are the bounds 
\begin{equation}
\left\Vert \mathcal{Z}\left(t\right)-\mathcal{Z}\left(s\right)\right\Vert _{L_{v,y}^{2}}\lesssim D^{\frac{5}{3}}\epsilon^{2}t^{-1}\left(1+t\right)^{\frac{5}{3}\delta}\left\Vert \mathcal{Z}\left(t\right)\right\Vert _{L_{v,y}^{2}},\label{eq:tdiffbound}
\end{equation}
\begin{equation}
\int_{s}^{t}\left\Vert \mathcal{Z}\left(\sigma\right)\right\Vert _{L_{v,y}^{2}}d\sigma\lesssim\left\Vert \mathcal{Z}\left(t\right)\right\Vert _{L_{v,y}^{2}}.\label{eq:tintebound}
\end{equation}

\end{lemma}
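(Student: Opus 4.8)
The plan is to reduce both estimates to a single pointwise-in-time bound on $\partial_{t}\mathcal{Z}$ together with a short continuity argument that exploits the fact that we are on a unit time interval. Since $\mathcal{Z}=S(-t)V$ and $S(-t)$ is unitary, equation (\ref{eq:V}) gives $i\partial_{t}\mathcal{Z}=t^{-1}S(-t)\left[2\left|w\right|^{2}V+w^{2}\overline{V}\right]$, so that, using the Sobolev embedding $H_{y}^{\alpha}\hookrightarrow L_{y}^{\infty}$ (valid since $\alpha=\frac{d}{2}^{+}>\frac{d}{2}$) to distribute the $y$-norm,
\[
\left\Vert \partial_{\sigma}\mathcal{Z}(\sigma)\right\Vert _{L_{v,y}^{2}}\lesssim\sigma^{-1}\left\Vert w(\sigma)\right\Vert _{L_{v}^{\infty}H_{y}^{\alpha}}^{2}\left\Vert V(\sigma)\right\Vert _{L_{v,y}^{2}}.
\]
Inserting the bound (\ref{eq:alpha bound}) for $\left\Vert w\right\Vert _{L_{v}^{\infty}H_{y}^{\alpha}}$, the mass bound $\left\Vert u\right\Vert _{L_{x,y}^{2}}\leq\epsilon$, and the bootstrap assumptions (\ref{eq:bootsass1})--(\ref{eq:bootsass2}) (which give $\left\Vert L_{x}u\right\Vert _{L_{x,y}^{2}},\left\Vert D_{y}^{s}u\right\Vert _{L_{x,y}^{2}}\lesssim D\epsilon(1+t)^{\delta}$), and using the identity $\left\Vert V(\sigma)\right\Vert _{L_{v,y}^{2}}=\left\Vert \mathcal{Z}(\sigma)\right\Vert _{L_{v,y}^{2}}$ from (\ref{eq:LVequal}), this becomes
\[
\left\Vert \partial_{\sigma}\mathcal{Z}(\sigma)\right\Vert _{L_{v,y}^{2}}\lesssim D^{\frac{5}{3}}\epsilon^{2}\,\sigma^{-1}(1+\sigma)^{\frac{5}{3}\delta}\left\Vert \mathcal{Z}(\sigma)\right\Vert _{L_{v,y}^{2}},
\]
which is exactly the bound on $\partial_{t}\mathcal{Z}$ already recorded in the proof of Lemma \ref{o1estiamte}.

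Next I would localize to the fixed unit interval $\left[T_{n},T_{n+1}\right)$. On such an interval $\sigma\approx t$ uniformly (the ratio $\sigma/t$ lies in a fixed compact subset of $(0,\infty)$ because $T_{n}\geq1$ and $T_{n+1}-T_{n}=2\pi$), so the coefficient $\sigma^{-1}(1+\sigma)^{\frac{5}{3}\delta}$ is comparable to $t^{-1}(1+t)^{\frac{5}{3}\delta}$, which is moreover bounded for $t\geq1$ once $\delta$ is small. Writing $M_{n}:=\sup_{\sigma\in[T_{n},T_{n+1})}\left\Vert \mathcal{Z}(\sigma)\right\Vert _{L_{v,y}^{2}}$ and integrating the differential bound between any $s,t\in[T_{n},T_{n+1})$,
\[
\left\Vert \mathcal{Z}(t)-\mathcal{Z}(s)\right\Vert _{L_{v,y}^{2}}\leq\int_{s}^{t}\left\Vert \partial_{\sigma}\mathcal{Z}(\sigma)\right\Vert _{L_{v,y}^{2}}\,d\sigma\lesssim D^{\frac{5}{3}}\epsilon^{2}\,t^{-1}(1+t)^{\frac{5}{3}\delta}\,M_{n}.
\]
Taking the supremum in $t$ and the infimum in $s$, and using $D^{\frac{5}{3}}\epsilon^{2}\,t^{-1}(1+t)^{\frac{5}{3}\delta}\leq\frac{1}{2}$ for $\epsilon$ small, one obtains $M_{n}\leq2\inf_{\sigma\in[T_{n},T_{n+1})}\left\Vert \mathcal{Z}(\sigma)\right\Vert _{L_{v,y}^{2}}$; in particular $M_{n}\leq2\left\Vert \mathcal{Z}(t)\right\Vert _{L_{v,y}^{2}}$ for every $t$ in the interval. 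Substituting this back into the previous display proves (\ref{eq:tdiffbound}). For (\ref{eq:tintebound}) it then suffices to note $\int_{s}^{t}\left\Vert \mathcal{Z}(\sigma)\right\Vert _{L_{v,y}^{2}}\,d\sigma\leq(t-s)\,M_{n}\leq2\pi M_{n}\lesssim\left\Vert \mathcal{Z}(t)\right\Vert _{L_{v,y}^{2}}$.

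The only point requiring genuine care — the main obstacle — is the apparent circularity: the increment $\left\Vert \mathcal{Z}(t)-\mathcal{Z}(s)\right\Vert _{L_{v,y}^{2}}$ is a priori controlled only by $\left\Vert \mathcal{Z}\right\Vert _{L_{v,y}^{2}}$ at intermediate times, not at the endpoint $t$. This is resolved by the supremum/infimum trick above, which works precisely because over one unit time interval the accumulated coefficient $D^{\frac{5}{3}}\epsilon^{2}\int_{T_{n}}^{T_{n+1}}\sigma^{-1}(1+\sigma)^{\frac{5}{3}\delta}\,d\sigma$ is small (using $t\geq1$, $\delta\ll1$, and $\epsilon$ small relative to the fixed bootstrap constant $D$); hence $\left\Vert \mathcal{Z}\right\Vert _{L_{v,y}^{2}}$ varies by at most a factor of $2$ across the interval and every intermediate value is comparable to the endpoint value. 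Everything else is a routine application of the $H_{y}^{\alpha}$ embedding and the bounds already collected in the preceding lemmas.
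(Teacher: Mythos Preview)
Your proof is correct and follows essentially the same route as the paper: bound $\|\partial_{\sigma}\mathcal{Z}\|_{L^{2}_{v,y}}$ via the equation for $\mathcal{Z}$, the $H_{y}^{\alpha}\hookrightarrow L_{y}^{\infty}$ embedding, and the bootstrap assumptions, then exploit smallness of the accumulated coefficient over a unit interval to close. The only cosmetic difference is the absorption mechanism---the paper writes $\int_{s}^{t}\|\mathcal{Z}(\sigma)\|\,d\sigma\lesssim\|\mathcal{Z}(t)\|+D^{5/3}\epsilon^{2}t^{-1}(1+t)^{5/3\delta}\int_{s}^{t}\|\mathcal{Z}(\sigma)\|\,d\sigma$ and absorbs the last term, whereas you use the equivalent sup/inf trick to show all values on the interval are comparable; your version is arguably more explicit about resolving the circularity.
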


\begin{proof}Since 
\[
\left\Vert \mathcal{Z}\left(t\right)-\mathcal{Z}\left(s\right)\right\Vert _{L_{v,y}^{2}}\lesssim\int_{s}^{t}\left\Vert \partial_{t}\mathcal{Z}(\sigma)\right\Vert d\sigma\lesssim\int_{s}^{t}\sigma^{-1}\left\Vert w\left(\sigma\right)\right\Vert _{L_{v}^{\infty}H_{y}^{\alpha}}^{2}\left\Vert \mathcal{Z}\left(\sigma\right)\right\Vert _{L_{v,y}^{2}}d\sigma,
\]
applying (\ref{eq:bootsass1}), (\ref{eq:bootsass2}), there is the
bound (\ref{eq:tdiffbound}). By applying (\ref{eq:tdiffbound})

\begin{align*}
\int_{s}^{t}\left\Vert \mathcal{Z}\left(\sigma\right)\right\Vert _{L_{v,y}^{2}}d\sigma & \lesssim\left\Vert \mathcal{Z}\left(t\right)\right\Vert _{L_{x,y}^{2}}+\int_{s}^{t}\left\Vert \mathcal{Z}\left(\sigma\right)-\mathcal{Z}(t)\right\Vert _{L_{v,y}^{2}}d\sigma\\
 & \lesssim\left\Vert \mathcal{Z}\left(t\right)\right\Vert _{L_{x,y}^{2}}+D^{\frac{5}{3}}\epsilon^{2}t^{-1}\left(1+t\right)^{\frac{5}{3}\delta}\int_{s}^{t}\left\Vert \mathcal{Z}\left(\sigma\right)\right\Vert _{L_{v,y}^{2}}d\sigma.
\end{align*}
By recursion, when $T_{n}$ is large hence $t$ is large, the inequality
(\ref{eq:tintebound}) holds.

\end{proof}

\begin{lemma}For $t\in\left[T_{n},T_{n+1}\right)$, define 
\begin{align*}
 & \tilde{M}\left(t,v,y\right)\\
:= & t^{-1}\mathcal{F}_{\xi}^{-1}\sum_{\mathbf{k}}\left[\sum_{\omega\neq0}\sum_{\Gamma_{\omega}\left(\mathbf{k}\right)}\iint\mathcal{X}_{2}\left(T_{n}\right)e^{i\Psi(T_{n})}\widehat{\tilde{G}}\left(T_{n},\kappa,\mathbf{k}_{1}\right)\widehat{\overline{\mathcal{Z}}}\left(T_{n},\xi-\eta-\kappa,\mathbf{k}_{2}\right)\widehat{\tilde{G}}\left(T_{n},\eta,\mathbf{k}_{3}\right)d\kappa d\eta\right]e^{i\mathbf{k}\cdot y},
\end{align*}
and
\[
\text{err}_{2}(t)=e_{4}\left(t,v,y\right)-\text{err}_{1}(t)-\tilde{M}(t,v,y).
\]
Then there is the bound
\begin{equation}
\begin{aligned}\left\Vert \text{err}_{2}(t)\right\Vert _{L_{v,y}^{2}} & \lesssim\left[D^{\frac{5}{3}}\epsilon^{2}t^{-\frac{5}{4}}\left(1+t\right)^{\frac{5}{3}\delta}+D^{\frac{10}{3}}\epsilon^{4}t^{-2}\left(1+t\right)^{\frac{10}{3}\delta}\right]\left\Vert \nu\right\Vert _{L_{x,y}^{2}}.\end{aligned}
\label{eq:err_2}
\end{equation}

\end{lemma}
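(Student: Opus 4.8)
Here is my plan for bounding the error term $\text{err}_2(t)$, which is the difference between $e_4$ (after removing the high-$y$-frequency piece via $\text{err}_1$) and its frozen-endpoint version $\tilde M(t)$.

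\medskip

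The plan is to write $e_4 - \text{err}_1 - \tilde M$ as a telescoping sum of three contributions, each obtained by replacing exactly one ingredient by its value at the left endpoint $T_n$ while keeping the others at time $t$. Precisely, I would interpolate in this order: (i) replace the phase and cutoff $e^{i\Psi(t)}\mathcal X_2$ by $e^{i\Psi(T_n)}\mathcal X_2(T_n)$; (ii) replace $\widehat{\overline{\mathcal Z}}(t)$ by $\widehat{\overline{\mathcal Z}}(T_n)$; (iii) replace each factor $\widehat{\tilde G}(t)$ by $\widehat{\tilde G}(T_n)$. Each replacement produces a difference term, and the triangle inequality gives $\|\text{err}_2(t)\|_{L^2_{v,y}}$ bounded by the sum of the three. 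For the first, I expand $e^{i\Psi(t)}\mathcal X_2 - e^{i\Psi(T_n)}\mathcal X_2(T_n) = \int_{T_n}^t \partial_\sigma(e^{i\Psi(\sigma)}\mathcal X_2(\sigma))\,d\sigma$; from the computations already carried out in the proof of Lemma~\ref{LemmaO_2} (and the surrounding discussion) we have $|\partial_t\Psi|\lesssim t^{-3/8}$ inside $\Omega_t^2$ — actually here we want the \emph{size} $|\Psi'|\lesssim t^{-3/8}$, which after integrating over a unit interval contributes a gain of $t^{-3/8}$ — together with $|\partial_t\mathcal X_2|\lesssim t^{-5/8}$. That gain, combined with the $t^{-1}$ prefactor and the product estimates $\|\gamma\|_{L^\infty_v H^\alpha_y}^2\lesssim D^2\epsilon^2$ and $\|V\|_{L^2_{v,y}}=\|\nu\|_{L^2_{x,y}}$, together with the energy growth $\|u\|_{X^+}\lesssim D\epsilon(1+t)^\delta$ from Hypothesis~\ref{hyp}, gives the first stated term $D^2\epsilon^2 t^{-5/4}(1+t)^{2\delta}\|\nu\|_{L^2_{x,y}}$.

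\medskip

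For the second and third replacements I would use Lemma~\ref{lemmaZint}: the bound $\|\mathcal Z(t)-\mathcal Z(s)\|_{L^2_{v,y}}\lesssim D^{5/3}\epsilon^2 t^{-1}(1+t)^{5/3\delta}\|\mathcal Z(t)\|_{L^2_{v,y}}$ controls the $\mathcal Z$-difference, and the analogous time-difference bound for $G=e^{-it\triangle_y/2}\gamma$ — which follows from $\|\partial_t G\|\lesssim t^{-1}\|w\|_{L^\infty_v H^\alpha_y}^2\|G\|$ exactly as in Lemma~\ref{o1estiamte} — controls the $\tilde G$-differences. In each case one multiplies the $L^2$ size of the frozen (or unfrozen) factors: two copies of $\tilde G$ contribute $\|\gamma\|_{L^\infty_v H^\alpha_y}^2$-type factors (bounded by $(D\epsilon)^2(1+t)^{\cdots}$ after the Bernstein interpolation $(\ref{eq:alpha bound})$, noting that the $y$-frequency truncation $P^y_{\le t^{1/16}}$ is harmless and in fact can only help), one copy of $\mathcal Z$ contributes $\|\nu\|_{L^2_{x,y}}$, and the time-difference factor contributes an extra $t^{-1}$ and two more powers of $D\epsilon$. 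The net is a term of size $D^4\epsilon^4 t^{-2}(1+t)^{4\delta}\|\nu\|_{L^2_{x,y}}$, matching the second term in $(\ref{eq:err_2})$. Absorbing the first replacement's term and this one into the stated right-hand side finishes the estimate.

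\medskip

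The main obstacle I anticipate is keeping careful track, inside the $v$-frequency cutoff region $|\kappa|,|\eta|<\sqrt t$ intersected with $\Omega_t^2$, of the fact that on a unit time interval the phase $\Psi$ genuinely is nearly constant: the bound one really needs is not just $|\Psi'|\lesssim t^{-3/8}$ but that $\int_{T_n}^t|\Psi'(\sigma)|\,d\sigma\lesssim t^{-3/8}$ uniformly over the admissible frequencies, which requires that $|\Psi'|\lesssim t^{-3/8}$ hold throughout $[T_n,T_{n+1})$ and not merely at the instantaneous time — so one must check that $\Omega_t^2$ varies slowly enough in $t$ that the frequencies stay (almost) in the almost-resonant set over the whole unit interval, shrinking $\mathcal X_2$'s support parameters by a harmless constant if needed. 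Once that uniformity is in hand, everything else is the routine bookkeeping of combining Bernstein's inequality, the $\gamma$-bounds $(\ref{eq:gammabound})$--$(\ref{eq:alpha bound})$, Lemma~\ref{lemmaZint}, and Hypothesis~\ref{hyp}.
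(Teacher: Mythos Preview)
Your plan---integrate $\partial_\sigma$ of the full expression over $[T_n,t]$ and split by the product rule into the $\partial_\sigma(e^{i\Psi}\mathcal X_2)$, $\partial_\sigma\tilde G$, and $\partial_\sigma\mathcal Z$ contributions---is exactly the paper's argument. Two points to tighten: (i) on $\Omega_t^2$ one actually has $|\Psi'|\lesssim\omega\,t^{-3/8}$, and it is the restriction $\omega\lesssim t^{1/8}$ coming from the $\text{err}_1$ step that gives $|\Psi'\mathcal X_2|\lesssim t^{-1/4}$ and hence the $t^{-5/4}$; (ii) because the multiplier $e^{i\Psi}\mathcal X_2$ couples the $v$-frequencies you cannot invoke the physical-space bound $\|\gamma\|_{L^\infty_v H^\alpha_y}^2\|V\|_{L^2}$ directly---the paper instead takes $L^\infty$ of the multiplier and applies Young's inequality in $\xi$, getting $\|\widehat{\tilde G}\|_{L^1_\xi h^\alpha_{\mathbf k}}^2\|\widehat{\mathcal Z}\|_{L^2}$, and then bounds $\|\widehat{\tilde G}\|_{L^1_\xi h^\alpha_{\mathbf k}}\lesssim\|u\|_{X^+}\lesssim D\epsilon(1+t)^\delta$, which is where the $(1+t)^{2\delta}$ factor comes from and which also renders your final concern moot: the pointwise multiplier bound holds on $\mathrm{supp}\,\mathcal X_2(\sigma)$ at each $\sigma$, so no uniformity of $\Omega_\sigma^2$ over the interval is needed.
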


\begin{proof}

Define
\begin{align*}
\mathcal{\tilde{R}}^{t}\left[f_{1},f_{2},f_{3}\right]:= & \mathcal{F}_{\xi}^{-1}\sum_{\mathbf{k}}\left[\sum_{\omega\neq0}\sum_{\Gamma_{\omega}\left(\mathbf{k}\right)}\iint\mathcal{X}_{2}\left(t\right)e^{i\Psi(t)}\widehat{f_{1}}\left(\kappa,\mathbf{k}_{1}\right)\widehat{\overline{f_{2}}}\left(\xi-\eta-\kappa,\mathbf{k}_{2}\right)\widehat{f_{3}}\left(\eta,\mathbf{k}_{3}\right)d\kappa d\eta\right]e^{i\mathbf{k}\cdot y}.
\end{align*}

Then we have
\[
\text{err}_{2}(t)=t^{-1}\int_{T_{n}}^{t}\left(\left(\partial_{\sigma}\mathcal{\tilde{R}}^{\sigma}\right)\left[\tilde{G},\mathcal{Z},\tilde{G}\right]+\mathcal{\tilde{R}}^{\sigma}\left[\tilde{G}_{t},\mathcal{Z},\tilde{G}\right]+\mathcal{\tilde{R}}^{\sigma}\left[\tilde{G},\mathcal{Z}_{t},\tilde{G}\right]+\mathcal{\tilde{R}}^{\sigma}\left[\tilde{G},\mathcal{Z},\tilde{G}_{t}\right]\right)d\sigma.
\]
By a direct computation,
\[
\begin{aligned} & \left(\partial_{t}\mathcal{\tilde{R}}^{t}\right)\left[G,\mathcal{Z},G\right]\\
:= & \mathcal{F}_{\xi}^{-1}\sum_{\mathbf{k}}\left[\sum_{\omega\neq0}\sum_{\Gamma_{\omega}\left(\mathbf{k}\right)}\iint e^{i\Psi(t)}\left[i\Psi^{\prime}\mathcal{X}_{2}+\partial_{t}\mathcal{X}_{2}\right](t)\widehat{\tilde{G}}\left(t,\kappa,\mathbf{k}_{1}\right)\widehat{\overline{\mathcal{Z}}}\left(t,\xi-\eta-\kappa,\mathbf{k}_{2}\right)\widehat{\tilde{G}}\left(t,\eta,\mathbf{k}_{3}\right)d\kappa d\eta\right]e^{i\mathbf{k}\cdot y}.
\end{aligned}
\]
Using the bound on $\omega$, we have
\[
\left|\Psi^{\prime}(t)\mathcal{X}_{2}\right|\lesssim\omega t^{-\frac{3}{8}}\lesssim t^{-\frac{1}{4}},
\]
and by (\ref{eq:Xi_esti})
\[
\left|\partial_{t}\mathcal{X}_{2}\right|=\left|-\partial_{t}\mathcal{X}_{1}\right|\lesssim t^{-\frac{5}{8}}.
\]
Therefore by Young's inequality and Parseval's identity we have 
\begin{align*}
 & \left\Vert \left(\partial_{t}\mathcal{\tilde{R}}^{t}\right)\left[\tilde{G},\mathcal{Z},\tilde{G}\right]\right\Vert _{L_{v,y}^{2}}\\
\lesssim & \left\Vert \sum_{\omega\neq0}\sum_{\Gamma_{\omega}\left(\mathbf{k}\right)}\iint\left|i\Psi^{\prime}\mathcal{X}_{2}+\partial_{t}\mathcal{X}_{2}\right|\left|\widehat{\tilde{G}}\left(t,\kappa,\mathbf{k}_{1}\right)\right|\left|\widehat{\overline{\mathcal{Z}}}\left(t,\xi-\eta-\kappa,\mathbf{k}_{2}\right)\right|\left|\widehat{\tilde{G}}\left(t,\eta,\mathbf{k}_{3}\right)\right|d\kappa d\eta\right\Vert _{L_{\xi}^{2}l_{\mathbf{k}}^{2}}\\
\lesssim & \left\Vert i\Psi^{\prime}\mathcal{X}_{2}+\partial_{t}\mathcal{X}_{2}\right\Vert _{L_{\xi}^{\infty}l_{\mathbf{k}}^{\infty}}\left\Vert \widehat{\tilde{G}}(t)\right\Vert _{L_{\xi}^{1}l_{\mathbf{k}}^{1}}^{2}\left\Vert \widehat{\overline{\mathcal{Z}}}\left(t\right)\right\Vert _{L_{\xi}^{2}l_{\mathbf{k}}^{2}}.
\end{align*}
Hence by the same estimation in Lemma \ref{o1estiamte}, we have the
following bound
\begin{alignat*}{1}
\left\Vert \left(\partial_{t}\mathcal{\tilde{R}}^{t}\right)\left[\tilde{G},\mathcal{Z},\tilde{G}\right]\right\Vert _{L_{v,y}^{2}} & \lesssim t^{-\frac{1}{4}}\left\Vert \widehat{G}\right\Vert _{L_{\xi}^{1}h_{\mathbf{k}}^{\alpha}}^{2}\left\Vert \widehat{\overline{\mathcal{Z}}}\right\Vert _{L_{\xi}^{2}l_{\mathbf{k}}^{2}}\lesssim t^{-\frac{1}{4}}\left\Vert \widehat{G}\right\Vert _{L_{\xi}^{2}l_{\mathbf{k}}^{2}}^{\frac{1}{3}}\left\Vert \widehat{G}\right\Vert _{L_{\xi}^{2}h_{\mathbf{k}}^{s}}^{\frac{2}{3}}\left\Vert \left|\xi\right|\widehat{G}\right\Vert _{L_{\xi}^{2}l_{\mathbf{k}}^{2}}\left\Vert \mathcal{Z}\right\Vert _{L_{v,y}^{2}}\\
 & \lesssim t^{-\frac{1}{4}}D^{\frac{5}{3}}\epsilon^{2}\left(1+t\right)^{\frac{5}{3}\delta}\left\Vert \nu\right\Vert _{L_{x,y}^{2}}.
\end{alignat*}
Similarly, we have 
\begin{align*}
\left\Vert \mathcal{\tilde{R}}^{t}\left[\tilde{G_{t}},\mathcal{Z},\tilde{G}\right]\right\Vert _{L_{v,y}^{2}} & \lesssim\left\Vert \mathcal{X}_{2}\right\Vert _{L_{\xi}^{\infty}l_{\mathbf{k}}^{\infty}}\left\Vert \widehat{\tilde{G}_{t}}(t)\right\Vert _{L_{\xi}^{1}l_{\mathbf{k}}^{1}}\left\Vert \widehat{\tilde{G}}(t)\right\Vert _{L_{\xi}^{1}l_{\mathbf{k}}^{1}}\left\Vert \widehat{\overline{\mathcal{Z}}}\left(t\right)\right\Vert _{L_{\xi}^{2}l_{\mathbf{k}}^{2}}.
\end{align*}
 By the estimate in Lemma \ref{o1estiamte} again, we have
\begin{alignat*}{1}
\left\Vert \mathcal{\tilde{R}}^{t}\left[\tilde{G_{t}},\mathcal{Z},\tilde{G}\right]\right\Vert _{L_{v,y}^{2}} & \lesssim\left\Vert \widehat{\partial_{t}w}\right\Vert _{L_{\xi}^{1}h_{\mathbf{k}}^{\alpha}}\left\Vert \widehat{w}\right\Vert _{L_{\xi}^{1}h_{\mathbf{k}}^{\alpha}}\left\Vert \widehat{\overline{\mathcal{Z}}}\right\Vert _{L_{\xi}^{2}l_{\mathbf{k}}^{2}}\lesssim t^{-1}\left\Vert \widehat{w}\left(t,\xi,\mathbf{k}\right)\right\Vert _{L_{\xi}^{1}h_{\mathbf{k}}^{\alpha}}^{2}\left(1+\left\Vert \widehat{w}\left(t,\xi,\mathbf{k}\right)\right\Vert _{L_{\xi}^{1}h_{\mathbf{k}}^{\alpha}}^{2}\right)\left\Vert \mathcal{Z}\right\Vert _{L_{v,y}^{2}}\\
 & \lesssim t^{-1}D^{\frac{10}{3}}\epsilon^{4}\left(1+t\right)^{\frac{10}{3}\delta}\left\Vert \nu\right\Vert _{L_{x,y}^{2}},
\end{alignat*}
The same estimate yields
\[
\left\Vert \mathcal{\tilde{R}}^{t}\left[\tilde{G},\mathcal{Z}_{t},\tilde{G}\right]\right\Vert _{L_{v,y}^{2}}\lesssim t^{-1}D^{\frac{10}{3}}\epsilon^{4}\left(1+t\right)^{\frac{10}{3}\delta}\left\Vert \nu\right\Vert _{L_{x,y}^{2}}.
\]
Therefore we have 
\begin{equation}
\left\Vert \text{err}_{2}(t)\right\Vert _{L_{v,y}^{2}}\lesssim\int_{T_{n}}^{t}\left[D^{\frac{5}{3}}\epsilon^{2}\sigma^{-\frac{5}{4}}\left(1+\sigma\right)^{\frac{5}{3}\delta}+D^{\frac{10}{3}}\epsilon^{4}\sigma^{-2}\left(1+\sigma\right)^{\frac{10}{3}\delta}\right]\left\Vert \nu(\sigma)\right\Vert _{L_{x,y}^{2}}d\sigma.\label{eq:err2}
\end{equation}

In order to switch this quantity into a form where Gronwall's inequality
can be applied, we use Lemma \ref{lemmaZint}.

Using that $\left\Vert \mathcal{Z}\left(t\right)\right\Vert _{L_{v,y}^{2}}=\left\Vert \nu(\sigma)\right\Vert _{L_{x,y}^{2}}$
and Lemma \ref{lemmaZint}, we can switch the bound (\ref{eq:err2})
to
\begin{align*}
\left\Vert \text{err}_{2}(t)\right\Vert _{L_{v,y}^{2}} & \lesssim\left[D^{\frac{5}{3}}\epsilon^{2}t^{-\frac{5}{4}}\left(1+t\right)^{\frac{5}{3}\delta}+D^{\frac{10}{3}}\epsilon^{4}t^{-2}\left(1+t\right)^{\frac{10}{3}\delta}\right]\left\Vert \nu(t)\right\Vert _{L_{x,y}^{2}},
\end{align*}
therefore proving the lemma.

\end{proof}Since from (\ref{defX_2}), we can define the interval
$A(\omega)$ corresponding to the cutoff function $\mathcal{X}_{1,\omega}\left(T_{n}\right)$
by
\begin{equation}
\begin{aligned}A\left(\omega\right):= & \left\{ \xi:\left|\frac{\left(\xi-\kappa-\eta\right)^{2}}{T_{n}^{2}\omega}-\frac{1}{2}\right|<2T_{n}^{-\frac{3}{8}}\right\} ,\end{aligned}
\end{equation}
to be the $v-$frequencies contributing to the $\Omega_{t}^{2}$ region
for a fixed $\omega$. Then we have the following equality for fixed
$(\mathbf{k}_{1},\mathbf{k}_{2},\mathbf{k}_{3})$:
\begin{equation}
\begin{aligned} & \iint\mathcal{X}_{2}\left(T_{n}\right)e^{i\Psi(T_{n})}\widehat{\tilde{G}}\left(T_{n},\kappa,\mathbf{k}_{1}\right)\widehat{\overline{\mathcal{Z}}}\left(T_{n},\xi-\eta-\kappa,\mathbf{k}_{2}\right)\widehat{\tilde{G}}\left(T_{n},\eta,\mathbf{k}_{3}\right)d\kappa d\eta\\
= & \mathcal{X}_{2,\omega}\left(T_{n},\xi\right)\iint e^{i\Psi(T_{n})}\widehat{\tilde{G}}\left(T_{n},\kappa,\mathbf{k}_{1}\right)\widehat{\overline{\mathcal{Z}_{A\left(\omega\right)}}}\left(T_{n},\xi-\eta-\kappa,\mathbf{k}_{2}\right)\widehat{\tilde{G}}\left(T_{n},\eta,\mathbf{k}_{3}\right)d\kappa d\eta.
\end{aligned}
\label{eq:XtoA}
\end{equation}
Since the $\mathcal{X}_{2,\omega}\left(T_{n},\xi\right)$ factor does
not affect the computation of $L_{v,y}^{2}$ norm, we can omit it.
Inside this interval $A(\omega)$,
\begin{align*}
\left(\xi-\left(\eta+\kappa\right)\right)^{2} & =\frac{T_{n}^{2}}{2}\omega+\mathcal{O}\left(\omega T_{n}^{\frac{13}{8}}\right),
\end{align*}
hence
\[
\xi=\pm T_{n}\sqrt{\frac{\omega}{2}}\sqrt{1+\mathcal{O}\left(T_{n}^{-\frac{3}{8}}\right)}+\left(\eta+\kappa\right).
\]
By Taylor expansion $\sqrt{1+x}\approx1+\frac{1}{2}x$ for $x$ very
small and $\sqrt{\frac{\omega}{2}}\leq T_{n}^{\frac{1}{16}}$,
\[
\left|\xi\mp T_{n}\sqrt{\frac{\omega}{2}}\right|\lesssim\left|\eta+\kappa\right|+\mathcal{O}\left(T_{n}^{\frac{11}{16}}\right).
\]
For each $A(k)$ we compute the distance of sets by the distance from
their ``center'':
\[
\left|T_{n}\sqrt{\frac{k}{2}}-T_{n}\sqrt{\frac{k+1}{2}}\right|\approx\frac{T_{n}}{\sqrt{k}\sqrt{k+1}}.
\]
Since $\omega<T_{n}^{\frac{1}{16}}$, the centers are separated by
a distance at least $T_{n}^{\frac{15}{16}}$, and the width of each
set is at most $T_{n}^{\frac{11}{16}}$. Hence the sets are disjoint
when $T_{n}$ is sufficiently large. 

Since we are not able to directly localize to this interval, we instead
divide $A(\omega)$ into several disjoint intervals, where each interval
has length $\sqrt{T_{n}}$. Denote these intervals by $A(\omega,k),$
where $k\in\mathbb{Z}$,
\[
A\left(\omega,k\right):=\left\{ \xi:\left|\xi\mp T_{n}\sqrt{\frac{\omega}{2}}-2k\sqrt{T_{n}}\right|\leq\sqrt{T_{n}}\right\} .
\]
Each $A(\omega)$ has length $T_{n}^{\frac{11}{16}}$ and each $A(\omega,k)$
has length $\sqrt{T_{n}}$ , therefore $\left|k\right|\lesssim T_{n}^{\frac{11}{16}}/T_{n}^{\frac{1}{2}}\lesssim T_{n}^{\frac{3}{16}}.$
By (\ref{eq:XtoA}), the quantity $\tilde{M}$ can be written as 
\begin{align*}
\tilde{M}\left(t,v,y\right) & :=t^{-1}\mathcal{F}_{\xi}^{-1}\sum_{\mathbf{k}}\sum_{\omega\neq0}\sum_{\Gamma_{\omega}\left(\mathbf{k}\right)}\sum_{k}\\
 & \quad\quad\left[\iint e^{i\Psi(T_{n})}\widehat{\tilde{G}}\left(T_{n},\kappa,\mathbf{k}_{1}\right)\widehat{\overline{\mathcal{Z}_{A\left(\omega,k\right)}}}\left(T_{n},\xi-\eta-\kappa,\mathbf{k}_{2}\right)\widehat{\tilde{G}}\left(T_{n},\eta,\mathbf{k}_{3}\right)d\kappa d\eta\right]e^{i\mathbf{k}\cdot y}.
\end{align*}

After the frequency localization, we can transform the effect of phase
function into translation of the $v$ variable with small errors.
In the following computation, let $\xi_{0}$ be either $T_{n}\sqrt{\frac{\omega}{2}}+2k\sqrt{T_{n}}$
or $-T_{n}\sqrt{\frac{\omega}{2}}+2k\sqrt{T_{n}}$,
\begin{align*}
\frac{1}{2T_{n}}\left(\left(\xi-\eta-\kappa\right)^{2}+\xi^{2}\right) & =\frac{\xi^{2}}{T_{n}}-\frac{\xi_{0}\left(\eta+\kappa\right)}{T_{n}}-\frac{\left(\xi-\xi_{0}\right)\left(\eta+\kappa\right)}{T_{n}}+\frac{\left(\eta+\kappa\right)^{2}}{2T_{n}}.
\end{align*}
The last factor $\frac{\left(\eta+\kappa\right)^{2}}{2T_{n}}\lesssim1$;
hence it is negligible. By a direct computation of Fourier transformation,
we have 
\begin{align*}
 & \mathcal{F}_{\xi}^{-1}\iint\exp\left(i\left(\frac{\xi^{2}}{T_{n}}-\frac{\xi_{0}\left(\eta+\kappa\right)}{T_{n}}-\frac{\left(\xi-\xi_{0}\right)\left(\eta+\kappa\right)}{T_{n}}\right)\right)\widehat{f_{1}}\left(\kappa\right)\widehat{\overline{f_{2}}}\left(\xi-\eta-\kappa\right)\widehat{f_{3}}\left(\eta\right)d\kappa d\eta\\
= & e^{-i\frac{\partial_{v}^{2}}{T_{n}}}\left[f_{1}\left(v-\frac{\xi_{0}}{T_{n}}\right)\overline{f_{2}(v)}f_{3}\left(v-\frac{\xi_{0}}{T_{n}}\right)\right].
\end{align*}
Hence we can split $\tilde{M}$ into three parts:
\[
\tilde{M}\left(t,v,y\right):=M_{+}\left(t,v,y\right)+M_{-}(t,v,y)+\text{err}_{3}\left(t,v,y\right).
\]
Where $M_{+}$ and $M_{-}$ correspond to positive frequency localization
$A^{+}(\omega,k):=A(\omega,k)\cap\{\xi>0\}$ and negative freqency
localization $A^{-}(\omega,k):=A(\omega,k)\cap\{\xi<0\}$.
\begin{align*}
M_{+}\left(t,v,y\right) & :=t^{-1}\mathcal{F}_{\xi}^{-1}\sum_{\mathbf{k}}\sum_{\omega\neq0}\sum_{\Gamma_{\omega}\left(\mathbf{k}\right)}\sum_{k}\\
 & \quad\quad e^{-i\frac{\partial_{v}^{2}}{T_{n}}+i\frac{\omega}{2}T_{n}}\sum_{k}\tilde{G}\left(T_{n},v-\sqrt{\frac{\omega}{2}}-\frac{2k}{\sqrt{T_{n}}},\mathbf{k}_{1}\right)\overline{\mathcal{Z}_{A^{+}(\omega,k)}}\left(T_{n},v,\mathbf{k}_{2}\right)\tilde{G}\left(T_{n},v-\sqrt{\frac{\omega}{2}}-\frac{2k}{\sqrt{T_{n}}},\mathbf{k}_{3}\right),
\end{align*}
\begin{align*}
M_{-}\left(t,v,y\right) & :=t^{-1}\mathcal{F}_{\xi}^{-1}\sum_{\mathbf{k}}\sum_{\omega\neq0}\sum_{\Gamma_{\omega}\left(\mathbf{k}\right)}\sum_{k}\\
 & \quad\quad e^{-i\frac{\partial_{v}^{2}}{T_{n}}+i\frac{\omega}{2}T_{n}}\sum_{k}\tilde{G}\left(T_{n},v+\sqrt{\frac{\omega}{2}}-\frac{2k}{\sqrt{T_{n}}},\mathbf{k}_{1}\right)\overline{\mathcal{Z}_{A^{-}(\omega,k)}}\left(T_{n},v,\mathbf{k}_{2}\right)\tilde{G}\left(T_{n},v+\sqrt{\frac{\omega}{2}}-\frac{2k}{\sqrt{T_{n}}},\mathbf{k}_{3}\right).
\end{align*}
The error from transforming the effect of phase function into spatial
translation is given by 
\[
\text{err}_{3}(t,v,y):=\tilde{M}(t,v,y)-M_{+}(t,v,y)-M_{-}(t,v,y).
\]

\begin{lemma}For $t\geq1$, the error from approximating the phase
function by spatial translation of $v$ has the bound
\begin{equation}
\left\Vert \text{err}_{3}\left(t,v,y\right)\right\Vert _{L_{v,y}^{2}}\lesssim D^{\frac{73}{42}}\epsilon^{2}t^{-\frac{25}{24}}\left(1+t\right)^{\frac{73}{42}\delta}\left\Vert \nu\left(t\right)\right\Vert _{L_{x,y}^{2}}.\label{eq:err_3}
\end{equation}

\end{lemma}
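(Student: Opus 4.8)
The idea is to compare, frequency block by frequency block, the exact phase $e^{i\Psi(T_n)}$ entering $\tilde M$ with the ``translation phase'' that defines $M_+$ and $M_-$, and to show the discrepancy gains a small power of $t$. Fix a resonance level $\omega$ and an index $k$, and let $\xi_0=\xi_0(\omega,k)=\pm T_n\sqrt{\omega/2}+2k\sqrt{T_n}$ be the center of $A(\omega,k)$. Starting from $\Psi(T_n)=\tfrac1{2T_n}((\xi-\eta-\kappa)^2+\xi^2)+\tfrac{T_n}2\omega$, expanding $(\xi-\eta-\kappa)^2=2\xi^2-2\xi(\eta+\kappa)+(\eta+\kappa)^2$ and writing $-\xi(\eta+\kappa)=-\xi_0(\eta+\kappa)-(\xi-\xi_0)(\eta+\kappa)$, one gets $\Psi(T_n)=\Phi+\theta$ with
\[
\Phi=\frac{\xi^2}{T_n}-\frac{\xi_0(\eta+\kappa)}{T_n}+\frac{T_n\omega}{2},\qquad \theta=-\frac{(\xi-\xi_0)(\eta+\kappa)}{T_n}+\frac{(\eta+\kappa)^2}{2T_n}.
\]
Inverting the $v$-Fourier transform after inserting $e^{i\Phi}$ reproduces precisely $e^{-i\partial_v^2/T_n}$ composed with the $v$-translations by $\xi_0/T_n=\pm\sqrt{\omega/2}+2k/\sqrt{T_n}$ of the two $\tilde G$-factors and the unimodular constant $e^{i\omega T_n/2}$ --- that is, exactly the building blocks of $M_\pm$. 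Hence $\mathrm{err}_3$ is the same expression as $\tilde M$ but with $e^{i\Phi}$ replaced by $e^{i\Phi}(e^{i\theta}-1)$; throughout, the bounded cutoff $\mathcal X_{2,\omega}(T_n,\xi)$ rides along harmlessly for the $L^2_{v,y}$ norm, as already noted.

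To produce the gain, recall that $|\eta|,|\kappa|\lesssim\sqrt{T_n}$ on the support of $\widehat{\tilde G}$ (as $\gamma=P_{\le\sqrt t}w$), and $\xi-\eta-\kappa\in A(\omega,k)$ gives $|\xi-\eta-\kappa-\xi_0|\le\sqrt{T_n}$. Split each $\tilde G$-factor in the $v$-variable at the scale $N:=t^{11/24}$ via $\tilde G=P_{\le N}^{v}\tilde G+P_{>N}^{v}\tilde G$. On the main piece, where both $\tilde G$-factors carry $v$-frequency $\lesssim N$, we have $|\eta+\kappa|\lesssim N$ and hence $|\xi-\xi_0|\lesssim\sqrt{T_n}$, so
\[
|\theta|\lesssim \frac{N\sqrt{T_n}}{T_n}+\frac{N^2}{T_n}\lesssim t^{-1/24},\qquad\text{hence}\quad |e^{i\theta}-1|\lesssim t^{-1/24}.
\]
Pulling this scalar factor out, the remaining trilinear form is treated exactly as in the estimate for $e_1$ in (\ref{eq:e1bound}) and in Lemma \ref{o1estiamte}: since $e^{i\Phi}$ is a unitary $v$-multiplier composed with $v$-translations it leaves $L^2_{v,y}$ norms unchanged, and one applies Young's inequality in $\xi$, the Cauchy--Schwarz bound $\|\widehat{\tilde G}\|_{L^1_\xi h^\alpha_{\mathbf k}}\lesssim\|\langle\xi\rangle^{2/3}\widehat{\tilde G}\|_{L^2_\xi h^\alpha_{\mathbf k}}\lesssim\|D_y^{s}u\|_{L^2}^{1/3}\|L_xu\|_{L^2}^{2/3}\lesssim\|u\|_{X^+}$ (here $s=3\alpha$, and $\langle\xi\rangle^{2/3}\langle\mathbf k\rangle^\alpha$ is interpolated as $(\langle\xi\rangle)^{2/3}(\langle\mathbf k\rangle^{3\alpha})^{1/3}$), the orthogonality of the intervals $A(\omega,k)$ in the $\mathcal Z$-frequency (whence $\sum_k\|\overline{\mathcal Z_{A(\omega,k)}}\|^2_{L^2}\le\|\mathcal Z\|^2_{L^2}$, with the outputs at distinct $k$ overlapping only boundedly), the separation $\gtrsim T_n^{15/16}$ of the sets $A(\omega)$ for distinct $\omega$ (which yields orthogonality of the output $\xi$-supports across $\omega$), and the summation over $\Gamma_\omega(\mathbf k)$ with $|\mathbf k_1|,|\mathbf k_3|\le t^{1/16}$ absorbed by the $H_y^s$ weights with $\alpha>d/2$ (as in \cite{HPTV}). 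This gives for the main piece $t^{-1}\cdot t^{-1/24}\cdot\|u\|_{X^+}^2\cdot\|\mathcal Z\|_{L^2_{v,y}}\lesssim D^2\epsilon^2 t^{-25/24}(1+t)^{2\delta}\|\nu(t)\|_{L^2_{x,y}}$.

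On the complementary pieces, where at least one $\tilde G$-factor is replaced by $P_{>N}^{v}\tilde G$, we simply use $|e^{i\theta}-1|\le2$ and repeat the above trilinear estimate with $\|\widehat{P_{>N}^{v}\tilde G}\|_{L^1_\xi h^\alpha_{\mathbf k}}\lesssim N^{-1/6}\|\langle\xi\rangle^{2/3}\widehat{\tilde G}\|_{L^2_\xi h^\alpha_{\mathbf k}}\lesssim t^{-11/144}\|u\|_{X^+}$ in place of one $\tilde G$-factor. This yields a bound $\lesssim D^2\epsilon^2 t^{-1-11/144}(1+t)^{2\delta}\|\nu(t)\|_{L^2_{x,y}}$, which is stronger than the claimed one since $1+\tfrac{11}{144}>\tfrac{25}{24}$. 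Summing the main piece and this remainder gives (\ref{eq:err_3}).

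The step requiring the most care is the trilinear bookkeeping in the previous two paragraphs: one must reconcile the summations over the discrete parameters $\omega$ and $(\mathbf k_1,\mathbf k_2,\mathbf k_3)$ --- where absolute values weighted by $\langle\mathbf k_i\rangle^\alpha$ are summed --- with the continuous $L^2$-orthogonality in the output frequency $\xi$ (across distinct $\omega$) and in $k$ (disjointness of the $A(\omega,k)$), so that no sum costs a power of $t$ exceeding the $t^{-1/24}$ gain. The disjointness facts established just before the lemma are exactly what make this work, and the choice $N=t^{11/24}$ is forced by balancing the phase error $t^{-1/24}$ of the main piece against the high-frequency gain $N^{-1/6}$ of the remainder.
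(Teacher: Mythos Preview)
Your argument is correct and follows the same route as the paper's proof: decompose $\Psi(T_n)=\Phi+\theta$, extract a $t^{-1/24}$ gain from $|e^{i\theta}-1|$, and bound the remaining trilinear form by Young's inequality in $\xi$, Lemma~\ref{elementarylp} in $\mathbf k$, and the disjointness of the sets $A(\omega)$. The only difference is cosmetic---the paper obtains the gain via the pointwise bound $|e^{i\theta}-1|\lesssim|\eta+\kappa|^{1/12}T_n^{-1/24}$ and absorbs the extra $|\eta+\kappa|^{1/12}$ directly into $\||\xi|^{1/12}\widehat{\tilde G}\|_{L^1_\xi}$, whereas you achieve the same effect by splitting $\tilde G$ at $v$-frequency $N=t^{11/24}$; note also the harmless slip $(\xi-\eta-\kappa)^2=2\xi^2-\cdots$, which should read $(\xi-\eta-\kappa)^2+\xi^2=2\xi^2-\cdots$.
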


\begin{proof}

From the inequality 
\[
\left|-\frac{\left(\xi-\xi_{0}\right)\left(\eta+\kappa\right)}{T_{n}}+\frac{\left(\eta+\kappa\right)^{2}}{2T_{n}}\right|\lesssim\frac{\left|\eta+\kappa\right|}{\sqrt{T_{n}}}+\frac{\left(\eta+\kappa\right)^{2}}{2T_{n}},
\]
thus we know that 

\begin{align*}
 & \left\Vert \iint\left|\frac{\eta+\kappa}{\sqrt{T_{n}}}\right|^{\frac{1}{12}}\left|\widehat{\tilde{G}}\left(T_{n},\kappa,\mathbf{k}_{1}\right)\widehat{\overline{\mathcal{Z}}_{A(\omega)}}\left(T_{n},\xi-\eta-\kappa,\mathbf{k}_{2}\right)\widehat{\tilde{G}}\left(T_{n},\eta,\mathbf{k}_{3}\right)\right|d\kappa d\eta\right\Vert _{L_{\xi}^{2}}\\
\lesssim & T_{n}^{-\frac{1}{24}}\left\Vert |\xi|^{\frac{1}{12}}\widehat{\tilde{G}}\left(T_{n},\xi,\mathbf{k}_{1}\right)\right\Vert _{L_{\xi}^{1}}\left\Vert \widehat{\mathcal{Z}_{A(\omega)}}\left(T_{n},\xi,\mathbf{k}_{2}\right)\right\Vert _{L_{\xi}^{2}}\left\Vert \widehat{\tilde{G}}\left(T_{n},\xi,\mathbf{k}_{3}\right)\right\Vert _{L_{\xi}^{1}}\\
 & \quad+T_{n}^{-\frac{1}{24}}\left\Vert \widehat{\tilde{G}}\left(t,\xi,\mathbf{k}_{1}\right)\right\Vert _{L_{\xi}^{1}}\left\Vert \widehat{\mathcal{Z}_{A(\omega)}}\left(T_{n},\xi,\mathbf{k}_{2}\right)\right\Vert _{L_{\xi}^{2}}\left\Vert |\xi|^{\frac{1}{12}}\widehat{\tilde{G}}\left(T_{n},\xi,\mathbf{k}_{3}\right)\right\Vert _{L_{\xi}^{1}}.
\end{align*}
From (\ref{eq:ele}) and let $a=\frac{7}{12}$, we have the following
inequality 
\begin{equation}
\left\Vert |\xi|^{\frac{1}{12}}\widehat{\tilde{G}}\left(T_{n},\xi,\mathbf{k}\right)\right\Vert _{L_{\xi}^{1}h_{\mathbf{k}}^{\alpha}}\lesssim\left\Vert |\xi|^{\frac{1}{12}}\widehat{\tilde{G}}\right\Vert _{L_{\xi}^{2}h_{\mathbf{k}}^{\alpha}}^{\frac{1}{7}}\left\Vert |\xi|^{\frac{2}{3}}\widehat{\tilde{G}}\right\Vert _{L_{\xi}^{2}h_{\mathbf{k}}^{\alpha}}^{\frac{6}{7}}\lesssim\left\Vert \widehat{\tilde{G}}\right\Vert _{L_{\xi}^{2}l_{\mathbf{k}}^{2}}^{\frac{2}{21}}\left\Vert \widehat{\tilde{G}}\right\Vert _{L_{\xi}^{2}h_{\mathbf{k}}^{s}}^{\frac{1}{3}}\left\Vert \left|\xi\right|\widehat{\tilde{G}}\right\Vert _{L_{\xi}^{2}l_{\mathbf{k}}^{2}}^{\frac{4}{7}}.\label{eq:tild_g_est}
\end{equation}
Therefore when summing over $\mathcal{M}\left(\mathbf{k}\right)$
by using Lemma \ref{elementarylp}, (\ref{eq:l1est}), (\ref{eq:tild_g_est}),
and the fact that the sets $A(\omega)$ are disjoint for different
$\omega$, we have
\begin{align*}
 & \left\Vert \text{err}_{3}\left(t,v,y\right)\right\Vert _{L_{v,y}^{2}}\lesssim t^{-\frac{25}{24}}\left\Vert \left|\xi\right|^{\frac{1}{12}}\widehat{\tilde{G}}\left(T_{n},\xi,\mathbf{k}\right)\right\Vert _{L_{\xi}^{1}h_{\mathbf{k}}^{\alpha}}\left\Vert \widehat{\tilde{G}}\left(T_{n},\xi,\mathbf{k}\right)\right\Vert _{L_{\xi}^{1}h_{\mathbf{k}}^{\alpha}}\left(\sum_{j}\left\Vert \widehat{\mathcal{Z}_{A(j)}}\left(T_{n},\xi,\mathbf{k}\right)\right\Vert _{l_{\mathbf{k}}^{2}L_{\xi}^{2}}^{2}\right)^{\frac{1}{2}}\\
\lesssim & t^{-\frac{25}{24}}\left\Vert \widehat{\tilde{G}}\left(T_{n}\right)\right\Vert _{L_{\xi}^{2}l_{\mathbf{k}}^{2}}^{\frac{11}{42}}\left\Vert \widehat{\tilde{G}}\left(T_{n}\right)\right\Vert _{L_{\xi}^{2}h_{\mathbf{k}}^{s}}^{\frac{2}{3}}\left\Vert \xi\widehat{\tilde{G}}\left(T_{n}\right)\right\Vert _{l_{\mathbf{k}}^{2}L_{\xi}^{2}}^{\frac{15}{14}}\left\Vert \mathcal{Z}\left(T_{n}\right)\right\Vert _{L_{v,y}^{2}}\lesssim D^{\frac{73}{42}}\epsilon^{2}t^{-\frac{25}{24}}\left(1+t\right)^{\frac{73}{42}\delta}\left\Vert \nu\left(t\right)\right\Vert _{L_{x,y}^{2}}.
\end{align*}
\end{proof}

To simplify the next computations, we consider the case where $\xi>0$;
the case $\xi<0$ is similar by changing the sign. First notice that
while doing summation over all the possible triples, due to the fact
that the $A(\omega)$'s are disjoint, the $e^{i\frac{\omega}{2}T_{n}}$
does not affect the computation of $L_{v,y}^{2}$ norm 
\[
\begin{aligned} & \left\Vert \sum_{\omega\neq0}\sum_{\Gamma_{\omega}\left(\mathbf{k}\right)}\sum_{k}e^{-i\frac{\partial_{v}^{2}}{T_{n}}+i\frac{\omega}{2}T_{n}}\tilde{G}\left(T_{n},v-\sqrt{\frac{\omega}{2}}-\frac{2k}{\sqrt{T_{n}}},\mathbf{k}_{1}\right)\overline{\mathcal{Z}_{A^{+}(\omega,k)}}\left(T_{n},v,\mathbf{k}_{2}\right)\tilde{G}\left(T_{n},v-\sqrt{\frac{\omega}{2}}-\frac{2k}{\sqrt{T_{n}}},\mathbf{k}_{3}\right)\right\Vert _{L_{v}^{2}}\\
= & \left\Vert \sum_{\omega\neq0}\sum_{\Gamma_{\omega}\left(\mathbf{k}\right)}\sum_{k}\tilde{G}\left(T_{n},v-\sqrt{\frac{\omega}{2}}-\frac{2k}{\sqrt{T_{n}}},\mathbf{k}_{1}\right)\overline{\mathcal{Z}_{A^{+}(\omega,k)}}\left(T_{n},v,\mathbf{k}_{2}\right)\tilde{G}\left(T_{n},v-\sqrt{\frac{\omega}{2}}-\frac{2k}{\sqrt{T_{n}}},\mathbf{k}_{3}\right)\right\Vert _{L_{v}^{2}}.
\end{aligned}
\]
Hence it suffices only to consider the $L^{2}$ norm of the following
function 
\begin{align*}
M\left(t,v,y\right) & =t^{-1}\sum_{\mathbf{k}}\sum_{\omega\neq0}\sum_{\Gamma_{\omega}\left(\mathbf{k}\right)}\sum_{k}\left[\tilde{G}\left(T_{n},v-\sqrt{\frac{\omega}{2}}-\frac{2k}{\sqrt{T_{n}}},\mathbf{k}_{1}\right)\overline{\mathcal{Z}_{A^{+}(\omega,k)}}\left(T_{n},v,\mathbf{k}_{2}\right)\tilde{G}\left(T_{n},v-\sqrt{\frac{\omega}{2}}-\frac{2k}{\sqrt{T_{n}}},\mathbf{k}_{3}\right)\right.\\
 & \quad+\left.\tilde{G}\left(T_{n},v+\sqrt{\frac{\omega}{2}}-\frac{2k}{\sqrt{T_{n}}},\mathbf{k}_{1}\right)\overline{\mathcal{Z}_{A^{-}(\omega,k)}}\left(T_{n},v,\mathbf{k}_{2}\right)\tilde{G}\left(T_{n},v+\sqrt{\frac{\omega}{2}}-\frac{2k}{\sqrt{T_{n}}},\mathbf{k}_{3}\right)\right]e^{i\mathbf{k}\cdot y}.
\end{align*}
When computing the $L_{v}^{2}$ norm, we can apply a shift to $v$
and obtain the same value,
\begin{align*}
 & \left\Vert \tilde{G}\left(T_{n},v-\sqrt{\frac{\omega}{2}}-\frac{2k}{\sqrt{T_{n}}},\mathbf{k}_{1}\right)\overline{\mathcal{Z}_{A^{+}(\omega,k)}}\left(T_{n},v,\mathbf{k}_{2}\right)\tilde{G}\left(T_{n},v-\sqrt{\frac{\omega}{2}}-\frac{2k}{\sqrt{T_{n}}},\mathbf{k}_{3}\right)\right\Vert _{L_{v}^{2}}\\
= & \left\Vert \tilde{G}\left(T_{n},v,\mathbf{k}_{1}\right)\overline{\mathcal{Z}_{A^{+}(\omega,k)}}\left(t,v+\sqrt{\frac{\omega}{2}}+\frac{2k}{\sqrt{T_{n}}},\mathbf{k}_{2}\right)\tilde{G}\left(T_{n},v,\mathbf{k}_{3}\right)\right\Vert _{L_{v}^{2}}.
\end{align*}
Since $\text{dist}\left(A^{+}\left(\omega,k\right),A^{+}\left(\omega,k+5\right)\right)\geq8\sqrt{t}$,
we sort the $k$ into five groups. There is the equality
\[
\begin{aligned} & \left\Vert \sum_{\omega\neq0}\sum_{\Gamma_{\omega}\left(\mathbf{k}\right)}\sum_{m}\tilde{G}\left(T_{n},v-\sqrt{\frac{\omega}{2}}-\frac{2(5m+i)}{\sqrt{T_{n}}},\mathbf{k}_{1}\right)\overline{\mathcal{Z}_{A^{+}(\omega,5m+i)}}\left(T_{n},v,\mathbf{k}_{2}\right)\tilde{G}\left(T_{n},v-\sqrt{\frac{\omega}{2}}-\frac{2(5m+i)}{\sqrt{T_{n}}},\mathbf{k}_{3}\right)\right\Vert _{L_{v}^{2}}\\
= & \left\Vert \sum_{\omega\neq0}\sum_{\Gamma_{\omega}\left(\mathbf{k}\right)}\sum_{m}\tilde{G}\left(T_{n},v,\mathbf{k}_{1}\right)\overline{\mathcal{Z}_{A^{+}(\omega,5m+i)}}\left(T_{n},v+\sqrt{\frac{\omega}{2}}+\frac{2(5m+i)}{\sqrt{T_{n}}},\mathbf{k}_{2}\right)\tilde{G}\left(T_{n},v,\mathbf{k}_{3}\right)\right\Vert _{L_{v}^{2}},
\end{aligned}
\]
where $i=0,1,2,3,4$. Hence we have the following bound
\[
\begin{aligned} & \left\Vert \sum_{\omega\neq0}\sum_{\Gamma_{\omega}\left(\mathbf{k}\right)}\sum_{k}\tilde{G}\left(T_{n},v-\sqrt{\frac{\omega}{2}}-\frac{2k}{\sqrt{T_{n}}},\mathbf{k}_{1}\right)\overline{\mathcal{Z}_{A^{+}(\omega,k)}}\left(T_{n},v,\mathbf{k}_{2}\right)\tilde{G}\left(T_{n},v-\sqrt{\frac{\omega}{2}}-\frac{2k}{\sqrt{T_{n}}},\mathbf{k}_{3}\right)\right\Vert _{L_{v}^{2}}\\
\lesssim & \left\Vert \sum_{\omega\neq0}\sum_{\Gamma_{\omega}\left(\mathbf{k}\right)}\sum_{k}\tilde{G}\left(T_{n},v,\mathbf{k}_{1}\right)\overline{\mathcal{Z}_{A^{+}(\omega,k)}}\left(T_{n},v+\sqrt{\frac{\omega}{2}}+\frac{2k}{\sqrt{T_{n}}},\mathbf{k}_{2}\right)\tilde{G}\left(T_{n},v,\mathbf{k}_{3}\right)\right\Vert _{L_{v}^{2}}.
\end{aligned}
\]

Moreover we define the function
\[
\mathcal{Z}_{+}\left(\sigma,v,\mathbf{k}_{2}\right)\left(T\right):=\sum_{1\leq j\leq T_{n}^{\frac{1}{16}}}\sum_{k}e^{ij\sigma/2}\mathcal{Z}_{A^{+}\left(j,k\right)}\left(T_{n},v+\sqrt{\frac{j}{2}}+\frac{2k}{\sqrt{T_{n}}},\mathbf{k}_{2}\right),
\]
\[
\mathcal{Z}_{-}\left(\sigma,v,\mathbf{k}_{2}\right)\left(T\right):=\sum_{1\leq j\leq T_{n}^{\frac{1}{16}}}\sum_{k}e^{ij\sigma/2}\mathcal{Z}_{A^{-}\left(j,k\right)}\left(T_{n},v-\sqrt{\frac{j}{2}}+\frac{2k}{\sqrt{T_{n}}},\mathbf{k}_{2}\right),
\]
where $\sigma\in\left[0,2\pi\right)$ which is akin to $\sigma=t-T_{n}$.
Notice that when $T_{n}$ is sufficiently large we have each $A^{+}(j)$
are disjoint hence $\left\Vert \mathcal{Z}_{+}\right\Vert _{L_{v,y}^{2}}\leq\left\Vert V\right\Vert _{L_{v,y}^{2}}.$
Using that the distance between $A^{+}(j)$ and $A^{+}(j+1)$ is greater
than $\sqrt{t}$ again, it follows that $\mathcal{F}_{v}G\overline{\mathcal{Z}_{A_{j}}}$
also have disjoint support for different $j$. 

\begin{lemma}For $t\geq1$, there is the bound

\[
\left\Vert M\left(t\right)\right\Vert _{L_{v,y}^{2}}\lesssim t^{-1}\left\Vert e^{-i\sigma\triangle_{y}/2}\left[\left(e^{i\sigma\triangle_{y}/2}\tilde{G}\left(T_{n},v,y\right)\right)^{2}\overline{e^{i\sigma\triangle_{y}/2}\left[\mathcal{Z}_{+}+\mathcal{Z}_{-}\right]\left(\sigma,v,y\right)}\right]\right\Vert _{L_{\sigma}^{2}\left(\mathbb{T};L_{v,y}^{2}\right)}.
\]

\end{lemma}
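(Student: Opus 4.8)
The plan is to compute both $\|M(t)\|_{L^2_{v,y}}$ and the right-hand norm as sums of squares of $L^2_{v,y}$-norms of the elementary ``translated frequency-block'' trilinear pieces out of which $M$ and $\mathcal{Z}_\pm$ are assembled, and then to compare the two sums termwise, recognizing the left one as a diagonal sub-sum of the right one. Fix $t\in[T_n,T_{n+1})$, so that $M(t,v,y)=t^{-1}M_0(v,y)$ with $M_0$ frozen at $T_n$, and write $M_0=\sum_\omega P_\omega$, where $P_\omega$ collects the resonance level $\omega$, i.e.\ the triples in $\Gamma_\omega(\mathbf{k})$ paired with the translated pieces $\mathcal{Z}_{A^\pm(\omega,k)}$ --- after moving, by the shift-invariance of the $L^2_v$ norm, the $v$-translations off the $\tilde{G}$ factors and onto $\mathcal{Z}$, exactly as in the preamble to the lemma. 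The geometric facts established there --- $\overline{\mathcal{Z}_{A^\pm(\omega,k)}}$ is supported at $v$-frequencies near $\pm T_n\sqrt{\omega/2}$ with $O(T_n^{11/16})$ spread over $k$, $\tilde{G}$ is supported at $v$-frequencies $\lesssim\sqrt{T_n}$, and the centers $T_n\sqrt{\omega/2}$ are separated by $\gtrsim T_n^{15/16}$ --- show that the products $\tilde{G}^2\,\overline{\mathcal{Z}_{A^\pm(\omega,k)}}$ occupy disjoint $v$-frequency windows for distinct $\omega$. Hence, up to the harmless constant $\le5$ coming from the five-fold grouping of the $k$-sum,
\[
\|M(t)\|_{L^2_{v,y}}^2\ \lesssim\ t^{-2}\sum_\omega\|P_\omega\|_{L^2_{v,y}}^2 .
\]

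For the right-hand side, expand on the Fourier side in $y$: inserting the linear flows $e^{\pm i\sigma\triangle_y/2}$ multiplies the contribution of a triple $(\mathbf{k}_1,\mathbf{k}_2,\mathbf{k}_3)\in\mathcal{M}(\mathbf{k})$ by $e^{-i\sigma\omega_0/2}$, $\omega_0:=|\mathbf{k}_1|^2-|\mathbf{k}_2|^2+|\mathbf{k}_3|^2-|\mathbf{k}|^2$, while the phase $e^{ij\sigma/2}$ already built into $\mathcal{Z}_\pm$ contributes $e^{-ij\sigma/2}$ after conjugation, so the $\sigma$-independent coefficient that survives is precisely $\tilde{G}(\mathbf{k}_1)\tilde{G}(\mathbf{k}_3)\,\overline{\mathcal{Z}_{A^\pm(j,k)}}$, which is the building block of $P_{\omega_0}$ when $j=\omega_0$. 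Since the resonance levels $\omega$ are even integers, $e^{-i\sigma(\omega_0+j)/2}$ is a genuine character of $\mathbb{T}$, so Parseval in $\sigma$ over $\mathbb{T}$ turns the right-hand norm into $2\pi\sum_m\|(\sigma\text{-mode }m)\|_{L^2_{v,y}}^2$; within a mode the blocks with distinct $j$ are again $v$-frequency disjoint by the same separation estimate, and for fixed $j$ the mode index pins the value of $\omega_0$, so $\sum_m\|(\sigma\text{-mode }m)\|_{L^2_{v,y}}^2=\sum_{\omega,j}\|P_{\omega,j}\|_{L^2_{v,y}}^2$, where $P_{\omega,j}$ is the analogue of $P_\omega$ in which the level $\omega$ selecting the triples is decoupled from the index $j$ selecting the $\mathcal{Z}$-frequency block. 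Because $P_{\omega,\omega}=P_\omega$, the diagonal terms $j=\omega$ already contribute $\sum_\omega\|P_\omega\|_{L^2_{v,y}}^2$, so $\sum_\omega\|P_\omega\|_{L^2_{v,y}}^2\le\sum_{\omega,j}\|P_{\omega,j}\|_{L^2_{v,y}}^2$; combining this with the previous display gives the claimed bound.

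The main obstacle is the $\sigma$-phase bookkeeping in the second step: one has to verify carefully that the inserted linear flows, together with the $e^{ij\sigma/2}$ packed into $\mathcal{Z}_\pm$ and the previously discarded factors $e^{i\omega T_n/2}$ and $e^{-i\partial_v^2/T_n}$ from $M_\pm$, leave exactly one character $e^{-i\sigma(\omega_0+j)/2}$ per block --- so that $\sigma$-Parseval decouples the sum cleanly --- and that the coefficient it produces is literally the building block of $P_{\omega_0}$; this is precisely where the reduction to unit time intervals $[T_n,T_{n+1})$ enters, the period of these characters being exactly $|\mathbb{T}|=2\pi$. The remaining ingredients --- the two $v$-frequency disjointness statements and the $L^2_v$ translation rearrangements --- are routine given the separation estimates already established before the lemma.
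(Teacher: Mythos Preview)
Your argument is correct and reaches the same conclusion as the paper, but by a genuinely different route.

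The paper proceeds by duality: it pairs the shifted version of $M$ against an arbitrary $\varphi\in L^{2}_{v,y}$ and computes directly that
\[
\int_{0}^{2\pi}\left\langle \varphi,\ e^{-i\sigma\triangle_{y}/2}\!\left[(e^{i\sigma\triangle_{y}/2}\tilde G)^{2}\,\overline{e^{i\sigma\triangle_{y}/2}\mathcal{Z}_{+}}\right]\right\rangle_{L^{2}_{v,y}}d\sigma
\ =\ 2\pi\,\langle \varphi,\, t\,M_{+}^{\text{shifted}}\rangle_{L^{2}_{v,y}},
\]
since integrating $e^{i(\omega-j)\sigma/2}$ over $[0,2\pi)$ kills all $j\neq\omega$; a single Cauchy--Schwarz in $\sigma$ then yields the bound. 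In effect the paper recognizes $M^{\text{shifted}}$ as the $\sigma$-average of the right-hand integrand and never needs to break either side into orthogonal pieces.

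Your approach instead expands both sides by Parseval: you use $v$-frequency disjointness to write $\|M\|^{2}\lesssim t^{-2}\sum_{\omega}\|P_{\omega}\|^{2}$, use $\sigma$-Parseval together with a second $v$-frequency disjointness argument to write the right-hand norm squared as $\sum_{\omega_{0},j}\|P_{\omega_{0},j}\|^{2}$, and conclude by the diagonal inequality $\sum_{\omega}\|P_{\omega,\omega}\|^{2}\le\sum_{\omega_{0},j}\|P_{\omega_{0},j}\|^{2}$. This is more laborious --- two orthogonality arguments rather than one averaging identity --- but it makes explicit exactly what is thrown away (the off-diagonal blocks $P_{\omega_{0},j}$ with $\omega_{0}\neq j$), which is informative. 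One small clarification: the factor $5$ from the $k$-grouping belongs to the translation step that passes from $M$ to $M^{\text{shifted}}$, not to the $\omega$-orthogonality itself; once in the shifted form, the $P_{\omega}$ are $v$-frequency disjoint without further grouping.
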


\begin{proof} 

Given any $\varphi\in L_{v,y}^{2}$,

\begin{align*}
 & \left\langle \varphi\left(v,y\right),e^{-i\sigma\triangle_{y}/2}\left[\left(e^{i\sigma\triangle_{y}/2}\tilde{G}\right)^{2}\left(T_{n},v,y\right)\overline{e^{i\sigma\triangle_{y}/2}\mathcal{Z}_{+}}\left(\sigma,v,y\right)\right]\right\rangle _{L_{v,y}^{2}}\\
= & \left\langle e^{i\sigma\triangle_{y}/2}\varphi\left(v,\mathbf{k}\right),\left(e^{i\sigma\triangle_{y}/2}\tilde{G}\right)^{2}\left(T,v,y\right)\overline{e^{i\sigma\triangle_{y}/2}\mathcal{Z}_{+}}\left(\sigma,v,y\right)\right\rangle _{L_{v}^{2}l_{k}^{2}}\\
= & \sum_{\omega\neq0}\sum_{\Gamma_{\omega}}\sum_{1\leq j\leq T_{n}^{\frac{1}{16}}}e^{i\left(\omega-j\right)\sigma/2}\left\langle \varphi\left(v,\mathbf{k}\right),\tilde{G}\left(T_{n},v,\mathbf{k}_{1}\right)\sum_{k}\overline{\mathcal{Z}_{A^{+}\left(j,k\right)}}\left(T,v+\sqrt{\frac{j}{2}}+\frac{2k}{\sqrt{T_{n}}},\mathbf{k}_{4}\right)\tilde{G}\left(T,v,\mathbf{k}_{3}\right)\right\rangle _{L_{\xi}^{2}L_{y}^{2}}.
\end{align*}
Only when $j=\omega=\left|k_{1}\right|^{2}-\left|k_{2}\right|^{2}+\left|k_{3}\right|^{2}-\left|k_{4}\right|^{2}$
can the integral in time be nonzero. In other cases there will be
$e^{it\left(j-\omega\right)/2}$ in front of the inner product and
integrating in $t$ over the unit time interval will be $0$. Hence
\begin{align*}
 & t^{-1}\int_{0}^{2\pi}\left\langle \varphi\left(v,y\right),e^{-i\sigma\triangle_{y}/2}\left[\left(e^{i\sigma\triangle_{y}/2}\tilde{G}\right)^{2}\left(T_{n},v,y\right)\overline{e^{i\sigma\triangle_{y}/2}\mathcal{Z}_{+}}\left(\sigma,v,y\right)\right]\right\rangle _{L_{v,y}^{2}}d\sigma\\
= & \left\langle \varphi(v,y),t^{-1}\sum_{\omega\neq0}\sum_{\Gamma_{\omega}\left(\mathbf{k}\right)}\sum_{k}\tilde{G}\left(T_{n},v,\mathbf{k}_{1}\right)\overline{\mathcal{Z}_{A^{+}(\omega,k)}}\left(t,v+\sqrt{\frac{\omega}{2}}+\frac{2k}{\sqrt{T_{n}}},\mathbf{k}_{2}\right)\tilde{G}\left(T_{n},v,\mathbf{k}_{3}\right)\right\rangle _{L_{v,y}^{2}}.
\end{align*}
Since $\varphi$ is an arbitrary function, by the definition of $L^{2}-$norm
, we know that the lemma holds.

\end{proof}

\begin{lemma}For any $t\in\left[T_{n},T_{n+1}\right),$ there is
the bound
\begin{equation}
\left\Vert M\left(t\right)\right\Vert _{L_{v,y}^{2}}\lesssim t^{-1}\left\Vert G(T_{n},v,y)\right\Vert _{L_{v}^{\infty}H_{y}^{1}}^{2}\left\Vert V\left(T_{n},v,y\right)\right\Vert _{L_{v,y}^{2}}.\label{eq:Mbound}
\end{equation}

\end{lemma}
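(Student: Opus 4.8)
The plan is to estimate the right‑hand side supplied by the previous lemma, i.e. to show that the trilinear linear‑flow quantity
\[
P(\sigma):=e^{-i\sigma\triangle_{y}/2}\left[\left(e^{i\sigma\triangle_{y}/2}\tilde{G}(T_{n})\right)^{2}\overline{e^{i\sigma\triangle_{y}/2}\left[\mathcal{Z}_{+}+\mathcal{Z}_{-}\right]\left(\sigma\right)}\right]
\]
obeys $\left\Vert P\right\Vert _{L_{\sigma}^{2}(\mathbb{T};L_{v,y}^{2})}\lesssim\left\Vert G(T_{n})\right\Vert _{L_{v}^{\infty}H_{y}^{1}}^{2}\left\Vert V(T_{n})\right\Vert _{L_{v,y}^{2}}$; since $t^{-1}\approx T_{n}^{-1}$ on $[T_{n},T_{n+1})$ this gives the lemma. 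First I would discard the outer $e^{-i\sigma\triangle_{y}/2}$, which is unitary on $L_{v,y}^{2}$. Then, expanding $\mathcal{Z}_{\pm}(\sigma)=\sum_{j}\sum_{k}e^{ij\sigma/2}(\cdot)_{j,k}$ (with $j$ running over the even positive values $\lesssim T_{n}^{1/16}$ of a resonance level) against the $y$‑Fourier series of the three linear flows, the $\sigma$‑dependence of the term carrying $y$‑frequencies $(\mathbf{k}_{1},\mathbf{k}_{2},\mathbf{k}_{3},\mathbf{k})$ and index $j$ collapses to the single exponential $e^{i(j-\omega)\sigma/2}$, where $\omega=|\mathbf{k}_{1}|^{2}-|\mathbf{k}_{2}|^{2}+|\mathbf{k}_{3}|^{2}-|\mathbf{k}|^{2}$ is an even integer; these exponentials are pairwise orthogonal on $[0,2\pi]$, so Plancherel in $\sigma$ reduces the problem to
\[
\left\Vert P\right\Vert _{L_{\sigma}^{2}(\mathbb{T};L_{v,y}^{2})}^{2}\lesssim\sum_{j}\left\Vert M_{j}\right\Vert _{L_{v,y}^{2}}^{2},
\]
where $M_{j}$ is the $\Gamma_{j}(\mathbf{k})$‑resonant trilinear interaction in $y$ between $\tilde{G}(T_{n})$, the part $\mathcal{Z}^{(j)}$ of $\mathcal{Z}=S(-T_{n})V(T_{n})$ whose $v$‑frequency lies in $A^{\pm}(j)=\bigcup_{k}A^{\pm}(j,k)$, and $\tilde{G}(T_{n})$, carrying the $v$‑shifts by $\pm\sqrt{j/2}+2k/\sqrt{T_{n}}$ on the two $\tilde{G}$ factors.

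Next I would estimate each $M_{j}$. Grouping the $k$‑sum into a bounded number of subsequences with disjoint $v$‑frequency support and translating $v$ exactly as in the shift‑and‑grouping computation carried out above, the $L^{2}_{v}$‑norm of $M_{j}$ is controlled by that of the level‑$j$ resonant trilinear form $R_{j}[\tilde{G}(T_{n}),\mathcal{Z}^{(j)},\tilde{G}(T_{n})]$, i.e. the sum over $\Gamma_{j}(\mathbf{k})$. The bilinear Strichartz estimate on $\mathbb{T}^{d}$, $1\le d\le4$ --- equivalently the resonant level‑set bound (\ref{eq:h1esti}) of \cite{HPTV}, whose proof is insensitive to the value of the level $\Gamma_{\omega}$ --- yields, uniformly in $j$,
\[
\left\Vert R_{j}\left[f_{1},f_{2},f_{3}\right]\left(v,\cdot\right)\right\Vert _{l_{\mathbf{k}}^{2}}\lesssim\left\Vert f_{1}\left(v,\cdot\right)\right\Vert _{H_{y}^{1}}\left\Vert f_{2}\left(v,\cdot\right)\right\Vert _{L_{y}^{2}}\left\Vert f_{3}\left(v,\cdot\right)\right\Vert _{H_{y}^{1}}.
\]
Taking $L_{v}^{\infty}$ in the two $\tilde{G}$‑factors and $L_{v}^{2}$ in $\mathcal{Z}^{(j)}$ then gives $\left\Vert M_{j}\right\Vert _{L_{v,y}^{2}}\lesssim\left\Vert \tilde{G}(T_{n})\right\Vert _{L_{v}^{\infty}H_{y}^{1}}^{2}\left\Vert \mathcal{Z}^{(j)}\right\Vert _{L_{v,y}^{2}}$.

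To conclude I would use that the $v$‑frequency sets $A^{\pm}(j)$ are pairwise disjoint for distinct $j$ --- the separation of their centres is $\gtrsim T_{n}^{15/16}$ while their widths are $\lesssim T_{n}^{11/16}$, which is exactly where the cutoff $\omega\lesssim T_{n}^{1/16}$ is needed --- so that, $S(-T_{n})$ being unitary, $\sum_{j}\left\Vert \mathcal{Z}^{(j)}\right\Vert _{L_{v,y}^{2}}^{2}\le\left\Vert \mathcal{Z}\right\Vert _{L_{v,y}^{2}}^{2}=\left\Vert V(T_{n})\right\Vert _{L_{v,y}^{2}}^{2}$. Combining the three displayed bounds and noting $\left\Vert \tilde{G}(T_{n})\right\Vert _{L_{v}^{\infty}H_{y}^{1}}=\left\Vert P_{\le T_{n}^{1/16}}^{y}G(T_{n})\right\Vert _{L_{v}^{\infty}H_{y}^{1}}\lesssim\left\Vert G(T_{n})\right\Vert _{L_{v}^{\infty}H_{y}^{1}}$ gives the stated estimate. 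The step I expect to be the main obstacle is the second one: one must verify that, once the phase has been frozen on the unit interval and the $v$‑frequencies localized, the object surviving at each scale $j$ is genuinely the resonant trilinear form, so that the sharp $H^{1}\times L^{2}\times H^{1}$ bilinear bound on $\mathbb{T}^{d}$ applies with no loss in the dimension --- the case $d=4$ being the delicate endpoint of that estimate --- and that the bookkeeping of the $v$‑translations and the conjugations remains consistent throughout.
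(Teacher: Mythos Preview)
Your Plancherel step contains a genuine gap. The exponentials $e^{i(j-\omega)\sigma/2}$ are orthogonal on $[0,2\pi]$ only when the \emph{values} $j-\omega$ differ, not when the pairs $(j,\omega)$ differ; distinct pairs with the same $j-\omega$ give the same character. So Plancherel in $\sigma$, together with the $v$--frequency disjointness of the $\mathcal{Z}^{(j)}$, actually yields
\[
\|P\|_{L^{2}_{\sigma}L^{2}_{v,y}}^{2}\;\simeq\;\sum_{j}\sum_{\omega}\bigl\|R_{\omega}[\tilde G,\mathcal{Z}^{(j)},\tilde G]\bigr\|_{L^{2}_{v}l^{2}_{\mathbf{k}}}^{2},
\]
a double sum over $j$ \emph{and} $\omega$, not the single sum $\sum_{j}\|M_{j}\|^{2}$ with $M_{j}=R_{j}[\tilde G,\mathcal{Z}^{(j)},\tilde G]$ that you claim. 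Applying the single--level bound (\ref{eq:h1esti}) at each $(j,\omega)$ then costs you the cardinality of the $\omega$--range, about $T_{n}^{1/8}$, which destroys the $t^{-1}$ rate. Equivalently, your argument is implicitly asserting the trilinear estimate $\|(e^{i\sigma\Delta_y}a)^{2}\overline{e^{i\sigma\Delta_y}b}\|_{L^{2}_{\sigma,y}}\lesssim\|a\|_{H^{1}_{y}}^{2}\|b\|_{L^{2}_{y}}$ on $\mathbb{T}^{d}$; for $d\ge 2$ this does not follow from the bilinear Strichartz bound, because after pairing with a generic $\psi\in L^{2}_{\sigma,y}$ the factor $\overline{e^{i\sigma\Delta_y}a}\cdot\psi$ is no longer a product of two free waves.

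The paper sidesteps this by \emph{not} estimating $\|P\|_{L^{2}_{\sigma}L^{2}_{v,y}}$. It goes back to the identity from the preceding lemma, $\langle\varphi,M(t)\rangle_{L^{2}_{v,y}}=t^{-1}\int_{0}^{2\pi}\langle\varphi,P(\sigma)\rangle\,d\sigma$ for a $\sigma$--\emph{independent} test function $\varphi\in L^{2}_{v,y}$, and then applies Cauchy--Schwarz in $(\sigma,v,y)$ to split the quartic integrand into two bilinear pieces,
\[
\bigl(\overline{e^{i\sigma\Delta_y/2}\varphi}\bigr)\bigl(e^{i\sigma\Delta_y/2}\tilde G\bigr)
\quad\text{and}\quad
\bigl(e^{i\sigma\Delta_y/2}\tilde G\bigr)\bigl(\overline{e^{i\sigma\Delta_y/2}\mathcal{Z}_{+}}\bigr).
\]
Because $\varphi$ does not depend on $\sigma$, the first factor is a genuine product of two linear Schr\"odinger flows and the $H^{1}\times L^{2}$ bilinear Strichartz applies directly, placing $H^{1}_{y}$ on $\tilde G$ and $L^{2}$ on $\varphi$. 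For the second factor one uses the $v$--frequency disjointness of the $A^{+}(j,k)$ (exactly as you describe) to absorb the phases $e^{ij\sigma/2}$ and reduce to a product of free flows, and then bilinear Strichartz again places $H^{1}_{y}$ on $\tilde G$ and $L^{2}$ on $V(T_{n})$. The duality step with a $\sigma$--independent $\varphi$ is precisely what lets one use the bilinear estimate \emph{twice}, once for each copy of $\tilde G$; your direct $L^{2}_{\sigma}$ bound on $P$ cannot do this.
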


\begin{proof}

From the previous lemma, for any $\varphi\in L_{v,y}^{2}$ there is
the inequality

\begin{align*}
 & t^{-1}\left\langle \varphi\left(v,y\right),e^{-i\sigma\triangle_{y}/2}\left[\left(e^{i\sigma\triangle_{y}/2}\tilde{G}\right)^{2}\left(T_{n},v,y\right)\overline{e^{i\sigma\triangle_{y}/2}\mathcal{Z}_{+}}\left(\sigma,v,y\right)\right]\right\rangle _{L_{\sigma,v,y}^{2}}\\
\lesssim & t^{-1}\left\Vert \overline{e^{i\sigma\triangle_{y}/2}\varphi}e^{i\sigma\triangle_{y}/2}\tilde{G}\left(T_{n},v,y\right)\right\Vert _{L_{\sigma,v,y}^{2}}\left\Vert e^{i\sigma\triangle_{y}/2}\tilde{G}\left(T_{n},v,y\right)\overline{e^{i\sigma\triangle_{y}/2}\mathcal{Z}_{+}}\left(\sigma,v,y\right)\right\Vert _{L_{\sigma,v,y}^{2}}.
\end{align*}
Thus by Fourier transformation and disjointness of domains for different
$j$, applying the bilinear Strichartz estimate we will have the $L^{2}$
bound: 
\begin{align*}
 & \left\Vert \left[e^{i\sigma\triangle_{y}/2}\tilde{G}\left(T_{n},v,y\right)\right]\overline{e^{i\sigma\triangle_{y}/2}\mathcal{Z}_{+}}\left(\sigma,v,y\right)\right\Vert _{L_{\sigma}^{2}\left(\mathbb{T};L_{v,y}^{2}\right)}\\
= & \sum_{1\leq j\leq T^{\frac{1}{16}}}\left\Vert e^{ij\sigma/4}\mathcal{F}_{v}\left(e^{i\sigma\triangle_{y}/2}\tilde{G}\left(T_{n},v,y\right)e^{-i\sigma\triangle_{y}/2}\sum_{k}\overline{\mathcal{Z}_{A^{+}\left(j,k\right)}}\left(T_{n},v+\sqrt{\frac{j}{2}}+\frac{2k}{\sqrt{T_{n}}},y\right)\right)\right\Vert _{L_{\sigma}^{2}\left(\mathbb{T};L_{\xi,y}^{2}\right)}\\
= & \left\Vert e^{i\sigma\triangle_{y}/2}\tilde{G}\left(T_{n},v,y\right)e^{-i\sigma\triangle_{y}/2}\left[\sum_{1\leq j\leq T^{\frac{1}{16}}}\sum_{k}\overline{\mathcal{Z}_{A^{+}\left(j,k\right)}}\left(T_{n},v+\sqrt{\frac{j}{2}}+\frac{2k}{\sqrt{T_{n}}},y\right)\right]\right\Vert _{L_{\sigma}^{2}\left(\mathbb{T};L_{v,y}^{2}\right)}\\
\lesssim & \left\Vert \tilde{G}(T,v,y)\right\Vert _{L_{v}^{\infty}H_{y}^{1}}\left\Vert \sum_{1\leq j\leq T^{\frac{1}{16}}}\sum_{k}\overline{\mathcal{Z}_{A^{+}\left(j,k\right)}}\left(T_{n},v+\sqrt{\frac{j}{2}}+\frac{2k}{\sqrt{T_{n}}},y\right)\right\Vert _{L_{v,y}^{2}}\\
\lesssim & \left\Vert G(T_{n},v,y)\right\Vert _{L_{v}^{\infty}H_{y}^{1}}\left\Vert V\left(T_{n},v,y\right)\right\Vert _{L_{v,y}^{2}}.
\end{align*}
The proof of bilinear Strichartz estimate on $L_{\sigma,y}^{2}$ is
given in \cite{HTT2}. Similarly, 
\begin{align*}
\left\Vert \overline{e^{i\sigma\triangle_{y}/2}\varphi}e^{i\sigma\triangle_{y}/2}\tilde{G}\left(T_{n},v,y\right)\right\Vert _{L_{\sigma}^{2}\left(\mathbb{T};L_{v,y}^{2}\right)} & \lesssim\left\Vert \tilde{G}(T_{n},v,y)\right\Vert _{L_{v}^{\infty}H_{y}^{1}}\left\Vert \varphi\right\Vert _{L_{v,y}^{2}}\lesssim\left\Vert G(T_{n},v,y)\right\Vert _{L_{v}^{\infty}H_{y}^{1}}\left\Vert \varphi\right\Vert _{L_{v,y}^{2}}.
\end{align*}
Since $\varphi$ is an arbitrary $L_{v,y}^{2}$ function, by the property
of $L^{2}-$norm we know that for $t\in[T_{n},T_{n+1}),$
\[
\left\Vert M\left(t\right)\right\Vert _{L_{v,y}^{2}}\lesssim t^{-1}\left\Vert \gamma(T_{n},v,y)\right\Vert _{L_{v}^{\infty}H_{y}^{1}}^{2}\left\Vert \nu\left(T_{n},v,y\right)\right\Vert _{L_{x,y}^{2}}.
\]
\end{proof}

In order to switch back to estimate of integration in time, by (\ref{eq:tdiffbound})
for any $t\in\left[T_{n},T_{n+1}\right)$ we have
\begin{align*}
\left\Vert M\left(t\right)\right\Vert _{L_{v,y}^{2}} & \lesssim D^{2}\epsilon^{2}t^{-1}\left\Vert \mathcal{Z}\left(T_{n},v,y\right)\right\Vert _{L_{v,y}^{2}}\\
 & \lesssim D^{2}\epsilon^{2}t{}^{-1}\left\Vert \nu\left(t\right)\right\Vert _{L_{x,y}^{2}}+D^{\frac{11}{3}}\epsilon^{4}t^{-2}\left(1+t\right)^{\frac{5}{3}\delta}\left\Vert \nu\left(t\right)\right\Vert _{L_{x,y}^{2}}.
\end{align*}

Since 
\[
e_{4}(t,v,y)=M_{+}(t,v,y)+M_{-}(t,v,y)+\text{err}_{1}(t,v,y)+\text{err}_{2}(t,v,y)+\text{err}_{3}(t,v,y),
\]
combining all the estimates (\ref{eq:err_1}), (\ref{eq:err_2}),
(\ref{eq:err_3}) together, the estimate (\ref{eq:e4bound}) is proved.
\begin{align*}
\left\Vert e_{4}\left(t\right)\right\Vert _{L_{v,y}^{2}} & \lesssim D^{2}\epsilon^{2}t^{-1}\left\Vert \nu\left(t\right)\right\Vert _{L_{x,y}^{2}}+D^{\frac{11}{3}}\epsilon^{4}t^{-2}\left(1+t\right)^{\frac{5}{3}\delta}\left\Vert \nu\left(t\right)\right\Vert _{L_{x,y}^{2}}\\
 & \quad+\left\Vert \text{err}_{1}\left(t\right)\right\Vert _{L_{v,y}^{2}}+\left\Vert \text{err}_{2}\left(t\right)\right\Vert _{L_{v,y}^{2}}+\left\Vert \text{err}_{3}\left(t\right)\right\Vert _{L_{v,y}^{2}}.
\end{align*}

\subsection{Proof of Proposition \ref{prop. nugrowth}}

By (\ref{eq:nuintegral}), we have the bound

\begin{align*}
\left\Vert \nu\left(T\right)\right\Vert _{L_{x,y}^{2}}^{2} & \leq\left\Vert \nu\left(1\right)\right\Vert _{L_{x,y}^{2}}^{2}+2\int_{1}^{T}\left(\left\Vert e_{1}\right\Vert _{L_{v,y}^{2}}+\left\Vert e_{2}\right\Vert _{L_{v,y}^{2}}+\left\Vert e_{4}\right\Vert _{L_{v,y}^{2}}\right)\left\Vert \nu(t)\right\Vert _{L_{x,y}}dt+\left|\int_{1}^{T}\left\langle \nu(t),e_{3}\right\rangle _{L_{x,y}^{2}}dt\right|.
\end{align*}

Combining (\ref{eq:e1bound}),(\ref{eq:e2bound}),(\ref{eq:e3bound})
and (\ref{eq:e4bound}) together, we assume that $D$ is a sufficiently
large positive constant which only depends on $d$ and $s$. Notice
that the choice of $D$ does not depend on $\epsilon$ and $T$. Therefore
the following inequality holds:

\begin{align*}
 & \left[1-D^{\frac{8}{3}}\epsilon^{2}T^{-\frac{5}{8}}\left(1+T\right)^{\frac{5}{3}\delta}\right]\left\Vert \nu\left(T\right)\right\Vert _{L_{x,y}^{2}}^{2}\\
\leq & \left\Vert \nu\left(1\right)\right\Vert _{L_{x,y}^{2}}^{2}+D^{\frac{8}{3}}\epsilon^{2}\left\Vert \nu\left(1\right)\right\Vert _{L_{x,y}^{2}}^{2}+\int_{1}^{T}D^{3}\epsilon^{2}t^{-1}\left\Vert \nu(t)\right\Vert _{L_{x,y}^{2}}^{2}dt\\
 & +D\int_{1}^{T}\left(D^{\frac{11}{6}}\epsilon^{2}+D^{\frac{11}{3}}\epsilon^{4}+D^{\frac{10}{3}}\epsilon^{4}+D^{\frac{73}{42}}\epsilon^{2}+D^{\frac{5}{3}}\epsilon^{2}\right)\left(1+t\right)^{-\frac{65}{64}+\frac{11}{6}\delta}\left\Vert \nu(t)\right\Vert _{L_{x,y}^{2}}^{2}dt.
\end{align*}
By Gronwall's inequality for any $t\in[1,T]$ there is the inequality
\begin{align*}
\left\Vert \nu\left(t\right)\right\Vert _{L_{x,y}^{2}}^{2}\leq & \frac{\left(1+D^{\frac{8}{3}}\epsilon^{2}\right)}{1-D^{\frac{8}{3}}\epsilon^{2}}\left\Vert \nu\left(1\right)\right\Vert _{L_{x,y}^{2}}^{2}\exp\left(\frac{D^{3}\epsilon^{2}}{1-D^{\frac{8}{3}}\epsilon^{2}}\log(1+t)\right)\\
 & \quad\exp\left(\frac{D}{1-D^{\frac{8}{3}}\epsilon^{2}}\left(D^{\frac{11}{6}}\epsilon^{2}+D^{\frac{11}{3}}\epsilon^{4}+D^{\frac{10}{3}}\epsilon^{4}+D^{\frac{73}{42}}\epsilon^{2}+D^{\frac{5}{3}}\epsilon^{2}\right)\right).
\end{align*}
Thus if we take $\epsilon\ll D^{-\frac{3}{2}}$ such that $0<D^{3}\epsilon^{2}<\delta<\frac{1}{2},$
we have the desired inequality 
\begin{equation}
\left\Vert \nu\left(t\right)\right\Vert _{L_{x,y}^{2}}^{2}\lesssim\left\Vert \nu\left(1\right)\right\Vert _{L_{x,y}^{2}}^{2}\left(1+t\right)^{2D^{3}\epsilon^{2}}\lesssim\left\Vert \nu\left(0\right)\right\Vert _{L_{x,y}^{2}}^{2}\left(1+t\right)^{2D^{3}\epsilon^{2}}.\label{eq:energybd}
\end{equation}

\begin{remark}

In the special case $d=1$, since we have the fact $\left|\gamma\right|$
is bounded, we can get a much simpler estimate.

\end{remark}

\subsection{Correction to the Leibnitz derivative rule}

In order to finish proof of the estimate for $D_{y}^{s}u$, we need
to prove the following lemma. Then the same estimate of $\left\Vert \nu\right\Vert _{L_{x,y}^{2}}$
for the solution $\nu$ to the linearized equation can be directly
applied to $\left\Vert D_{y}^{s}u\right\Vert _{L_{x,y}^{2}}.$

\begin{lemma}\label{LemmaD_ycorrect}

Assume that $u$ satisfies Hypothesis \ref{hyp}. Then there is the
bound
\begin{equation}
\begin{aligned}\left|\int_{1}^{T}\left\langle D_{y}^{s}u,\text{cor}\left(t\right)\right\rangle _{L_{x,y}^{2}}dt\right|\lesssim & D^{\frac{5}{3}}\epsilon^{2}\left\Vert D_{y}^{s}u\left(1\right)\right\Vert _{L_{x,y}^{2}}^{2}+D^{\frac{5}{3}}\epsilon^{2}T^{-1}\left(1+T\right)^{\frac{5}{3}\delta}\left\Vert D_{y}^{s}u\left(T\right)\right\Vert _{L_{x,y}^{2}}^{2}\\
 & +\int_{1}^{T}t^{-1}D^{2}\epsilon^{2}\left\Vert D_{y}^{s}u\right\Vert _{L_{x,y}^{2}}^{2}dt+\int_{1}^{T}D^{\frac{d}{s}+\tilde{\beta}}\epsilon^{2}t^{-1-\tilde{\beta}}\left(1+t\right)^{\left(\frac{d}{s}+\tilde{\beta}\right)\delta}\left\Vert D_{y}^{s}u(t)\right\Vert _{L_{x,y}^{2}}^{2}dt\\
 & +\int_{1}^{T}\left[D^{\frac{5}{3}}\epsilon^{2}t^{-2}\left(1+t\right)^{\frac{5}{3}\delta}+D^{\frac{10}{3}}\epsilon^{4}t^{-2}\left(1+t\right)^{\frac{10}{3}\delta}\right]\left\Vert D_{y}^{s}u\left(t\right)\right\Vert _{L_{x,y}^{2}}^{2}dt,
\end{aligned}
\label{eq:Ducorerro}
\end{equation}
for some positive number $\tilde{\beta}.$

\end{lemma}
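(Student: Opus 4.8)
The plan is to run the argument of Proposition~\ref{prop. nugrowth} almost verbatim, with $D_y^{s}u$ in the role of $\nu$ and $\mathrm{cor}(t)$ in the role of a (favorable) perturbation of the linearized nonlinearity. First I would pass to the $w$-picture: since $D_y^{s}$ commutes with all the $x$-side operations, $D_y^{s}w=t^{1/2}e^{-itv^{2}/2}(D_y^{s}u)(t,tv,y)$, and applying $S(-t)$ exactly as in (\ref{eq:V})--(\ref{eq:nonlinV}) one obtains
\[
\langle D_y^{s}u,\mathrm{cor}(t)\rangle_{L^{2}_{x,y}}=t^{-1}\bigl\langle S(-t)D_y^{s}V,\ \mathcal{C}(t)\bigr\rangle_{L^{2}_{v,y}}+(\text{the same }x\text{-Fourier oscillatory structure}),
\]
where $\mathcal{C}$ is the trilinear expression coming from the transformed nonlinearity, carrying the extra $y$-Fourier multiplier $m_{s}(\mathbf{k}_{1},\mathbf{k}_{2},\mathbf{k}_{3}):=|\mathbf{k}|^{s}-|\mathbf{k}_{1}|^{s}-|\mathbf{k}_{2}|^{s}-|\mathbf{k}_{3}|^{s}$ with $\mathbf{k}=\mathbf{k}_{1}-\mathbf{k}_{2}+\mathbf{k}_{3}$, together with the phase $e^{i\Psi(t)}$ of (\ref{eq:refunc}). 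The structural input is the elementary commutator bound $|m_{s}(\mathbf{k}_{1},\mathbf{k}_{2},\mathbf{k}_{3})|\lesssim\langle\mathbf{k}_{(1)}\rangle^{s-1}\langle\mathbf{k}_{(2)}\rangle$, where $\langle\mathbf{k}_{(1)}\rangle\ge\langle\mathbf{k}_{(2)}\rangle\ge\langle\mathbf{k}_{(3)}\rangle$ are the ordered magnitudes and also $|\mathbf{k}|\lesssim\langle\mathbf{k}_{(1)}\rangle$; this expresses that $\mathrm{cor}$ is a genuine Leibniz commutator that removes one $y$-derivative from the top factor.

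With this in hand I would decompose $\langle D_y^{s}u,\mathrm{cor}\rangle$ into the same four regimes as the $e_{1},\dots,e_{4}$ decomposition. In the regime where one $w$-factor has $v$-frequency $\gtrsim\sqrt{t}$, replacing $w$ by $\gamma$ costs a power $t^{-\tilde\beta}$ as in the proofs of (\ref{eq:err_1}), and bounding the two lowest-frequency factors by (\ref{eq:alpha bound}) yields the integrable $t^{-1-\tilde\beta}$ contribution. In the $y$-resonant regime $\Gamma_{0}$ (after $w\mapsto\gamma$), the commutator bound lets at most $D_y^{s-1}$ land on the top factor and $D_y^{1}$ on the next, so the $H^{s}(\mathbb{T}^{d})$ algebra property (available since $s>d/2$) together with $\|\gamma\|_{L^{\infty}_{v}H^{1}_{y}}\lesssim D\epsilon$ from (\ref{eq:bootsass1}) produces precisely the Gronwall term $t^{-1}D^{2}\epsilon^{2}\|D_y^{s}u\|_{L^{2}_{x,y}}^{2}$. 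In the $v$- and $y$-non-resonant regime $\Omega^{1}_{t}$, a normal-form correction in time ($e^{i\Psi}=(\partial_{t}e^{i\Psi})/i\Psi'$, with $|\Psi'|\gtrsim t^{-3/8}\omega$ there) generates the boundary terms $D^{5/3}\epsilon^{2}\|D_y^{s}u(1)\|^{2}$ and $D^{5/3}\epsilon^{2}T^{-1}(1+T)^{5\delta/3}\|D_y^{s}u(T)\|^{2}$ --- the factor $D^{5/3}(1+t)^{5\delta/3}$ being exactly $\|\gamma\|_{L^{\infty}_{v}H^{\alpha}_{y}}^{2}$ via (\ref{eq:alpha bound}) --- while $\partial_{t}$ falling on $(\Psi')^{-1}$, on the cutoff, or on a $w$/$\mathcal Z$ factor gives the $t^{-2}(1+t)^{?\delta}$ errors (the extra $t^{-1}$ and two extra $w$'s from $\partial_{t}\mathcal Z$ producing the $D^{10/3}\epsilon^{4}$ piece). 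Finally, in the almost-resonant regime $\Omega^{2}_{t}$ I would rerun the unit-time-interval bilinear-Strichartz argument of the $e_{4}$ estimate, with the outcome again absorbed into the $t^{-1-\tilde\beta}$ term after peeling off the high $y$-frequencies of $\gamma$.

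The main obstacle is the high--high--low $y$-interaction, where two factors sit at comparable frequency $N$ but the output frequency $\mathbf{k}$ is $\ll N$: there $m_{s}$ only strips one derivative off a factor whose frequency is \emph{not} comparable to $\mathbf{k}$, so one cannot turn the two top factors into a single clean copy of $\|D_y^{s}u\|_{L^{2}}$. The remedy is to split the $2s$ available $y$-derivatives unevenly, keep one factor in $L^{2}$ after a Littlewood--Paley projection $P^{y}_{\le N}$, and close with the Gagliardo--Nirenberg interpolation $\|\gamma\|_{L^{\infty}_{y}}\lesssim\|\gamma\|_{L^{2}_{y}}^{1-d/(2s)}\|D_y^{s}\gamma\|_{L^{2}_{y}}^{d/(2s)}$, valid precisely because $s>d/2$; this is what forces the exponent $d/s$ and the small surplus $\tilde\beta>0$ in the corresponding term of (\ref{eq:Ducorerro}). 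I expect the only genuinely delicate points to be the bookkeeping of these exponents and the verification that every piece other than the single $t^{-1}D^{2}\epsilon^{2}$ term is time-integrable against the expected growth $(1+t)^{2D^{3}\epsilon^{2}}$; structurally this is the HPTV treatment of their Leibniz-rule correction, re-expressed in the wave-packet variables $w,\gamma,\mathcal Z$.
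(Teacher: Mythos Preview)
Your proposal is in the right spirit but takes a substantially more complicated route than the paper, and in one place the exponents you quote are inconsistent with the mechanism you invoke.

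The paper does \emph{not} rerun the full four-regime $e_1,\dots,e_4$ decomposition for $\mathrm{cor}(t)$. Instead it makes only a two-step split. First, $t^{-1}|w|^2w=t^{-1}|\gamma|^2\gamma+t^{-1}(|w|^2w-|\gamma|^2\gamma)$. For the low-$v$-frequency piece $t^{-1}|\gamma|^2\gamma$ one pairs with $D_y^sw$ and splits only into $y$-resonant and $y$-nonresonant; the resonant part gives the Gronwall term via Lemma~\ref{resonance form}, and the nonresonant part is handled by the \emph{simple} integration by parts in time against the phase $e^{i\omega t/2}$ exactly as in Proposition~\ref{prop:wproperty}(b), yielding the $T^{-1}$ boundary terms and the $t^{-2}$ integrable errors. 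No $\Omega_t^1/\Omega_t^2$ split, no $\Psi(t)$ phase, and no bilinear Strichartz are needed here. This is why the boundary term carries the factor $T^{-1}$ and the coefficient $D^{5/3}\epsilon^2\sim\|\gamma\|_{L_v^\infty H_y^\alpha}^2$: the symbol gain is $1/|\omega|\le 1$, not $t^{3/8}/|\omega|$. Your derivation of the same $T^{-1}$ boundary term from the $\Omega_t^1$ normal form with $|\Psi'|\gtrsim t^{-3/8}\omega$ is inconsistent---that mechanism produces a $T^{-5/8}$ boundary term with coefficient $D^2\epsilon^2$ (compare Lemma~\ref{o1estiamte}), not the numbers in (\ref{eq:Ducorerro}).

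Second, the high-$v$-frequency piece $|w|^2w-|\gamma|^2\gamma$ is estimated \emph{directly}, without any time integration by parts, using the fractional Leibniz correction formula (Lemma~\ref{lemma:correction}) together with H\"older in $y$ with exponents $p_i=s/s_i$, Sobolev embedding $H_y^{s_i^*}\hookrightarrow L_y^{2p_i}$ with $s_i^*=s_i+\tfrac{d}{2}(1-s_i/s)$, and interpolation between $\|w\|_{L^2}$, $\|\partial_v w\|_{L^2}$, $\|D_y^s w\|_{L^2}$. This is what produces the specific exponent $d/s$ and the extra decay $t^{-\tilde\beta}$ in (\ref{eq:Ducorerro}); it is not a Gagliardo--Nirenberg argument on the high--high--low output as you suggest, but rather a direct trilinear H\"older/Sobolev estimate exploiting that in $\mathrm{cor}(t)$ the $s$ derivatives are genuinely distributed over the three factors (so $\max s_i\le\max\{s-1,1\}<s$), and that one factor is $P_{\ge\sqrt t}w$.

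In short: your plan would likely close (with different constants) but at the cost of the entire $e_3/e_4$ machinery, whereas the paper observes that once the $v$-frequencies are truncated to $\gamma$ the phase is purely $e^{i\omega t/2}$ and the normal form is elementary, and the remaining high-$v$-frequency piece admits a fixed-time bound via Lemma~\ref{lemma:correction}.
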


\begin{proof}

We first split the nonlinear term into two parts: the low $v$-frequency
part and the high frequency part,
\[
t^{-1}\left|w\right|^{2}w=t^{-1}\left|\gamma\right|^{2}\gamma+t^{-1}\left(\left|w\right|^{2}w-\left|\gamma\right|^{2}\gamma\right).
\]
The first factor $t^{-1}\left|\gamma\right|^{2}\gamma$ is easy to
estimate. The inner product can be written as
\begin{align*}
\left\langle D_{y}^{s}w,D_{y}^{s}\left(t^{-1}\left|\gamma\right|^{2}\gamma\right)\right\rangle _{L_{v,y}^{2}}= & \left\langle P_{\leq5\sqrt{t}}D_{y}^{s}w,D_{y}^{s}\left(t^{-1}\left|\gamma\right|^{2}\gamma\right)\right\rangle _{L_{v,y}^{2}}.
\end{align*}
Then separate the inner product into a $y$ resonant factor and a
$y$ nonresonant factor. The resonant factor can be estimated by (\ref{eq:h1esti}),
and the elementary inequality
\[
\left\Vert D^{s}\left(fg\right)\right\Vert _{L^{2}}\lesssim\left\Vert \left(D^{s}f\right)g\right\Vert _{L^{2}}+\left\Vert f\left(D^{s}g\right)\right\Vert _{L^{2}},
\]
hence we have
\begin{align*}
t^{-1}\left\Vert R\left[\gamma,\gamma,\gamma\right]\right\Vert _{L_{v}^{2}H_{y}^{s}}\lesssim t^{-1}\left\Vert \gamma\right\Vert _{L_{v}^{\infty}H_{y}^{1}}^{2}\left\Vert D_{y}^{s}w\right\Vert _{L_{v,y}^{2}}.
\end{align*}
The nonresonant factor can be estimated by integration by parts in
time, which is almost the same as the computations in the proof of
Proposition \ref{prop:wproperty} (b), hence we omit it. Therefore
we have 
\begin{equation}
\begin{aligned}\left|\int_{1}^{T}\left\langle D_{y}^{s}u,t^{-1}\left|\gamma\right|^{2}\gamma\right\rangle _{L_{x,y}^{2}}dt\right| & \lesssim D^{\frac{5}{3}}\epsilon^{2}\left\Vert D_{y}^{s}u\left(1\right)\right\Vert _{L_{x,y}^{2}}^{2}+D^{\frac{5}{3}}\epsilon^{2}T^{-1}\left(1+T\right)^{\frac{5}{3}\delta}\left\Vert D_{y}^{s}u\left(T\right)\right\Vert _{L_{x,y}^{2}}^{2}\\
 & \quad+\int_{1}^{T}\left[D^{\frac{5}{3}}\epsilon^{2}t^{-2}\left(1+t\right)^{\frac{5}{3}\delta}+D^{\frac{10}{3}}\epsilon^{4}t^{-2}\left(1+t\right)^{\frac{10}{3}\delta}\right]\left\Vert D_{y}^{s}u\left(t\right)\right\Vert _{L_{x,y}^{2}}^{2}dt.
\end{aligned}
\label{eq:Dylow}
\end{equation}

In order to estimate the second term $t^{-1}\left(\left|w\right|^{2}w-\left|\gamma\right|^{2}\gamma\right)$,
we separate it into three parts 
\[
t^{-1}\left(P_{\geq\sqrt{t}}w|w|^{2}+\gamma\overline{P_{\geq\sqrt{t}}w}w+|\gamma|^{2}P_{\geq\sqrt{t}}w\right).
\]
We move the bound for the first term, and the others follow in the
same manner.

By Lemma \ref{lemma:correction}, the Leibnitz rule correction form,
it suffices to work with the estimate for 
\[
t^{-1}\left(D_{y}^{s_{1}}P_{\geq\sqrt{t}}w\right)\left(\overline{D_{y}^{s_{2}}w}\right)\left(D_{y}^{s_{3}}w\right),
\]
where 
\[
s_{1},s_{2},s_{3}\geq0,\quad s_{1}+s_{2}+s_{3}=s,\quad,\max\left\{ s_{1},s_{2},s_{3}\right\} \leq\max\{s-1,1\}.
\]

Let 
\[
p_{i}=\frac{s}{s_{i}},\quad\text{ for }i=1,2,3.
\]
If $\alpha_{i}=0$, we let $p_{i}=\infty.$
\begin{align*}
\left\Vert \left(D_{y}^{s_{1}}P_{\geq\sqrt{t}}w\right)\left(\overline{D_{y}^{s_{2}}w}\right)\left(D_{y}^{s_{3}}w\right)\right\Vert _{L_{y}^{2}} & \lesssim\left\Vert D_{y}^{s_{1}}P_{\geq\sqrt{t}}w\right\Vert _{L_{y}^{2p_{1}}}\left\Vert D_{y}^{s_{2}}w\right\Vert _{L_{y}^{2p_{2}}}\left\Vert D_{y}^{s_{3}}w\right\Vert _{L_{y}^{2p_{3}}}\\
 & \lesssim\left\Vert D_{y}^{s_{1}^{*}}P_{\geq\sqrt{t}}w\right\Vert _{L_{y}^{2}}\left\Vert D_{y}^{s_{2}^{*}}w\right\Vert _{L_{y}^{2}}\left\Vert D_{y}^{s_{3}^{*}}w\right\Vert _{L_{y}^{2}},
\end{align*}
where 
\[
s_{i}^{*}=s_{i}+\frac{d}{2}\left(1-\frac{s_{i}}{s}\right),\quad\text{for }s_{i}>0,\quad s_{i}^{*}=\frac{d}{2}+\delta^{*},\quad\text{for }s_{i}=0.
\]
Here $\delta^{*}$ is some sufficiently small positive number. Notice
that $s_{i}^{*}<s$ if $s_{i}<s$. 

There exist $q_{i}>1$, satisfying 
\[
\frac{1}{q_{1}}+\frac{1}{q_{2}}+\frac{1}{q_{3}}=1,
\]
and 
\[
\frac{1}{2}\left[1-\frac{1}{q_{i}}\right]+\frac{s_{i}^{*}}{s}<1.
\]
Define $\beta_{i}:=\frac{1}{2}\left(1-\frac{1}{q_{i}}\right)$. Then
we well have 
\begin{align*}
\left\Vert \left(D_{y}^{s_{1}}P_{\geq\sqrt{t}}w\right)\left(\overline{D_{y}^{s_{2}}w}\right)\left(D_{y}^{s_{3}}w\right)\right\Vert _{L_{v,y}^{2}} & \lesssim\left\Vert D_{y}^{s_{1}^{*}}P_{\geq\sqrt{t}}w\right\Vert _{L_{v}^{2q_{1}}L_{y}^{2}}\left\Vert D_{y}^{s_{2}^{*}}w\right\Vert _{L_{v}^{2q_{2}}L_{y}^{2}}\left\Vert D_{y}^{s_{3}^{*}}w\right\Vert _{L_{v}^{2q_{3}}L_{y}^{2}}\\
 & \lesssim\left\Vert D_{v}^{\beta_{1}}D_{y}^{s_{1}^{*}}P_{\geq\sqrt{t}}w\right\Vert _{L_{v,y}^{2}}\left\Vert D_{v}^{\beta_{2}}D_{y}^{s_{2}^{*}}w\right\Vert _{L_{v,y}^{2}}\left\Vert D_{v}^{\beta_{3}}D_{y}^{s_{3}^{*}}w\right\Vert _{L_{v,y}^{2}}.
\end{align*}
Since by interpolation 
\[
\left\Vert D_{v}^{\beta_{i}}D_{y}^{s_{i}^{*}}w\right\Vert _{L_{v,y}^{2}}\lesssim\left\Vert w\right\Vert _{L_{v,y}^{2}}^{1-\beta_{i}-s_{i}^{*}/s}\left\Vert \partial_{v}w\right\Vert _{L_{v,y}^{2}}^{\beta_{i}}\left\Vert D_{y}^{s}w\right\Vert _{L_{v,y}^{2}}^{s_{i}^{*}/s},
\]
for $D_{y}^{s_{i}}P_{\geq\sqrt{t}}w,$ we will have 
\[
\left\Vert D_{v}^{\beta_{1}}D_{y}^{s_{1}^{*}}P_{\geq\sqrt{t}}w\right\Vert _{L_{v,y}^{2}}\lesssim t^{-\frac{1}{2}\left(1-\beta_{1}-s_{1}^{*}/s\right)}\left\Vert \partial_{v}w\right\Vert _{L_{v,y}^{2}}^{1-s_{1}^{*}/s}\left\Vert D_{y}^{s}w\right\Vert _{L_{v,y}^{2}}^{s_{1}^{*}/s},
\]
which allows extra decay in time. Defining 
\[
\tilde{\beta}:=\min_{i}\left\{ \frac{1}{2}\left(1-\beta_{i}-s_{i}^{*}/s\right)\right\} 
\]
for all possible values of $i$, we will have
\begin{equation}
\left\Vert D_{y}^{s}\left(\left|w\right|^{2}w-\left|\gamma\right|^{2}\gamma\right)\right\Vert _{L_{v,y}^{2}}\lesssim D^{\frac{d}{s}+\tilde{\beta}}\epsilon^{2}t^{-\tilde{\beta}}\left(1+t\right)^{\frac{d}{s}+\tilde{\beta}}\left\Vert D_{y}^{s}u\right\Vert _{L_{x,y}^{2}}.\label{eq:Dyhigh}
\end{equation}
Combining (\ref{eq:Dylow}) and (\ref{eq:Dyhigh}), the lemma is proved.

\end{proof}

By (\ref{eq:Ducorerro}) and the same estimate for $\nu$ can be apply
to $2\left|u\right|^{2}D_{y}^{s}u+u^{2}\overline{D_{y}^{s}u}$; therefore
we have 

\begin{align*}
 & \left[1-D^{3}\epsilon^{2}T^{-\frac{5}{8}}\left(1+T\right)^{2\delta}-D^{\frac{8}{3}}\epsilon^{2}T^{-1}\left(1+T\right)^{\frac{5}{3}\delta}\right]\left\Vert D_{y}^{s}u\left(T\right)\right\Vert _{L_{x,y}^{2}}^{2}\\
\leq & \left(1+D^{3}\epsilon^{2}+D^{\frac{8}{3}}\epsilon^{2}\right)\left\Vert D_{y}^{s}u\left(1\right)\right\Vert _{L_{x,y}^{2}}^{2}+\int_{1}^{T}D^{3}\epsilon^{2}t^{-1}\left\Vert D_{y}^{s}(t)\right\Vert _{L_{x,y}^{2}}^{2}dt\\
 & +\int_{1}^{T}D^{\frac{d}{s}+\tilde{\beta}}\epsilon^{2}t^{-1-\tilde{\beta}}\left(1+t\right)^{\left(\frac{d}{s}+\tilde{\beta}\right)\delta}\left\Vert D_{y}^{s}u(t)\right\Vert _{L_{x,y}^{2}}^{2}dt\\
 & +D\int_{1}^{T}\left(D^{\frac{11}{6}}\epsilon^{3}+D^{\frac{11}{3}}\epsilon^{4}+D^{\frac{8}{3}}\epsilon^{4}+D^{2}\epsilon^{2}+D^{4}\epsilon^{4}+D^{\frac{5}{3}}\epsilon^{2}+D^{\frac{10}{3}}\epsilon^{4}\right)\\
 & \quad\quad\left(1+t\right)^{-\frac{65}{64}+\frac{11}{6}\delta}\left\Vert D_{y}^{s}u(t)\right\Vert _{L_{x,y}^{2}}^{2}dt.
\end{align*}
By Gronwall's inequality and the assumption $0<D^{3}\epsilon^{2}<\delta<\frac{1}{2}$,
there is the bound on $[1,T]$ that 
\begin{equation}
\left\Vert D_{y}^{s}u\left(t\right)\right\Vert _{L_{x,y}^{2}}^{2}\lesssim\left\Vert D_{y}^{s}u\left(1\right)\right\Vert _{L_{x,y}^{2}}^{2}\left(1+t\right)^{2D^{2}\epsilon^{3}}\lesssim\left\Vert D_{y}^{s}u\left(0\right)\right\Vert _{L_{x,y}^{2}}^{2}\left(1+t\right)^{2D^{2}\epsilon^{3}}.\label{eq:Dybound}
\end{equation}

\subsection{Proof of Theorem \ref{Theoremmain}}

We want to show that under the bootstrap assumptions (\ref{eq:bootsass2})
and (\ref{eq:bootsass1}) in time interval $[0,T]$ with $T$ arbitrary,
$u$ satisfies a better energy bound and a better decay bound after
a more careful computation. By (\ref{eq:energybd}) and (\ref{eq:Dybound}),
we know that when $u_{0}$ satisfies (\ref{eq:ini}), the solution
$u$ satisfies
\begin{equation}
\left\Vert u\left(t\right)\right\Vert _{X^{+}}\lesssim\epsilon\left(1+t\right)^{D^{3}\epsilon^{2}}.\label{eq:X+bound}
\end{equation}
The inequality holds with an implicit constant which does not depend
on $D$ and $T$.

Hence we turn to the global decay of $u$. From (\ref{eq:diffalpha}),
(\ref{eq:X+bound}) and $s=3\alpha>1$
\[
\left\Vert u(t)-\frac{1}{\sqrt{t}}e^{-i\frac{x^{2}}{2t}}\gamma\left(t,\frac{x}{t},y\right)\right\Vert _{L_{x}^{\infty}H_{y}^{1}}\lesssim t^{-\frac{7}{12}}\left\Vert u\right\Vert _{X^{+}}\lesssim\epsilon t^{-\frac{7}{12}+D^{3}\epsilon^{2}}.
\]
 There is also the Sobolev embedding $H_{y}^{1}\subset L_{y}^{4}$.
Assuming that $u$ satisfies (\ref{eq:ini}), by local well-posedness
we have
\[
\left\Vert \nabla_{y}\gamma(1)\right\Vert _{L_{v}^{\infty}L_{y}^{2}}\lesssim\left\Vert \nabla_{y}u(1)\right\Vert _{L_{x}^{\infty}L_{y}^{2}}\leq\epsilon,\quad\left\Vert \gamma(1)\right\Vert _{L_{v}^{\infty}H_{y}^{\alpha}}\lesssim\left\Vert u(1)\right\Vert _{L_{x}^{\infty}H_{y}^{\alpha}}\leq\epsilon.
\]
By (\ref{eq:gammaestimate}) we have that 
\begin{align*}
 & \left\Vert \nabla_{y}\gamma(T)\right\Vert _{L_{v}^{\infty}L_{y}^{2}}^{2}+\frac{1}{t}\left\Vert \gamma(T)\right\Vert _{L_{v}^{\infty}L_{y}^{4}}^{4}\\
\leq & \epsilon^{2}+\epsilon^{4}+\int_{1}^{T}D\left[D^{\frac{10}{3}}\epsilon^{4}\left(1+t\right)^{-2+\frac{10}{3}D^{3}\epsilon^{2}}+D^{\frac{7}{2}}\epsilon^{4}(1+t)^{-\frac{13}{12}+\frac{7}{2}D^{3}\epsilon^{2}}\right.\\
 & \quad\quad+\left.D^{\frac{31}{6}}\epsilon^{6}(1+t)^{-\frac{25}{12}+\frac{31}{6}D^{3}\epsilon^{2}}+D^{\frac{16}{3}}\epsilon^{6}(1+t)^{-\frac{13}{6}+\frac{16}{3}D^{3}\epsilon^{2}}\right]dt\\
\leq & \epsilon^{2}+\epsilon^{4}+D\left[D^{\frac{10}{3}}\epsilon^{4}+D^{\frac{7}{2}}\epsilon^{4}+D^{\frac{31}{6}}\epsilon^{6}+D^{\frac{16}{3}}\epsilon^{6}\right]\lesssim\epsilon^{2}.
\end{align*}
If we have $\epsilon\ll D^{-\frac{3}{2}}$ as before, we will have
\begin{equation}
\left\Vert u(t)\right\Vert _{L_{x}^{\infty}H_{y}^{1}}\leq t^{-\frac{1}{2}}\left\Vert \gamma(t)\right\Vert _{L_{v}^{\infty}H_{y}^{1}}+\epsilon t^{-\frac{7}{12}+D^{3}\epsilon^{2}}\lesssim\epsilon t^{-\frac{1}{2}}.\label{eq:udecaybd}
\end{equation}
The bounds (\ref{eq:udecaybd}), (\ref{eq:X+bound}) are better than
the original bootstrap assumptions (\ref{eq:bootsass1}), (\ref{eq:bootsass2}).
Thus we have closed the bootstrap argument, and the time interval
of local well-posedness $t\in[1,T]$ can be extended to $t\in[1,\infty).$ 

We go back to the estimates of asymptotic profile of $u$. By (\ref{eq:diffalpha}),
(\ref{eq:diffL2}), and (\ref{eq:energybd}),
\[
\left\Vert u\left(t,x,y\right)-\frac{1}{\sqrt{t}}e^{-i\frac{x^{2}}{2t}}\gamma\left(t,\frac{x}{t},y\right)\right\Vert _{L_{x,y}^{2}}\lesssim\epsilon t^{-\frac{1}{2}+D^{3}\epsilon^{2}},
\]

\[
\left\Vert u\left(t,x,y\right)-\frac{1}{\sqrt{t}}e^{-i\frac{x^{2}}{2t}}\gamma\left(t,\frac{x}{t},y\right)\right\Vert _{L_{x}^{\infty}H_{y}^{\alpha}}\lesssim\epsilon t^{-\frac{7}{12}+D^{3}\epsilon^{2}}.
\]
 Combining Lemma \ref{uasymptote} and (\ref{eq:energybd}), we obtain
\begin{equation}
\left\Vert I\right\Vert _{L_{v,y}^{2}}\lesssim\epsilon t^{-\frac{3}{2}+D^{3}\epsilon^{2}},\quad\left\Vert I\right\Vert _{L_{v}^{\infty}H_{y}^{\alpha}}\lesssim\epsilon t^{-\frac{13}{12}+D^{3}\epsilon^{2}}.\label{eq:Ierresti}
\end{equation}
The local well-posedness property of the equation 
\begin{equation}
\left(i\partial_{t}+\frac{1}{2}\triangle_{y}\right)\gamma(t,v,y)=\frac{1}{t}|\gamma|^{2}\gamma(t,v,y)+I(t,v,y)\label{eq:gammaerr}
\end{equation}
can be obtained by treating the $I(t,v,y)$ term as a perturbation.

\begin{lemma}\label{lemmagammaasym}If $\gamma$ satisfies the equation
(\ref{eq:gammaerr}) with the bounds (\ref{eq:Ierresti}), there exists
a solution $W(t)$ to (\ref{eq:eqtorus}) with 
\[
\gamma(t,v,y)=W(t,v,y)+\mathcal{O}_{L_{v,y}^{2}}\left(\epsilon t^{-\frac{1}{2}+2D^{3}\epsilon^{2}}\right),
\]
and
\[
\gamma(t,v,y)=W(t,v,y)+\mathcal{O}_{L_{v}^{\infty}H_{y}^{\alpha}}\left(\epsilon t^{-\frac{1}{12}+2D^{3}\epsilon^{2}}\right).
\]

\end{lemma}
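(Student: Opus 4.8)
The plan is to compare $\gamma$, which solves the perturbed equation (\ref{eq:gammaerr}), with the true solution $W$ of (\ref{eq:eqtorus}), and to show that the difference is controlled by the size of the error term $I$ together with the energy growth rate. First I would set up $W$ as the solution of (\ref{eq:eqtorus}) with matching data at some reference time: since (\ref{eq:eqtorus}) is, along each ray $v=\text{const}$, a cubic NLS on $\mathbb{T}^d$ with a $1/t$ coupling, and $\gamma$ is already known (from the energy estimate (\ref{eq:gammaestimate}) and the $X^+$ bound (\ref{eq:X+bound})) to satisfy $\|\gamma(t)\|_{L_v^\infty H_y^\alpha}\lesssim\epsilon$, a standard fixed-point/continuation argument gives a unique $W$ with the same $L_{v,y}^2\cap L_v^\infty H_y^\alpha$ regularity, obeying the same bound. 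I would fix the data for $W$ at $t=1$ (or at a large time, integrating backward), so that $\gamma-W$ is small initially.

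The core of the argument is a Gronwall estimate for $h:=\gamma-W$. Subtracting (\ref{eq:eqtorus}) from (\ref{eq:gammaerr}),
\[
i\partial_t h+\tfrac12\triangle_y h=\tfrac1t\left(|\gamma|^2\gamma-|W|^2 W\right)+I,
\]
and the cubic difference is bounded pointwise in $v$ by $\lesssim\left(\|\gamma\|_{L_v^\infty H_y^\alpha}^2+\|W\|_{L_v^\infty H_y^\alpha}^2\right)\|h\|$ in either the $L_{v,y}^2$ norm or the $L_v^\infty H_y^\alpha$ norm, using that $H_y^\alpha$ with $\alpha>d/2$ is an algebra. Running the energy method (multiply by $\bar h_t$ or simply by $\bar h$ and take imaginary/real parts as in the $\gamma$-energy computation above), one gets
\[
\partial_t\|h(t)\|\lesssim \tfrac{\epsilon^2}{t}\|h(t)\|+\|I(t)\|,
\]
in the relevant norm. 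Feeding in the error bounds (\ref{eq:Ierresti}), namely $\|I\|_{L_{v,y}^2}\lesssim\epsilon t^{-3/2+D^3\epsilon^2}$ and $\|I\|_{L_v^\infty H_y^\alpha}\lesssim\epsilon t^{-13/12+D^3\epsilon^2}$, and integrating via Gronwall with integrating factor $t^{CD^3\epsilon^2}$, the $\int_1^t s^{-3/2}\,ds$ (resp. $\int_1^t s^{-13/12}\,ds$) contribution converges, while the extra $t^{CD^3\epsilon^2}$ from the homogeneous part produces the stated loss. This yields $\|h(t)\|_{L_{v,y}^2}\lesssim\epsilon t^{-1/2+2D^3\epsilon^2}$ and $\|h(t)\|_{L_v^\infty H_y^\alpha}\lesssim\epsilon t^{-1/12+2D^3\epsilon^2}$, exactly as claimed (the exponents $-1/2$ and $-1/12$ being one power of $t$ better than the decay of $\int I$, consistent with the worst term in $\int_1^t s^{CD^3\epsilon^2-13/12}\,ds\sim t^{-1/12+CD^3\epsilon^2}$).

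The main obstacle is the construction and a priori control of $W$ itself: one must know that (\ref{eq:eqtorus}) admits a global-in-time solution on $[1,\infty)$ staying in $L_v^\infty H_y^\alpha$ with an $O(\epsilon)$ bound, uniformly in $t$, so that the cubic difference estimate has a good coefficient and the continuation argument for $h$ closes. Here I would rely on the $1/t$ weighting: writing $\tau=\log t$ turns (\ref{eq:eqtorus}) into the unweighted resonant/cubic system, for which the conserved mass and the $H_y^1$ energy-type quantity (as in the computation leading to (\ref{eq:gammaestimate})) give the needed uniform bound for small data — this is essentially Proposition \ref{prop:wproperty}, which I would invoke. A secondary technical point is matching the $L_v^\infty$-in-$v$ structure with the Gronwall argument: since the estimates are pointwise in $v$ along rays, one runs the differential inequality for each fixed $v$ and takes the supremum at the end, exactly as in the derivation of (\ref{eq:gammaestimate}); no genuinely new difficulty arises there.
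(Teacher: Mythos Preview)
There is a genuine gap in the Gronwall step. If you fix $W(1)=\gamma(1)$ and run the inequality $\partial_t\|h\|\lesssim \tfrac{\epsilon^2}{t}\|h\|+\|I\|$ forward from $t=1$, you obtain
\[
\|h(t)\|\lesssim t^{C\epsilon^2}\int_1^t s^{-C\epsilon^2}\|I(s)\|\,ds,
\]
and since $\int_1^t s^{-3/2+\delta}\,ds$ and $\int_1^t s^{-13/12+\delta}\,ds$ converge to \emph{constants} (the contribution from $s=1$ dominates), the conclusion is only $\|h(t)\|\lesssim \epsilon\, t^{C\epsilon^2}$, not $\epsilon\, t^{-1/2+C\epsilon^2}$ or $\epsilon\, t^{-1/12+C\epsilon^2}$. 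Your parenthetical $\int_1^t s^{-13/12+\delta}\,ds\sim t^{-1/12+\delta}$ is simply false: the $t^{-1/12+\delta}$ piece is the vanishing correction, not the main term. In other words, the $W$ launched from $\gamma(1)$ is the wrong $W$; it stays within $O(\epsilon\, t^{C\epsilon^2})$ of $\gamma$ but does not approach it.

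What is missing is the construction of the \emph{right} $W$, namely the one that matches $\gamma$ at $t=\infty$. The paper does this by a dyadic Cauchy-sequence argument: set $W_n(2^n)=\gamma(2^n)$, solve (\ref{eq:eqtorus}) globally for each $W_n$ (using the $H^1_y$ conservation and Lemma~\ref{lemmaWreg}), bound $\|(\gamma-W_n)(2^{n+1})\|$ by integrating the difference on $[2^n,2^{n+1}]$ using the resonant/non-resonant split (so that the linear coefficient involves the \emph{bounded} $H^1_y$ norms via (\ref{eq:h1esti}), and the $\mathcal{E}$ terms gain a $t^{-1}$ by (\ref{eq:nonres})), then propagate $W_n-W_{n+1}$ backward to $t=1$ to show the sequence $\{W_n(1)\}$ is Cauchy. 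The limit $W_\infty$ is the desired profile, and the stated decay of $\gamma-W_\infty$ comes from combining the dyadic bounds. Your brief aside ``or at a large time, integrating backward'' is the right instinct, but it is not a single Gronwall: you must pass to the limit $T\to\infty$ and control that limit, which is exactly the step you have not supplied.
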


\begin{proof} Let $W_{n}$ be a solution to the homogeneous equation
(\ref{eq:eqtorus}) with initial data $W_{n}\left(2^{n},v,y\right)=\gamma\left(2^{n},v,y\right)$,
where $n\in\mathbb{N}$, $n\geq1$. By (\ref{eq:mass}), we have the
conservation law $\left\Vert W_{n}(t)\right\Vert _{L_{v,y}^{2}}=\left\Vert \gamma\left(2^{n}\right)\right\Vert _{L_{v,y}^{2}}\lesssim\epsilon$.
By Lemma \ref{lemmaWreg}, there is the conservation law $\left\Vert W_{n}(t)\right\Vert _{L_{v}^{\infty}H_{y}^{1}}\lesssim\left\Vert W_{n}\left(2^{n}\right)\right\Vert _{L_{v}^{\infty}H_{y}^{1}}=\left\Vert \gamma\left(2^{n}\right)\right\Vert _{L_{v}^{\infty}H_{y}^{1}}\lesssim\epsilon$,
and the bound $\left\Vert W_{n}(2^{n})\right\Vert _{L_{v}^{\infty}H_{y}^{\alpha}}\lesssim\epsilon2^{\frac{5}{6}nD^{3}\epsilon^{2}}.$
The local and global well-posedness of the homogeneous equation (\ref{eq:eqtorus})
is given in Proposition \ref{prop:wproperty}, hence $W_{n}$ exists
on $[1,\infty)$ for each $n$. By (\ref{eq:eqtorus}) and (\ref{eq:gammaerr}),
we have
\begin{align*}
\left(i\partial_{t}+\frac{1}{2}\triangle_{y}\right)\left(\gamma-W_{n}\right) & =\frac{1}{t}\left(|\gamma|^{2}\gamma-\left|W_{n}\right|^{2}W_{n}\right)+I\\
 & =\frac{1}{t}\left(R\left[\gamma,\gamma,\gamma\right]-R\left[W_{n},W_{n},W_{n}\right]\right)+\frac{1}{t}\left(\mathcal{E}\left[\gamma,\gamma,\gamma\right]-\mathcal{E}\left[W_{n},W_{n},W_{n}\right]\right)+I.
\end{align*}
Therefore we have the bounds
\begin{align*}
\left\Vert \left(\gamma-W_{n}\right)(t)\right\Vert _{L_{v}^{\infty}H_{y}^{\alpha}} & \lesssim\int_{2^{n}}^{t}\frac{1}{\sigma}\left(\left\Vert \gamma\right\Vert _{L_{v}^{\infty}H_{y}^{1}}^{2}+\left\Vert W_{n}\right\Vert _{L_{v}^{\infty}H_{y}^{1}}^{2}\right)\left\Vert \left(\gamma-W_{n}\right)(\sigma)\right\Vert _{L_{v}^{\infty}H_{y}^{\alpha}}d\sigma+\int_{2^{n}}^{t}\left\Vert I(\sigma)\right\Vert _{L_{v}^{\infty}H_{y}^{\alpha}}d\sigma\\
 & \quad+\left\Vert \int_{2^{n}}^{t}\frac{1}{\sigma}\mathcal{E}\left[\gamma,\gamma,\gamma\right]d\sigma\right\Vert _{L_{v}^{\infty}H_{y}^{\alpha}}+\left\Vert \int_{2^{n}}^{t}\frac{1}{\sigma}\mathcal{E}\left[W_{n},W_{n},W_{n}\right]d\sigma\right\Vert _{L_{v}^{\infty}H_{y}^{\alpha}}.
\end{align*}
By (\ref{eq:Ierresti}), (\ref{eq:nonres}), and (\ref{eq:wbound})
,we have the bounds
\[
\int_{2^{n}}^{2^{n+1}}\left\Vert I(\sigma)\right\Vert _{L_{v}^{\infty}H_{y}^{\alpha}}d\sigma\lesssim\epsilon2^{n\left(-\frac{1}{12}+D^{3}\epsilon^{2}\right)},
\]
\begin{align*}
\sup_{2^{n}\leq t\leq2^{n+1}}\left\Vert \int_{2^{n}}^{t}\frac{1}{\sigma}\mathcal{E}\left[\gamma,\gamma,\gamma\right]d\sigma\right\Vert _{L_{v}^{\infty}H_{y}^{\alpha}}\lesssim\epsilon^{3}2^{-n+\frac{5}{2}D^{3}\epsilon^{2}},\\
\sup_{2^{n}\leq t\leq2^{n+1}}\left\Vert \int_{2^{n}}^{t}\frac{1}{\sigma}\mathcal{E}\left[W_{n},W_{n},W_{n}\right]d\sigma\right\Vert _{L_{v}^{\infty}H_{y}^{\alpha}}\lesssim\epsilon^{3}2^{-n+\frac{5}{2}D^{3}\epsilon^{2}}.
\end{align*}
Therefore by Gronwall's inequality, at $t=2^{n+1}$ we obtain 
\[
\left\Vert \left(\gamma-W_{n}\right)(2^{n+1})\right\Vert _{L_{v}^{\infty}H_{y}^{\alpha}}\lesssim\left[\epsilon2^{n\left(-\frac{1}{12}+D^{3}\epsilon^{2}\right)}+\epsilon^{3}2^{-n+\frac{5}{2}D^{3}\epsilon^{2}}\right]2^{nD\epsilon^{2}},
\]
\begin{align*}
 & \left\Vert W_{n}(2^{n+1})-W_{n+1}(2^{n+1})\right\Vert _{L_{v}^{\infty}H_{y}^{\alpha}}\\
\lesssim & \left\Vert \left(\gamma-W_{n}\right)(2^{n+1})\right\Vert _{L_{v}^{\infty}H_{y}^{\alpha}}+\left\Vert \left(\gamma-W_{n+1}\right)(2^{n+1})\right\Vert _{L_{v}^{\infty}H_{y}^{\alpha}}\lesssim\epsilon2^{n\left(-\frac{1}{12}+D\epsilon^{2}+D^{3}\epsilon^{2}\right)}.
\end{align*}
Solving the equation for the difference $W_{n}-W_{n+1}$ backward
from $t=2^{n+1}$,
\begin{align*}
\left(i\partial_{t}+\frac{1}{2}\triangle_{y}\right)\left(W_{n}-W_{n+1}\right) & =\frac{1}{t}\left(R\left[W_{n},W_{n},W_{n}\right]-R\left[W_{n+1},W_{n+1},W_{n+1}\right]\right)\\
 & \quad+\frac{1}{t}\left(\mathcal{E}\left[W_{n},W_{n},W_{n}\right]-\mathcal{E}\left[W_{n+1},W_{n+1},W_{n+1}\right]\right).
\end{align*}
 By Gronwall's inequality we obtain
\[
\left\Vert W_{n}(1)-W_{n+1}(1)\right\Vert _{L_{v}^{\infty}H_{y}^{\alpha}}\lesssim\epsilon2^{n\left(-\frac{1}{12}+D\epsilon^{2}+D^{3}\epsilon^{2}\right)}2^{(n+1)D\epsilon^{2}}=\epsilon2^{n\left(-\frac{1}{12}+2D\epsilon^{2}+D^{3}\epsilon^{2}\right)+D\epsilon^{2}}.
\]
Here we assume that $\epsilon\ll D^{-\frac{3}{2}}$, hence $D\epsilon^{2}<\frac{1}{2}D^{3}\epsilon^{2}$.
Therefore there exists $W_{\infty}\left(1\right)$ such that 
\[
\lim_{n\rightarrow\infty}\left\Vert W_{n}(1)-W_{\infty}(1)\right\Vert _{L_{v}^{\infty}H_{y}^{\alpha}}\lesssim\lim_{n\rightarrow\infty}\epsilon2^{n\left(-\frac{1}{12}+\frac{3}{2}D^{3}\epsilon^{2}\right)}.
\]
We obtain a solution $W_{\infty}(t)$ to the equation (\ref{eq:eqtorus})
with initial data $W_{\infty}(1)$ at $t=1$. Then $W_{\infty}$has
the desired property. By Lemma \ref{lemmaWreg},
\[
\left\Vert W_{n}(t)-W_{\infty}(t)\right\Vert _{L_{v}^{\infty}H_{y}^{\alpha}}\lesssim t^{D\epsilon^{2}}\left\Vert W_{n}(1)-W_{\infty}(t)\right\Vert _{L_{v}^{\infty}H_{y}^{\alpha}}\lesssim\epsilon2^{n\left(-\frac{1}{12}+2D^{3}\epsilon^{2}\right)}t^{D\epsilon^{2}}.
\]
For $t\in(2^{n},2^{n+1})$, we have 
\[
\begin{aligned}\left\Vert \gamma(t)-W_{\infty}(t)\right\Vert _{L_{v}^{\infty}H_{y}^{\alpha}} & \leq\left\Vert W_{n}(t)-W_{\infty}(t)\right\Vert _{L_{v}^{\infty}H_{y}^{\alpha}}+\left\Vert \gamma(t)-W_{n}(t)\right\Vert _{L_{v}^{\infty}H_{y}^{\alpha}}\\
 & \lesssim\epsilon2^{n\left(-\frac{1}{12}+\frac{3}{2}D^{3}\epsilon^{2}\right)}t^{D\epsilon^{2}}+\epsilon2^{n\left(-\frac{1}{12}+D^{3}\epsilon^{2}\right)}\left(t-2^{n}\right)^{D\epsilon^{2}}\lesssim\epsilon t^{\left(-\frac{1}{12}+2D^{3}\epsilon^{2}\right)}.
\end{aligned}
\]
The computations for the $L_{v,y}^{2}$ norm follows a similar steps
as the $L_{v}^{\infty}H_{y}^{\alpha}$ norm; therefore we obtain
\[
\left\Vert \gamma(t)-W_{\infty}(t)\right\Vert _{L_{v,y}^{2}}\lesssim\epsilon2^{n\left(-\frac{1}{2}+2D^{3}\epsilon^{2}\right)}.
\]

\end{proof}

Together with (\ref{eq:udecaybd}), the proof of Theorem \ref{Theoremmain}
is complete.

\section{The Cubic NLS on Torus\label{sec:The-Cubic-NLS}}

The main results of this section are to develop the same resonant
equation as in \cite{HPTV}, and prove some important properties for
the solution of (\ref{eq:eqtorus}), such as local (global) well-posedness,
asymptotic dynamics, asymptotic completeness and energy bounds. 

The asymptotic dynamics of small solutions to (\ref{eq main}) is
related to solutions of the resonant system: 
\begin{equation}
\begin{aligned}i\partial_{t}\mathcal{G}\left(t,v,y\right) & =\frac{1}{t}R\left[\mathcal{G}\left(t,v,y\right),\mathcal{G}\left(t,v,y\right),\mathcal{G}\left(t,v,y\right)\right].\end{aligned}
\label{eq dyna}
\end{equation}
Note that this system is none other than the resonant system for the
cubic NLS equation on $\mathbb{T}^{d}$ up to a change of timescale.
Here we introduce the global well-posedness of (\ref{eq dyna}), which
will be used in the proof of asymptotic dynamics and asymptotic completeness
for (\ref{eq:eqtorus}).
\begin{prop}
\label{prop:Gprop}(Properties of $\mathcal{G}$) (a) Let $d=1,2,3,4$.
For any $\mathcal{G}(1)\in L_{v}^{\infty}H_{y}^{\alpha}\cap L_{v,y}^{2}$,
there exists a unique global solution $\mathcal{G}(t)\in C^{1}\left(\left[1,\infty\right),L_{v}^{\infty}H_{y}^{\alpha}\cap L_{v,y}^{2}\right)$
for any $\alpha\geq1$.

(b) There are the conservation laws $\left\Vert \mathcal{G}(t)\right\Vert _{L_{v,y}^{2}}\equiv\left\Vert \mathcal{G}(1)\right\Vert _{L_{v,y}^{2}},\quad\left\Vert \mathcal{G}(t)\right\Vert _{L_{v}^{\infty}\dot{H}_{y}^{1}}\equiv\left\Vert \mathcal{G}(1)\right\Vert _{L_{v}^{\infty}\dot{H}_{y}^{1}}.$

(c) (Infinite cascade) If $d\geq2$ and $\alpha>1$, there exist global
solutions $\mathcal{G}(t)\in C^{1}\left(\left[1,\infty\right),L_{v}^{\infty}H_{y}^{\alpha}\cap L_{v,y}^{2}\right)$
such that 
\[
\sup_{t>1}\left\Vert \mathcal{G}(t)\right\Vert _{L_{v}^{\infty}H_{y}^{\alpha}}=\infty.
\]
\end{prop}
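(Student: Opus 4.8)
The plan is to reduce the resonant system (\ref{eq dyna}) to an autonomous problem through the change of timescale $\tau=\log t$: writing $\mathcal{G}(t,v,y)=g(\log t,v,y)$ turns (\ref{eq dyna}) into $i\partial_{\tau}g=R[g,g,g]$, which is exactly the resonant (``big box'') cubic Schr\"odinger system on $\mathbb{T}^{d}$, fibered over the parameter $v$, with $t\in[1,\infty)$ corresponding to $\tau\in[0,\infty)$. With this reduction, part (a) will follow from a fixed-time trilinear estimate together with the conservation laws of part (b); part (b) is the Hamiltonian/symmetry structure of the resonant system applied on each fiber $v=\mathrm{const}$; and part (c) will be obtained by feeding the known frequency-cascade construction for this system into $v$-independent data.

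For (a): the form $R$ acts fiberwise in $v$ and carries no $v$-derivative, so it suffices to have a good $y$-estimate, uniform in $v$. Since $\mathbf{k}=\mathbf{k}_{1}-\mathbf{k}_{2}+\mathbf{k}_{3}$ on $\Gamma_{0}(\mathbf{k})$ we have $|\mathbf{k}|^{\alpha}\lesssim|\mathbf{k}_{1}|^{\alpha}+|\mathbf{k}_{2}|^{\alpha}+|\mathbf{k}_{3}|^{\alpha}$; combining this with the fixed-time resonant estimate (\ref{eq:h1esti}) (Lemma~7.1 of \cite{HPTV}), in which two of the three factors are measured in $H_{y}^{1}$ and one in $L_{y}^{2}$, and with Minkowski's inequality in $v$, one gets for every $\alpha\geq1$ the bounds $\|R[\mathcal{G},\mathcal{G},\mathcal{G}]\|_{L_{v}^{\infty}H_{y}^{\alpha}}\lesssim\|\mathcal{G}\|_{L_{v}^{\infty}H_{y}^{1}}^{2}\|\mathcal{G}\|_{L_{v}^{\infty}H_{y}^{\alpha}}$ and $\|R[\mathcal{G},\mathcal{G},\mathcal{G}]\|_{L_{v,y}^{2}}\lesssim\|\mathcal{G}\|_{L_{v}^{\infty}H_{y}^{1}}^{2}\|\mathcal{G}\|_{L_{v,y}^{2}}$. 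Hence the cubic term is locally Lipschitz on $X:=L_{v}^{\infty}H_{y}^{\alpha}\cap L_{v,y}^{2}$, and a Picard iteration on the Duhamel form of (\ref{eq dyna}) produces a unique local solution; since the right-hand side of (\ref{eq dyna}) is continuous in $t$ with values in $X$, the solution lies in $C^{1}$. The local lifespan depends only on $\|\mathcal{G}\|_{L_{v}^{\infty}H_{y}^{1}\cap L_{v,y}^{2}}$, so once (b) provides a uniform-in-$t$ bound on this quantity the solution extends to all of $[1,\infty)$; when $\alpha>1$, the differential inequality $\frac{d}{d\tau}\|\mathcal{G}\|_{L_{v}^{\infty}H_{y}^{\alpha}}^{2}\lesssim\|\mathcal{G}(1)\|_{L_{v}^{\infty}H_{y}^{1}}^{2}\|\mathcal{G}\|_{L_{v}^{\infty}H_{y}^{\alpha}}^{2}$ plus Gr\"onwall's inequality shows the $H_{y}^{\alpha}$ norm grows at most exponentially in $\tau$ (polynomially in $t$), excluding blow-up.

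For (b): fix $v$ and pair (\ref{eq dyna}) with $\widehat{\mathcal{G}}(t,v,\mathbf{k})$ in $\ell_{\mathbf{k}}^{2}$, taking imaginary parts. The quartic sum $\sum_{\mathbf{k}}\sum_{\Gamma_{0}(\mathbf{k})}\widehat{\mathcal{G}}(\mathbf{k}_{1})\overline{\widehat{\mathcal{G}}}(\mathbf{k}_{2})\widehat{\mathcal{G}}(\mathbf{k}_{3})\overline{\widehat{\mathcal{G}}}(\mathbf{k})$ is invariant under the involution $(\mathbf{k}_{1},\mathbf{k}_{2},\mathbf{k}_{3},\mathbf{k})\mapsto(\mathbf{k}_{2},\mathbf{k}_{1},\mathbf{k},\mathbf{k}_{3})$, which preserves $\Gamma_{0}$ and exchanges the sum with its complex conjugate; hence it is real and $\partial_{t}\|\mathcal{G}(t,v,\cdot)\|_{L_{y}^{2}}^{2}=0$ for each $v$, and integrating in $v$ gives the mass conservation. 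The quantity $\sum_{\mathbf{k}}|\mathbf{k}|^{2}|\widehat{\mathcal{G}}(t,v,\mathbf{k})|^{2}$ is likewise conserved for each $v$: since $\Gamma_{0}$ is cut out by $|\mathbf{k}_{1}|^{2}-|\mathbf{k}_{2}|^{2}+|\mathbf{k}_{3}|^{2}-|\mathbf{k}|^{2}=0$, the kinetic-energy flow $\widehat{\mathcal{G}}(\mathbf{k})\mapsto e^{-is|\mathbf{k}|^{2}}\widehat{\mathcal{G}}(\mathbf{k})$ leaves the resonant Hamiltonian $\sum_{\Gamma_{0}}\widehat{\mathcal{G}}(\mathbf{k}_{1})\overline{\widehat{\mathcal{G}}}(\mathbf{k}_{2})\widehat{\mathcal{G}}(\mathbf{k}_{3})\overline{\widehat{\mathcal{G}}}(\mathbf{k})$ invariant, so the two Poisson-commute. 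Taking the supremum over $v$ gives $\|\mathcal{G}(t)\|_{L_{v}^{\infty}\dot H_{y}^{1}}\equiv\|\mathcal{G}(1)\|_{L_{v}^{\infty}\dot H_{y}^{1}}$, which in turn closes the continuation argument in (a).

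For (c): take $\mathcal{G}$ constant in $v$, say $\mathcal{G}(t,v,y)=g(\log t,y)$ with $g$ solving $i\partial_{\tau}g=R[g,g,g]$ on $\mathbb{T}^{d}$. For $d\geq2$ and any $\alpha>1$ this system has smooth solutions $g$ with $\sup_{\tau\geq0}\|g(\tau)\|_{H_{y}^{\alpha}}=\infty$: this is the frequency-cascade construction (of Colliander--Keel--Staffilani--Takaoka--Tao type) carried out for precisely this resonant system in \cite{HPTV}; the growth it produces over bounded $\tau$-intervals can be made arbitrarily large, consistently with the at-most-exponential bound from (a). Since $\|\mathcal{G}(t)\|_{L_{v}^{\infty}H_{y}^{\alpha}}=\|g(\log t)\|_{H_{y}^{\alpha}}$ and $\log$ maps $[1,\infty)$ onto $[0,\infty)$, this gives $\sup_{t>1}\|\mathcal{G}(t)\|_{L_{v}^{\infty}H_{y}^{\alpha}}=\infty$. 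The only real difficulty is (c): transferring the construction to the fibered setting is immediate for $v$-independent data, but the construction itself rests on the delicate combinatorics of the resonant frequency set and is imported wholesale from \cite{HPTV}; parts (a) and (b) are routine once the resonant trilinear estimate and the Hamiltonian structure are available.
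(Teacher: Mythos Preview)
Your outline is essentially correct and matches what the paper does: the paper gives no argument at all here, simply citing \cite{HPTV}, Section~4 (and \cite{CKGTT,H1,GK} for the cascade), so your sketch is in fact \emph{more} than the paper provides and is a faithful summary of the cited proof. The reduction via $\tau=\log t$, the use of the fixed-time resonant trilinear estimate (\ref{eq:h1esti}) for local well-posedness, and the Hamiltonian/symmetry derivation of the mass and $\dot H^{1}$ conservation laws are exactly the mechanisms behind parts (a) and (b) in \cite{HPTV}.

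There is one small but genuine slip in your treatment of (c). You propose data constant in $v$, $\mathcal{G}(t,v,y)=g(\log t,y)$; since $v\in\mathbb{R}$, such a function is never in $L^{2}_{v,y}$ unless it vanishes, so it does not lie in the space $L^{\infty}_{v}H^{\alpha}_{y}\cap L^{2}_{v,y}$ required by the statement. The fix is immediate because the resonant system is fibered in $v$: take $\mathcal{G}(1,v,y)=\chi(v)\,g_{0}(y)$ with $\chi\in C^{\infty}_{c}(\mathbb{R})$ and $\chi\equiv1$ on an interval. For each fixed $v$ the solution is the resonant flow from $\chi(v)g_{0}$, which by the scaling $g_{0}\mapsto\lambda g_{0}$, $\tau\mapsto\lambda^{2}\tau$ equals $\chi(v)\,g(\chi(v)^{2}\log t,y)$; hence on the set where $\chi=1$ one recovers the cascade solution, and $\|\mathcal{G}(t)\|_{L^{\infty}_{v}H^{\alpha}_{y}}\geq\|g(\log t)\|_{H^{\alpha}_{y}}\to\infty$ while $\mathcal{G}(t)\in L^{2}_{v,y}$ for all $t$. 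With this adjustment your argument for (c) goes through.
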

The proof is given in \cite{HPTV}, Section 4. See \cite{CKGTT,H1,GK}
for the energy cascade property, which is stated here for completeness,
but not used in any way.
\begin{prop}
\label{prop:wproperty}(a) The equation (\ref{eq:eqtorus}) is locally
well-posed for initial data $W_{1}\in Y$ ($\alpha>\frac{d}{2}$,
$d=1,2,3,4$). Precisely, for each such data there is a unique solution
$W\in C\left([1,b];Y\right)$ for some $b>1$ with $W(1)=W_{1}$,
and Lipschitz continuous dependence on initial data locally on time.

(b) There exists $\epsilon=\epsilon(d)>0$ such that if in addition
\begin{equation}
\left\Vert W_{1}\right\Vert _{Y}\leq\epsilon,\label{eq:iniW}
\end{equation}
then the equation (\ref{eq:eqtorus}) is globally well-posed in $[1,\infty)$,
and there exists a $\mathcal{G}\in Y$ solution to (\ref{eq dyna})
such that
\begin{equation}
\left\Vert W(t,v,y)-e^{it\triangle_{y}/2}\mathcal{G}(t,v,y)\right\Vert _{Y}\rightarrow0\quad\text{as }t\rightarrow\infty.\label{eq:Wasyerr}
\end{equation}

(c) If $\mathcal{G}_{1}$ satisfies $\left\Vert \mathcal{G}_{1}\right\Vert _{Y}\ll\epsilon<1$,
then there exists a global solution $\mathcal{G}\in C([1,\infty);Y)$
of (\ref{eq dyna}) with initial data $\mathcal{G}(1)=\mathcal{G}_{1}$
and $\mathcal{W}\in C([1,\infty);Y)$ which satisfies (\ref{eq:eqtorus})
with $\left\Vert \mathcal{W}(1)\right\Vert _{Y}\leq\epsilon$ such
that (\ref{eq:Wasyerr}) holds.
\end{prop}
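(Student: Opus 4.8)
The plan is to dispatch the three parts in turn; (a) and (b) are soft and (c) carries the real work.

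For part (a), the point is simply that, since $\alpha>\frac d2$, $H_y^\alpha(\mathbb T^d)$ is a Banach algebra, so the cubic term satisfies $\||W|^2W\|_{L_v^\infty H_y^\alpha}\lesssim\|W\|_{L_v^\infty H_y^\alpha}^3$, and, using $H_y^\alpha\hookrightarrow L_y^\infty$, $\||W|^2W\|_{L_{v,y}^2}\lesssim\|W\|_{L_v^\infty H_y^\alpha}^2\|W\|_{L_{v,y}^2}$; differencing gives $\||W_1|^2W_1-|W_2|^2W_2\|_Y\lesssim(\|W_1\|_Y^2+\|W_2\|_Y^2)\|W_1-W_2\|_Y$. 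The linear flow $e^{it\triangle_y/2}$ is a fiberwise unit-modulus Fourier multiplier, hence an isometry of both $L_v^\infty H_y^\alpha$ and $L_{v,y}^2$, so of $Y$; and on a finite interval $[1,b]$ the coefficient $t^{-1}$ is bounded. Therefore the Duhamel map $\Phi(W)(t)=e^{i(t-1)\triangle_y/2}W_1-i\int_1^t e^{i(t-\sigma)\triangle_y/2}\sigma^{-1}|W|^2W\,d\sigma$ is a contraction on a small ball of $C([1,b];Y)$ once $b-1$ is small depending on $\|W_1\|_Y$; comparing two solutions gives the Lipschitz dependence. I expect no obstruction here.

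For part (b), I would first record a priori bounds under (\ref{eq:iniW}): mass conservation gives $\|W(t)\|_{L_{v,y}^2}\equiv\|W_1\|_{L_{v,y}^2}\lesssim\epsilon$, the energy identity from pairing (\ref{eq:eqtorus}) with $\overline{W_t}$ and taking real parts yields $\frac{d}{dt}\bigl(\|\nabla_yW\|_{L_y^2}^2+t^{-1}\|W\|_{L_y^4}^4\bigr)=-t^{-2}\|W\|_{L_y^4}^4\le0$ along each ray $v=\mathrm{const}$, hence a uniform $L_v^\infty\dot H_y^1$ bound, and the higher regularity then grows at most as $\|W(t)\|_{L_v^\infty H_y^\alpha}\lesssim\epsilon(1+t)^{C\epsilon^2}$ (Lemma \ref{lemmaWreg}). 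Since the local existence time of (a), started at time $T$, is $\gtrsim T/\|W(T)\|_Y^2$, which grows, these bounds promote the local solution to a global one. For the asymptotics, set $G:=e^{-it\triangle_y/2}W$; a direct Fourier computation shows $G$ solves $i\partial_tG=t^{-1}R[G,G,G]+t^{-1}\mathcal E[G,G,G]$, and $\|W-e^{it\triangle_y/2}\mathcal G\|_Y=\|G-\mathcal G\|_Y$ by the isometry property. Now run the dyadic argument of Lemma \ref{lemmagammaasym} with the error $I$ there replaced by $0$: let $\mathcal G_n$ solve (\ref{eq dyna}) with $\mathcal G_n(2^n)=G(2^n)$, use Gronwall together with the normal-form bound on $\int t^{-1}\mathcal E$ to control $\|G-\mathcal G_n\|$ on $[2^n,2^{n+1}]$, deduce that $\|\mathcal G_n(2^{n+1})-\mathcal G_{n+1}(2^{n+1})\|_Y$ is geometrically small, solve (\ref{eq dyna}) backward to $t=1$ (losing only a factor $2^{nC\epsilon^2}$) to get that $\mathcal G_n(1)$ is Cauchy in $Y$; its limit generates, by Proposition \ref{prop:Gprop}, a global solution $\mathcal G$ of (\ref{eq dyna}) with $\|G(t)-\mathcal G(t)\|_Y\to0$, which is (\ref{eq:Wasyerr}).

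For part (c), Proposition \ref{prop:Gprop} first produces the global solution $\mathcal G$ of (\ref{eq dyna}) with $\mathcal G(1)=\mathcal G_1$; I would then build $\mathcal W$ by solving (\ref{eq:eqtorus}) "from $t=\infty$." Again pass to the interaction picture $G=e^{-it\triangle_y/2}\mathcal W$ and write $G=\mathcal G+h$, so the equation for $h$ is $i\partial_t h=t^{-1}\bigl(R[\mathcal G+h,\mathcal G+h,\mathcal G+h]-R[\mathcal G,\mathcal G,\mathcal G]\bigr)+t^{-1}\mathcal E[\mathcal G+h,\mathcal G+h,\mathcal G+h]$; the solution with $h(\infty)=0$ solves the integral equation $h(t)=i\int_t^\infty\sigma^{-1}\bigl((R[\mathcal G+h]-R[\mathcal G])+\mathcal E[\mathcal G+h]\bigr)\,d\sigma$, which I would solve by contraction in the weighted space with norm $\sup_{t\ge1}t^{a}\|h(t)\|_Y$ for a small $a=a(\epsilon)\in(0,1-C\epsilon^2)$. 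The purely non-resonant source $\int_t^\infty\sigma^{-1}\mathcal E[\mathcal G,\mathcal G,\mathcal G]\,d\sigma$ converges with rate $O(t^{-1+C\epsilon^2})$ after one integration by parts in time (the normal form using $\omega\neq0$, as in \cite{IT2}), which beats $t^{-a}$; the remaining terms are resonant interactions, or differences thereof, so the fixed-time estimate (\ref{eq:h1esti}) bounds them by the conserved $\|\mathcal G\|_{L_v^\infty H_y^1}^2\lesssim\epsilon^2$ (plus a cross term controlled by the at-most-$t^{C\epsilon^2}$ growth of $\|\mathcal G\|_{L_v^\infty H_y^\alpha}$) times $\|h\|_Y$, so the map is a contraction for $\epsilon$ small and in particular $\|h(1)\|_Y$ is as small as we wish. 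Setting $\mathcal W:=e^{it\triangle_y/2}(\mathcal G+h)$ gives a solution of (\ref{eq:eqtorus}) with $\mathcal W(1)=e^{i\triangle_y/2}(\mathcal G_1+h(1))$, hence $\|\mathcal W(1)\|_Y\le\|\mathcal G_1\|_Y+\|h(1)\|_Y\le\epsilon$, uniqueness following from (a) and continuation, while $\|\mathcal W(t)-e^{it\triangle_y/2}\mathcal G(t)\|_Y=\|h(t)\|_Y\lesssim t^{-a}\to0$, which is (\ref{eq:Wasyerr}). The main obstacle is exactly this last fixed-point construction: one must choose the time-weight $a$ compatibly with the $O(t^{-1+C\epsilon^2})$ decay of the normal-form-corrected non-resonant source and with the possibly growing $H_y^\alpha$-norm of $\mathcal G$ (the energy-cascade scenario of Proposition \ref{prop:Gprop}(c)), and verify that all genuinely dangerous interactions are resonant so that they can be absorbed through (\ref{eq:h1esti}) against the conserved $\dot H_y^1$ norm rather than a naive algebra estimate.
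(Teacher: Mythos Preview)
Your proposal is correct and follows essentially the same approach as the paper: part (a) is the identical Banach-algebra contraction, part (c) is the same backward-in-time fixed point in a $t^{\delta}$-weighted $Y$-space (the paper's $\mathscr{Z}$-norm) with the resonant differences handled by (\ref{eq:h1esti}) and the nonresonant source by the $\mathcal{D}$-normal form, and part (b) differs only cosmetically in that the paper first peels off the explicit tail $F(t)=-i\int_t^\infty \sigma^{-1}\mathcal{E}[\mathcal{W},\mathcal{W},\mathcal{W}]\,d\sigma$ so that $\tilde{\mathcal W}=\mathcal W+F$ solves (\ref{eq dyna}) up to an $O_Y(t^{-2+})$ error before running the Lemma~\ref{lemmagammaasym}-type dyadic argument, whereas you invoke that argument directly on $G=e^{-it\triangle_y/2}W$.
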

\begin{remark}Here the variable $v$ is merely a parameter. We can
prove the well-posedness and asymptotic profile in the space $Y=H_{y}^{s}$
for the function $W(t,y)$ satisfying (\ref{eq:eqtorus}). The proof
follows in a similar manner to the proof of Proposition \ref{prop:wproperty}. 

\end{remark}

\begin{proof}[Proof of (a):] The equation can also be solved in critical
functional spaces. See \cite{HTT1,HTT2,IP}. Consider a short interval
$I=[a,b]\subset\mathbb{R}^{+}$, and let $f,g\in Y$. There is the
following iteration scheme:
\begin{align*}
 & \left\Vert \int_{0}^{t}s^{-1}e^{i(t-s)\triangle_{y}/2}\left(\left|f\right|^{2}f-\left|g\right|^{2}g\right)\left(s\right)ds\right\Vert _{L_{t}^{\infty}\left(I;L_{v}^{\infty}H_{y}^{\alpha}\right)}\\
\lesssim & \left\Vert t^{-1}\left(\left\Vert f\right\Vert _{L_{v}^{\infty}H_{y}^{\alpha}}^{2}+\left\Vert g\right\Vert _{L_{v}^{\infty}H_{y}^{\alpha}}^{2}\right)\left\Vert f-g\right\Vert _{L_{v}^{\infty}H_{y}^{\alpha}}\right\Vert _{L_{t}^{\infty}\left(I\right)}\\
\lesssim & \log\left(\frac{b}{a}\right)\left(\left\Vert f\right\Vert _{L_{t}^{\infty}\left(I;L_{v}^{\infty}H_{y}^{\alpha}\right)}^{2}+\left\Vert g\right\Vert _{L_{t}^{\infty}\left(I;L_{v}^{\infty}H_{y}^{\alpha}\right)}^{2}\right)\left\Vert f-g\right\Vert _{L_{t}^{\infty}\left(I;L_{v}^{\infty}H_{y}^{\alpha}\right),}
\end{align*}

\begin{align*}
 & \left\Vert \int_{0}^{t}s^{-1}e^{i(t-s)\triangle_{y}/2}\left(\left|f\right|^{2}f-\left|g\right|^{2}g\right)\left(s\right)ds\right\Vert _{L_{t}^{\infty}\left(I;L_{v,y}^{2}\right)}\\
\lesssim & \left\Vert t^{-1}\left(\left\Vert f\right\Vert _{L_{v}^{\infty}H_{y}^{\alpha}}^{2}+\left\Vert g\right\Vert _{L_{v}^{\infty}H_{y}^{\alpha}}^{2}\right)\left\Vert f-g\right\Vert _{L_{v,y}^{2}}\right\Vert _{L_{t}^{\infty}\left(I\right)}\\
\lesssim & \log\left(\frac{b}{a}\right)\left(\left\Vert f\right\Vert _{L_{t}^{\infty}\left(I;L_{v}^{\infty}H_{y}^{\alpha}\right)}^{2}+\left\Vert g\right\Vert _{L_{t}^{\infty}\left(I;L_{v}^{\infty}H_{y}^{\alpha}\right)}^{2}\right)\left\Vert f-g\right\Vert _{L_{t}^{\infty}\left(I;L_{v,y}^{2}\right).}
\end{align*}
If we take $|I|$ to be small enough, by the contraction principle
the local well-posedness is obtained in $Y$.

\end{proof}

\begin{proof}[Proof of (b):]

The solution of (\ref{eq:eqtorus}) with initial condition (\ref{eq:iniW})
will have the conservation laws and inequality for $t\geq1$:
\[
\left\Vert W(t)\right\Vert _{L_{v,y}^{2}}=\left\Vert W_{1}\right\Vert _{L_{v,y}^{2}},\quad\left\Vert W(t)\right\Vert _{L_{v}^{\infty}\dot{H}_{y}^{1}}^{2}+\frac{1}{t}\left\Vert W(t)\right\Vert _{L_{v}^{\infty}L_{y}^{4}}^{4}\leq\left\Vert W(1)\right\Vert _{L_{v}^{\infty}\dot{H}_{y}^{1}}^{2}+\left\Vert W(1)\right\Vert _{L_{v}^{\infty}L_{y}^{4}}^{4}.
\]
From local well-posedness, assume that the solution $W(t)$ exists
on the time interval $[1,T]$. Here we make an extra bootstrap assumption
\begin{equation}
\left\Vert W(t)\right\Vert _{L_{v}^{\infty}H_{y}^{\alpha}}\lesssim D\epsilon\left(1+t\right)^{\delta}\label{eq:wbootsassum}
\end{equation}
where $\delta$ is some small positive number. Denote $\mathcal{W}:=e^{-it\triangle_{y}/2}W$,
we have 
\[
\left\Vert \mathcal{W}(1)\right\Vert _{L_{v}^{\infty}H_{y}^{\alpha}}=\left\Vert W(1)\right\Vert _{L_{v}^{\infty}H_{y}^{\alpha}}\leq\epsilon
\]
and 
\[
\left\Vert \mathcal{W}(t)\right\Vert _{L_{v}^{\infty}H_{y}^{1}}=\left\Vert W(t)\right\Vert _{L_{v}^{\infty}H_{y}^{1}}\leq\epsilon
\]
for any $t\in[1,T]$. The equation of $\mathcal{W}$ is 
\[
i\partial_{t}\mathcal{W}(t,v,y)=\sum_{\omega}\sum_{\Gamma_{\omega}(\mathbf{k})}e^{i\omega t/2}\mathcal{W}\left(t,v,\mathbf{k}_{1}\right)\overline{\mathcal{W}}\left(t,v,\mathbf{k}_{2}\right)\mathcal{W}\left(t,v,\mathbf{k}_{3}\right)e^{i\mathbf{k}\cdot y}.
\]
To obtain the resonant equation, separate the nonlinear term of $\partial_{t}\mathcal{W}$
into the resonant term and nonresonant term, 

\begin{align*}
i\partial_{t}\mathcal{W}\left(t,v,y\right) & =\frac{1}{t}R\left[\mathcal{W},\mathcal{W},\mathcal{W}\right](t,v,y)+\frac{1}{t}\mathcal{E}\left[\mathcal{W},\overline{\mathcal{W}},\mathcal{W}\right](t,v,y).
\end{align*}
Using the definition of the non-resonant trilinear form $\mathcal{D}$
and we will have 
\begin{equation}
\begin{aligned} & \frac{1}{t}\mathcal{E}\left[\mathcal{W},\overline{\mathcal{W}},\mathcal{W}\right](t,v,y)\\
= & \partial_{t}\mathcal{D}\left[\mathcal{W},\mathcal{W},\mathcal{W}\right]+t^{-1}\mathcal{D}\left[\mathcal{W},\mathcal{W},\mathcal{W}\right]-\mathcal{D}\left[\mathcal{W}_{t},\mathcal{W},\mathcal{W}\right]-\mathcal{D}\left[\mathcal{W},\mathcal{W}_{t},\mathcal{W}\right]-\mathcal{D}\left[\mathcal{W},\mathcal{W},\mathcal{W}_{t}\right].
\end{aligned}
\label{eq:nonres}
\end{equation}
It is easy to verify that
\begin{equation}
\begin{aligned}\left\Vert \mathcal{D}\left[\mathcal{W},\mathcal{W},\mathcal{W}\right](t)\right\Vert _{L_{v}^{\infty}H_{y}^{\alpha}} & \lesssim t^{-1}\left\Vert \mathcal{W}\left(t\right)\right\Vert _{L_{v}^{\infty}H_{y}^{\alpha}}^{3},\\
\left\Vert \partial_{t}\mathcal{W}\right\Vert _{L_{v}^{\infty}H_{y}^{\alpha}} & \lesssim t^{-1}\left\Vert \mathcal{W}\left(t\right)\right\Vert _{L_{v}^{\infty}H_{y}^{\alpha}}^{3}.
\end{aligned}
\label{eq:wbound}
\end{equation}
By (\ref{eq:wbound}), (\ref{eq:wbootsassum}), we have the bound
\begin{align*}
 & \left\Vert \int_{1}^{T}t^{-1}\mathcal{E}\left[\mathcal{W},\overline{\mathcal{W}},\mathcal{W}\right](t,v,y)dt\right\Vert _{L_{v}^{\infty}H_{y}^{\alpha}}\\
\lesssim & T^{-1}\left\Vert \mathcal{W}\left(T\right)\right\Vert _{L_{v}^{\infty}H_{y}^{\alpha}}^{3}+\left\Vert \mathcal{W}\left(1\right)\right\Vert _{L_{v}^{\infty}H_{y}^{\alpha}}^{3}+\int_{1}^{T}t^{-2}\left(\left\Vert \mathcal{W}\left(t\right)\right\Vert _{L_{v}^{\infty}H_{y}^{\alpha}}^{3}+\left\Vert \mathcal{W}\left(t\right)\right\Vert _{L_{v}^{\infty}H_{y}^{\alpha}}^{6}\right)dt\\
\lesssim & D^{3}\epsilon^{3}T^{-1}\left(1+T\right)^{3\delta}+\epsilon^{3}+\int_{1}^{T}t^{-2}\left(D^{3}\epsilon^{3}\left(1+t\right)^{3\delta}+D^{6}\epsilon^{6}(1+t)^{6\delta}\right)dt.
\end{align*}
By ($\ref{eq:h1esti}$), we also have 
\[
\left\Vert \mathcal{R}\left[\mathcal{W},\mathcal{W},\mathcal{W}\right](t)\right\Vert _{L_{v}^{\infty}H_{y}^{\alpha}}\lesssim\left\Vert \mathcal{W}\left(t\right)\right\Vert _{L_{v}^{\infty}H_{y}^{1}}^{2}\left\Vert \mathcal{W}\left(t\right)\right\Vert _{L_{v}^{\infty}H_{y}^{\alpha}}\lesssim\epsilon^{2}\left\Vert \mathcal{W}\left(t\right)\right\Vert _{L_{v}^{\infty}H_{y}^{\alpha}}.
\]
Combining the above estimates together, 
\begin{align*}
\left\Vert \mathcal{W}(T)\right\Vert _{L_{v}^{\infty}H_{y}^{\alpha}} & \leq\epsilon+D^{4}\epsilon^{3}T^{-1}\left(1+T\right)^{3\delta}+D\epsilon^{3}\\
 & \quad+\int_{1}^{T}D^{2}\epsilon^{3}t^{-1}+t^{-2}D\left(D^{3}\epsilon^{3}\left(1+t\right)^{3\delta}+D^{6}\epsilon^{6}(1+t)^{6\delta}\right)dt.
\end{align*}
Hence if $\epsilon\ll D^{-\frac{3}{2}}$and $D^{2}\epsilon^{3}<\delta$,
we have 
\begin{equation}
\left\Vert W(t)\right\Vert _{L_{v}^{\infty}H_{y}^{\alpha}}=\left\Vert \mathcal{W}(t)\right\Vert _{L_{v}^{\infty}H_{y}^{\alpha}}\lesssim\left(\epsilon+D\epsilon^{3}+D^{4}\epsilon^{3}+D^{7}\epsilon^{6}\right)t^{D^{2}\epsilon^{3}}\lesssim\epsilon t^{D^{2}\epsilon^{3}},\label{eq:walphabd}
\end{equation}
which is a better bound. Due to the boundness of $\left\Vert W(t)\right\Vert _{L_{v}^{\infty}H_{y}^{\alpha}}$,
we can extend the local well-posedness result to well-posedness on
$[1,\infty)$.

To obtain the asymptotic equation, define the function 
\[
F(t,v,y):=-i\int_{t}^{\infty}\sum_{\omega,\omega\neq0}\sum_{\Gamma_{\omega}(\mathbf{k})}e^{i\omega\sigma/2}\mathcal{W}\left(\sigma,v,\mathbf{k}_{1}\right)\overline{\mathcal{W}}\left(\sigma,v,\mathbf{k}_{2}\right)\mathcal{W}\left(\sigma,v,\mathbf{k}_{3}\right)e^{i\mathbf{k}\cdot y}d\sigma.
\]
For this function we have the bounds:

\begin{lemma} For $t\geq1$, we have 
\[
\left\Vert F(t,v,y)\right\Vert _{L_{v}^{\infty}H_{y}^{\alpha}}\lesssim\epsilon^{6}t^{-1+6D^{2}\epsilon^{3}},\quad\left\Vert F(t,v,y)\right\Vert _{L_{v,y}^{2}}\lesssim\epsilon^{7}t^{-1+5D^{2}\epsilon^{3}}.
\]

\end{lemma}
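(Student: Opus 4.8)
The plan is to make sense of $F$ as a conditionally convergent oscillatory integral and to extract decay by integrating by parts in $\sigma$ against the oscillation $e^{i\omega\sigma/2}$, which is nontrivial precisely because $\omega\ne0$ forces $|\omega|\ge1$. The naive absolute bound on the integrand is, by (\ref{eq:walphabd}), of size $\sigma^{-1}\|\mathcal{W}(\sigma)\|_{L_v^\infty H_y^\alpha}^3\lesssim\epsilon^3\sigma^{-1+3D^2\epsilon^3}$, which is \emph{not} integrable at $\sigma=\infty$, so the gain must come entirely from the oscillation. Concretely, I would integrate by parts in $\sigma$ via $e^{i\omega\sigma/2}=\tfrac{2}{i\omega}\partial_\sigma(e^{i\omega\sigma/2})$; equivalently, apply the normal-form identity (\ref{eq:nonres}), which rewrites the integrand of $F$ as $\partial_\sigma\mathcal{D}[\mathcal{W},\mathcal{W},\mathcal{W}]+\sigma^{-1}\mathcal{D}[\mathcal{W},\mathcal{W},\mathcal{W}]-\mathcal{D}[\mathcal{W}_\sigma,\mathcal{W},\mathcal{W}]-\mathcal{D}[\mathcal{W},\mathcal{W}_\sigma,\mathcal{W}]-\mathcal{D}[\mathcal{W},\mathcal{W},\mathcal{W}_\sigma]$, and integrate from $t$ to $\infty$.

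The first thing to verify is that the boundary term at $\sigma=\infty$ vanishes: by the $H_y^\alpha$-algebra property ($\alpha>\frac{d}{2}$) together with $|\omega|^{-1}\le1$ one has $\|\mathcal{D}[\mathcal{W},\mathcal{W},\mathcal{W}](\sigma)\|_{L_v^\infty H_y^\alpha}\lesssim\sigma^{-1}\|\mathcal{W}(\sigma)\|_{L_v^\infty H_y^\alpha}^3\lesssim\epsilon^3\sigma^{-1+3D^2\epsilon^3}\to 0$, using that $D^2\epsilon^3$ is small from the bootstrap behind (\ref{eq:walphabd}); this is exactly what legitimizes the improper integral. The remaining pieces are then: the boundary term at $\sigma=t$, namely $\mathcal{D}[\mathcal{W},\mathcal{W},\mathcal{W}](t)$, of size $\epsilon^3 t^{-1+3D^2\epsilon^3}$; the integral of $\sigma^{-1}\mathcal{D}[\mathcal{W},\mathcal{W},\mathcal{W}]$, whose integrand is $\lesssim\epsilon^3\sigma^{-2+3D^2\epsilon^3}$ and hence contributes $\lesssim\epsilon^3 t^{-1+3D^2\epsilon^3}$; and the integrals of the $\mathcal{D}[\mathcal{W}_\sigma,\cdots]$ terms, where by (\ref{eq:wbound}) the factor $\mathcal{W}_\sigma$ carries an extra $\sigma^{-1}$ and two extra factors of $\mathcal{W}$, so those integrands are $\lesssim\sigma^{-2}$ times a polynomially growing power of $\epsilon$ and again integrate to $\lesssim t^{-1+CD^2\epsilon^3}$. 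If reaching the stated $\epsilon^{6}$ and $\epsilon^{7}$ requires pushing further, the $\mathcal{E}$-part of $\mathcal{W}_\sigma$ (from the equation (\ref{eq:eqtorus})) is itself non-resonant and can be integrated by parts a second time by the same mechanism; each iteration costs another factor of $\mathcal{W}$ and another $\sigma^{-1}$, and every new boundary term at $\infty$ again vanishes by smallness of $D^2\epsilon^3$.

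For the $L_v^\infty H_y^\alpha$ estimate I would bound every $\mathcal{D}[\cdots]$ by the $H_y^\alpha$-algebra, inserting (\ref{eq:walphabd}) for each factor of $\mathcal{W}$ and (\ref{eq:wbound}) for each factor of $\mathcal{W}_\sigma$, and then collect the stated power of $\epsilon$ and exponent. For the $L_{v,y}^2$ estimate the smaller exponent comes from placing the lowest-regularity factor(s) in $L_{v,y}^2$ and invoking mass conservation $\|\mathcal{W}(\sigma)\|_{L_{v,y}^2}=\|W_1\|_{L_{v,y}^2}\lesssim\epsilon$ (which does not grow), the remaining factors still measured in $L_v^\infty H_y^\alpha$; this trades one growth factor $\sigma^{D^2\epsilon^3}$ for a fixed $\epsilon$, producing the exponent $5D^2\epsilon^3$ at the price of an extra $\epsilon$, and one may additionally use (\ref{eq:h1esti}) to put two factors in $H_y^1$ where convenient. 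The one genuinely delicate point throughout is the bookkeeping of the improper integral at $\sigma=\infty$: one must confirm that \emph{every} boundary term generated by the (possibly iterated) integration by parts decays, which in each case reduces to the smallness of $D^2\epsilon^3$; the rest is the same trilinear Young/$H_y^\alpha$-algebra manipulation already used to prove (\ref{eq:wbound}).
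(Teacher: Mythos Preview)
Your approach is correct and essentially identical to the paper's: integrate by parts in $\sigma$ using the identity (\ref{eq:nonres}), bound the boundary term at $t$ and the remaining integrals via (\ref{eq:walphabd}) and (\ref{eq:wbound}), and for the $L_{v,y}^2$ bound place one factor in $L_{v,y}^2$ using mass conservation. The paper's proof is in fact terser than yours and simply records $\epsilon^3 t^{-1+3D^2\epsilon^3}+\epsilon^6 t^{-1+6D^2\epsilon^3}\lesssim\epsilon^6 t^{-1+6D^2\epsilon^3}$, taking the worst $t$-exponent without a second integration by parts; the stated $\epsilon$-power is not sharp, so your proposed iteration is unnecessary for the application.
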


\begin{proof}By (\ref{eq:walphabd}), (\ref{eq:wbound}) and integration
by parts in time $t$, 

\begin{align*}
\left\Vert F(t,v,y)\right\Vert _{L_{v}^{\infty}H_{y}^{\alpha}} & \lesssim t^{-1}\left\Vert \mathcal{W}\left(t\right)\right\Vert _{L_{v}^{\infty}H_{y}^{\alpha}}^{3}+\int_{t}^{\infty}\sigma^{-2}\left(\left\Vert \mathcal{W}\left(\sigma\right)\right\Vert _{L_{v}^{\infty}H_{y}^{\alpha}}^{3}+\left\Vert \mathcal{W}\left(\sigma\right)\right\Vert _{L_{v}^{\infty}H_{y}^{\alpha}}^{6}\right)d\sigma\\
 & \lesssim\epsilon^{3}t^{-1+3D^{2}\epsilon^{3}}+\epsilon^{6}t^{-1+6D^{2}\epsilon^{3}}\lesssim\epsilon^{6}t^{-1+6D^{2}\epsilon^{3}},
\end{align*}
\begin{align*}
\left\Vert F(t,v,y)\right\Vert _{L_{v,y}^{2}} & \lesssim t^{-1}\left\Vert \mathcal{W}\left(t\right)\right\Vert _{L_{v}^{\infty}H_{y}^{\alpha}}^{2}\left\Vert \mathcal{W}\left(t\right)\right\Vert _{L_{v,y}^{2}}\\
 & \quad+\int_{t}^{\infty}\sigma^{-2}\left(\left\Vert \mathcal{W}\left(\sigma\right)\right\Vert _{L_{v}^{\infty}H_{y}^{\alpha}}^{2}+\left\Vert \mathcal{W}\left(\sigma\right)\right\Vert _{L_{v}^{\infty}H_{y}^{\alpha}}^{5}\right)\left\Vert \mathcal{W}\left(\sigma\right)\right\Vert _{L_{v,y}^{2}}d\sigma\\
 & \lesssim\epsilon^{4}t^{-1+2D^{2}\epsilon^{3}}+\epsilon^{7}t^{-1+5D^{2}\epsilon^{3}}\lesssim\epsilon^{7}t^{-1+5D^{2}\epsilon^{3}}.
\end{align*}
\end{proof}

Then we have the property 

\[
i\partial_{t}\left(\mathcal{W}\left(t,v,y\right)+F(t,v,y)\right)=\frac{1}{t}R\left[\mathcal{W}\left(t,v,y\right),\mathcal{W}\left(t,v,y\right),\mathcal{W}\left(t,v,y\right)\right].
\]
In order to rewrite this equation into the form of (\ref{eq dyna}),
define a modified function of $\mathcal{W}$ to be
\[
\tilde{\mathcal{W}}(t,v,y):=\mathcal{W}\left(t,v,y\right)+F(t,v,y).
\]
Thus the equation becomes
\begin{align*}
i\partial_{t}\tilde{\mathcal{W}}(t,v,y) & =\frac{1}{t}R\left[\tilde{\mathcal{W}}-F,\mathcal{\tilde{\mathcal{W}}}-F,\tilde{\mathcal{W}}-F\right]\\
 & =\frac{1}{t}R\left[\tilde{\mathcal{W}},\mathcal{\tilde{\mathcal{W}}},\tilde{\mathcal{W}}\right]-\frac{1}{t}R\left[F,\mathcal{\mathcal{W}},\mathcal{W}\right]-\frac{1}{t}R\left[\tilde{\mathcal{W}},F,\mathcal{W}\right]-\frac{1}{t}R\left[\tilde{\mathcal{W}},\tilde{\mathcal{W}},F\right].
\end{align*}

\begin{lemma}For $t>1$, 
\[
i\partial_{t}\tilde{\mathcal{W}}(t,v,y)=\frac{1}{t}R\left[\tilde{\mathcal{W}},\mathcal{\tilde{\mathcal{W}}},\tilde{\mathcal{W}}\right]+\mathcal{O}_{L_{v,y}^{2}}\left(\epsilon^{9}t^{-2+5D^{2}\epsilon^{3}}\right)\cap\mathcal{O}_{L_{v}^{\infty}H_{y}^{\alpha}}\left(\epsilon^{8}t^{-2+8D^{2}\epsilon^{3}}\right).
\]

\end{lemma}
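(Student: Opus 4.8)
The plan is to read the claim off the algebraic identity already obtained, namely
\[
i\partial_{t}\tilde{\mathcal{W}} = \frac{1}{t}R[\tilde{\mathcal{W}},\tilde{\mathcal{W}},\tilde{\mathcal{W}}] - \frac{1}{t}\Big(R[F,\mathcal{W},\mathcal{W}] + R[\tilde{\mathcal{W}},F,\mathcal{W}] + R[\tilde{\mathcal{W}},\tilde{\mathcal{W}},F]\Big),
\]
so that everything reduces to estimating the three ``one slot is $F$'' error terms in $L^{2}_{v,y}$ and in $L^{\infty}_{v}H^{\alpha}_{y}$. First I would collect the inputs to be fed into the resonant trilinear estimate (\ref{eq:h1esti}): the conservation-plus-smallness bound $\|\mathcal{W}(t)\|_{L^{\infty}_{v}H^{1}_{y}}\le\epsilon$ (and hence $\|\tilde{\mathcal{W}}(t)\|_{L^{\infty}_{v}H^{1}_{y}}\lesssim\epsilon$, since $\|F(t)\|_{L^{\infty}_{v}H^{1}_{y}}$ is dominated by $\|F(t)\|_{L^{\infty}_{v}H^{\alpha}_{y}}$ when $\alpha\ge1$ and, when $d=1$, by $\epsilon^{3}t^{-1+C\epsilon^{3}}$ via the same integration-by-parts argument together with the algebra property of $H^{1}(\mathbb{T})$); the growth bound $\|\mathcal{W}(t)\|_{L^{\infty}_{v}H^{\alpha}_{y}}\lesssim\epsilon t^{D^{2}\epsilon^{3}}$ from (\ref{eq:walphabd}); and the decay bounds $\|F(t)\|_{L^{\infty}_{v}H^{\alpha}_{y}}\lesssim\epsilon^{6}t^{-1+6D^{2}\epsilon^{3}}$ and $\|F(t)\|_{L^{2}_{v,y}}\lesssim\epsilon^{7}t^{-1+5D^{2}\epsilon^{3}}$ from the preceding lemma.

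For the $L^{2}$ bound I would, in each of the three error terms, apply (\ref{eq:h1esti}) with the $F$-factor placed in the low-regularity $L^{2}_{v,y}$ slot and the two remaining copies of $\mathcal{W}$ or $\tilde{\mathcal{W}}$ placed in the $L^{\infty}_{v}H^{1}_{y}$ slots; this is exactly the mechanism that earlier turned $\|R[\mathcal{W},\mathcal{W},\mathcal{W}]\|_{L^{\infty}_{v}H^{\alpha}_{y}}$ into $\epsilon^{2}\|\mathcal{W}\|_{L^{\infty}_{v}H^{\alpha}_{y}}$ rather than $\epsilon^{2}t^{2\delta}\|\mathcal{W}\|_{L^{\infty}_{v}H^{\alpha}_{y}}$. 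Each term is then bounded by $t^{-1}\epsilon^{2}\|F\|_{L^{2}_{v,y}}\lesssim\epsilon^{9}t^{-2+5D^{2}\epsilon^{3}}$, which is the claimed $\mathcal{O}_{L^{2}_{v,y}}$ error. For the $L^{\infty}_{v}H^{\alpha}_{y}$ bound I would instead place the $F$-factor in the $H^{\alpha}$ slot and the two copies of $\mathcal{W}/\tilde{\mathcal{W}}$ in the $H^{1}$ slots, obtaining $t^{-1}\epsilon^{2}\|F\|_{L^{\infty}_{v}H^{\alpha}_{y}}\lesssim\epsilon^{8}t^{-2+6D^{2}\epsilon^{3}}\le\epsilon^{8}t^{-2+8D^{2}\epsilon^{3}}$; summing the three contributions gives the stated bound.

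The main obstacle is essentially bookkeeping: ensuring that for each of the three error terms and in each of the two norms the resonant estimate (\ref{eq:h1esti}) can be applied so that at least two of the three input factors are measured in $L^{\infty}_{v}H^{1}_{y}$, where they are $O(\epsilon)$ with no time growth --- otherwise one would pick up an extra power of $t^{D^{2}\epsilon^{3}}$ and overshoot the stated exponents. This is not quite automatic in the term $R[\tilde{\mathcal{W}},F,\mathcal{W}]$, where $F$ occupies the complex-conjugated slot, so one needs a short remark that the resonant set $\Gamma_{0}$ (equivalently, the estimate quoted from \cite{HPTV}) is symmetric enough that the top-regularity norm may be assigned to that slot as well; an alternative is to leave a $\mathcal{W}$ in the derivative slot and use the cruder bound $\|F(t)\|_{L^{\infty}_{v}H^{1}_{y}}\lesssim\epsilon^{3}t^{-1+C\epsilon^{3}}$, which still fits under $\epsilon^{8}t^{-2+8D^{2}\epsilon^{3}}$ after multiplying by $t^{-1}$. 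Once the slot assignments are fixed the three estimates are immediate, and keeping the worst of them --- the $L^{2}$ one at order $t^{-2+5D^{2}\epsilon^{3}}$ and the $H^{\alpha}$ one at order $t^{-2+6D^{2}\epsilon^{3}}\le t^{-2+8D^{2}\epsilon^{3}}$ --- yields the lemma.
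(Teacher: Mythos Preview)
Your proposal is correct and follows essentially the same route as the paper: reduce to the three ``one slot is $F$'' error terms via the algebraic identity, then apply the resonant trilinear estimate (\ref{eq:h1esti}) together with the $F$ bounds from the previous lemma and the $\mathcal{W}$ bounds. The only minor difference is that for the $L_v^\infty H_y^\alpha$ estimate the paper uses the cruder placement $t^{-1}\|\mathcal{W}\|_{L_v^\infty H_y^\alpha}^{2}\|F\|_{L_v^\infty H_y^\alpha}$, which lands exactly on $\epsilon^{8}t^{-2+8D^{2}\epsilon^{3}}$, whereas you keep two factors in $L_v^\infty H_y^1$ and obtain the slightly sharper $\epsilon^{8}t^{-2+6D^{2}\epsilon^{3}}$ before absorbing it into the stated bound; your concern about the conjugated slot in $R[\tilde{\mathcal{W}},F,\mathcal{W}]$ is indeed resolved by the symmetry built into (\ref{eq:h1esti}).
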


\begin{proof}

Since $\left\Vert \tilde{\mathcal{W}}\right\Vert _{L_{v,y}^{2}}\lesssim\left\Vert \mathcal{W}\right\Vert _{L_{v,y}^{2}}$
and $\left\Vert \tilde{\mathcal{W}}\right\Vert _{L_{v}^{\infty}H_{y}^{\alpha}}\lesssim\left\Vert \mathcal{W}\right\Vert _{L_{v}^{\infty}H_{y}^{\alpha}}$,
and the estimate
\begin{align*}
\left\Vert t^{-1}R\left[F,\mathcal{\mathcal{W}},\mathcal{W}\right]\right\Vert _{L_{v,y}^{2}} & \lesssim t^{-1}\left\Vert \mathcal{W}\right\Vert _{L_{v}^{\infty}H_{y}^{1}}^{2}\left\Vert F\right\Vert _{L_{v,y}^{2}}\lesssim\epsilon^{9}t^{-2+5D^{2}\epsilon^{3}},\\
\left\Vert t^{-1}R\left[F,\mathcal{\mathcal{W}},\mathcal{W}\right]\right\Vert _{L_{v}^{\infty}H_{y}^{\alpha}} & \lesssim t^{-1}\left\Vert \mathcal{W}\right\Vert _{L_{v}^{\infty}H_{y}^{\alpha}}^{2}\left\Vert F\right\Vert _{L_{v}^{\infty}H_{y}^{\alpha}}\lesssim\epsilon^{8}t^{-2+8D^{2}\epsilon^{3}},
\end{align*}
the lemma is proved.

\end{proof}

Therefore $\tilde{\mathcal{W}}$ solves the resonant equation (\ref{eq dyna})
with a perturbative error. By Proposition \ref{prop:Gprop}, there
is a solution to the homogeneous equation (\ref{eq dyna}) for any
initial data in $Y$. Hence through a similar argument as in Lemma
\ref{lemmagammaasym}, there exists a solution $\mathcal{G}$ to equation
(\ref{eq dyna}) satisfying 
\[
\tilde{\mathcal{W}}(t,v,y)=\mathcal{G}\left(t,v,y\right)+\mathcal{O}_{L_{v,y}^{2}}\left(\epsilon^{9}t^{-1+5D^{2}\epsilon^{3}}\right)\cap\mathcal{O}_{L_{v}^{\infty}H_{y}^{\alpha}}\left(\epsilon^{8}t^{-1+8D^{2}\epsilon^{3}}\right).
\]
 Using the estimate of $F=\tilde{\mathcal{W}}-W$, we obtain 
\[
W(t,v,y)=e^{it\triangle_{y}/2}\mathcal{G}\left(t,v,y\right)+\mathcal{O}_{L_{v,y}^{2}}\left(\epsilon^{7}t^{-1+5D^{2}\epsilon^{3}}\right)\cap\mathcal{O}_{L_{v}^{\infty}H_{y}^{\alpha}}\left(\epsilon^{8}t^{-1+8D^{2}\epsilon^{3}}\right).
\]

\end{proof}

\begin{proof}[Proof of (c):]

Let $\mathcal{G}$ be a solution to (\ref{eq dyna}), but where we
put extra assumptions on the initial data:
\[
\left\Vert \mathcal{G}\left(1\right)\right\Vert _{L_{v,y}^{2}}+\left\Vert \mathcal{G}\left(1\right)\right\Vert _{L_{v}^{\infty}H_{y}^{\alpha}}\leq M,
\]
where $0<M\ll\delta$. By the conservation law of (\ref{eq dyna})
and (\ref{eq:h1esti}) we have the following inequalities
\[
\left\Vert \mathcal{G}\left(t\right)\right\Vert _{L_{v,y}^{2}},\left\Vert \mathcal{G}\left(t\right)\right\Vert _{L_{v}^{\infty}H_{y}^{1}}\leq M,\quad\left\Vert \mathcal{G}\right\Vert _{L_{v}^{\infty}H_{y}^{\alpha}}\lesssim Mt^{\delta}.
\]
 If there exists a solution $W$ to (\ref{eq:eqtorus}) tending to
$e^{it\triangle_{y}/2}\mathcal{G}$ at $t=\infty$, then the difference
$\mathcal{V}:=W-e^{it\triangle_{y}/2}\mathcal{G}$ satisfies the following
equation:
\begin{equation}
\begin{cases}
\left(i\partial_{t}+\frac{1}{2}\triangle_{y}\right)\mathcal{V}:=h_{1}+h_{2},\\
\mathcal{V}(\infty)=0.
\end{cases}\label{eq:bigVeq}
\end{equation}
The functions $h_{1}$, $h_{2}$ are given by
\begin{align*}
h_{1} & :=\frac{1}{t}e^{it\triangle_{y}/2}\left[\mathcal{R}\left[e^{-it\triangle_{y}/2}\mathcal{V}+\mathcal{G},e^{-it\triangle_{y}/2}\mathcal{V}+\mathcal{G},e^{-it\triangle_{y}/2}\mathcal{V}+\mathcal{G}\right]-\mathcal{R}\left[\mathcal{G},\mathcal{G},\mathcal{G}\right]\right],
\end{align*}
and denoting $\mathcal{W}:=e^{-it\triangle_{y}/2}\mathcal{V}+\mathcal{G}$,
\[
h_{2}:=\frac{e^{i\frac{\omega}{2}t+it\triangle_{y}/2}}{t}\sum_{\omega}\sum_{\Gamma_{\omega}(\mathbf{k})}\mathcal{W}\left(t,v,\mathbf{k}_{1}\right)\overline{\mathcal{W}}\left(t,v,\mathbf{k}_{2}\right)\mathcal{W}\left(t,v,\mathbf{k}_{3}\right)e^{i\mathbf{k}\cdot y}.
\]
Here our goal is to solve the $\mathcal{V}$ equation from $t=\infty$.
The solution $\mathcal{V}$ will satisfy the equation
\begin{equation}
\mathcal{V}(t)=i\int_{t}^{\infty}e^{i(t-s)\triangle_{y}/2}\left(h_{1}+h_{2}\right)(s)ds.
\end{equation}
Hence we define a function space with time decay, and solve $\mathcal{V}$
in the space 
\[
\left\Vert f\right\Vert _{\mathscr{Z}}:=\sup_{T>1}T^{\delta}\left(\left\Vert f\right\Vert _{L_{t}^{\infty}\left(T,2T;L_{v}^{\infty}H_{y}^{\alpha}\right)}+\left\Vert f\right\Vert _{L_{t}^{\infty}\left(T,2T;L_{v,y}^{2}\right)}\right).
\]
Since $\mathcal{R}$ is a trilinear form, 
\begin{align*}
\left\Vert h_{1}\right\Vert _{L_{v}^{\infty}H_{y}^{\alpha}\left(L_{v,y}^{2}\right)} & \lesssim t^{-1}\left(\left\Vert R\left[\mathcal{G},\mathcal{G},e^{-it\triangle_{y}/2}\mathcal{V}\right]\right\Vert _{L_{v}^{\infty}H_{y}^{\alpha}\left(L_{v,y}^{2}\right)}+\left\Vert R\left[\mathcal{G},e^{-it\triangle_{y}/2}\mathcal{V},e^{-it\triangle_{y}/2}\mathcal{V}\right]\right\Vert _{L_{v}^{\infty}H_{y}^{\alpha}\left(L_{v,y}^{2}\right)}\right.\\
 & \quad\left.+\left\Vert R\left[e^{-it\triangle_{y}/2}\mathcal{V},e^{-it\triangle_{y}/2}\mathcal{V},e^{-it\triangle_{y}/2}\mathcal{V}\right]\right\Vert _{L_{v}^{\infty}H_{y}^{\alpha}\left(L_{v,y}^{2}\right)}\right).
\end{align*}
By (\ref{eq:h1esti}) there are the following bounds: 
\[
\left\Vert t^{-1}R\left[\mathcal{G},\mathcal{G},e^{-it\triangle_{y}/2}\mathcal{V}\right]\right\Vert _{L_{t}^{1}\left(T,2T;L_{v}^{\infty}H_{y}^{\alpha}\right)+L_{t}^{1}\left(T,2T;L_{v,y}^{2}\right)}\lesssim M^{2}T^{-\delta}\left\Vert \mathcal{V}\right\Vert _{\mathscr{Z}},
\]
\begin{align*}
\left\Vert t^{-1}R\left[\mathcal{G},e^{-it\triangle_{y}/2}\mathcal{V},e^{-it\triangle_{y}/2}\mathcal{V}\right]\right\Vert _{L_{t}^{1}\left(T,2T;L_{v}^{\infty}H_{y}^{\alpha}\right)+L_{t}^{1}\left(T,2T;L_{v,y}^{2}\right)}\lesssim MT^{-2\delta}\left\Vert \mathcal{V}\right\Vert _{\mathscr{Z}}^{2},
\end{align*}
\[
\left\Vert t^{-1}R\left[e^{-it\triangle_{y}/2}\mathcal{V},e^{-it\triangle_{y}/2}\mathcal{V},e^{-it\triangle_{y}/2}\mathcal{V}\right]\right\Vert _{L_{t}^{1}\left(T,2T;L_{v}^{\infty}H_{y}^{\alpha}\right)+L_{t}^{1}\left(T,2T;L_{v,y}^{2}\right)}\lesssim T^{-3\delta}\left\Vert \mathcal{V}\right\Vert _{\mathscr{Z}}^{3}.
\]
We obtain the $\mathscr{Z}$ bound for $h_{1}$,
\begin{equation}
\left\Vert \int_{t}^{\infty}e^{-i(t-s)\triangle_{y}/2}h_{1}(s)ds\right\Vert _{\mathscr{Z}}\lesssim M^{2}\left\Vert \mathcal{V}\right\Vert _{\mathscr{Z}}+M\left\Vert \mathcal{V}\right\Vert _{\mathscr{Z}}^{2}+\left\Vert \mathcal{V}\right\Vert _{\mathscr{Z}}^{3}.\label{eq:h_1esti}
\end{equation}
For the $h_{2}$ part, use the integration by parts in time and break
the time interval into dyadic subintervals and estimate $e^{-i(t-s)\triangle_{y}/2}h_{1}(s)$
in each interval and sum up: 
\begin{align*}
\int_{T}^{2T}e^{-is\triangle_{y}/2}h_{2}(s)ds= & \left.\mathcal{D}\left[\mathcal{W},\mathcal{W},\mathcal{W}\right]\right|_{T}^{2T}+\int_{T}^{2T}s^{-1}\mathcal{D}\left[\mathcal{W},\mathcal{W},\mathcal{W}\right]ds\\
 & +\int_{T}^{2T}\mathcal{D}\left[\partial_{t}\mathcal{W},\mathcal{W},\mathcal{W}\right]+\mathcal{D}\left[\mathcal{W},\partial_{t}\mathcal{W},\mathcal{W}\right]+\mathcal{D}\left[\mathcal{W},\mathcal{W},\partial_{t}\mathcal{W}\right]ds.
\end{align*}
By the formula for $\mathcal{G}$ and $\mathcal{V}$, we have 
\[
i\partial_{t}\mathcal{W}=\frac{e^{-it\triangle_{y}/2}}{t}\left|\mathcal{W}\right|^{2}\mathcal{W},
\]
and 
\[
\left\Vert \mathcal{W}\right\Vert _{L_{v}^{\infty}H_{y}^{\alpha}}\lesssim T^{-\delta}\left\Vert \mathcal{V}\right\Vert _{\mathscr{Z}}+MT^{\delta},\quad\left\Vert \mathcal{W}\right\Vert _{L_{v,y}^{2}}\lesssim T^{-\delta}\left\Vert \mathcal{V}\right\Vert _{\mathscr{Z}}+M.
\]
Hence we have
\begin{align*}
\left\Vert \int_{T}^{2T}e^{-is\triangle_{y}/2}h_{2}(s)ds\right\Vert _{L_{v}^{\infty}H_{y}^{\alpha}} & \lesssim T^{-1}\left(T^{-\delta}\left\Vert \mathcal{V}\right\Vert _{\mathscr{Z}}+MT^{\delta}\right)^{3}+T^{-1}\left(T^{-\delta}\left\Vert \mathcal{V}\right\Vert _{\mathscr{Z}}+MT^{\delta}\right)^{5},
\end{align*}
\begin{align*}
\left\Vert \int_{T}^{2T}e^{-is\triangle_{y}/2}h_{2}(s)ds\right\Vert _{L_{v,y}^{2}} & \lesssim T^{-1}\left[\left(T^{-\delta}\left\Vert \mathcal{V}\right\Vert _{\mathscr{Z}}+MT^{\delta}\right)^{2}+\left(T^{-\delta}\left\Vert \mathcal{V}\right\Vert _{\mathscr{Z}}+MT^{\delta}\right)^{4}\right]\left(T^{-\delta}\left\Vert \mathcal{V}\right\Vert _{\mathscr{Z}}+M\right).
\end{align*}
Therefore we have the bound
\begin{equation}
\left\Vert \int_{t}^{\infty}e^{-i(t-s)\triangle_{y}/2}h_{2}(s)ds\right\Vert _{\mathscr{Z}}\lesssim\left(\left\Vert \mathcal{V}\right\Vert _{\mathscr{Z}}+M\right)^{3}+\left(\left\Vert \mathcal{V}\right\Vert _{\mathscr{Z}}+M\right)^{5}.\label{eq:h_2estim}
\end{equation}
By (\ref{eq:h_1esti}), (\ref{eq:h_2estim}), and the assumption that
$M$ is a small positive number, we obtain the bound
\begin{equation}
\left\Vert \mathcal{V}\right\Vert _{\mathscr{Z}}\lesssim M^{3}+M^{2}\left\Vert \mathcal{V}\right\Vert _{\mathscr{Z}}+M\left\Vert \mathcal{V}\right\Vert _{\mathscr{Z}}^{2}+\left\Vert \mathcal{V}\right\Vert _{\mathscr{Z}}^{3}+\left\Vert \mathcal{V}\right\Vert _{\mathscr{Z}}^{4}+\left\Vert \mathcal{V}\right\Vert _{\mathscr{Z}}^{5}.\label{eq:bigVbound}
\end{equation}
In a similar manner, for $\mathcal{V}_{1}$ and $\mathcal{V}_{2}$
both satisfying the equation, we have the Lipschitz bounds 
\begin{equation}
\left\Vert \mathcal{V}_{1}-\mathcal{V}_{2}\right\Vert _{\mathscr{Z}}\lesssim\left(M^{2}+\left\Vert \mathcal{V}_{1}\right\Vert _{\mathscr{Z}}^{4}+\left\Vert \mathcal{V}_{2}\right\Vert _{\mathscr{Z}}^{4}\right)\left\Vert \mathcal{V}_{1}-\mathcal{V}_{2}\right\Vert _{\mathscr{Z}}.\label{eq:bigVLip}
\end{equation}
By (\ref{eq:bigVbound}), we have the bound $\left\Vert \mathcal{V}\right\Vert _{\mathscr{Z}}\lesssim M^{3}$
if $M$ is small enough. Hence by (\ref{eq:bigVLip}), we can solve
the equation (\ref{eq:bigVeq}) for $\gamma$ by the contraction principle
in the function space $\mathscr{Z}$. 

\end{proof}

In order to prove the asymptotic completeness of (\ref{eq main}),
we introduce the following lemma:

\begin{lemma}\label{lemmaWreg} Suppose $W$ is a solution to the
equation (\ref{eq:eqtorus}), let $\alpha>\frac{d}{2}$, and assume
that $\left\Vert W(1)\right\Vert _{L_{v}^{\infty}H_{y}^{1}}^{2}\lesssim\delta$.
By Lemma \ref{resonance form}, there are the growth bounds: 

(a) For any $s\geq1$, if at $t=1$, we have $\left\Vert W(1)\right\Vert _{L_{v}^{2}H_{y}^{s}+L_{v}^{\infty}H_{y}^{\alpha}}<\infty$,
then for any $t\geq1$, there is the bound $\left\Vert W(t)\right\Vert _{L_{v}^{2}H_{y}^{s}+L_{v}^{\infty}H_{y}^{\alpha}}\lesssim t^{\delta}\left\Vert W(1)\right\Vert _{L_{v}^{2}H_{y}^{s}+L_{v}^{\infty}H_{y}^{\alpha}}$$.$

(b) For any $s>0$, if at $t=1$, we have $\left\Vert W(1)\right\Vert _{H_{v}^{s}L_{y}^{2}+L_{v}^{\infty}H_{y}^{\alpha}}<\infty$,
then for any $t\geq1$, there is the bound $\left\Vert W(t)\right\Vert _{H_{v}^{s}L_{y}^{2}+L_{v}^{\infty}H_{y}^{\alpha}}\lesssim t^{\delta}\left\Vert W(1)\right\Vert _{H_{v}^{s}L_{y}^{2}+L_{v}^{\infty}H_{y}^{\alpha}}$.

\end{lemma}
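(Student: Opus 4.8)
The plan is to run an energy estimate in which the factor $\frac1t$ in front of the cubic term of (\ref{eq:eqtorus}) produces only a Gronwall coefficient of size $\delta/t$, exactly as in the proof of Proposition \ref{prop. nugrowth} and of Proposition \ref{prop:wproperty}(b). Throughout, $v$ enters (\ref{eq:eqtorus}) only as a parameter, since neither $|W|^2W$ nor $\triangle_y$ acts in $v$. I would first record the baseline bounds: testing (\ref{eq:eqtorus}) against $\overline W$ pointwise in $v$ and taking imaginary parts gives conservation of $\|W(t,v,\cdot)\|_{L_y^2}$, while testing against $\overline{\partial_tW}$ and taking real parts gives the monotonicity identity $\partial_t(\|\nabla_yW\|_{L_y^2}^2+\tfrac1t\|W\|_{L_y^4}^4)=-\tfrac1{t^2}\|W\|_{L_y^4}^4\le0$ (the computation preceding (\ref{eq:gammaestimate}) with $I=0$). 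Together with $H_y^1\subset L_y^4$ for $d\le4$ this yields $\|W(t)\|_{L_v^\infty H_y^1}^2\lesssim\|W(1)\|_{L_v^\infty H_y^1}^2\lesssim\delta$ uniformly in $t\ge1$; this uniform smallness is what drives the whole argument.

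For part (a) I would conjugate by the linear $y$-flow, $\mathcal W:=e^{-it\triangle_y/2}W$, so that $i\partial_t\mathcal W=\tfrac1tR[\mathcal W,\mathcal W,\mathcal W]+\tfrac1t\mathcal E[\mathcal W,\overline{\mathcal W},\mathcal W]$ as in Proposition \ref{prop:wproperty}(b), and since $D_y^s$ commutes with $e^{it\triangle_y/2}$ it suffices to bound $\|D_y^s\mathcal W(t)\|_{L_v^2L_y^2}$. Testing the $D_y^s\mathcal W$ equation against $\overline{D_y^s\mathcal W}$ and integrating in $v,y$: the resonant contribution is handled by the resonance-form estimate (\ref{eq:h1esti}) (Lemma \ref{resonance form}) in its $H_y^s$ form — put the top-regularity factor in the $L_y^2$ slot and the two remaining ones in $L_v^\infty H_y^1$ — giving a bound $\tfrac1t\|\mathcal W\|_{L_v^\infty H_y^1}^2\|D_y^s\mathcal W\|_{L_v^2L_y^2}^2\lesssim\tfrac\delta t\|D_y^sW(t)\|_{L_{x,y}^2}^2$. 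The nonresonant contribution I would integrate by parts in time via the normal-form identity (\ref{eq:nonres}): the time integral of $\langle D_y^s\tfrac1\sigma\mathcal E,D_y^s\mathcal W\rangle$ becomes a boundary term $\langle D_y^s\mathcal D[\mathcal W,\mathcal W,\mathcal W],D_y^s\mathcal W\rangle\big|_{\sigma=1}^{\sigma=t}$ plus bulk terms carrying either an extra $\sigma^{-1}$ or a factor $\partial_\sigma\mathcal W=O(\sigma^{-1})$ (cf. (\ref{eq:wbound})); each bulk term thus decays like $\sigma^{-2}$ up to the a priori polynomial growth and so is time-integrable, while the boundary term at $t$ carries $t^{-1+C\delta}$ and is absorbed into the left side. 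The Leibniz bookkeeping — making sure that in every trilinear expression only one factor carries the top regularity $s$, the others sitting in $L_v^\infty H_y^1$ (small) or $L_v^\infty H_y^\alpha$ (a priori bounded) — is the same as in Lemma \ref{LemmaD_ycorrect}. Gronwall then gives $\|D_y^sW(t)\|_{L_{x,y}^2}^2\lesssim t^{C\delta}(\ldots)$, and after renaming $C\delta$ as $\delta$ one gets the $L_v^2H_y^s$ part of (a); the $L_v^\infty H_y^\alpha$ part of the sum-norm is propagated by exactly the bootstrap of Proposition \ref{prop:wproperty}(b), which again uses only the $L_v^\infty H_y^1$ smallness, and the two combine since neither argument feeds the high norm back into its own Gronwall coefficient.

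Part (b) is the same computation with $D_v^s$ in place of $D_y^s$. Since $v$ is a parameter, $D_v^s$ carries no commutator with $\triangle_y$ and merely distributes (fractionally) over the cubic term; testing against $\overline{D_v^sW}$ and integrating in $v,y$, the resonant piece is again controlled by (\ref{eq:h1esti}) applied pointwise in $v$ after a fractional Leibniz in $v$, the sub-top $v$-derivative factors being absorbed by interpolation and Bernstein, gaining extra $v$-integrability exactly as in Lemma \ref{LemmaD_ycorrect}; the nonresonant piece is handled by the same time normal form. Here one also records $\|W(t)\|_{L_v^\infty L_y^\infty}\lesssim\|W(t)\|_{L_v^\infty H_y^\alpha}$ since $\alpha>d/2$: when $d=1$ this is $\lesssim\|W(t)\|_{L_v^\infty H_y^1}\lesssim\sqrt\delta$ and is small, while for $d\ge2$ one must not bound the nonlinearity crudely in $L_y^\infty$ but keep the resonant structure as above so that only two $H_y^1$ factors appear.

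The main obstacle is the last part of the previous two paragraphs: carrying the integration-by-parts-in-time at a general regularity level $s$ while keeping, in each trilinear term produced when $D_y^s$ (resp. $D_v^s$) hits $\mathcal D[\mathcal W,\mathcal W,\mathcal W]$ or when $\partial_\sigma$ does, exactly one top-regularity factor and two factors measured where they are small or a priori bounded — a Leibniz/commutator bookkeeping of precisely the type already carried out in Lemma \ref{LemmaD_ycorrect}. Once that is done, every remaining estimate is a direct transcription of bounds established earlier in the paper, and a final Gronwall argument (as in Lemma \ref{lemmagammaasym} and Proposition \ref{prop:wproperty}) closes it.
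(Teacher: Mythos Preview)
Your proposal is correct and matches the paper's own approach: the paper's entire proof of this lemma is the single sentence ``The lemma can be proved by following the same steps in Proposition \ref{prop:wproperty},'' and what you describe --- the $H_y^1$ monotonicity, conjugation by $e^{-it\triangle_y/2}$, the resonant/nonresonant splitting with (\ref{eq:h1esti}) for the former and the normal-form identity (\ref{eq:nonres}) for the latter, then Gronwall --- is precisely that argument carried to the $L_v^2H_y^s$ and $H_v^sL_y^2$ norms. Your identification of the Leibniz bookkeeping (as in Lemma \ref{LemmaD_ycorrect}) as the only nontrivial adaptation is accurate.
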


The lemma can be proved by following the same steps in Proposition
\ref{prop:wproperty}. 

\section{Asymptotic Completeness\label{sec:Asymptotic-Completeness}}

In this section we prove Theorem \ref{theoremasycom}. Suppose that
$W$ is the solution to the equation (\ref{eq:eqtorus}) for $t\in[1,\infty)$,
it suffices to prove existence of a solution $u(t)$ on $[1,\infty)$
satisfying 
\begin{equation}
\left\Vert u(1)\right\Vert _{L_{x}^{2}H_{y}^{s}}+\left\Vert L_{x}u(1)\right\Vert _{L_{x,y}^{2}}\lesssim\epsilon,
\end{equation}
so that $u$ is close to $W$ at $t=\infty$.

Since we need extra regularity in $v$ in order to finish the asymptotic
completeness proof, to start with the proof, from Lemma \ref{lemmaWreg},
we make extra assumptions that the initial data of $W(t)$ satisfies

\[
\left\Vert D_{v}^{1+8\delta}W\left(1\right)\right\Vert _{L_{v,y}^{2}},\left\Vert D_{y}^{s}D_{v}^{8\delta}W\left(1\right)\right\Vert _{L_{v,y}^{2}},\left\Vert D_{y}^{s}W\left(1\right)\right\Vert _{L_{v,y}^{2}}\lesssim M.
\]
Therefore we will have 
\[
\left\Vert W(1)\right\Vert _{L_{v}^{\infty}H_{y}^{1}}^{2}\lesssim M^{2}\lesssim\delta,
\]
and the growth rate for any $t\geq1$ 
\begin{equation}
\left\Vert D_{v}^{1+8\delta}W\left(t\right)\right\Vert _{L_{v,y}^{2}},\left\Vert D_{y}^{s}D_{v}^{8\delta}W\left(t\right)\right\Vert _{L_{v,y}^{2}},\left\Vert D_{y}^{s}W\left(t\right)\right\Vert _{L_{v,y}^{2}}\leq Mt^{\delta},
\end{equation}
where $M,\delta>0$ and $\delta\gg M$. For regularity reasons, instead
of working with the asymptotic profile 
\[
u_{asy}=\frac{1}{\sqrt{t}}e^{i\frac{x^{2}}{2t}}W\left(t,\frac{x}{t},y\right),
\]
we work with the regularized approximate solution 
\[
u_{app}=\frac{1}{\sqrt{t}}e^{i\frac{x^{2}}{2t}}\left(P_{\leq\sqrt{t}}W\right)\left(t,\frac{x}{t},y\right).
\]
Then by Bernstein's inequality we have the bounds
\begin{equation}
\left\Vert u_{asy}-u_{app}\right\Vert _{L_{v,y}^{2}}\lesssim Mt^{-\frac{1}{2}-4\delta},\quad\left\Vert u_{asy}-u_{app}\right\Vert _{L_{v}^{2}H_{y}^{s}}\lesssim Mt^{-4\delta},\quad\left\Vert L_{x}u_{asy}-L_{x}u_{app}\right\Vert _{L_{v,y}^{2}}\lesssim Mt^{-4\delta}.\label{eq:asyappdiff}
\end{equation}
Here we use direct computations showing that 
\begin{equation}
L_{x}u_{app}=\frac{i}{\sqrt{t}}e^{i\frac{x^{2}}{2t}}\left(\partial_{v}W\right)\left(t,\frac{x}{t},y\right),\quad D_{y}^{s}u_{app}=\frac{1}{\sqrt{t}}e^{i\frac{x^{2}}{2t}}\left(P_{\leq\sqrt{t}}D_{y}^{s}W\right)\left(t,\frac{x}{t},y\right),\label{eq:reLxpartialv}
\end{equation}
as well as the inequalities
\[
\left\Vert u_{app}\right\Vert _{L_{x}^{\infty}H_{y}^{\alpha}}\lesssim Mt^{-\frac{1}{2}+\alpha},\quad\left\Vert u_{app}\right\Vert _{L_{x}^{\infty}H_{y}^{1}}\lesssim M.
\]

Thus we have that 
\begin{align*}
 & \left(i\partial_{t}+\frac{1}{2}\partial_{x}^{2}+\frac{1}{2}\triangle_{y}\right)u_{app}\\
= & \left|u_{app}\right|^{2}u_{app}+t^{-\frac{3}{2}}e^{i\frac{x^{2}}{2t}}\mathcal{F}_{\xi}^{-1}\left[-\mathcal{X}\left(\frac{\xi}{\sqrt{t}}\right)\frac{\xi^{2}}{t}-\mathcal{X}^{\prime}\left(\frac{\xi}{\sqrt{t}}\right)\frac{\xi}{2\sqrt{t}}\right]*W\left(t,\frac{x}{t},y\right)\\
 & \quad+t^{-\frac{3}{2}}e^{i\frac{x^{2}}{2t}}P_{\leq\sqrt{t}}\left(|W|^{2}W-\left|P_{\leq\sqrt{t}}W\right|^{2}P_{\leq\sqrt{t}}W\right)\left(t,\frac{x}{t},y\right)+t^{-\frac{3}{2}}e^{i\frac{x^{2}}{2t}}P_{\geq\sqrt{t}}\left(\left|P_{\leq\sqrt{t}}W\right|^{2}P_{\leq\sqrt{t}}W\right)\left(t,\frac{x}{t},y\right)\\
:= & \left|u_{app}\right|^{2}u_{app}+I_{1}^{\prime}+I_{2}^{\prime}+I_{3}^{\prime}.
\end{align*}

To find $u$ solving the cubic NLS equation (\ref{eq main}) and matching
$u_{app}$, let $\tilde{w}=u-u_{app}$ solve the following equation
for $\tilde{w}$: 
\begin{align*}
\left(i\partial_{t}+\frac{1}{2}\partial_{x}^{2}+\frac{1}{2}\triangle_{y}\right)\tilde{w} & =\left|u_{app}+\tilde{w}\right|^{2}\left(u_{app}+\tilde{w}\right)-\left|u_{app}\right|^{2}u_{app}-I_{1}^{\prime}-I_{2}^{\prime}-I_{3}^{\prime}.
\end{align*}
By direct computation
\begin{align*}
\left|u_{app}+\tilde{w}\right|^{2}\left(u_{app}+\tilde{w}\right)-\left|u_{app}\right|^{2}u_{app} & =2\tilde{w}\left|u_{app}\right|^{2}+\overline{\tilde{w}}u_{app}^{2}+\tilde{w}^{2}\overline{u_{app}}+2\left|\tilde{w}\right|^{2}u_{app}+\left|\tilde{w}\right|^{2}\tilde{w}.
\end{align*}
Let 
\[
L_{lin}\left(u_{app},\tilde{w}\right):=2\left|u_{app}\right|^{2}\tilde{w}+u_{app}^{2}\overline{\tilde{w}},
\]
\[
Q_{1}\left(u_{app},\tilde{w}\right):=\tilde{w}^{2}\overline{u_{app}}+2\left|\tilde{w}\right|^{2}u_{app}+\left|\tilde{w}\right|^{2}\tilde{w}.
\]
Hence we need to solve the equation from infinity: 
\begin{equation}
\begin{aligned}\left(i\partial_{t}+\frac{1}{2}\partial_{x}^{2}+\frac{1}{2}\triangle_{y}\right)\tilde{w} & =L_{lin}\left(u_{app},\tilde{w}\right)+Q_{1}\left(u_{app},\tilde{w}\right)-I_{1}^{\prime}-I_{2}^{\prime}-I_{3}^{\prime},\\
 & \tilde{w}(\infty)=0.
\end{aligned}
\label{eq:asymprofile}
\end{equation}
The solution operator for the inhomogeneous Schr\"{o}dinger equation
with zero Cauchy data at infinity is given by 
\[
\Phi f=i\int_{t}^{\infty}U\left(t-s\right)f(s)ds.
\]
Then the equation for $\tilde{w}$ can be written as 
\begin{equation}
\tilde{w}=i\int_{t}^{\infty}U(t-s)\left(L_{lin}\left(u_{app},\tilde{w}\right)+Q_{1}\left(u_{app},w\right)-I_{1}^{\prime}-I_{2}^{\prime}-I_{3}^{\prime}\right)(s)ds.\label{eq:solw}
\end{equation}
We also need the backward solvability for the linearized equation
to solve (\ref{eq:asymprofile}). Hence consider the linearized equation,
where $\tilde{\nu}$ can be either $L_{x}\tilde{w}$ or $D_{y}^{s}\tilde{w}$
with Leibnitz rule correction terms (The correction terms can be estimated
by the same computation in Lemma \ref{LemmaD_ycorrect}.) By direct
computations, we obtain the equation 

\[
\left(i\partial_{t}+\frac{1}{2}\partial_{x}^{2}+\frac{1}{2}\triangle_{y}\right)\tilde{\nu}:=L_{lin}\left(u_{app},\tilde{\nu}\right)+Q_{2}\left(u_{app},\tilde{\nu}\right)+g\left(u_{app},\tilde{w}\right)-\tilde{I_{1}^{\prime}}-\tilde{I_{2}^{\prime}}-\tilde{I_{3}^{\prime}},
\]
where
\[
L_{lin}\left(u_{app},\tilde{\nu}\right):=2\left|u_{app}\right|^{2}\tilde{\nu}-u_{app}^{2}\overline{L\tilde{\nu}},
\]
\[
Q_{2}(u_{app},\tilde{\nu}):=2|\tilde{w}|^{2}\tilde{\nu}-\tilde{w}^{2}\overline{\tilde{\nu}}+2\overline{u_{app}}\tilde{w}\tilde{\nu}+2\overline{\tilde{w}}u_{app}\tilde{\nu}-2\tilde{w}u_{app}\overline{\tilde{\nu}}.
\]
For $\tilde{\nu}=L_{x}\tilde{w}$, defining the following functions:
\[
g\left(u_{app},\tilde{w}\right):=2w\left(L_{x}u_{app}\right)\overline{u_{app}}-2wu_{app}\overline{L_{x}u_{app}}-w^{2}\overline{L_{x}u_{app}}+2\overline{w}u_{app}L_{x}u_{app}+2|w|^{2}L_{x}u_{app},
\]
and 
\[
\tilde{I_{1}^{\prime}}=L_{x}I_{1}^{\prime},\quad\tilde{I_{2}^{\prime}}=L_{x}\tilde{I_{2}^{\prime}},\quad\tilde{I_{3}^{\prime}}=L_{x}\tilde{I_{3}^{\prime}}.
\]
For $\tilde{\nu}=D_{y}^{s}\tilde{w}$, we define $g\left(u_{app},\tilde{w}\right)$
and $\tilde{I_{1}^{\prime}},\tilde{I_{2}^{\prime}},\tilde{I_{3}^{\prime}}$
in the same manner.

The equation for $\tilde{\nu}$ can be written as 
\begin{equation}
\tilde{\nu}=i\int_{t}^{\infty}U(t-s)\left[L_{lin}\left(u_{app},\tilde{\nu}\right)+Q_{2}\left(u_{app},\tilde{\nu}\right)+g\left(u_{app},\tilde{w}\right)-\tilde{I_{1}^{\prime}}-\tilde{I_{2}^{\prime}}-\tilde{I_{3}^{\prime}}\right](s)ds.\label{eq:solLw}
\end{equation}

The solution (\ref{eq:solw}),(\ref{eq:solLw}) will be solved together
through the contraction principle, using the Strichartz bound
\begin{equation}
\left\Vert \Phi f\right\Vert _{L_{t}^{\infty}\left(T,\infty;L_{x,y}^{2}\right)}\lesssim\left\Vert f\right\Vert _{L_{t}^{1}\left(T,\infty;L_{x,y}^{2}\right)}.
\end{equation}
In order to bound $\left\Vert f\right\Vert _{L_{t}^{1}\left(T,\infty;L_{x,y}^{2}\right)}$
we divide $[T,\infty)$ into dyadic subintervals, estimate $\left\Vert f\right\Vert _{L_{t}^{\infty}L_{x,y}^{2}}$in
each interval, and then sum up. Hence we define a function space with
appropriate time decay, and let $\tilde{w}$ be solved in the space
\[
\left\Vert f\right\Vert _{Z}=\sup_{T>1}T^{\frac{1}{2}+\delta}\left[\left\Vert f\right\Vert _{L_{t}^{\infty}\left(T,2T;L_{x,y}^{2}\right)}\right],
\]
and the larger space for $L\tilde{w}$ is 
\[
\left\Vert f\right\Vert _{\tilde{Z}}=\sup_{T>1}T^{\delta}\left[\left\Vert f\right\Vert _{L_{t}^{\infty}\left(T,2T;L_{x,y}^{2}\right)}\right].
\]
Since we are unable to solve $\tilde{w}$ and $\tilde{\nu}$ separately,
here also define a norm $Z^{+}$ to be 
\[
\left\Vert \tilde{w}\right\Vert _{Z^{+}}^{2}:=\left\Vert \tilde{w}\right\Vert _{Z}^{2}+\left\Vert \tilde{\nu}\right\Vert _{\tilde{Z}}^{2}.
\]
In order to solve the equations for $\tilde{w}$ and $\tilde{\nu}$
simultaneously in $Z^{+}$ using the contraction principle we need
to show that

(1) The map
\[
\left(\tilde{w},\tilde{\nu}\right)\rightarrow\left(\Phi\left[L_{lin}\left(u_{app},\tilde{w}\right)+Q_{1}\left(u_{app},\tilde{w}\right)\right],\Phi\left[L_{lin}\left(u_{app},\tilde{\nu}\right)+Q_{2}\left(u_{app},\tilde{\nu}\right)+g\left(u_{app},\tilde{w}\right)\right]\right)
\]
maps $Z^{+}$ into $Z^{+}$with a small Lipschitz constant for $\left(\tilde{w},\tilde{\nu}\right)$
in a ball of radius $CM$ where $1\ll C\ll M.$

(2) The nonlinear term $I_{1}+I_{2}+I_{3}$ satisfies 
\[
\left\Vert \Phi\left(I_{1}+I_{2}+I_{3}\right)\right\Vert _{Z^{+}}\lesssim M.
\]

Then there is a solution $\left(\tilde{w},\tilde{\nu}\right)$ satisfying
\[
\left\Vert \tilde{w}\right\Vert _{Z^{+}}\lesssim M.
\]

\begin{lemma}There are bounds associated with $\tilde{w}$:
\begin{equation}
\left\Vert \Phi L_{lin}\left(u_{app},\tilde{w}\right)\right\Vert _{Z}\lesssim M^{2}\left\Vert \tilde{w}\right\Vert _{Z},\label{eq:zlinbd}
\end{equation}
\begin{equation}
\left\Vert \Phi Q_{1}\left(u_{app},\tilde{w}\right)\right\Vert _{Z}\lesssim\delta^{-1}M\left\Vert \tilde{w}\right\Vert _{Z}^{\frac{7}{6}}\left\Vert \tilde{\nu}\right\Vert _{\tilde{Z}}^{\frac{5}{6}}+\left\Vert \tilde{w}\right\Vert _{Z}^{\frac{4}{3}}\left\Vert \tilde{\nu}\right\Vert _{\tilde{Z}}^{\frac{5}{3}}.\label{eq:zq1bd}
\end{equation}

\end{lemma}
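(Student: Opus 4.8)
Both estimates are instances of one mechanism. Recall that $\Phi$ obeys the Strichartz bound $\|\Phi f\|_{L_t^\infty(T,\infty;L_{x,y}^2)}\lesssim\|f\|_{L_t^1(T,\infty;L_{x,y}^2)}$ stated just above the lemma, so it suffices to control $\|f\|_{L_t^1(T,\infty;L_{x,y}^2)}$ by an appropriate multiple of $T^{-\frac12-\delta}$ (to reproduce the weight in the definition of $Z$). I would decompose $[T,\infty)=\bigcup_{j\ge0}[2^jT,2^{j+1}T)$, bound $\|f(t)\|_{L_{x,y}^2}$ on each block by H\"older, and then sum in $j$. On the block $[2^jT,2^{j+1}T)$ the defining norms give the pointwise-in-time inputs $\|\tilde w(t)\|_{L_{x,y}^2}\le(2^jT)^{-\frac12-\delta}\|\tilde w\|_Z$ and $\|\tilde\nu(t)\|_{L_{x,y}^2}\le(2^jT)^{-\delta}\|\tilde\nu\|_{\tilde Z}$, while $u_{app}$ contributes $\|u_{app}(t)\|_{L_{x,y}^2}\lesssim M$, $\|u_{app}(t)\|_{L_x^\infty H_y^1}\lesssim M$, and (via the $t^{-1/2}$ Fresnel prefactor, the $P_{\le\sqrt t}$ truncation, and Sobolev embedding in $y$) an $L_{x,y}^\infty$-type decay $\|u_{app}(t)\|_{L_{x,y}^\infty}\lesssim Mt^{-1/2}$. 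Integrating over a block gains a factor $\sim2^jT$, turning $L_t^\infty$ into $L_t^1$, after which one sums the geometric series.

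\textbf{The linear term.} For $L_{lin}(u_{app},\tilde w)=2|u_{app}|^2\tilde w+u_{app}^2\overline{\tilde w}$ I would put both $u_{app}$ factors in $L_{x,y}^\infty$ and $\tilde w$ in $L_{x,y}^2$, so that on $[2^jT,2^{j+1}T)$
\[
\|L_{lin}(t)\|_{L_{x,y}^2}\lesssim\|u_{app}(t)\|_{L_{x,y}^\infty}^2\,\|\tilde w(t)\|_{L_{x,y}^2}\lesssim M^2(2^jT)^{-1}(2^jT)^{-\frac12-\delta}\|\tilde w\|_Z .
\]
Integrating over the block and summing over $j$ yields $\|\Phi L_{lin}\|_{L_t^\infty(T,\infty;L_{x,y}^2)}\lesssim M^2T^{-\frac12-\delta}\|\tilde w\|_Z$, that is \ref{eq:zlinbd}; no $\delta^{-1}$ occurs because the block decay $(2^jT)^{-\frac32-\delta}$ is strictly summable with room to spare.

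\textbf{The cubic term.} Split $Q_1=\tilde w^2\overline{u_{app}}+2|\tilde w|^2u_{app}+|\tilde w|^2\tilde w$. For the two mixed terms I would place $u_{app}$ in $L_{x,y}^2$ (bounded by $M$), one factor $\tilde w$ in $L_{x,y}^2$ with its $Z$-decay, and the remaining factor $\tilde w$ in $L_{v,y}^\infty$, the latter controlled by the Gagliardo--Nirenberg inequality \ref{eq:alpha bound},
\[
\|\tilde w(t)\|_{L_v^\infty H_y^\alpha}\lesssim\|\tilde w(t)\|_{L_{x,y}^2}^{1/6}\|\tilde\nu(t)\|_{L_{x,y}^2}^{1/2}\|D_y^s\tilde w(t)\|_{L_{x,y}^2}^{1/3},
\]
which in the $(Z,\tilde Z)$ variables decays like $(2^jT)^{-\frac13-\delta}$ and carries the fractional homogeneity $\|\tilde w\|_Z^{2/3}\|\tilde\nu\|_{\tilde Z}^{1/3}$ per such factor. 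Collecting exponents, the mixed terms have block-wise decay whose exponent, after the $L_t^1$ integration, is only slightly below $-1$; summing this near-critical geometric series is precisely what produces the factor $\delta^{-1}$ and the combination $\delta^{-1}M\|\tilde w\|_Z^{7/6}\|\tilde\nu\|_{\tilde Z}^{5/6}$ in \ref{eq:zq1bd}. For the pure cube $|\tilde w|^2\tilde w$ I would put two factors in $L_{v,y}^\infty$ (again through \ref{eq:alpha bound}) and one in $L_{x,y}^2$, which gives the $\|\tilde w\|_Z^{4/3}\|\tilde\nu\|_{\tilde Z}^{5/3}$ contribution with strictly summable decay and no $\delta^{-1}$.

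\textbf{Main obstacle.} The delicate point is securing the full $t^{-1/2}$ per-factor time decay of $u_{app}$ uniformly in $1\le d\le4$. For $d=1$ this is immediate from $H_y^1\hookrightarrow L_y^\infty$ and the $t^{-1/2}$ prefactor, but for $d\ge2$ one cannot use that embedding, and $\|D_y^sW(t)\|_{L_{v,y}^2}$ is only known to grow (slowly) in $t$; so one must exploit that $u_{app}$ is, up to the Fresnel factor, a frequency-truncated linear-flow profile, and combine the $L_v^\infty H_y^1$ conservation for $W$, the embedding $H_y^1\hookrightarrow L_y^4$ (valid for $d\le4$), and Bernstein in $v$ in a way that costs no positive power of $t$. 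The secondary difficulty is the exponent bookkeeping in the mixed $Q_1$ terms needed to keep the post-integration decay strictly better than $t^{-1}$, which is exactly the place where the $\delta^{-1}$ loss in \ref{eq:zq1bd} is genuinely required and cannot be avoided.
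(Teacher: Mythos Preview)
Your treatment of the cubic term is close in spirit to the paper's, but contains a H\"older slip: placing $u_{app}$ in $L_{x,y}^2$, one $\tilde w$ in $L_{x,y}^2$, and one $\tilde w$ in $L_{x,y}^\infty$ lands the product in $L_{x,y}^1$, not $L_{x,y}^2$. The paper instead puts one $\tilde w$ in $L_{x,y}^2$ and both remaining factors in $L_x^\infty H_y^\alpha\hookrightarrow L_x^\infty L_y^\infty$, using $\|u_{app}\|_{L_x^\infty H_y^\alpha}\lesssim Mt^{-1/2+\delta}$ and the pointwise inequality $\|\tilde w\|_{L_x^\infty H_y^\alpha}\lesssim t^{-1/2}\|\tilde w\|_{L_{x,y}^2}^{1/6}\|L_x\tilde w\|_{L_{x,y}^2}^{1/2}\|D_y^s\tilde w\|_{L_{x,y}^2}^{1/3}$; note the $t^{-1/2}$ prefactor (coming from the $x\leftrightarrow v$ rescaling), which you omitted and which is needed for the exponents to close.

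The genuine gap is in (\ref{eq:zlinbd}). Your argument requires $\|u_{app}(t)\|_{L_{x,y}^\infty}\lesssim Mt^{-1/2}$ with no loss, and you correctly flag this as the obstacle. But for $d\ge2$ one cannot reach $L_y^\infty$ from the conserved $L_v^\infty H_y^1$ norm of $W$; Sobolev forces the $H_y^\alpha$ norm, which only satisfies $\|W(t)\|_{L_v^\infty H_y^\alpha}\lesssim Mt^\delta$, yielding $\|u_{app}\|_{L_{x,y}^\infty}\lesssim Mt^{-1/2+\delta}$. Squaring this gives a $t^{2\delta}$ deficit, and after integrating and taking the $Z$-supremum one finds $\sup_{T\ge1}T^{2\delta}=\infty$, so the estimate does not close. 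Your proposed fix via $H_y^1\hookrightarrow L_y^4$ cannot rescue this, because the third factor $\tilde w$ sits only in $L_y^2$, so no H\"older pairing is available. The paper does not attempt a direct product estimate here at all: it observes that $L_{lin}(u_{app},\tilde w)$ is exactly the right-hand side of the linearized equation (\ref{eq:lin}) with $u$ replaced by $u_{app}$, and then invokes the full energy-estimate machinery of Section~4 (the resonant/nonresonant splitting, the normal-form correction for $e_3$, and the bilinear Strichartz estimate on $\mathbb T^d$ for $e_4$) to obtain $\bigl\|\int_T^{2T}U(-s)L_{lin}(u_{app},\tilde w)(s)\,ds\bigr\|_{L_{x,y}^2}\lesssim M^2\|\tilde w\|_{L_t^\infty(T,2T;L_{x,y}^2)}$, where the $M^2$ comes from the \emph{conserved} bound $\|P_{\le\sqrt t}W\|_{L_v^\infty H_y^1}\lesssim M$. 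The $t^{-1}$ gain here is genuinely a consequence of the resonance structure, not of a pointwise product bound.
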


\begin{proof}

For (\ref{eq:zlinbd}), notice that $L_{lin}\left(u_{app},\tilde{w}\right)$
is nothing but the nonlinear term of the linearized equation (\ref{eq:lin}),
if one replaces $u$ by $u_{app}$ and $L_{x}u$ by $i\tilde{w}$.
Using the same computations as in the energy estimate section, we
have 
\begin{align*}
 & \left\Vert \int_{T}^{2T}U(-s)L_{lin}\left(u_{app},\tilde{w}\right)(s)ds\right\Vert _{L_{t}^{\infty}\left(T,2T;L_{x,y}^{2}\right)}\\
\lesssim & \left\Vert t^{-1}\left\Vert u_{app}\right\Vert _{L_{x}^{\infty}H_{y}^{1}}^{2}\left\Vert \tilde{w}\right\Vert _{L_{x,y}^{2}}\right\Vert _{L_{t}^{1}\left(T,2T\right)}\lesssim M^{2}\left\Vert \tilde{w}\right\Vert _{L_{t}^{\infty}\left(T,2T;L_{x,y}^{2}\right)}.
\end{align*}

We start the proof of (\ref{eq:zq1bd}) by the estimate used in (\ref{eq:alpha bound}),
and get

\begin{align*}
\left\Vert \tilde{w}(t,x,y)\right\Vert _{L_{x}^{\infty}H_{y}^{\alpha}} & \lesssim t^{-\frac{1}{2}}\left\Vert \tilde{w}\right\Vert _{L_{x,y}^{2}}^{\frac{1}{6}}\left\Vert L_{x}\tilde{w}\right\Vert _{L_{x,y}^{2}}^{\frac{1}{2}}\left\Vert D_{y}^{s}\tilde{w}\right\Vert _{L_{x,y}^{2}}^{\frac{1}{6}}.
\end{align*}

The estimate for $Q_{1}\left(u_{app},\tilde{w}\right)$ is straightforward
by applying above inequality. Here we let $\tilde{\nu}$ be either
$L_{x}$ or $D_{y}^{s}$,
\begin{align*}
\left\Vert \left|\tilde{w}\right|^{2}u_{app}\right\Vert _{L_{t}^{1}\left(T,2T;L_{x,y}^{2}\right)} & \lesssim\left\Vert \left\Vert \tilde{w}\right\Vert _{L_{x,y}^{2}}\left\Vert \tilde{w}\right\Vert _{L_{x}^{\infty}H_{y}^{\alpha}}\left\Vert u_{app}\right\Vert _{L_{x}^{\infty}H_{y}^{\alpha}}\right\Vert _{L_{t}^{1}\left(T,2T\right)}\lesssim\left\Vert Mt^{-1+\delta}\left\Vert \tilde{w}\right\Vert _{L_{x,y}^{2}}^{\frac{7}{6}}\left\Vert \tilde{\nu}\right\Vert _{L_{x,y}^{2}}^{\frac{5}{6}}\right\Vert _{L_{t}^{1}(T,2T)}\\
 & \lesssim\delta^{-1}MT^{\delta}\left\Vert \tilde{w}\right\Vert _{L_{t}^{\infty}\left(T,2T;L_{x,y}^{2}\right)}^{\frac{7}{6}}\left\Vert \tilde{\nu}\right\Vert _{L_{t}^{\infty}\left(T,2T;L_{x,y}^{2}\right)}^{\frac{5}{6}}\lesssim\delta^{-1}MT^{-\frac{7}{12}-2\delta}\left\Vert \tilde{w}\right\Vert _{Z}^{\frac{7}{6}}\left\Vert \tilde{\nu}\right\Vert _{\tilde{Z}}^{\frac{5}{6}}.
\end{align*}
\begin{align*}
\left\Vert \left|\tilde{w}\right|^{2}\tilde{w}\right\Vert _{L_{t}^{1}\left(T,2T;L_{x,y}^{2}\right)} & \lesssim\left\Vert \left\Vert \tilde{w}\right\Vert _{L_{x,y}^{2}}\left\Vert \tilde{w}\right\Vert _{L_{x}^{\infty}H_{y}^{\alpha}}^{2}\right\Vert _{L_{t}^{1}\left(T,2T\right)}\lesssim\left\Vert t^{-1}\left\Vert \tilde{w}\right\Vert _{L_{x,y}^{2}}^{\frac{4}{3}}\left\Vert \tilde{\nu}\right\Vert _{L_{x}^{\infty}}^{\frac{5}{3}}\right\Vert _{L_{t}^{1}\left(T,2T\right)}\\
 & \lesssim\left\Vert \tilde{w}\right\Vert _{L_{t}^{\infty}\left(T,2T;L_{x,y}^{2}\right)}^{\frac{4}{3}}\left\Vert \tilde{\nu}\right\Vert _{L_{t}^{\infty}\left(T,2T;L_{x,y}^{2}\right)}^{\frac{5}{3}}\lesssim T^{-\frac{2}{3}-3\delta}\left\Vert \tilde{w}\right\Vert _{Z}^{\frac{4}{3}}\left\Vert \tilde{\nu}\right\Vert _{\tilde{Z}}^{\frac{5}{3}}.
\end{align*}
\end{proof}

\begin{lemma}For $\delta<\frac{1}{24}$, there are the bounds associated
with $\tilde{\nu}$ :
\begin{equation}
\left\Vert \Phi L_{lin}\left(u_{app},\tilde{\nu}\right)\right\Vert _{\tilde{Z}}\lesssim M^{2}\left\Vert \tilde{\nu}\right\Vert _{\tilde{Z}},\label{eq:Lzlinbd}
\end{equation}
\begin{equation}
\left\Vert \Phi Q_{2}(u_{app},\tilde{\nu})\right\Vert _{\tilde{Z}}\lesssim\left\Vert \tilde{w}\right\Vert _{Z}^{\frac{1}{3}}\left\Vert \tilde{\nu}\right\Vert _{\tilde{Z}}^{\frac{8}{3}}+\delta^{-1}M\left\Vert \tilde{w}\right\Vert _{Z}^{\frac{1}{6}}\left\Vert \tilde{\nu}\right\Vert _{\tilde{Z}}^{\frac{11}{6}},\label{eq:LzQ2bd}
\end{equation}
\begin{equation}
\left\Vert \Phi g\left(u_{app},\tilde{w}\right)\right\Vert _{\tilde{Z}}\lesssim\delta^{-1}M^{2}\left\Vert \tilde{w}\right\Vert _{Z}^{\frac{1}{6}}\left\Vert \tilde{\nu}\right\Vert _{\tilde{Z}}^{\frac{5}{6}}+\delta^{-1}M\left\Vert \tilde{w}\right\Vert _{Z}^{\frac{1}{3}}\left\Vert \tilde{\nu}\right\Vert _{\tilde{Z}}^{\frac{5}{3}}.\label{eq:Lzgbd}
\end{equation}

\end{lemma}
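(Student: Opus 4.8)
The plan is to prove all three bounds by a single mechanism: apply the inhomogeneous Strichartz estimate $\left\Vert \Phi f\right\Vert_{L_t^\infty(T,\infty;L_{x,y}^2)}\lesssim\left\Vert f\right\Vert_{L_t^1(T,\infty;L_{x,y}^2)}$ recorded above, decompose $[T,\infty)$ into dyadic intervals $[2^j,2^{j+1})$ with $2^{j_0}\simeq T$, estimate the fixed-time $L_{x,y}^2$ norm of the nonlinearity on each block, and sum the resulting geometric series in $j$. On a block $[2^j,2^{j+1})$ the definitions of the $Z$ and $\tilde Z$ norms give $\left\Vert \tilde w(t)\right\Vert_{L_{x,y}^2}\lesssim 2^{-j(\frac12+\delta)}\left\Vert \tilde w\right\Vert_Z$ and $\left\Vert \tilde\nu(t)\right\Vert_{L_{x,y}^2}\lesssim 2^{-j\delta}\left\Vert \tilde\nu\right\Vert_{\tilde Z}$ (and likewise for $L_x\tilde w$, $D_y^s\tilde w$, which I treat as instances of $\tilde\nu$), while the factors involving $u_{app}$ are controlled through the dispersive bounds for $u_{app}$, $L_xu_{app}$ and $D_y^su_{app}$ from (\ref{eq:reLxpartialv}), the Sobolev embedding $H_y^\alpha\hookrightarrow L_y^\infty$ ($\alpha>d/2$), and the interpolation bound $\left\Vert \tilde w(t)\right\Vert_{L_x^\infty H_y^\alpha}\lesssim t^{-1/2}\left\Vert \tilde w\right\Vert_{L_{x,y}^2}^{1/6}\left\Vert L_x\tilde w\right\Vert_{L_{x,y}^2}^{1/2}\left\Vert D_y^s\tilde w\right\Vert_{L_{x,y}^2}^{1/3}$ from the previous lemma. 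Provided every block contributes a factor $2^{-j\kappa}$ with $\kappa$ strictly larger than the relevant $\tilde Z$- (resp. $Z$-) weight exponent, the $j$-sum converges and produces the stated bound; this summability requirement is what eventually pins down $\delta<\tfrac1{24}$.

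For (\ref{eq:Lzlinbd}), note that $L_{lin}(u_{app},\tilde\nu)$ is exactly the nonlinearity of the linearized equation (\ref{eq:linearized}) with $u$ replaced by $u_{app}$ and $\nu$ by $\tilde\nu$; after conjugating by the linear flow and rescaling $x=tv$ it becomes $t^{-1}[\,2|P_{\le\sqrt t}W|^2V+(P_{\le\sqrt t}W)^2\overline V\,]$, the very object analyzed in the energy-estimate section leading to Proposition \ref{prop. nugrowth}. A crude bound $\left\Vert L_{lin}\right\Vert_{L^2}\lesssim t^{-1}\left\Vert W\right\Vert_{L_v^\infty H_y^\alpha}^2\left\Vert \tilde\nu\right\Vert_{L^2}$ loses a power $t^{C\delta}$ and is not block-summable, so one must split into the $y$-resonant part — estimated by the fixed-time inequality (\ref{eq:h1esti}) using only the conserved $\left\Vert W(t)\right\Vert_{L_v^\infty H_y^1}\lesssim M$, hence a clean $t^{-1}$ — and the $y$-nonresonant part, which is integrated by parts in time as in the normal-form computation (\ref{eq:nonres}) of Proposition \ref{prop:wproperty}(b), gaining an extra power of $t$. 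Summing the dyadic pieces against the $T^\delta$ weight of $\tilde Z$ yields $\lesssim M^2\left\Vert \tilde\nu\right\Vert_{\tilde Z}$.

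For (\ref{eq:LzQ2bd}), each monomial of $Q_2(u_{app},\tilde\nu)$ is either cubic in $(\tilde w,\tilde\nu)$ or quadratic in $(\tilde w,\tilde\nu)$ with one factor of $u_{app}$. In every case I place the two lowest-regularity factors in $L_x^\infty H_y^\alpha$, use $H_y^\alpha\hookrightarrow L_y^\infty$, and expand each such norm by the interpolation bound above, which converts it into a product of an $L_{x,y}^2$-power, an $\left\Vert L_x\cdot\right\Vert_{L_{x,y}^2}$-power and a $\left\Vert D_y^s\cdot\right\Vert_{L_{x,y}^2}$-power and contributes a factor $t^{-1/2}$. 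For the cubic monomials two such factors give $t^{-1}$ overall and, after inserting the $Z/\tilde Z$ weights, a block contribution $\lesssim 2^{-j(\frac23+c\delta)}\left\Vert \tilde w\right\Vert_Z^{1/3}\left\Vert \tilde\nu\right\Vert_{\tilde Z}^{8/3}$; for the mixed monomials the single $u_{app}$ factor costs $Mt^{-1/2+\delta}$, and integrating the remaining $t^{-1}$ against that $t^{\delta}$ loss over a block produces the $\delta^{-1}$ prefactor, giving $\lesssim\delta^{-1}M\,2^{-j(\frac23+c\delta)}\left\Vert \tilde w\right\Vert_Z^{1/6}\left\Vert \tilde\nu\right\Vert_{\tilde Z}^{11/6}$. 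Summing both series gives (\ref{eq:LzQ2bd}).

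The remaining estimate (\ref{eq:Lzgbd}) is the main obstacle. Every monomial of $g(u_{app},\tilde w)$ carries a factor $L_xu_{app}$ (in the $\tilde\nu=L_x\tilde w$ case) or $D_y^su_{app}$ (in the $\tilde\nu=D_y^s\tilde w$ case), and by (\ref{eq:reLxpartialv}) these equal $t^{-1/2}$ times a $\sqrt t$-frequency truncation in $v$ of $\partial_vW$ resp. $D_y^sW$, both of which grow like $t^\delta$ by the standing regularity assumptions on $W(1)$ and Lemma \ref{lemmaWreg}. The efficient organization is to put the two low-regularity factors ($\tilde w,\tilde w$, or $\tilde w,u_{app}$) in $L_x^\infty H_y^\alpha$ via the interpolation bound — producing $t^{-1/2}\cdot t^{-1/2}=t^{-1}$ together with the fractional powers of $\left\Vert \tilde w\right\Vert_Z$ and $\left\Vert \tilde\nu\right\Vert_{\tilde Z}$ — and the vector-field factor $L_xu_{app}$ or $D_y^su_{app}$ in $L_{x,y}^2$, where it has no time decay but only the $t^\delta$ growth. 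The monomials with two $u_{app}$-type factors then contribute the $\delta^{-1}M^2\left\Vert \tilde w\right\Vert_Z^{1/6}\left\Vert \tilde\nu\right\Vert_{\tilde Z}^{5/6}$ term, and those linear in $u_{app}$ (i.e. $|\tilde w|^2L_xu_{app}$ and its analogues) the $\delta^{-1}M\left\Vert \tilde w\right\Vert_Z^{1/3}\left\Vert \tilde\nu\right\Vert_{\tilde Z}^{5/3}$ term. Summing against the $T^\delta$ weight of $\tilde Z$ requires the net decay $t^{-1}\cdot t^{2\delta}$ times the $Z/\tilde Z$ gains to beat $T^{-\delta}$, which works precisely when $\delta<\tfrac1{24}$ — hence the hypothesis of the lemma. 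For $\tilde\nu=D_y^s\tilde w$ one first inserts the Leibniz-rule correction exactly as in Lemma \ref{LemmaD_ycorrect}, after which the same distribution of factors applies verbatim; the only genuinely delicate point throughout is bookkeeping the various $t^\delta$ losses coming from $\partial_vW$ and $D_y^sW$ and verifying that the dispersive $t^{-1/2}$ gains overwhelm them with enough room to run the outer dyadic summation.
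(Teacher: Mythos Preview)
Your proposal is essentially correct and follows the paper's own approach: dyadic decomposition of $[T,\infty)$, the Strichartz bound $\|\Phi f\|_{L_t^\infty L_{x,y}^2}\lesssim\|f\|_{L_t^1 L_{x,y}^2}$, placing two factors in $L_x^\infty H_y^\alpha$ via the interpolation bound and one in $L_{x,y}^2$. Your identification of the constraint $\delta<\tfrac1{24}$ as arising from the monomial $\tilde w\,(L_xu_{app})\,\overline{u_{app}}$ in $g$ (which on a block gives $M^2T^{-\frac1{12}+\delta}\|\tilde w\|_Z^{1/6}\|\tilde\nu\|_{\tilde Z}^{5/6}$ and must beat $T^{-\delta}$ after summation) is exactly what the paper's computation shows; your block exponents for $Q_2$ are slightly off numerically (the paper gets $T^{-\frac16-3\delta}$ and $T^{-\frac1{12}-\delta}$ rather than your $2^{-j(2/3+c\delta)}$), but this does not affect summability.

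One point to sharpen: for (\ref{eq:Lzlinbd}) you invoke the torus-only normal form of Proposition~\ref{prop:wproperty}(b) for the $y$-nonresonant part. That computation uses the pure phase $e^{i\omega t/2}$, whereas after conjugating by the full product-space flow the phase is $\Psi(t)$ of (\ref{eq:refunc}), which contains the $v$-frequency contribution $-\tfrac{1}{2t^2}((\xi-\eta-\kappa)^2+\xi^2)$. Since $\tilde\nu$ carries no $v$-regularity, its $v$-frequency $\xi-\eta-\kappa$ is unrestricted and can land in the almost-resonant region $\Omega_t^2$, where $|\Psi'|\lesssim t^{-3/8}$ and integration by parts in time alone does not gain enough. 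The paper handles (\ref{eq:Lzlinbd}) by pointing back to ``the same computations as in the energy estimate section,'' meaning the full four-part decomposition $e_1+e_2+e_3+e_4$ of Section~4, including the bilinear Strichartz argument for $e_4$. You already note that $L_{lin}(u_{app},\tilde\nu)$ is ``the very object analyzed in the energy-estimate section leading to Proposition~\ref{prop. nugrowth},'' so you have the right reference; just replace the appeal to (\ref{eq:nonres}) by an appeal to that full machinery (noting that $e_1$ vanishes here because $u_{app}$ is already localized to $|\xi|\le\sqrt t$).
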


\begin{proof}

By a similar process as the estimates for the $Z$ norm, we have
\[
\left\Vert |\tilde{w}|^{2}\tilde{\nu}\right\Vert _{L_{t}^{1}\left(T,2T;L_{x,y}^{2}\right)}\lesssim\left\Vert \left\Vert \tilde{w}\right\Vert _{L_{x}^{\infty}H_{y}^{\alpha}}^{2}\left\Vert \tilde{\nu}\right\Vert _{L_{x,y}^{2}}\right\Vert _{L_{t}^{1}\left(T,2T\right)}\lesssim T^{-\frac{1}{6}-3\delta}\left\Vert \tilde{w}\right\Vert _{Z}^{\frac{1}{3}}\left\Vert \tilde{\nu}\right\Vert _{\tilde{Z}}^{\frac{8}{3}},
\]
\begin{align*}
\left\Vert \overline{u_{app}}\tilde{w}\tilde{\nu}\right\Vert _{L_{t}^{1}\left(T,2T;L_{x,y}^{2}\right)} & \lesssim\left\Vert \left\Vert \tilde{w}\right\Vert _{L_{x}^{\infty}H_{y}^{\alpha}}\left\Vert u_{app}\right\Vert _{L_{x}^{\infty}H_{y}^{\alpha}}\left\Vert \tilde{\nu}\right\Vert _{L_{x,y}^{2}}\right\Vert _{L_{t}^{1}\left(T,2T\right)}\lesssim\delta^{-1}MT^{-\frac{1}{12}-\delta}\left\Vert \tilde{w}\right\Vert _{Z}^{\frac{1}{6}}\left\Vert \tilde{\nu}\right\Vert _{\tilde{Z}}^{\frac{11}{6}}.
\end{align*}

Since the operator can either by $D_{y}^{s}$ or $L_{x}$ here, we
compute the $L_{x}$case, and the $D_{y}^{s}$ case follows the same
manner. There are the bounds
\begin{align*}
 & \left\Vert w\left(L_{x}u_{app}\right)\overline{u_{app}}\right\Vert _{L_{t}^{1}\left(T,2T;L_{x,y}^{2}\right)}\lesssim\left\Vert \left\Vert \tilde{w}\right\Vert _{L_{x}^{\infty}H_{y}^{\alpha}}\left\Vert u_{app}\right\Vert _{L_{x}^{\infty}H_{y}^{\alpha}}\left\Vert L_{x}u_{app}\right\Vert _{L_{x,y}^{2}}\right\Vert _{L_{t}^{1}\left(T,2T\right)}\\
\lesssim & \delta^{-1}M^{2}T^{2\delta}\left\Vert \tilde{w}\right\Vert _{L_{t}^{\infty}\left(T,2T;L_{x,y}^{2}\right)}^{\frac{1}{6}}\left\Vert \tilde{\nu}\right\Vert _{L_{t}^{\infty}\left(T,2T;L_{x,y}^{2}\right)}^{\frac{5}{6}}\lesssim\delta^{-1}M^{2}T^{-\frac{1}{12}+\delta}\left\Vert \tilde{w}\right\Vert _{Z}^{\frac{1}{6}}\left\Vert \tilde{\nu}\right\Vert _{\tilde{Z}}^{\frac{5}{6}},
\end{align*}

\begin{align*}
 & \left\Vert w^{2}\overline{L_{x}u_{app}}\right\Vert _{L_{t}^{1}\left(T,2T;L_{x,y}^{2}\right)}\lesssim\left\Vert \left\Vert \tilde{w}\right\Vert _{L_{x}^{\infty}H_{y}^{\alpha}}^{2}\left\Vert L_{x}u_{app}\right\Vert _{L_{x,y}^{2}}\right\Vert _{L_{t}^{1}\left(T,2T\right)}\\
\lesssim & \delta^{-1}MT^{\delta}\left\Vert \tilde{w}\right\Vert _{L_{t}^{\infty}\left(T,2T;L_{x,y}^{2}\right)}^{\frac{1}{3}}\left\Vert \tilde{\nu}\right\Vert _{L_{t}^{\infty}\left(T,2T;L_{x,y}^{2}\right)}^{\frac{5}{3}}\lesssim\delta^{-1}MT^{-\frac{1}{6}-\delta}\left\Vert \tilde{w}\right\Vert _{Z}^{\frac{1}{3}}\left\Vert \tilde{\nu}\right\Vert _{\tilde{Z}}^{\frac{5}{3}}.
\end{align*}
\end{proof}

By (\ref{eq:zlinbd}),(\ref{eq:zq1bd}),(\ref{eq:Lzlinbd}),(\ref{eq:LzQ2bd}),(\ref{eq:Lzgbd}),
and $M\ll\delta$ we obtain the Lipschitz dependence:
\begin{equation}
\begin{aligned} & \left\Vert \Phi\left[L_{lin}\left(u_{app},\tilde{w}_{1}\right)+Q_{1}\left(u_{app},\tilde{w}_{1}\right)-L_{lin}\left(u_{app},\tilde{w}_{2}\right)-Q_{1}\left(u_{app},\tilde{w}_{2}\right)\right]\right\Vert _{Z}\\
\lesssim & \left(M^{2}+\left\Vert \tilde{w}_{1}\right\Vert _{Z^{+}}^{2}+\left\Vert \tilde{w}_{2}\right\Vert _{Z^{+}}^{2}\right)\left\Vert \tilde{w}_{1}-\tilde{w}_{2}\right\Vert _{Z^{+}},
\end{aligned}
\label{eq:Lipzbd}
\end{equation}
\begin{equation}
\begin{aligned} & \left\Vert \Phi\left[L_{lin}\left(u_{app},\tilde{\nu}_{1}\right)+Q_{2}\left(u_{app},\tilde{w}_{1}\right)+g\left(u_{app},\tilde{w}_{1}\right)-L_{lin}\left(u_{app},\tilde{\nu}_{2}\right)-Q_{2}\left(u_{app},\tilde{w}_{2}\right)-g\left(u_{app},\tilde{w}_{2}\right)\right]\right\Vert _{\tilde{Z}}\\
\lesssim & \left(M^{2}+\left\Vert \tilde{w}_{1}\right\Vert _{Z^{+}}^{2}+\left\Vert \tilde{w}_{2}\right\Vert _{Z^{+}}^{2}\right)\left\Vert \tilde{w}_{1}-\tilde{w}_{2}\right\Vert _{Z^{+}}.
\end{aligned}
\label{eq:LipLz}
\end{equation}

\begin{lemma} \label{lemmaIzbd} We have estimates for $I_{1}^{\prime}$,
$I_{2}^{\prime}$ ,$I_{3}^{\prime}$:
\begin{equation}
\left\Vert \Phi I_{1}^{\prime}\right\Vert _{Z}\lesssim M,\quad\left\Vert \Phi I_{2}^{\prime}\right\Vert _{Z},\left\Vert \Phi I_{3}^{\prime}\right\Vert _{Z}\lesssim M^{3}.\label{eq:ZIbound}
\end{equation}
\begin{equation}
\left\Vert \Phi L_{x}I_{1}^{\prime}\right\Vert _{\tilde{Z}}\lesssim M,\quad\left\Vert \Phi L_{x}I_{2}^{\prime}\right\Vert _{\tilde{Z}},\left\Vert \Phi L_{x}I_{3}^{\prime}\right\Vert _{\tilde{Z}}\lesssim M^{3}.
\end{equation}
\begin{equation}
\left\Vert \Phi D_{y}^{s}I_{1}^{\prime}\right\Vert _{\tilde{Z}}\lesssim M,\quad\left\Vert \Phi D_{y}^{s}I_{2}^{\prime}\right\Vert _{\tilde{Z}},\left\Vert \Phi D_{y}^{s}I_{3}^{\prime}\right\Vert _{\tilde{Z}}\lesssim M^{3}.
\end{equation}

\end{lemma}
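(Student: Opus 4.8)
The plan is to reduce all nine bounds to pointwise‑in‑time estimates on $\|I_j'(t)\|_{L^2_{x,y}}$, $\|L_xI_j'(t)\|_{L^2_{x,y}}$ and $\|D_y^sI_j'(t)\|_{L^2_{x,y}}$. Since $\Phi$ is the backward Duhamel operator, splitting $[T,\infty)$ into dyadic subintervals and using the Strichartz bound $\|\Phi f\|_{L^\infty_t(T,\infty;L^2_{x,y})}\lesssim\|f\|_{L^1_t(T,\infty;L^2_{x,y})}$ shows that a pointwise bound $\|I_j'(t)\|_{L^2_{x,y}}\lesssim At^{-1-\rho}$ with $\rho>0$ yields $\|\Phi I_j'\|_{L^\infty_t(T,\infty;L^2_{x,y})}\lesssim AT^{-\rho}$; multiplying by the $Z$‑ or $\tilde Z$‑weight then gives the claim, provided the residual power of $T$ is $\le 0$, which holds because $\delta$ is small and $M\ll\delta$. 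Throughout I pass to $(v,y)=(x/t,y)$, where $\|t^{-1/2}e^{ix^2/2t}G(x/t,y)\|_{L^2_{x,y}}=\|G\|_{L^2_{v,y}}$, use the Fourier‑side description of $I_1'$ and the decomposition of $|W|^2W-|P_{\le\sqrt t}W|^2P_{\le\sqrt t}W$ into trilinear terms each carrying a $P_{\ge\sqrt t}W$ factor, and exploit the bounds $\|W(t)\|_{L^2_{v,y}}\lesssim M$, $\|D_v^{1+8\delta}W(t)\|_{L^2_{v,y}}+\|D_y^sD_v^{8\delta}W(t)\|_{L^2_{v,y}}+\|D_y^sW(t)\|_{L^2_{v,y}}\lesssim Mt^\delta$, together with the interpolation/Bernstein inequality of (\ref{eq:alpha bound}), which gives $\|W(t)\|_{L^\infty_vH^\alpha_y}\lesssim Mt^{5\delta/6}$.

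For $I_1'$, in the $(v,y)$ variables and by Plancherel in $v$, $\|I_1'(t)\|_{L^2_{x,y}}=t^{-1}\bigl\|[\mathcal{X}(\xi/\sqrt t)\xi^2/t+\mathcal{X}'(\xi/\sqrt t)\xi/(2\sqrt t)]\widehat W(t)\bigr\|_{L^2_{\xi,y}}$. On the support of these symbols $|\xi|\lesssim\sqrt t$, and $\xi^2/t\lesssim|\xi|^{1+8\delta}\,|\xi|^{1-8\delta}/t\lesssim|\xi|^{1+8\delta}t^{-1/2-4\delta}$ (likewise for the $\mathcal{X}'$ term, where $|\xi|\sim\sqrt t$), which trades the symbol for $t^{-1/2-4\delta}$ times $D_v^{1+8\delta}$. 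Hence $\|I_1'(t)\|_{L^2_{x,y}}\lesssim t^{-1}\cdot t^{-1/2-4\delta}\|D_v^{1+8\delta}W(t)\|_{L^2_{v,y}}\lesssim Mt^{-3/2-3\delta}$ and $\|\Phi I_1'\|_Z\lesssim M$. The operator $L_x$ turns a profile $t^{-1/2}e^{ix^2/2t}F(x/t,y)$ into $it^{-1/2}e^{ix^2/2t}(\partial_vF)(x/t,y)$ (the identity behind (\ref{eq:reLxpartialv})), so $L_xI_1'$ is the same expression with one extra $i\xi$ on the symbol, costing at most $|\xi|\lesssim\sqrt t$: $\|L_xI_1'(t)\|_{L^2_{x,y}}\lesssim Mt^{-1-3\delta}$, whence $\|\Phi L_xI_1'\|_{\tilde Z}\lesssim M$. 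Since $D_y^s$ commutes with all operators here, the same argument with $D_v^{8\delta}D_y^sW$ in place of $D_v^{1+8\delta}W$ (using $\xi^2/t\lesssim|\xi|^{8\delta}t^{-1/2-4\delta}$ on the support) gives $\|D_y^sI_1'(t)\|_{L^2_{x,y}}\lesssim Mt^{-3/2-3\delta}$ and $\|\Phi D_y^sI_1'\|_{\tilde Z}\lesssim M$.

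For $I_2'$ and $I_3'$ the gain is cubic. In $I_2'$, each term of $|W|^2W-|\gamma|^2\gamma$ (with $\gamma:=P_{\le\sqrt t}W$) carries a $P_{\ge\sqrt t}W$ factor, and $\|P_{\ge\sqrt t}W(t)\|_{L^2_{v,y}}\lesssim t^{-1/2-4\delta}\|D_v^{1+8\delta}W(t)\|_{L^2_{v,y}}\lesssim Mt^{-1/2-3\delta}$; placing the other two factors in $L^\infty_vL^\infty_y$ via the Sobolev embedding $H^\alpha_y\subset L^\infty_y$ ($\alpha>d/2$) and discarding the harmless $P_{\le\sqrt t}$ gives $\|I_2'(t)\|_{L^2_{x,y}}\lesssim t^{-1}\|P_{\ge\sqrt t}W\|_{L^2_{v,y}}\|W\|_{L^\infty_vH^\alpha_y}^2\lesssim M^3t^{-3/2-4\delta/3}$, so $\|\Phi I_2'\|_Z\lesssim M^3$. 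For $I_3'=t^{-1}\cdot t^{-1/2}e^{ix^2/2t}P_{\ge\sqrt t}(|\gamma|^2\gamma)(x/t,y)$ one uses almost‑orthogonality: each $\gamma$ has $v$‑frequency $\lesssim\sqrt t$, so the output of $P_{\ge\sqrt t}$ forces at least one input of $v$‑frequency $\gtrsim\sqrt t/c$, hence $\|P_{\ge\sqrt t}(|\gamma|^2\gamma)\|_{L^2_{v,y}}\lesssim\|P_{\gtrsim\sqrt t}\gamma\|_{L^2_{v,y}}\|\gamma\|_{L^\infty_vH^\alpha_y}^2\lesssim Mt^{-1/2-3\delta}\cdot M^2t^{5\delta/3}$, and again $\|\Phi I_3'\|_Z\lesssim M^3$. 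The $L_x$ versions are the same with one $\partial_v$ distributed over the product; when $\partial_v$ lands on a $P_{\ge\sqrt t}$ factor it still gains $t^{-4\delta}$ (from $\|\partial_vP_{\ge\sqrt t}W\|_{L^2_{v,y}}\lesssim t^{-4\delta}\|D_v^{1+8\delta}W\|_{L^2_{v,y}}$), producing $\|L_xI_{2}'(t)\|_{L^2_{x,y}},\|L_xI_3'(t)\|_{L^2_{x,y}}\lesssim M^3t^{-1-4\delta/3}$ and thus the $\tilde Z$‑bound $\lesssim M^3$. For the $D_y^s$ versions one distributes $D_y^s$ by the fractional Leibniz/commutator decomposition exactly as in Lemma \ref{LemmaD_ycorrect} (and Lemma \ref{lemma:correction}): the factor not carrying $D_y^s$‑weight is the one supplying the $P_{\ge\sqrt t}$ (equivalently the $P_{\gtrsim\sqrt t}\gamma$) gain $t^{-4\delta}$, while the remaining factors are controlled in the mixed Lebesgue norms by Sobolev embedding and interpolation, again giving $\lesssim M^3$ in $\tilde Z$.

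The main obstacle is precisely this last family — the $D_y^s$ estimates for $I_2'$ and $I_3'$ — where one must run two mechanisms simultaneously: the fractional Leibniz expansion in $y$ (so no single factor carries more than $\max\{s-1,1\}$ $y$‑derivatives) and the $v$‑frequency‑truncation gain (so that $P_{\ge\sqrt t}$ always yields a genuine power $t^{-\delta}$). One has to verify, term by term in the Leibniz expansion, that the $v$‑frequency gain can be placed on a factor whose available norm — $\|D_y^sD_v^{8\delta}W\|_{L^2_{v,y}}$ or $\|D_v^{1+8\delta}W\|_{L^2_{v,y}}$ — actually controls it, and then bookkeep the small $t^\delta$ powers so that every dyadic sum converges with residual $T$‑power $\le 0$. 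Everything else reduces to the change of variables and the symbol‑versus‑regularity trade already set up for $I_1'$.
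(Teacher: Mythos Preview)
Your approach is essentially the paper's, and the architecture is sound; there is just one concrete arithmetic slip. For $D_y^sI_1'$ you assert $\xi^2/t\lesssim|\xi|^{8\delta}t^{-1/2-4\delta}$ on the support $|\xi|\lesssim\sqrt t$, but in fact $|\xi|^{2-8\delta}/t\le(\sqrt t)^{2-8\delta}/t=t^{-4\delta}$, so the correct trade is $\xi^2/t\lesssim|\xi|^{8\delta}t^{-4\delta}$. This gives only $\|D_y^sI_1'(t)\|_{L^2_{x,y}}\lesssim Mt^{-1-3\delta}$, not $Mt^{-3/2-3\delta}$. The weaker pointwise rate still integrates to $MT^{-3\delta}$ on dyadic blocks and hence yields the $\tilde Z$ bound (exactly as for your $L_xI_1'$ estimate), so the conclusion survives; the paper handles $L_xI_1'$ and $D_y^sI_1'$ by splitting $W=P_{\ll\sqrt t}W+P_{\approx\sqrt t}W$ and estimating the $L^1_t(T,2T)$ norm directly, arriving at the same $MT^{-3\delta}$.

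A second, smaller point concerns $L_xI_2'$. The paper does \emph{not} distribute $\partial_v$ over the cubic expression; doing so produces terms like $(\partial_vW)\,\overline{W}\,P_{\ge\sqrt t}W$ for which you would need $\|\partial_vW\|_{L^\infty_vH^\alpha_y}$, a norm not available from the hypotheses. Instead the paper simply uses that $\partial_vP_{\le\sqrt t}$ is an $L^2$-bounded operator of norm $\lesssim\sqrt t$, together with the already established bound $\bigl\||W|^2W-|P_{\le\sqrt t}W|^2P_{\le\sqrt t}W\bigr\|_{L^2_{v,y}}\lesssim\|W\|_{L^\infty_vH^\alpha_y}^2\|P_{\ge\sqrt t}W\|_{L^2_{v,y}}$, to get $\|L_xI_2'(t)\|_{L^2_{x,y}}\lesssim M^3t^{-1-\delta}$ directly. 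Your claimed rate is compatible with this, but the ``distribute $\partial_v$'' justification, as written, is incomplete for the terms where $\partial_v$ does not land on the high-frequency factor.
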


\begin{proof}

By Bernstein's inequality, we have the following inequality for any
$t\geq1$,

\begin{align*}
\left\Vert I_{1}^{\prime}(t)\right\Vert _{L_{x,y}^{2}} & \lesssim t^{-\frac{3}{2}-4\delta}\left\Vert D_{v}^{1+8\delta}W\right\Vert _{L_{v,y}^{2}}\lesssim Mt^{-\frac{3}{2}-3\delta},
\end{align*}
\begin{align*}
\left\Vert I_{2}^{\prime}\left(t\right)\right\Vert _{L_{x,y}^{2}} & \lesssim t^{-1}\left\Vert |W|^{2}W-\left|P_{\leq\sqrt{t}}W\right|^{2}P_{\leq\sqrt{t}}W\right\Vert _{L_{v,y}^{2}}\lesssim t^{-1}\left\Vert W\right\Vert _{L_{v}^{\infty}H_{y}^{\alpha}}^{2}\left\Vert P_{\geq\sqrt{t}}W\right\Vert _{L_{v,y}^{2}}\lesssim M^{3}t^{-\frac{3}{2}-\delta},
\end{align*}
\[
\left\Vert I_{3}^{\prime}\left(t\right)\right\Vert _{L_{x,y}^{2}}\lesssim t^{-1}\left\Vert P_{\geq\sqrt{t}}\left(\left|P_{\leq\sqrt{t}}W\right|^{2}P_{\leq\sqrt{t}}W\right)\right\Vert _{L_{v,y}^{2}}\lesssim t^{-\frac{3}{2}-4\delta}\left\Vert W\right\Vert _{L_{v}^{\infty}H_{y}^{\alpha}}^{2}\left\Vert D_{v}^{1+8\delta}W\right\Vert _{L_{v,y}^{2}}\lesssim M^{3}t^{-\frac{3}{2}-\delta}.
\]
Then integrating the above inequalities with respect to $t$ in $[T,2T]$,
(\ref{eq:ZIbound}) is straightforward.

By (\ref{eq:reLxpartialv}) and letting the projection operator $\tilde{P}=\mathcal{F}_{\xi}^{-1}\left[-\mathcal{X}\left(\frac{\xi}{\sqrt{t}}\right)\frac{\xi^{2}}{t}-\mathcal{X}^{\prime}\left(\frac{\xi}{\sqrt{t}}\right)\frac{\xi}{2\sqrt{t}}\right]$
we separate $L_{x}I_{1}^{\prime}$ into two parts: 
\begin{align*}
L_{x}I_{1}^{\prime}= & it^{-\frac{3}{2}}e^{i\frac{x^{2}}{2t}}\tilde{P}\left(\partial_{v}P_{\ll\sqrt{t}}W\right)\left(t,\frac{x}{t},y\right)+it^{-\frac{3}{2}}e^{i\frac{x^{2}}{2t}}\tilde{P}\left(\partial_{v}P_{\approx\sqrt{t}}W\right)\left(t,\frac{x}{t},y\right),
\end{align*}
The first part is very small due to the fact that $\tilde{P}P_{\ll\sqrt{t}}\approx0$,
hence it suffices only to do the estimate for the second part. The
second part we integrate with respect to $t$ first and get

\begin{align*}
\left\Vert t^{-\frac{3}{2}}e^{i\frac{x^{2}}{2t}}\tilde{P}\left(\partial_{v}P_{\approx\sqrt{t}}W\right)\left(t,\frac{x}{t},y\right)\right\Vert _{L_{t}^{1}\left(T,2T;L_{x,y}^{2}\right)} & \lesssim\left\Vert \tilde{P}\partial_{v}P_{\approx\sqrt{t}}W\right\Vert _{L_{t}^{\infty}\left(T,2T;L_{v,y}^{2}\right)}\\
 & \lesssim T^{-4\delta}\left\Vert D_{v}^{1+8\delta}W\right\Vert _{L_{t}^{\infty}\left(T,2T;L_{v,y}^{2}\right)}\lesssim MT^{-3\delta}.
\end{align*}
Similar estimates apply for $D_{y}^{s}I_{1}^{\prime}$.

By Bernstein's inequality,
\begin{align*}
\left\Vert L_{x}I_{2}^{\prime}\left(t\right)\right\Vert _{L_{t}^{1}\left(T,2T;L_{x,y}^{2}\right)} & \lesssim\left\Vert \partial_{v}P_{\leq\sqrt{t}}\left(|W|^{2}W-\left|P_{\leq\sqrt{t}}W\right|^{2}P_{\leq\sqrt{t}}W\right)\right\Vert _{L_{t}^{\infty}\left(T,2T;L_{v,y}^{2}\right)}\\
 & \lesssim T^{\frac{1}{2}}\left\Vert W\right\Vert _{L_{t}^{\infty}\left(T,2T;L_{v,}^{\infty}H_{y}^{\alpha}\right)}^{2}\left\Vert P_{\geq\sqrt{t}}W\right\Vert _{L_{t}^{\infty}\left(T,2T;L_{v,y}^{2}\right)}\lesssim M^{3}T^{-\delta},
\end{align*}
\begin{align*}
\left\Vert L_{x}I_{3}^{\prime}\left(t\right)\right\Vert _{L_{t}^{1}\left(T,2T;L_{x,y}^{2}\right)} & \lesssim\left\Vert P_{\geq\sqrt{t}}\partial_{v}\left(\left|P_{\leq\sqrt{t}}W\right|^{2}P_{\leq\sqrt{t}}W\right)\right\Vert _{L_{t}^{\infty}\left(T,2T;L_{v,y}^{2}\right)}\\
 & \lesssim T^{-4\delta}\left\Vert D_{v}^{1+8\delta}\left(\left|P_{\leq\sqrt{t}}W\right|^{2}P_{\leq\sqrt{t}}W\right)\right\Vert _{L_{t}^{\infty}\left(T,2T;L_{v,y}^{2}\right)}\lesssim M^{3}T^{-\delta}.
\end{align*}
Applying the inequality 
\begin{align*}
 & \left\Vert P_{\geq\sqrt{t}}W\right\Vert _{L_{t}^{\infty}\left(T,2T;L_{v}^{\infty}H_{y}^{\alpha}\right)}\\
\lesssim & \left\Vert P_{\geq\sqrt{t}}W\right\Vert _{L_{t}^{\infty}\left(T,2T;L_{v,y}^{2}\right)}^{\frac{1}{6}}\left\Vert D_{y}^{s}P_{\geq\sqrt{t}}W\right\Vert _{L_{t}^{\infty}\left(T,2T;L_{v,y}^{2}\right)}^{\frac{1}{3}}\left\Vert \partial_{v}P_{\geq\sqrt{t}}W\right\Vert _{L_{t}^{\infty}\left(T,2T;L_{v,y}^{2}\right)}^{\frac{1}{2}}\lesssim M^{3}T^{-\frac{1}{12}-\delta},
\end{align*}
we will have
\begin{align*}
 & \left\Vert D_{y}^{s}I_{2}^{\prime}\left(t\right)\right\Vert _{L_{t}^{1}\left(T,2T;L_{x,y}^{2}\right)}\lesssim\left\Vert D_{y}^{s}\left(|W|^{2}W-\left|P_{\leq\sqrt{t}}W\right|^{2}P_{\leq\sqrt{t}}W\right)\right\Vert _{L_{t}^{\infty}\left(T,2T;L_{v,y}^{2}\right)}\\
\lesssim & \left\Vert \left(D_{y}^{s}W\right)W\left(P_{\geq\sqrt{t}}W\right)\right\Vert _{L_{t}^{\infty}\left(T,2T;L_{v,y}^{2}\right)}+\left\Vert W^{2}\left(D_{y}^{s}P_{\geq\sqrt{t}}W\right)\right\Vert _{L_{t}^{\infty}\left(T,2T;L_{v,y}^{2}\right)}\\
\lesssim & \left\Vert W\right\Vert _{L_{t}^{\infty}\left(T,2T;L_{v,}^{\infty}H_{y}^{\alpha}\right)}\left\Vert P_{\geq\sqrt{t}}W\right\Vert _{L_{t}^{\infty}\left(T,2T;L_{v,}^{\infty}H_{y}^{\alpha}\right)}\left\Vert D_{y}^{s}W\right\Vert _{L_{t}^{\infty}\left(T,2T;L_{v,y}^{2}\right)}\\
 & \quad+\left\Vert W\right\Vert _{L_{t}^{\infty}\left(T,2T;L_{v,}^{\infty}H_{y}^{\alpha}\right)}^{2}\left\Vert D_{y}^{s}P_{\geq\sqrt{t}}W\right\Vert _{L_{t}^{\infty}\left(T,2T;L_{v,y}^{2}\right)}\lesssim M^{3}T^{-\delta}.
\end{align*}
\begin{align*}
\left\Vert D_{y}^{s}I_{3}^{\prime}\left(t\right)\right\Vert _{L_{t}^{1}\left(T,2T;L_{x,y}^{2}\right)} & \lesssim\left\Vert P_{\geq\sqrt{t}}D_{y}^{s}\left(\left|P_{\leq\sqrt{t}}W\right|^{2}P_{\leq\sqrt{t}}W\right)\right\Vert _{L_{t}^{\infty}\left(T,2T;L_{v,y}^{2}\right)}\\
 & \lesssim T^{-4\delta}\left\Vert D_{y}^{s}D_{v}^{8\delta}\left(\left|P_{\leq\sqrt{t}}W\right|^{2}P_{\leq\sqrt{t}}W\right)\right\Vert _{L_{t}^{\infty}\left(T,2T;L_{v,y}^{2}\right)}\lesssim M^{3}T^{-\delta}.
\end{align*}
\end{proof}

Combining Lemma \ref{lemmaIzbd} with (\ref{eq:Lipzbd}), (\ref{eq:LipLz}),
we have
\begin{align*}
\left\Vert \tilde{w}\right\Vert _{Z^{+}} & \lesssim M+M^{3}+\left(M^{2}+\left\Vert \tilde{w}\right\Vert _{Z^{+}}^{2}\right)\left\Vert \tilde{w}\right\Vert _{Z^{+}},
\end{align*}
 hence for $M$ small enough there is the desired property $\left\Vert \tilde{w}\right\Vert _{Z^{+}}\lesssim M,$
and $u=u_{app}+\tilde{w}$ is a solution to (\ref{eq main}). Recalling
(\ref{eq:asyappdiff}), $u_{asy}$ tends to $u$ in the following
sense that
\[
\left\Vert u_{asy}-u\right\Vert _{L_{v,y}^{2}}\lesssim Mt^{-\frac{1}{2}-\delta},\quad\left\Vert u_{asy}-u\right\Vert _{L_{v}^{2}H_{y}^{s}}\lesssim Mt^{-\delta},\quad\left\Vert L_{x}u_{asy}-L_{x}u\right\Vert _{L_{v,y}^{2}}\lesssim Mt^{-\delta}.
\]

\subsection*{Acknowledgments.}

The author would like to thank her advisor Daniel Tataru for suggesting
the problem and several key suggestions for the proof. She would also
like to thank Mihaela Ifrim and Jason Murphy for a number of helpful
discussions.

\section{Additional Estimates}

The proof is given in \cite{HPTV} Lemma 7.1 and Lemma 7.4: see also
\cite{B1,HTT2,B2}. The definiton of $\Gamma_{0}$ is given in (\ref{def:gamma_0}).

\begin{lemma}\label{resonance form} Let $R$ be defined as 
\begin{equation}
R\left[a^{1}(\mathbf{k}_{1}),a^{2}(\mathbf{k}_{2}),a^{3}(\mathbf{k}_{3})\right](\mathbf{k}_{4}):=\sum_{\Gamma_{0}\left(\mathbf{k}_{4}\right)}a^{1}(\mathbf{k}_{1})\overline{a^{2}(\mathbf{k}_{2})}a^{3}(\mathbf{k}_{3}).
\end{equation}
For every sequences $a^{1}(\mathbf{k}),a^{2}(\mathbf{k}),a^{3}(\mathbf{k})$
indexed by $\mathbb{Z}^{d},d\leq4$, we have
\begin{equation}
\left\Vert R\left[a^{1},a^{2},a^{3}\right]\right\Vert _{l_{\mathbf{k}}^{2}}\lesssim_{d}\min_{\tau\in\mathfrak{S}(3)}\left\Vert a^{\tau(1)}\right\Vert _{l_{\mathbf{k}}^{2}}\left\Vert a^{\tau(2)}\right\Vert _{h_{\mathbf{k}}^{1}}\left\Vert a^{\tau(3)}\right\Vert _{h_{\mathbf{k}}^{1}}.\label{eq:h1esti}
\end{equation}
For any $s>0$, there is the inequality 
\[
\left\Vert R\left[a^{1},a^{2},a^{3}\right]\right\Vert _{h_{\mathbf{k}}^{s}}\lesssim_{d}\max_{\tau\in\mathfrak{S}(3)}\left\Vert a^{\tau(1)}\right\Vert _{h_{\mathbf{k}}^{s}}\left\Vert a^{\tau(2)}\right\Vert _{h_{\mathbf{k}}^{1}}\left\Vert a^{\tau(3)}\right\Vert _{h_{\mathbf{k}}^{1}},
\]
and if $a^{i}=a^{i}(v,\mathbf{k})$ for $i=1,2,3$, there is also
the inequality
\[
\left\Vert D_{v}^{s}R\left[a^{1},a^{2},a^{3}\right]\right\Vert _{L_{v}^{2}l_{\mathbf{k}}^{2}}\lesssim_{d}\max_{\tau\in\mathfrak{S}(3)}\left\Vert D_{v}^{s}a^{\tau(1)}\right\Vert _{L_{v}^{2}l_{\mathbf{k}}^{2}}\left\Vert a^{\tau(2)}\right\Vert _{L_{v}^{\infty}h_{\mathbf{k}}^{1}}\left\Vert a^{\tau(3)}\right\Vert _{L_{v}^{\infty}h_{\mathbf{k}}^{1}}.
\]

\end{lemma}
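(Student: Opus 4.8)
The plan is to reduce the weighted estimates to the basic $l^2$ bound \eqref{eq:h1esti} and to isolate the one genuinely arithmetic ingredient. For \eqref{eq:h1esti} itself, I would argue by duality: testing $R[a^1,a^2,a^3]$ against a sequence $a^4\in l^2_{\mathbf{k}}$ produces the quadrilinear sum over all $(\mathbf{k}_1,\mathbf{k}_2,\mathbf{k}_3,\mathbf{k}_4)\in\mathbb{Z}^{4d}$ with $\mathbf{k}_1-\mathbf{k}_2+\mathbf{k}_3-\mathbf{k}_4=0$ and $|\mathbf{k}_1|^2-|\mathbf{k}_2|^2+|\mathbf{k}_3|^2-|\mathbf{k}_4|^2=0$. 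Writing the second constraint as $(\mathbf{k}_1-\mathbf{k}_2)\cdot(\mathbf{k}_1+\mathbf{k}_2-\mathbf{k}_3-\mathbf{k}_4)=0$ and using $\mathbf{k}_2-\mathbf{k}_3=\mathbf{k}_1-\mathbf{k}_4$ one gets $(\mathbf{k}_1-\mathbf{k}_2)\perp(\mathbf{k}_1-\mathbf{k}_4)$, so the four frequencies form a (possibly degenerate) rectangle with $\mathbf{k}_1,\mathbf{k}_3$ and $\mathbf{k}_2,\mathbf{k}_4$ the two pairs of opposite vertices; setting $\mathbf{n}=\mathbf{k}_1-\mathbf{k}_2=\mathbf{k}_4-\mathbf{k}_3$, the resonant set is parametrized by $\mathbf{n}$ together with the position of $\mathbf{k}_2$ on the hyperplane $\{\mathbf{n}\cdot(\mathbf{k}_2+\mathbf{n}-\mathbf{k}_4)=0\}$.

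In dimension $d=1$ this hyperplane is a single point when $\mathbf{n}\neq0$, so the resonant set collapses to $\{\mathbf{k}_1=\mathbf{k}_2,\ \mathbf{k}_3=\mathbf{k}_4\}\cup\{\mathbf{k}_1=\mathbf{k}_4,\ \mathbf{k}_3=\mathbf{k}_2\}$, and \eqref{eq:h1esti} follows at once from Cauchy--Schwarz with no derivative gain needed (this is the source of the simplification flagged for $d=1$). For $2\le d\le4$ this is the heart of the matter: after a dyadic decomposition of each $a^i$ to shells $|\mathbf{k}_i|\sim N_i$ (the two smallest $N_i$ being comparable by momentum conservation), one freezes the edge vector $\mathbf{n}$, applies Cauchy--Schwarz in $\mathbf{k}_2$ along the corresponding hyperplane, and is left with a lattice-point counting problem for spheres (equivalently, for the number of rectangles with a prescribed vertex and diagonal) which in dimensions up to four is controlled after summation over $\mathbf{n}$ using the $\langle\mathbf{k}\rangle$-weights supplied by the $h^1$-norms on the two non-extremal inputs. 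This counting step, in which $d=4$ is borderline and the orthogonality is used in full, is precisely Lemma 7.1 of \cite{HPTV}; see also \cite{B1,HTT2,B2}. I would reproduce the reduction to the rectangle structure and then quote the counting bound rather than redo it.

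The two remaining inequalities follow from \eqref{eq:h1esti} by a multilinear Leibniz argument. On the resonant set one has $\langle\mathbf{k}_4\rangle\lesssim\langle\mathbf{k}_1\rangle+\langle\mathbf{k}_2\rangle+\langle\mathbf{k}_3\rangle$, so $\langle\mathbf{k}_4\rangle^s R[a^1,a^2,a^3](\mathbf{k}_4)$ is a sum of three terms in each of which the weight $\langle\cdot\rangle^s$ has been transferred onto one input; applying \eqref{eq:h1esti} with that input in $h^s$ and the other two in $h^1$ (taking the max over permutations to account for which factor is extremal) yields the $h^s$ bound. Since the resonance relations involve only the $\mathbf{k}$-variables and not $v$, the operator $D_v^s$ commutes with $R$ up to the analogous splitting on the Fourier side in $v$ (distributing $|\xi_1+\xi_2+\xi_3|^s$ among the three factors), and one concludes by applying the fixed-$v$-frequency estimate and Minkowski's integral inequality in $\xi$. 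The main obstacle throughout is the $2\le d\le4$ lattice counting of the first step, which is why we lean on \cite{HPTV} for that piece.
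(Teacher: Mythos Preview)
Your proposal is correct and matches the paper's treatment: the paper does not prove this lemma at all but simply defers to \cite{HPTV}, Lemmas~7.1 and~7.4 (with further references to \cite{B1,HTT2,B2}), which is exactly what you do for the arithmetic core, while your sketch of the rectangle reduction and the Leibniz-type derivation of the $h^s$ and $D_v^s$ bounds from \eqref{eq:h1esti} is the standard route and more explicit than what the paper records.
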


\begin{lemma}\label{elementarylp}Consider three sequences $\left\{ c_{1}\right\} ,$
$\left\{ c_{1}\right\} $ and $\left\{ c_{3}\right\} $. We will have
the following elementary bound

\[
\left\Vert \sum_{\mathcal{M}\left(k\right)}c_{1}\left(k_{1}\right)c_{2}\left(k_{2}\right)c_{3}\left(k_{3}\right)\right\Vert _{l_{k}^{2}}\lesssim\min_{\tau\in\mathfrak{S}\left(3\right)}\left\Vert c_{\tau\left(1\right)}\right\Vert _{l_{k}^{2}}\left\Vert c_{\tau\left(2\right)}\right\Vert _{l_{k}^{1}}\left\Vert c_{\tau\left(3\right)}\right\Vert _{l_{k}^{1}}.
\]

\end{lemma}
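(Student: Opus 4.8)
The plan is to recognize the left-hand side as an ordinary triple discrete convolution and read off the bound from Young's inequality on $\ell^{p}(\mathbb{Z}^{d})$. Writing the momentum constraint $k_{1}-k_{2}+k_{3}=k$ in the form $k_{1}+(-k_{2})+k_{3}=k$ and introducing the reflected sequence $\tilde{c}_{2}(m):=c_{2}(-m)$, the inner sum becomes $\sum_{k_{1}+m+k_{3}=k}c_{1}(k_{1})\tilde{c}_{2}(m)c_{3}(k_{3})=(c_{1}*\tilde{c}_{2}*c_{3})(k)$. Since reflection is an isometry of every $\ell^{p}(\mathbb{Z}^{d})$ we have $\|\tilde{c}_{2}\|_{\ell^{p}}=\|c_{2}\|_{\ell^{p}}$, so it suffices to estimate $\|c_{1}*\tilde{c}_{2}*c_{3}\|_{\ell^{2}}$.

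First I would invoke the two relevant instances of Young's convolution inequality, namely $\|f*g\|_{\ell^{1}}\le\|f\|_{\ell^{1}}\|g\|_{\ell^{1}}$ and $\|f*g\|_{\ell^{2}}\le\|f\|_{\ell^{2}}\|g\|_{\ell^{1}}$. Combining them and placing the $\ell^{2}$ norm on the first factor gives $\|c_{1}*\tilde{c}_{2}*c_{3}\|_{\ell^{2}}\le\|c_{1}\|_{\ell^{2}}\|\tilde{c}_{2}*c_{3}\|_{\ell^{1}}\le\|c_{1}\|_{\ell^{2}}\|c_{2}\|_{\ell^{1}}\|c_{3}\|_{\ell^{1}}$. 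Running the same argument with $\tilde{c}_{2}$, respectively $c_{3}$, in the $\ell^{2}$ slot — legitimate because the convolution is commutative and associative — produces the analogous bounds with $\|c_{2}\|_{\ell^{2}}$, respectively $\|c_{3}\|_{\ell^{2}}$, out front. Taking the best of these and noting that the two factors carrying $\ell^{1}$ norms enter symmetrically, one obtains exactly the asserted estimate with the minimum over $\mathfrak{S}(3)$.

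I do not expect a genuine obstacle here: unlike Lemma \ref{resonance form}, where the additional resonance constraint $\Gamma_{0}$ is what makes the counting nontrivial, the present statement only involves the momentum constraint and is therefore a soft consequence of Young's inequality. The only two points deserving a sentence of justification are that the sign on $k_{2}$ in the momentum relation is harmless (absorbed by the reflection $c_{2}\mapsto\tilde{c}_{2}$) and that the $\ell^{2}$ norm may be assigned to whichever of the three sequences is smallest in that norm, yielding the minimum over permutations.
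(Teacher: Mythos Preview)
Your argument is correct: rewriting the momentum constraint as an ordinary triple convolution (via the reflection $\tilde c_{2}(m)=c_{2}(-m)$) and applying Young's inequality in the form $\|f*g\|_{\ell^{2}}\le\|f\|_{\ell^{2}}\|g\|_{\ell^{1}}$ together with $\|f*g\|_{\ell^{1}}\le\|f\|_{\ell^{1}}\|g\|_{\ell^{1}}$ gives the bound with the $\ell^{2}$ norm on any chosen factor, and hence the minimum over permutations. The paper states this lemma without proof, treating it as an elementary fact; your Young's inequality argument is precisely the standard justification one would supply.
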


\begin{lemma}\label{strichartz}

Let $x\in\mathbb{R}$ , $y\in\mathbb{T}^{d}$, and $\alpha>\frac{d}{2}$.
Denote $U(t)=e^{it\partial_{x}^{2}/2}e^{it\triangle_{y}/2}$, then
we have the following Strichartz inequality:
\begin{align*}
(1)\quad & \left\Vert U(t)u_{0}\right\Vert _{L_{x,y}^{\infty}}\lesssim\left|t\right|^{-\frac{1}{2}}\left\Vert u_{0}\right\Vert _{L_{x}^{1}H_{y}^{\alpha}},\quad\left\Vert U(t)u_{0}\right\Vert _{L_{t}^{4}\left(I,L_{x,y}^{\infty}\right)}\lesssim\left\Vert u_{0}\right\Vert _{L_{x}^{2}H_{y}^{\alpha}},\\
(2)\quad & \left\Vert \int_{I}U^{*}\left(t\right)F\left(t\right)dt\right\Vert _{L_{x}^{2}H_{y}^{-\alpha}}\lesssim\left\Vert F(t)\right\Vert _{L_{t}^{4/3}\left(I,L_{x,y}^{1}\right)},\\
(3)\quad & \left\Vert \int_{I}U\left(t-s\right)F(s)ds\right\Vert _{L_{t}^{4}\left(L_{x,y}^{\infty}\right)}\lesssim\left\Vert F\left(t\right)\right\Vert _{L_{t}^{4/3}\left(I,L_{x}^{1}H_{y}^{\alpha}\right)},\quad\left\Vert \int_{I}U\left(t-s\right)F(s)ds\right\Vert _{L_{t}^{4}\left(L_{x,y}^{\infty}\right)}\lesssim\left\Vert F\right\Vert _{L_{t}^{1}\left(I;L_{x}^{2}H_{y}^{\alpha}\right)}.
\end{align*}

\end{lemma}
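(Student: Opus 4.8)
The plan is to reduce all four inequalities to the classical one-dimensional Schr\"odinger Strichartz estimates, exploiting the product structure $U(t)=e^{it\partial_x^2/2}e^{it\triangle_y/2}$ together with the Sobolev embedding $H_y^\alpha(\mathbb{T}^d)\hookrightarrow L_y^\infty(\mathbb{T}^d)$, which is available precisely because $\alpha>\tfrac d2$. The key observation is that $e^{it\triangle_y/2}$ is unitary on $H_y^\alpha$, commutes with $e^{it\partial_x^2/2}$, and is diagonalized by the $y$-Fourier transform: $\widehat{U(t)u_0}(x,\mathbf{k})=e^{-it|\mathbf{k}|^2/2}\bigl(e^{it\partial_x^2/2}\widehat{u_0}(\cdot,\mathbf{k})\bigr)(x)$, where $\widehat{\phantom{u}}$ denotes the Fourier transform in $y$.

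For the pointwise bound in $(1)$ I would first estimate $\|U(t)u_0\|_{L_{x,y}^\infty}\lesssim\|U(t)u_0\|_{L_x^\infty H_y^\alpha}$ by the Sobolev embedding, then write the $H_y^\alpha$ norm as an $\ell_{\mathbf{k}}^2$ sum over Fourier modes, use Minkowski's inequality to move $L_x^\infty$ inside the $\ell_{\mathbf{k}}^2$ sum, apply the one-dimensional dispersive estimate $\|e^{it\partial_x^2/2}g\|_{L_x^\infty}\lesssim|t|^{-1/2}\|g\|_{L_x^1}$ mode by mode (the unit-modulus factor $e^{-it|\mathbf{k}|^2/2}$ is discarded), and finally use Minkowski again to pull $L_x^1$ back out, arriving at $|t|^{-1/2}\|u_0\|_{L_x^1 H_y^\alpha}$. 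The homogeneous $L_t^4 L_{x,y}^\infty$ bound in $(1)$ is handled the same way: after passing to $L_x^\infty H_y^\alpha$ and working mode by mode, each mode reduces to the genuine one-dimensional Strichartz estimate $\|e^{it\partial_x^2/2}g\|_{L_t^4 L_x^\infty}\lesssim\|g\|_{L_x^2}$ for the admissible pair $(4,\infty)$ in one space dimension, with a constant uniform in the interval $I$; Minkowski ($L_t^4\ell_{\mathbf{k}}^2\subset\ell_{\mathbf{k}}^2 L_t^4$ since $4\ge2$) then resums to $\|u_0\|_{L_x^2 H_y^\alpha}$. Inequality $(2)$ is the adjoint of this homogeneous estimate, obtained by duality with respect to the $L_{x,y}^2$ pairing, using $(L_x^2 H_y^\alpha)^*=L_x^2 H_y^{-\alpha}$, $(L_t^4 L_{x,y}^\infty)^*=L_t^{4/3}L_{x,y}^1$, and the fact that $\int_I U^*(t)F(t)\,dt$ is exactly $T^*F$ for $Tu_0=U(t)u_0$. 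The retarded estimates $(3)$ follow from $(1)$ and $(2)$ by the standard $TT^*$/composition argument together with the Christ--Kiselev lemma to remove the time ordering (applicable because the relevant exponents $4$ and $4/3$ are distinct); the second bound in $(3)$ follows more directly by Minkowski in time from the homogeneous estimate.

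The nontrivial inputs here are entirely classical and external: the one-dimensional Schr\"odinger dispersive decay and the one-dimensional $(4,\infty)$ Strichartz estimate on $\mathbb{R}$. The only real obstacle is bookkeeping --- keeping track of the order in which the mixed norms $L_t^p$, $L_x^q$, $\ell_{\mathbf{k}}^2$, $H_y^\alpha$ are interchanged so that every application of Minkowski's inequality runs in the permitted direction. Once the evolution is diagonalized in the $y$-frequency and the unit-modulus phase is dropped, no torus-specific harmonic analysis (no lattice point counting, no Bourgain-type periodic Strichartz) is required, which is exactly why the sole restriction is the Sobolev threshold $\alpha>\tfrac d2$ that makes $H_y^\alpha$ embed into $L_y^\infty$.
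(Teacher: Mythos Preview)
Your proof is correct, and in fact somewhat cleaner than the paper's. The main difference lies in how you obtain the homogeneous $L_t^4L_{x,y}^\infty$ bound and its dual $(2)$. You reduce to the $y$-Fourier modes and invoke the known one-dimensional $(4,\infty)$ Strichartz estimate directly, then derive $(2)$ by abstract duality against $L^2_xH^\alpha_y$. The paper instead proves the homogeneous bound and $(2)$ together by a $TT^*$ argument at the product-space level: it expands $\left\Vert\int_I U^*(t)\varphi(t)\,dt\right\Vert_{L^2_xH_y^{-\alpha}}^2$ as a double time integral, feeds in the dispersive estimate from $(1)$, and applies Hardy--Littlewood--Sobolev in $t$, which requires a somewhat delicate splitting of the $H_y^{-\alpha}$ pairing into pieces $H_y^{-\alpha_1}$, $H_y^{-\alpha_2}$ and passage through $(L^\infty_y)^*$. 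Your route avoids these dual-space gymnastics entirely. For $(3)$ the two arguments are essentially the same (dispersive bound plus Hardy--Littlewood--Sobolev for the first, homogeneous estimate plus Minkowski in time for the second, Christ--Kiselev for the retarded versions). One small remark: your phrase ``$TT^*$/composition'' for the first estimate in $(3)$ is imprecise as stated, since the range $L^2_xH_y^{-\alpha}$ of $T^*$ does not match the domain $L^2_xH_y^{\alpha}$ of $T$; but your mode-by-mode philosophy resolves this immediately, because at the level of a single mode the intermediate space is just $L^2_x$ and the one-dimensional inhomogeneous Strichartz estimate applies directly.
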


\begin{proof}The estimate:
\[
\left\Vert U(t)u_{0}\right\Vert _{L_{x,y}^{\infty}}\lesssim\left|t\right|^{-\frac{1}{2}}\left\Vert u_{0}\right\Vert _{L_{x}^{1}H_{y}^{\alpha}}
\]
can be obtained by direct computation.

By standard $UU^{*}$ argument, consider $\varphi\in L_{t}^{4/3}\left(L_{x,y}^{\infty}\right)^{*}\cap L_{t}^{4/3}L_{x,y}^{1}$
with $\left\Vert \varphi\right\Vert _{L_{t}^{4/3}\left(I,L_{x,y}^{1}\right)}=1$,
we will have
\begin{align*}
\left\Vert U(t)u_{0}\right\Vert _{L_{t}^{4}\left(I,L_{x,y}^{\infty}\right)} & \leq\sup_{\varphi}\int_{I}\left\langle U\left(t\right)u_{0},\varphi\right\rangle _{L_{x,y}^{2}}dt\leq\sup_{\varphi}\left\Vert u_{0}\right\Vert _{L_{x}^{2}H_{y}^{\alpha}}\left\Vert \int_{I}U^{*}\left(t\right)\varphi\left(t\right)dt\right\Vert _{L_{x}^{2}H_{y}^{-\alpha}}.
\end{align*}
Then estimate $\left\Vert \int_{I}U^{*}\left(t\right)\varphi\left(t\right)dt\right\Vert _{L_{x}^{2}H_{y}^{-\alpha}}$
by inner product, then for $\alpha_{1},\alpha_{2}>0$ satisfying $\alpha_{1}-\alpha>0$,
$\alpha_{1}+\alpha_{2}=2\alpha$. By the dual space property, $H^{\alpha^{\prime}}\subset H^{\frac{d}{2}+}\subset L^{\infty}$
hence $\left(L^{\infty}\right)^{*}\subset H^{-\frac{d}{2}-}\subset H^{-\alpha^{\prime}}$
if $0<\alpha^{\prime}<\frac{d}{2}$ , we will have
\begin{align*}
 & \int_{I}\int_{I}\left\langle U\left(t^{\prime}\right)\varphi\left(t^{\prime}\right),U\left(t\right)\varphi\left(t\right)\right\rangle _{L_{x}^{2}H_{y}^{-\alpha}}dt^{\prime}dt=\int_{I}\int_{I}\left\langle U^{*}\left(t\right)U\left(t^{\prime}\right)\varphi\left(t^{\prime}\right),\varphi\left(t\right)\right\rangle _{L_{x}^{2}H_{y}^{-\alpha}}dt^{\prime}dt\\
\lesssim & \int_{I}\int_{I}\left\Vert U^{*}\left(t\right)U\left(t^{\prime}\right)\varphi\left(t^{\prime}\right)\right\Vert _{L_{x}^{\infty}H_{y}^{-\alpha_{1}}}\left\Vert \varphi\left(t\right)\right\Vert _{\left(L_{x}^{\infty}\right)^{*}H_{y}^{-\alpha_{2}}}dt^{\prime}dt.
\end{align*}
Since 
\begin{align*}
\left\Vert U^{*}\left(t\right)U\left(t^{\prime}\right)\varphi\left(t^{\prime}\right)\right\Vert _{L_{x}^{\infty}H_{y}^{-\alpha_{1}}} & \lesssim\left|t^{\prime}-t\right|^{-\frac{1}{2}}\left\Vert \varphi\left(t^{\prime}\right)\right\Vert _{L_{x}^{1}H_{y}^{-\alpha_{1}+\alpha}}\lesssim\left|t^{\prime}-t\right|^{-\frac{1}{2}}\left\Vert \varphi\left(t^{\prime}\right)\right\Vert _{L_{x}^{1}\left(L_{y}^{\infty}\right)^{*}}
\end{align*}
and 
\[
\left\Vert \varphi\left(t\right)\right\Vert _{\left(L_{x}^{\infty}\right)^{*}H_{y}^{-\alpha_{2}}}\lesssim\left\Vert \varphi\left(t\right)\right\Vert _{\left(L_{x}^{\infty}\right)^{*}\left(L_{y}^{\infty}\right)^{*}}\lesssim\left\Vert \varphi\left(t\right)\right\Vert _{L_{x}^{1}L_{y}^{1}},
\]
we prove the second inequality, 
\[
\left\Vert \int_{I}U^{*}\left(t\right)\varphi\left(t\right)dt\right\Vert _{L_{x}^{2}H_{y}^{-\alpha}}\lesssim\left\Vert \varphi\right\Vert _{L_{t}^{4/3}\left(I,\left(L_{x,y}^{\infty}\right)^{*}\right)}\left\Vert \left|t\right|^{-\frac{1}{2}}*_{t}\varphi\left(t\right)\right\Vert _{L_{t}^{4}\left(I,L_{x}^{1}L_{y}^{1}\right)}\lesssim\left\Vert \varphi\right\Vert _{L_{t}^{4/3}\left(I,L_{x,y}^{1}\right)}\left\Vert \varphi\right\Vert _{L_{t}^{4/3}\left(I,L_{x,y}^{1}\right)}.
\]

The last inequality comes from 
\begin{align*}
 & \left\Vert \int_{I}U\left(t-s\right)F(s)ds\right\Vert _{L_{t}^{4}\left(I;L_{x,y}^{\infty}\right)}\lesssim\left\Vert \int_{I}\left\Vert U(t-s)F(s)\right\Vert _{L_{x,y}^{\infty}}ds\right\Vert _{L_{t}^{4}\left(I\right)}\\
\lesssim & \left\Vert \int_{I}\left|t-s\right|^{-\frac{1}{2}}\left\Vert F\left(s\right)\right\Vert _{L_{x}^{1}H_{y}^{\alpha}}ds\right\Vert _{L_{t}^{4}\left(I\right)}\lesssim\left\Vert F\right\Vert _{L_{t}^{4/3}\left(I;L_{x}^{1}H_{y}^{\alpha}\right)}.
\end{align*}
By Christ-Kiselev lemma we have 
\[
\left\Vert \int_{s<t}U\left(t-s\right)F(s)ds\right\Vert _{L_{t}^{4}\left(I;L_{x,y}^{\infty}\right)}\lesssim\left\Vert F\right\Vert _{L_{t}^{4/3}\left(I;L_{x}^{1}H_{y}^{\alpha}\right)}.
\]
By direct computation we have
\begin{align*}
 & \left\Vert \int_{I}U\left(t-s\right)F(s)ds\right\Vert _{L_{t}^{4}\left(L_{x,y}^{\infty}\right)}=\left\Vert U(t)\int_{I}U\left(-s\right)F(s)ds\right\Vert _{L_{t}^{4}\left(L_{x,y}^{\infty}\right)}\\
\lesssim & \left\Vert \int_{I}U\left(-s\right)F(s)ds\right\Vert _{L_{x}^{2}H_{y}^{\alpha}}\lesssim\left\Vert F\right\Vert _{L_{t}^{1}\left(I;L_{x}^{2}H_{y}^{\alpha}\right)},
\end{align*}
and by Christ-Kiselev lemma 
\[
\left\Vert \int_{s<t}U\left(t-s\right)F(s)ds\right\Vert _{L_{t}^{4}\left(I;L_{x,y}^{\infty}\right)}\lesssim\left\Vert F\right\Vert _{L_{t}^{1}\left(I;L_{x}^{2}H_{y}^{\alpha}\right)}.
\]

\end{proof}

\begin{corollary} For any function $f_{1}$, $f_{2}$ and $f_{3}$,
suppose that $f_{i}\in L_{x}^{2}H_{y}^{\alpha}\cap L_{t}^{4}L_{x,y}^{\infty}$.
We have the following inequality:
\begin{align*}
 & \left\Vert \int_{0}^{t}U\left(t-s\right)\left(f_{1}\left(s\right)f_{2}\left(s\right)f_{3}\left(s\right)\right)ds\right\Vert _{L_{t}^{\infty}\left(I;L_{x}^{2}H_{y}^{\alpha}\right)}\\
\lesssim & \sum_{\tau\in\mathfrak{S}\left(3\right)}\left|I\right|^{\frac{1}{2}}\left\Vert f_{\tau\left(1\right)}\right\Vert _{L_{t}^{4}\left(I,L_{x,y}^{\infty}\right)}\left\Vert f_{\tau\left(2\right)}\right\Vert _{L_{t}^{4}\left(I;L_{x,y}^{\infty}\right)}\left\Vert f_{\tau\left(3\right)}\right\Vert _{L_{t}^{\infty}\left(I;L_{x}^{2}H_{y}^{\alpha}\right)},
\end{align*}
\begin{align*}
 & \left\Vert \int_{0}^{t}U\left(t-s\right)\left(f_{1}\left(s\right)f_{2}\left(s\right)f_{3}\left(s\right)\right)ds\right\Vert _{L_{t}^{4}\left(I,L_{x,y}^{\infty}\right)}\\
\lesssim & \sum_{\tau\in\mathfrak{S}\left(3\right)}\left|I\right|^{\frac{3}{4}}\left\Vert f_{\tau(1)}\right\Vert _{L_{t}^{4}\left(I,L_{x,y}^{\infty}\right)}\left\Vert f_{\tau(2)}\right\Vert _{L_{t}^{\infty}\left(I,L_{x}^{2}H_{y}^{\alpha}\right)}\left\Vert f_{\tau(3)}\right\Vert _{L_{t}^{\infty}\left(I,L_{x}^{2}H_{y}^{\alpha}\right)}.
\end{align*}

\end{corollary}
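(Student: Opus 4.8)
The plan is to feed the triple product $f_1f_2f_3$ into the Strichartz estimates of Lemma~\ref{strichartz}, after first controlling the product in the torus variable. Since $\alpha>\frac d2$, the space $H_y^\alpha(\mathbb{T}^d)$ embeds in $L_y^\infty$ and satisfies the fractional Leibniz inequality $\|fg\|_{H_y^\alpha}\lesssim\|f\|_{L_y^\infty}\|g\|_{H_y^\alpha}+\|f\|_{H_y^\alpha}\|g\|_{L_y^\infty}$; iterating it once gives, pointwise in $(t,x)$,
\begin{equation*}
\|f_1f_2f_3\|_{H_y^\alpha}\lesssim\sum_{i=1}^{3}\|f_i\|_{H_y^\alpha}\prod_{j\neq i}\|f_j\|_{L_y^\infty}.
\end{equation*}
Taking $L_x^2$ with H\"older exponents $(\infty,\infty,2)$, and $L_x^1$ with H\"older exponents $(\infty,2,2)$ together with the fiberwise embedding $H_y^\alpha\hookrightarrow L_y^\infty$ applied to one non-derivative factor, one obtains the two product estimates
\begin{equation*}
\|f_1f_2f_3\|_{L_x^2H_y^\alpha}\lesssim\sum_{\tau\in\mathfrak{S}(3)}\|f_{\tau(1)}\|_{L_{x,y}^\infty}\|f_{\tau(2)}\|_{L_{x,y}^\infty}\|f_{\tau(3)}\|_{L_x^2H_y^\alpha},
\end{equation*}
\begin{equation*}
\|f_1f_2f_3\|_{L_x^1H_y^\alpha}\lesssim\sum_{\tau\in\mathfrak{S}(3)}\|f_{\tau(1)}\|_{L_{x,y}^\infty}\|f_{\tau(2)}\|_{L_x^2H_y^\alpha}\|f_{\tau(3)}\|_{L_x^2H_y^\alpha}.
\end{equation*}
The symmetrized sum over $\tau$ appears because the Leibniz derivative can land on any one of the three factors.

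For the first inequality I would use the energy bound in $x$: since $U(t)$ is unitary on $L_x^2H_y^\alpha$, Minkowski's inequality gives $\|\int_0^tU(t-s)F(s)\,ds\|_{L_t^\infty(I;L_x^2H_y^\alpha)}\le\int_I\|F(s)\|_{L_x^2H_y^\alpha}\,ds$ (taking, as usual, $I$ to contain the base point $0$). With $F=f_1f_2f_3$, the first product estimate above, and H\"older in time on $I$ with exponents $(\infty,4,4)$, which produces the factor $|I|^{1-\frac14-\frac14}=|I|^{1/2}$, one obtains the asserted bound.

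For the second inequality I would instead use the retarded dual Strichartz estimate $\|\int_0^tU(t-s)F(s)\,ds\|_{L_t^4(I;L_{x,y}^\infty)}\lesssim\|F\|_{L_t^{4/3}(I;L_x^1H_y^\alpha)}$, which follows from Lemma~\ref{strichartz}(3) and the Christ-Kiselev lemma exactly as in the proof of that lemma. Taking $F=f_1f_2f_3$, inserting the second product estimate, and applying H\"older in time on $I$ (one $L_t^4$ factor against two $L_t^\infty$ factors) produces a positive power of $|I|$ in front of the stated product of norms; since $|I|\le 1$ this yields the desired inequality and, in particular, suffices for the use made of it in (\ref{eq:wellposed}).

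The argument is pure bookkeeping, so I do not anticipate a genuine obstacle; the one point to watch is the $x$-integrability matching. The first inequality targets an $L_x^2$-based norm and so requires the $(\infty,\infty,2)$ H\"older split in $x$, while the second passes through the dual Strichartz estimate, whose right-hand side is $L_x^1$-based, and so requires the $(\infty,2,2)$ split; this, together with the freedom in placing the fractional $y$-derivative, is exactly what dictates which factors carry the $L_{x,y}^\infty$ norm versus the $L_x^2H_y^\alpha$ norm, and hence forces the sum over $\tau\in\mathfrak{S}(3)$. Finally, one should recall that the retarded Duhamel operators $\int_0^t$ reduce to the full-interval operators appearing in Lemma~\ref{strichartz} via the Christ-Kiselev lemma.
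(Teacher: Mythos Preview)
The paper states this corollary without proof, so there is nothing to compare against; your approach via Lemma~\ref{strichartz}, the fractional Leibniz rule in $y$, and H\"older is exactly the intended (and only reasonable) route.

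Your argument for the first inequality is clean and correct.

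For the second inequality there is a slip in the last step. Going through $\|F\|_{L_t^{4/3}(I;L_x^1H_y^\alpha)}$ and placing one factor in $L_t^4$ and two in $L_t^\infty$ gives, as you say, a positive power of $|I|$; but that power is $\tfrac34-\tfrac14=\tfrac12$, not $\tfrac34$. Your remark ``since $|I|\le1$ this yields the desired inequality'' is backwards: for $|I|\le1$ one has $|I|^{1/2}\ge|I|^{3/4}$, so an upper bound with $|I|^{1/2}$ is \emph{weaker} than the stated one with $|I|^{3/4}$ and does not imply it. In fact, with the specific norms on the right-hand side (one $L_t^4L_{x,y}^\infty$ factor and two $L_t^\infty L_x^2H_y^\alpha$ factors) neither Strichartz endpoint in Lemma~\ref{strichartz}(3) produces $|I|^{3/4}$; the $L_t^{4/3}L_x^1H_y^\alpha$ route gives $|I|^{1/2}$, while the $L_t^1L_x^2H_y^\alpha$ route would give $|I|^{3/4}$ but forces two $L_{x,y}^\infty$ spatial factors, not two $L_x^2H_y^\alpha$ ones. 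So the exponent $\tfrac34$ in the corollary as stated appears to be a misprint; the honest bound with these norms is $|I|^{1/2}$.

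This does not affect anything downstream: in \eqref{eq:wellposed} the factor $|I|^{1/2}+|I|^{3/4}$ could just as well be $|I|^{1/2}$, and the contraction argument in Proposition~\ref{prop:localwell} only needs some positive power. So your proof is correct in substance; just replace the sentence invoking $|I|\le1$ by the observation that you obtain $|I|^{1/2}$, which is what the application actually requires.
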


\begin{lemma}\label{lemma:correction} Let $a_{s}\left(\xi,\eta,\theta\right)=\left|\eta+\theta\xi\right|^{s}$.
Define 
\[
A_{s}^{m}\left(\theta\right)\left(f,g\right)=\int_{\mathbb{R}^{n}}\int_{\mathbb{R}^{n}}e^{ix(\xi+\eta)}\frac{1}{m!}\partial_{\theta}^{m}a_{s}\left(\xi,\eta,\theta\right)\hat{f}\left(\xi\right)\hat{g}\left(\eta\right)d\xi d\eta.
\]
 Let $l\in\mathbb{N}$. Let $p,p_{1},p_{2}$ satisfy $1<p,p_{1},p_{2}<\infty$
and $\frac{1}{p}=\frac{1}{p_{1}}+\frac{1}{p_{2}}$. Let $s,s_{1},s_{2}$
satisfy $0\leq s_{2},s_{2}$ and $l-1\leq s=s_{1}+s_{2}\leq l$. Then
the following bilinear estimate
\[
\left\Vert D^{s}\left(fg\right)-\sum_{k\in\mathbb{Z}}\sum_{m=0}^{l-1}A_{s}^{m}\left(0\right)\left(P_{\leq k-3}f,P_{k}g\right)-\sum_{k\in\mathbb{Z}}\sum_{m=0}^{l-1}A_{s}^{m}\left(0\right)\left(P_{\leq k-3}g,P_{k}f\right)\right\Vert _{L^{p}}\leq C\left\Vert D^{s_{1}}f\right\Vert _{L^{p_{1}}}\left\Vert D^{s_{2}}g\right\Vert _{L^{p_{2}}}
\]
holds for all $f,g\in\mathcal{S}$, where $C=C(n,p,p_{1},p_{2}).$

\end{lemma}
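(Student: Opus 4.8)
The plan is to run a Bony/Littlewood--Paley trichotomy on the product $fg$ and to recognize the subtracted expressions $\sum_{m=0}^{l-1}A_s^m(0)(\cdot,\cdot)$ as exactly the degree-$(l-1)$ Taylor polynomial in $\theta$ of the symbol $a_s(\xi,\eta,1)=|\xi+\eta|^s$ of $D^s$, restricted to the off-diagonal (paraproduct) frequency regimes. Concretely, I would write
\[
fg=\sum_{k\in\mathbb Z}P_{\le k-3}f\cdot P_kg+\sum_{k\in\mathbb Z}P_{\le k-3}g\cdot P_kf+\sum_{k\in\mathbb Z}\sum_{|k-k'|\le 2}P_kf\cdot P_{k'}g=:\Pi_1+\Pi_2+\Pi_3,
\]
and note that on the Fourier side $D^s$ carries the bilinear multiplier $a_s(\xi,\eta,1)$. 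For the high--high piece $\Pi_3$ no correction is subtracted, so I bound $\|D^s\Pi_3\|_{L^p}$ directly by the usual Kato--Ponce high--high argument: since $|k-k'|\le 2$ the output frequency is $\lesssim 2^k$, hence $|\xi+\eta|^s\lesssim 2^{ks}\lesssim 2^{ks_1}2^{k's_2}$, which lets me distribute $D^{s_1}$ onto $f$ and $D^{s_2}$ onto $g$; decomposing the output in Littlewood--Paley, summing the resulting geometric series in $k$ (using $s=s_1+s_2\ge 0$ since $l\ge 1$), and applying H\"older together with the $L^p$-boundedness of the square function gives $\|D^s\Pi_3\|_{L^p}\lesssim\|D^{s_1}f\|_{L^{p_1}}\|D^{s_2}g\|_{L^{p_2}}$.

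The heart of the matter is $\Pi_1$ (and $\Pi_2$ by the symmetric computation). On each summand of $\Pi_1$ the relevant symbol is $a_s(\xi,\eta,1)$ with $|\xi|\le 2^{k-3}\ll|\eta|\sim 2^k$, so $|\eta+\theta\xi|\sim|\eta|$ uniformly for $\theta\in[0,1]$ and, by homogeneity, $|\partial_\theta^m a_s(\xi,\eta,\theta)|\lesssim_m|\xi|^m|\eta|^{s-m}$ on this support, together with the analogous bounds for its $\xi$- and $\eta$-derivatives. Taylor expanding $\theta\mapsto a_s(\xi,\eta,\theta)$ about $\theta=0$ to order $l-1$, the polynomial part is precisely $\sum_{m=0}^{l-1}\frac{1}{m!}\partial_\theta^m a_s(\xi,\eta,0)$, i.e.\ the symbol of $\sum_{m=0}^{l-1}A_s^m(0)(P_{\le k-3}f,P_kg)$, while the integral remainder $\frac{1}{(l-1)!}\int_0^1(1-\theta)^{l-1}\partial_\theta^l a_s(\xi,\eta,\theta)\,d\theta$ has size $\lesssim|\xi|^l|\eta|^{s-l}$. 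Thus $D^s(P_{\le k-3}f\cdot P_kg)-\sum_{m=0}^{l-1}A_s^m(0)(P_{\le k-3}f,P_kg)$ is a bilinear Fourier multiplier whose symbol, after division by $|\xi|^{s_1}|\eta|^{s_2}$, equals $|\xi|^{\,l-s_1}|\eta|^{\,s_1-l}$ up to bounded factors, and hence is $\le 1$ on $|\xi|\ll|\eta|$ (using $l\ge s\ge s_1$). Pulling $D^{s_1}$ out of the low factor and $D^{s_2}$ out of the high factor leaves a Coifman--Meyer-type bilinear multiplier localized to the gap region $|\xi|\ll|\eta|$, with the $\theta$-integration absorbed by Minkowski's inequality; since the outputs have frequency $\sim 2^k$ and are therefore finitely overlapping, summing in $k$ via Littlewood--Paley and H\"older yields $\lesssim\|D^{s_1}f\|_{L^{p_1}}\|D^{s_2}g\|_{L^{p_2}}$. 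The term $\Pi_2$ is treated identically with the roles of $f,g$ (equivalently of $\xi,\eta$) exchanged, which is exactly why the statement contains the second sum $A_s^m(0)(P_{\le k-3}g,P_kf)$.

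The step I expect to be the main obstacle is verifying that these remainder multipliers are \emph{uniformly} (in $k$ and in $\theta$) admissible bilinear multipliers throughout the full range $1<p,p_1,p_2<\infty$, particularly in the boundary configurations $s_1=0$ or $s_2=0$ with $s=l$, where the frequency-gap gain $(|\xi|/|\eta|)^{\,l-s_1}$ degenerates and one must rely entirely on the finite overlap of the output frequencies plus the square-function / H\"ormander--Mikhlin machinery rather than on summing a geometric series. Producing clean, $\theta$-uniform estimates for $\partial_\theta^l a_s$ and for enough of its $\xi,\eta$ derivatives (to invoke a bilinear Coifman--Meyer theorem, or equivalently to run a vector-valued Littlewood--Paley argument by hand) is the real technical core; the remainder is routine paraproduct bookkeeping. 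Alternatively, one may simply quote the known fractional-Leibniz-rule-with-remainder estimates of Kato--Ponce type, of which this lemma is a variant.
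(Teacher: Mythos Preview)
The paper does not give its own proof of this lemma; it simply writes ``See \cite{FGO}'' and moves on. So there is no in-paper argument to compare against.

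Your sketch is the standard and correct route to this kind of fractional-Leibniz-with-correction estimate: paraproduct trichotomy, identification of the subtracted terms as the Taylor polynomial of $|\eta+\theta\xi|^s$ in $\theta$ at $\theta=0$, integral-remainder bound $|\partial_\theta^l a_s|\lesssim |\xi|^l|\eta|^{s-l}$ on the low--high region, and then a Coifman--Meyer/Littlewood--Paley argument to sum. This is essentially the argument one finds in the literature (the cited \cite{FGO} is Fujiwara--Georgiev--Ozawa, where exactly this $A_s^m(\theta)$ formalism and proof appear), so your proposal is aligned with what the paper is deferring to. Your caveat about the endpoint cases $s_1=0$ or $s=l$ is well placed: that is precisely where one must lean on the finite overlap of output frequencies and the square-function machinery rather than on a geometric gain, and the cited reference handles it in the way you indicate.
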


See \cite{FGO}.

\bibliographystyle{plain}
\bibliography{cubicNLS_revised}

\end{document}